\documentclass[10pt]{amsart}
\usepackage{rotating}
\usepackage{mathcomp, amscd}
\usepackage{times}
\usepackage{amsmath, amssymb, amsthm, fancyhdr, verbatim, graphicx}
\usepackage{enumerate}
\usepackage{enumitem}
\usepackage[all]{xy}
\usepackage{accents}
\usepackage{mathtools}
\usepackage{bbm} 
\usepackage{extarrows}
\usepackage{stmaryrd}
\usepackage{array}
\usepackage{float}
\usepackage[unicode=true]{hyperref}
\usepackage{framed}
\usepackage{url}
\usepackage{scalerel}
\usepackage{booktabs}

\usepackage{bbding}
\usepackage{pifont}

\usepackage[usenames,dvipsnames]{xcolor}
\usepackage{mathrsfs}
\usepackage{caption}
\usepackage{tikz-cd}
\usepackage{color}

\definecolor{byzantium}{rgb}{0.44, 0.16, 0.39}

\definecolor{GOOD}{HTML}{2C5C88}

\definecolor{bluedefrance}{rgb}{0.19, 0.55, 0.91}

\definecolor{OxfordBlue}{RGB}{0, 33, 71}
\definecolor{JHUBlue}{RGB}{0, 127, 174}
\definecolor{QueensRed}{RGB}{171, 35, 40}
\definecolor{QueensGold}{RGB}{255, 199, 44}

\definecolor{ImpressionistBlue}{HTML}{5B7F95}
\definecolor{WaterLilyGreen}{HTML}{7D956D}
\definecolor{SunsetPeach}{HTML}{F4A988}
\definecolor{LilacMist}{HTML}{C3A2C8}
\definecolor{SunflowerGold}{HTML}{E3A048}
\definecolor{WimP}{HTML}{4C2B69}
\definecolor{CrownAmethyst}{HTML}{3B2453}

\definecolor{SaxonGold}{HTML}{FFD700}
\definecolor{SaxonGreen}{HTML}{008B45}
\definecolor{DresdenYellow}{HTML}{FFCC00}

\definecolor{Pistachio}{HTML}{84B067}
\definecolor{Avacado}{HTML}{568203}
\definecolor{Malachite}{HTML}{0BDA51}
\definecolor{SageGreen}{HTML}{8A9A5B}
\definecolor{Gua}{HTML}{355E3B}
\definecolor{Wim}{HTML}{005F36}
\definecolor{Froggie}{HTML}{7b908e}
\definecolor{Cerulean}{HTML}{007BA7}
\definecolor{BlueJay}{HTML}{2B547E}
\definecolor{CobaltBlue}{HTML}{0047AB}
\definecolor{ImperialSapphire}{HTML}{1E3A5F}
\definecolor{RegalAzure}{HTML}{2C5C88}
\definecolor{Sapphire1}{HTML}{172f5d}
\definecolor{Sapphire2}{HTML}{2b3dab}
\definecolor{Sapphire3}{HTML}{4555b8}
\definecolor{Sapphire4}{HTML}{5092ce}
\definecolor{DeepSeaBlue}{HTML}{123456}
\definecolor{DarkBlueGrey}{HTML}{29465B}
\definecolor{Burgundy}{HTML}{660033}
\definecolor{Cartier}{HTML}{9D2235}
\definecolor{Marigold}{HTML}{EAA221}
\definecolor{Brass}{HTML}{B5A642}

\definecolor{Roof}{HTML}{CE4B1B}
\definecolor{Sea}{HTML}{0396BF}

\definecolor{ByzPurple}{HTML}{5f3672}
\definecolor{ByzYellow}{HTML}{ba923f}
\definecolor{ByzPink}{HTML}{845a60}

\definecolor{Delft1}{RGB}{182,92,70}
\definecolor{Delft2}{RGB}{42,67,104}
\definecolor{Delft3}{RGB}{145,123,95}
\definecolor{StarlingGround}{HTML}{024E57}
\definecolor{StarlingLinen}{HTML}{C3B49D}
\definecolor{StarlingFlesh}{HTML}{AF8365}
\definecolor{StarlingRusset}{HTML}{9A5E41}
\definecolor{StarlingFig}{HTML}{576327}

\newcommand{\F}{\mathbb{F}}

\newcommand{\Z}{\mathbb{Z}}
\renewcommand{\P}{\mathbb{P}}

\newcommand{\N}{\mathbb{N}}

\newcommand{\OO}{\mathcal{O}}

\newcommand{\sL}{\mathscr{L}}

\newcommand{\cE}{\mathcal{E}}
\newcommand{\cF}{\mathcal{F}}
\newcommand{\cG}{\mathcal{G}}
\newcommand{\cH}{\mathcal{H}}
\newcommand{\cI}{\mathcal{I}}
\newcommand{\cJ}{\mathcal{J}}

\newcommand{\cL}{\mathcal{L}}
\newcommand{\cM}{\mathcal{M}}

\newcommand{\cR}{\mathcal{R}}

\newcommand{\cV}{\mathcal{V}}
\newcommand{\cW}{\mathcal{W}}

\newcommand{\Qp}{\mathbb{Q}_p}

\newcommand{\Zp}{\mathbb{Z}_{p}}

\newcommand{\Fp}{\mathbb{F}_p}

\newcommand{\Fq}{\mathbb{F}_{q}}

\newcommand{\wt}[1]{\widetilde{#1}}

\newcommand{\twomat}[4]{\begin{pmatrix}#1 & #2 \\ #3 & #4\end{pmatrix}}

\newcommand{\Gal}{\operatorname{Gal}}

\newcommand{\ul}[1]{\underline{#1}}
\newcommand{\ol}[1]{\overline{#1}}
\newcommand{\wh}[1]{\widehat{#1}}

\DeclareMathOperator{\et}{\acute{e}t}

\makeatletter
\newcommand*\bigcdot{\mathpalette\bigcdot@{.7}}
\newcommand*\bigcdot@[2]{\mathbin{\vcenter{\hbox{\scalebox{#2}{$\m@th#1\bullet$}}}}}
\makeatother

\newcommand{\modifier}[1]{}

\def\llparent{( \! ( }
\def\rrparent{) \! ) }
\newcommand{\bs}{\backslash}
\newcommand{\surj}{\twoheadrightarrow}

\newcommand{\dirlim}{\varinjlim}
\newcommand{\invlim}{\varprojlim}

\newcommand{\HHom}{\mathcal{H}\mathrm{om}}

\newcommand{\tilt}{\flat}

\newlist{renumerate}{enumerate}{3}
\setlist[renumerate]{label=(\roman*),before=\raggedright, itemsep=0mm}
\newlist{arenumerate}{enumerate}{3}
\setlist[arenumerate]{label=(\arabic*),before=\raggedright, itemsep=0mm}
\newlist{aenumerate}{enumerate}{3}
\setlist[aenumerate]{label=(\alph*),before=\raggedright, itemsep=0mm}
\setlist[enumerate]{itemsep=0mm}

\DeclareMathOperator{\GL}{GL}
\DeclareMathOperator{\SL}{SL}

\DeclareMathOperator{\coker}{coker}

\DeclareMathOperator{\Dec}{Dec}

\DeclareMathOperator{\Sp}{Sp}

\DeclareMathOperator{\Hom}{Hom}

\DeclareMathOperator{\Ind}{Ind}
\DeclareMathOperator{\im}{im\,}
\DeclareMathOperator{\rank}{rank\,}

\DeclareMathOperator{\Rep}{Rep}

\DeclareMathOperator{\LT}{LT}

\DeclareMathOperator{\Div}{Div}

\DeclareMathOperator{\id}{id}

\DeclareMathOperator{\Gr}{Gr}
\DeclareMathOperator{\Pl}{Pl}

\DeclareMathOperator{\cts}{cts}

\DeclareMathOperator{\sm}{sm}

\DeclareMathOperator{\adm}{adm}
\DeclareMathOperator{\rad}{rad}

\DeclareMathOperator{\Perf}{Perf}

\DeclareMathOperator{\Spa}{Spa}
\DeclareMathOperator{\Spd}{Spd}

\DeclareMathOperator{\Coeq}{Coeq}

\DeclareMathOperator{\bB}{\mathbf{B}}

\DeclareMathOperator{\Inj}{\mathcal{I}\mathrm{nj}}
\DeclareMathOperator{\Surj}{\mathcal{S}\mathrm{urj}}
\DeclareMathOperator{\BC}{BC}

\usetikzlibrary{calc,decorations.pathmorphing,shapes}
\newcounter{sarrow}

\tikzcdset{scale cd/.style={every label/.append style={scale=#1},
    cells={nodes={scale=#1}}}}

\newtheorem{thm}{Theorem}
\newtheorem{cor}[thm]{Corollary}
\newtheorem{prop}[thm]{Proposition}
\newtheorem{lem}[thm]{Lemma}
\newtheorem{conj}[thm]{Conjecture}
\newtheorem{quest}[thm]{Question}
\newtheorem{thmx}{Theorem}

\theoremstyle{definition}
\newtheorem{defn}[thm]{Definition}

\newtheorem{exmp}[thm]{Example}
\newtheorem{exmps}[thm]{Examples}
\newtheorem{notn}[thm]{Notation}

\newtheorem*{prob*}{Problem}
\newtheorem*{goal*}{Goal}

\theoremstyle{remark}
\newtheorem{rem}[thm]{Remark}
\newtheorem*{rem*}{Remark}
\newtheorem{rems}[thm]{Remarks}

\newcommand{\para}[1][]{%
  \refstepcounter{thm}%
  \subsection*{%
    \thethm.
    \if\relax\detokenize{#1}\relax
    \else
      \textbf{#1}
    \fi
  }%
}

\numberwithin{equation}{section}
\numberwithin{thm}{section}
\usepackage[cal=euler, scr=boondox]{mathalfa} 

\hypersetup{
    colorlinks=true,
    linkcolor=StarlingGround,
    urlcolor=StarlingFig,
    citecolor=StarlingRusset
    }

\theoremstyle{definition}

\usepackage[
backend=biber,
sorting=nyvt,
style=alphabetic,
maxalphanames=5,
minalphanames=5,
maxbibnames=10,
backref=true
]{biblatex}
\renewbibmacro{in:}{}
\DeclareFieldFormat[article]{volume}{\textbf{#1}}
\DeclareFieldFormat[article]{number}{no.~#1}
\DeclareFieldFormat[article]{year}{\mkbibparens{#1}}
\renewbibmacro*{volume+number+eid}{%
  \printfield{volume}%
  \setunit*{\addspace}
  \printfield{number}%
  \setunit{\addcomma\space}
  \printfield{eid}%
}

\DeclareMathOperator{\dec}{dec}
\DeclareMathOperator{\Ord}{Ord}

\title{Grassmannians of absolute Banach--Colmez Spaces}
\author{Zecheng Yi}
\email{\url{zechengyi0822@gmail.com}}
\address{Boston University Department of Mathematics and Statistics, 665 Commonwealth Avenue, Boston, MA, USA}

\begin{document}
\begin{abstract}
We introduce Grassmannians of absolute Banach--Colmez spaces and prove  the  $k$-Grassmannian is perfectoid if and only if the space has slope $< 1/k$. For $d=1$, this proves a conjecture of Hansen--Mann. The key ingredient is a Pl\"{u}cker morphism, which realizes the $k$-Grassmannian of a slope $d/n$ space as a Zariski closed subspace of the projectivization of a slope $kd/n$ space. We also introduce a general method to construct invariant elements in perfectoid Tate--Huber pairs, and use this to prove absolute Banach--Colmez spaces of slope $<1$ have perfectoid projectivizations. As applications, we establish  vanishing bounds for Scholze's mod $p$ Jacquet--Langlands functor on parabolic inductions, and also prove its vanishing  in the top degree on every infinite-dimensional non-supersingular irreducible representation.
\end{abstract}
\maketitle
\tableofcontents

\section*{Introduction}

Introduced by Colmez in \cite{Colmez2002-1}, Banach--Colmez (BC) spaces play a key role in $p$-adic Hodge theory. Fargues and Fontaine \cite{FF2018-1} gave them a geometric interpretation  through  the cohomology of vector bundles on the Fargues--Fontaine curve.  Le Bras \cite{LeBras2018-1} further developed this interpretation categorically, viewing BC spaces as $v$-sheaves of $\Qp$-vector spaces on $\Perf_C$ and relating them to coherent sheaves on the curve.

Allowing the base to vary without choosing an untilt gives absolute BC spaces \cite{Fargues2020-1}, which are $v$-sheaves on $\Perf_{\Fq}$.  Their projectivizations include familiar moduli spaces: for $d\geq 1$, the quotient of the nonzero sections of $\OO(d)$ by scalar multiplication is isomorphic to the space $\Div^{d}$ of effective Cartier divisors of degree $d$ \cite{Fargues2020-1}. Motivated by the spaces $W_{n,k}$ of Hansen--Mann \cite{HM2022-1}, we introduce Grassmannians of absolute BC spaces of positive slope and higher rank, extending the projectivizations studied in \cite{Fargues2020-1}\cite{FS2024-1}. The primary goal of this article is to study geometric properties of these Grassmannians, and our main result determines when they are representable by perfectoid spaces.

Let $K/\Qp$ be a finite extension with uniformizer $\pi$ and residue field $\Fq$. Let  $C$ be a complete algebraically closed extension of $K$. Write  $\bB^{\varphi^{n}=\pi^{d}}$ for the absolute BC space of rank $n$ and degree $d>0$; its slope is given by  $\mu=d/n$.  As a $v$-sheaf on $\Perf_{\Fq}$, $\bB^{\varphi^{n}=\pi^{d}}$ is defined by 
\[
	\bB^{\varphi^{n}=\pi^{d}}: S\mapsto H^{0}(X_S, \cE_{n,d}), \quad S\in \Perf_{\Fq}
\] 
where $\cE_{n,d}$ is the standard semistable vector bundle of rank $n$ and degree $d$. For $0<k<n$, let $\Inj(\OO^{k}, \cE_{n,d})$ be the $v$-sheaf on $\Perf_{\Fq}$ parametrizing stably injective homomorphisms  $\OO^{k}\hookrightarrow \cE_{n,d}$.  We define the $k$-Grassmannian of $\bB^{\varphi^{n}=\pi^{d}}$ as 
\[
	\Gr_{k}(\bB^{\varphi^{n}=\pi^{d}}):= \Inj(\OO^{k}, \cE_{n,d})/\underline{\GL_k(K)}
\] 
and write  $\Gr_{k,C}(\bB^{\varphi^{n}=\pi^{d}})$ for its base change to $\Spd C$. Our main result is the following:

\begin{thmx}[Theorem \ref{thm-dk<n} and Proposition \ref{prop-nonperf}]\label{thm-A}
	The $k$-Grassmannian  $\Gr_{k,C}(\bB^{\varphi^{n}=\pi^{d}})$ is perfectoid if and only if $\bB^{\varphi^{n}=\pi^{d}}$ has slope $\mu<1/k$.  
\end{thmx}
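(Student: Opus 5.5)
The proof has two directions, and the abstract already points to the tools: a Plücker morphism and the perfectoidness of projectivizations of slope $<1$.

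\emph{Sufficiency ($k\mu<1$).} I would first construct the Plücker morphism
\[
\Pl\colon \Gr_{k}(\bB^{\varphi^{n}=\pi^{d}})\to \P(\bB^{\varphi^{n}=\pi^{kd}}) ,
\]
sending an injection $\OO^{k}\hookrightarrow \cE_{n,d}$ to its top exterior power $\OO\to \wedge^{k}\cE_{n,d}$. The bundle $\wedge^{k}\cE_{n,d}$ is semistable of slope $k\mu$, hence a direct sum of copies of $\cE_{n',d'}$ with $d'/n'=kd/n$. Its global sections therefore form a BC space of slope $k\mu$, which is identified with a product of copies of $\bB^{\varphi^{n}=\pi^{kd}}$ up to the appropriate rank bookkeeping. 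Stable injectivity of the original map makes the wedge nowhere vanishing, so the map lands in the nonzero locus, and it is $\GL_k(K)$-equivariant through $\det$. I would then show $\Pl$ is a closed immersion onto a Zariski closed subspace cut out by Plücker-type relations. Concretely, a nonzero section $\omega$ of $\wedge^{k}\cE$ is decomposable with a $K$-rational image precisely when the induced map $\cE^{\vee}\to\wedge^{k-1}\cE$ satisfies quadratic relations. These relations are expressed through maps of BC spaces, and their vanishing loci are Zariski closed.

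The second ingredient is that the projectivization of an absolute BC space of slope $<1$ is perfectoid after base change to $C$. This should come from the invariant-elements method in perfectoid Tate--Huber pairs. The idea is to cover $(\bB\setminus 0)$ by opens on which some coordinate is invertible, take $\underline{K^{\times}}$-invariants, and produce enough invariant functions to obtain a perfectoid Tate--Huber pair. Since a Zariski closed subspace of a perfectoid space is perfectoid (by Scholze/BS), $\Gr_{k,C}$ is perfectoid when $k\mu<1$.

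\emph{Necessity ($k\mu\ge 1$).} I would show non-perfectoidness by restricting to a subspace or fiber. Choose $k-1$ fixed independent sections and consider the Grassmannian points containing them. This gives a locally closed subspace, or a fibration, whose fiber resembles $\P$ of a BC space of slope $\ge 1$, for instance one containing a copy of $\Div^{1}$-type quotients of $\OO(1)$. When the slope is $\geq 1$, $\bB^{\varphi=\pi}$-type spaces are not perfectoid over $C$, because they contain $\Spd C$-directions with untilt information. Alternatively one can compute that the tangent structure, or the cohomological dimension of the structure sheaf, is incompatible with being perfectoid.

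A cleaner route for necessity uses dimension. A perfectoid space over $C$ has well-behaved $\ell$-dimension, and when $k\mu\ge1$ the Grassmannian admits a map to a nonrepresentable diamond such as $\Spd \breve{K}$-quotients, or $\Div^{1}=\Spd\breve K/\varphi^{\Z}$. This obstruction is inherited from the rank-one case, where $\P(\bB^{\varphi=\pi})\cong\Div^1$ is not perfectoid over $C$.

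The main obstacle will be proving that $\Pl$ is a Zariski closed immersion at the level of $v$-sheaves. Injectivity up to $\GL_k(K)$ requires recovering the subspace from its wedge, which in turn requires $K$-rationality of the image. Showing that the image is cut out by closed conditions needs the fact that decomposability and rationality can be tested fiberwise over geometric points. The non-perfectoid direction is also delicate, and I would try to reduce it to the already known case $k=1$.
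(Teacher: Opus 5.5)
Your sufficiency direction follows the paper's route: a Plücker morphism into the projectivization of $\bigwedge^{k}\cE_{n,d}\cong\cE_{N,D}$, Zariski closedness of the decomposable locus, and perfectoidness of projectivizations of slope $<1$. Here $N=\binom{n}{k}$ and $D=d\binom{n-1}{k-1}$; the target is not a product of copies of $\bB^{\varphi^{n}=\pi^{kd}}$, since for example $n=4$, $k=2$ gives rank $6$.

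Two points you leave implicit should be pinned down. First, in the Plücker step the hypothesis $kd<n$ is what makes $u(\cL)$ a line subbundle of $\bigwedge^{k}\cE_{n,d}$ (Lemma~\ref{lem-coker}). It also forces the degree-$0$ rank-$k$ subbundle recovered from a decomposable line to be fibrewise $\OO^{k}$: a positive-degree HN piece of rank $r\le k$ would have slope $\ge 1/r>d/n$ (Lemma~\ref{lem-deg-0}). This is presumably what your ``$K$-rationality'' should mean. Second, a perfectoid ring of invariants does not by itself represent the $v$-sheaf quotient. The paper shows each chart is a pro-\'etale $U_1$-torsor over the spectrum of its invariants (Proposition~\ref{prop-aff-quot}). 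Besides orbit norms, whose convergence is where $d<n$ enters, this needs the separating elements $\theta_m$ that make each step of the tower finite \'etale.

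The genuine gap is necessity. The available transfer principle is that open and Zariski closed subspaces of perfectoid spaces are perfectoid. So you must exhibit a Zariski closed (or locally closed) non-perfectoid subspace of $\Gr_{k,C}(\bB^{\varphi^{n}=\pi^{d}})$, and your sketches do not do this:
\begin{itemize}
\item A map to a non-perfectoid diamond is no obstruction. For instance $\Spd C\xrightarrow{\infty}\Div^{1}_{C}$, and the perfectoid space $(\bB^{\varphi=\pi}\bs\{0\})\times_{\Spd\Fq}\Spd C$ surjects onto $\Div^{1}_{C}$.
\item A ``fibration'' does not help unless the fibre inclusion is Zariski closed. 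The appeal to $\ell$-dimension names no invariant that perfectoid spaces satisfy and these spaces violate.
\item $\bB^{\varphi=\pi}$-type spaces are not the problem: $\bB^{\varphi=\pi}\bs\{0\}$ is perfectoid, and only its $\ul{K^{\times}}$-quotient $\Div^{1}$ is not.
\item Your concrete construction, $k$-planes through a fixed $W_0\cong\OO^{k-1}$, does not see the threshold. The quotient $\cE_{n,d}/W_0$ has rank $n-k+1$ and degree $d$, so its slope $d/(n-k+1)$ is $\ge 1$ only if $n\le d+k-1$. This is strictly stronger than $kd\ge n$: for $n=10$, $k=5$, $d=2$ one has $kd=n$ but slope $1/3$. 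You give no way to choose $W_0$ so that a slope-$\ge 1$ subbundle with locally free quotient appears.
\item For $kd\ge n$ the images of $\OO^{k}$ need not be saturated. So ``$W\supseteq W_0$'' is not the vanishing of a map into a vector bundle, and its closedness is not automatic.
\item Even for $k=1$ you still need a Zariski closed copy of $\Div^{1}_{C}$ inside $\Gr_{1,C}(\bB^{\varphi^{n}=\pi^{d}})$ when $d\ge n$.
\end{itemize}

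The missing idea is Lemma~\ref{lem-extn}: for $kd\ge n$ there is an exact sequence $0\to\OO(1/k)\to\cE_{n,d}\xrightarrow{\gamma}\cE_{n-k,d-1}\to 0$. It comes from the classification of extensions via HN polygons when $kd>n$, and is split when $kd=n$. Let $H_{k,C}$ be the base change to $C$ of $\Inj(\OO^{k},\OO(1/k))/\ul{\GL_k(K)}$, the locus of $k$-planes inside $\OO(1/k)$. Because the quotient in the sequence is a vector bundle, $H_{k,C}$ is cut out by $\gamma\circ i=0$. It is therefore Zariski closed, via Lemma~\ref{lem-FS} on $Y_{S^{\flat},[1,q]}$ (Lemma~\ref{lem-Zar2}).
\begin{itemize}
\item For $k=1$, $H_{1,C}=\Div^{1}_{C}$, which is not perfectoid (Proposition~\ref{prop-div}). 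It contains an open $Y_{C^{\flat},I}$ that would then have finitely many rank-one points, yet it has a Gauss point for every radius in $I$.
\item For $k\ge 2$, the determinant map $H_{k,C}\to\Div^{1}_{C}$ has fibre over the canonical divisor $\infty$ equal to $\cM^{\LT}_{\infty}/\ul{\GL_k(K)}\cong\P^{k-1}$. This is a Zariski closed non-perfectoid subspace (Lemma~\ref{lem-proj}).
\end{itemize}
Your $W_0$ idea can be repaired for $k\ge 2$ only after this input. Taking $W_0\cong\OO^{k-1}\subset\OO(1/k)$ with $\OO(1/k)/W_0\cong\OO(1)$ gives a copy of $\Div^{1}_{C}$, but its closedness would still need its own argument.
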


For   $d=1$, the Grassmannian  $\Gr_{k,C}(\bB^{\varphi^{n}=\pi})$ is isomorphic to the space $W_{n,n-k}$ defined in   \cite[Definition~5.1]{HM2022-1}. Theorem \ref{thm-A} therefore proves \cite[Conjecture~5.4]{HM2022-1}. 

Let us now explain the key ideas and constructions leading to Theorem \ref{thm-A}. Classically, the Pl\"{u}cker embedding provides a link between different Grassmannians; the same also holds in our setting, except now an additional parameter, the degree $d$, is  involved. Given a space   $\bB^{\varphi^{n}=\pi^{d}}$ of positive slope, put $N=\binom{n}{k}$ and $D=d\binom{n-1}{k-1}$. These are the rank and degree of $\bigwedge^{k}\cE_{n,d}$. Taking the determinant defines a Pl\"{u}cker morphism
\[
	\Pl: \Gr_{k}(\bB^{\varphi^{n}=\pi^{d}})\rightarrow \Gr_{1}(\bB^{\varphi^{N}=\pi^{D}}),
\] 
where a stably injective morphism $i: \cF\hookrightarrow \cE_{n,d}$ with $\cF\cong \OO^{k}$ at every geometric point is sent to $\wedge^{k}i: \det \cF\hookrightarrow \bigwedge^{k}\cE_{n,d}$. We then show:

\begin{thmx}[Theorem \ref{thm-iso-dec} and Proposition \ref{prop-Zariski-closed}]\label{thm-B}
	If  $\bB^{\varphi^{n}=\pi^{d}}$ has slope $<1/k$, the Pl\"{u}cker morphism $\Pl: \Gr_{k}(\bB^{\varphi^{n}=\pi^{d}})\rightarrow \Gr_{1}(\bB^{\varphi^{N}=\pi^{D}})$ is a Zariski closed immersion.
\end{thmx}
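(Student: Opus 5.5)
The plan has two independent parts: first show that $\Pl$ is a monomorphism of $v$-sheaves, then identify its image with a Zariski closed subsheaf of $\Gr_{1}(\bB^{\varphi^{N}=\pi^{D}})$ cut out by Plücker-type relations.

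\textbf{Injectivity.} Working $v$-locally, take $S=\Spa(R,R^+)$ a geometric point $\Spa(C',C'^+)$. A point of $\Gr_{k}(\bB^{\varphi^{n}=\pi^{d}})$ is a $k$-dimensional $K$-subspace $V\subset H^{0}(X_{C'},\cE_{n,d})$ that generates a saturated-at-generic-point subsheaf $\OO^{k}\hookrightarrow\cE_{n,d}$ which is injective at every point. The Plücker image is the line $\wedge^{k}V\subset H^{0}(\bigwedge^{k}\cE_{n,d})$. We must recover $V$ from the decomposable section $\omega=v_1\wedge\dots\wedge v_k$. Classically $V=\{s: s\wedge\omega=0\}$, but here $s$ ranges over global sections of $\cE_{n,d}$, which is a big Banach space. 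The slope hypothesis enters here. Consider the subsheaf $\cF\subset\cE_{n,d}$ with $\cF=\ker(\cE_{n,d}\xrightarrow{\wedge\omega}\bigwedge^{k+1}\cE_{n,d})$. It is the saturation of the image of $\OO^{k}$, a rank $k$ subbundle. Since $\cE_{n,d}$ is semistable of slope $\mu$, $\cF$ has degree $\le k\mu<1$. Degree is an integer $\ge 0$ because $\OO^k\subset\cF$, so $\deg\cF=0$, and $\OO^{k}=\cF$ is already saturated. Then $H^{0}(\cF)=K^{k}=V$, which recovers $V$. In families, I would run the same argument fiberwise and use that the condition $s\wedge\omega=0$ defines the sub-$v$-sheaf $\underline{V}$. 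This shows $\Pl$ is injective on $v$-sheaves.

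\textbf{Closed image.} I expect this step to be the main obstacle. Let $L\subset\bB^{\varphi^{N}=\pi^{D}}\setminus 0$ be the cone over $\Gr_1$. The image should consist of the nonzero sections $\omega$ that are decomposable, meaning $\omega=\wedge^k i$ for an injection $i\colon\OO^{k}\to\cE_{n,d}$. Decomposability of $\omega$ as a section of the vector bundle $\bigwedge^{k}\cE$ is expressed classically by the quadratic Plücker relations $(\iota_{\xi}\omega)\wedge\omega=0$ for all $\xi\in\bigwedge^{k-1}\cE^{\vee}$. These are conditions on $\omega$ in $H^{0}$ of certain bundles $\cE^{\vee}\otimes\ldots$. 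I would therefore define the closed subspace via the vanishing of the map
\[\omega\mapsto (\omega\otimes\omega\mapsto \text{Plücker contraction})\in H^0(X_S,\HHom(\textstyle\bigwedge^{k-1}\cE_{n,d},\bigwedge^{k+1}\cE_{n,d}\otimes\bigwedge^{k}\cE_{n,d})).\]
Vanishing of a section of a BC space over $S$ is Zariski closed. This holds because sections of $\OO(m)$-type bundles are given by functions on $S$, so each condition gives equations $f=0$ with $f\in\OO(S)$. The loci are also stable under $K^\times$ scaling. The resulting condition forces $\omega$ to be decomposable as a section of the bundle at the generic point of each fiber. It remains to show that such $\omega$ come from $\OO^{k}$ rather than from an arbitrary rank $k$ subsheaf $\cF$ with $\det\cF\cong\OO$. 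The saturated $\cF$ determined by $\omega$ has $\deg\le k\mu<1$, and the map $\OO\to\det\cF$ is nonzero, so $\deg\cF=0$. Semistable degree-$0$ subbundles of a semistable positive-slope bundle are $\OO^{k}$ on fibers by the classification. Then $\omega$ is a nonzero multiple of the determinant, and $H^0(\det)=K$ lets us rescale. Stable injectivity must also be checked pointwise, and it follows from $\omega\ne 0$ at every point.

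\textbf{Immersion.} Finally, to show $\Pl$ is an isomorphism onto this closed locus, I would construct the inverse $\omega\mapsto\ker(\wedge\omega)$ in families. The hard part is showing that this kernel is a family of $\underline{K^{k}}$-torsors compatibly over $S$. Here I would invoke the fiberwise computation together with the fact that degree-$0$ families with $H^0$ of constant rank are trivialized $v$-locally (pro-étale).
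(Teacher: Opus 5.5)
Your outline follows the paper's route. The Pl\"{u}cker relations $q_u=0$ cut out $\Dec^k(\cE_{n,d})$, the $k$-plane is recovered as $\ker(\wedge\omega)$, and the bound $k\mu<1$ forces degree $0$ and fibrewise triviality. Your injectivity argument is essentially the uniqueness part of Proposition~\ref{prop-reconstruct}. The gap is in the step you yourself flag as hard.

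\textbf{The missing constant-rank argument.} To turn an $S$-point $[(u,\cL)]$ of $\Dec^k(\cE_{n,d})$ into an $S$-point of $\Gr_k$, you need $\ker(\wedge\omega)$ to be a rank-$k$ subbundle of $\cE_{n,d}$ on $X_S$ whose formation commutes with base change. Pro-\'etale triviality of fibrewise trivial bundles presupposes such a bundle and cannot produce it. Knowing the kernel is a subbundle on each geometric fibre $X_{C'}$ does not by itself give a subbundle in the family. The paper argues on all of $|X_S|$:
\begin{itemize}
\item since $q_u=0$, the fibre $\omega(x)$ is decomposable at \emph{every} $x\in|X_S|$, not only generically;
\item since $\bigwedge^k\cE_{n,d}$ has slope $kd/n<1$, $u(\cL)$ is a line subbundle (Lemma~\ref{lem-coker}), so $\omega(x)\neq 0$ everywhere (your observation $\deg\det\cF=0$ gives the same nonvanishing fibrewise);
\item hence $\wt{m}_u(x)$ has rank exactly $n-k$ at every point, and Lemma~\ref{lem-const-rank}, a Kedlaya--Liu projectivity criterion for locally constant fibre rank, makes $\cV_u=\ker(\wt{m}_u)$ a subbundle with $\det\cV_u=u(\cL)$.
\end{itemize}
Only then does your degree argument apply. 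You should also prove, not assume, that $\cV_u$ is semistable: a positive-degree HN piece of rank $r\le k$ would have slope $\ge 1/r\ge 1/k>\mu$ (Lemma~\ref{lem-deg-0}). Once $\cV_u$ exists and is fibrewise $\OO^k$, it is an $S$-point of $\Gr_k$ in the paper's description by pairs $(i,\cF)$. The trivialization you invoke justifies that description, but it can only enter after the subbundle exists.

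\textbf{Zariski closedness.} Your justification here is also incomplete. The relations form a section $q_u$ of $\HHom(\bigwedge^{k-1}\cE^{\vee}_{n,d}\otimes\cL^{\otimes 2},\bigwedge^{k+1}\cE_{n,d})$, not of the bundle you wrote, and this bundle depends on $\cL$. Only over the cone $\bB^{\varphi^{N}=\pi^{D}}\setminus\{0\}$ is $\cL=\OO$, so that coordinate functions on a fixed positive-slope Banach--Colmez space describe its vanishing. A general affinoid $S\to\Gr_1(\bB^{\varphi^N=\pi^D})$ lifts to the cone only $v$-locally. You would then need Zariski closedness to descend along $v$-covers, which you do not address.

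The paper avoids this entirely. It pulls $q_u$ back to $Y_{S,[1,q]}$, where $\Gamma(Y_{S,I},\pi^{*}\cH)$ is finite projective over $B_{S,I}$. Evaluating against the dual module gives an ideal $J\subseteq B_{S,I}$, and Lemma~\ref{lem-FS} then produces the Zariski closed subspace of $S$ directly for arbitrary $S$ and $\cL$. The choice $I=[1,q]$ guarantees that vanishing on $Y_{T,I}$ is equivalent to vanishing on $X_T$.
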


Since $D/N=kd/n$, Theorem \ref{thm-B} (after base change to $\Spd C$) reduces perfectoidness of the  $k$-Grassmannian of a space of slope $<1/k$ to the case of a projectivized space of slope $<1$. In this range, the absolute BC space is identified with the special fiber of the universal cover of a $p$-divisible group. By \cite[Proposition~3.1.3]{SW2013-1}, its punctured space, after quotienting by the torsion subgroup $\mu_{q-1}\subset \OO^{\times}_{K}$, admits a finite $\OO^{\times}_{K}$-stable cover in the analytic topology by affinoid perfectoid  $\cV_i=\Spa (V_i,V_i^{+})$, where 
\begin{align*}
	&V_i=\Fq\llparent u_i^{1/p^{\infty}}\rrparent \langle y^{1/p^{\infty}}_{ij}, 0\leq j\leq d-1 \text{ with }j\neq i\rangle \\
	&V_i^{+}=\Fq\llbracket u_i^{1/p^{\infty}}\rrbracket \langle y_{ij}^{1/p^{\infty}}, 0\leq j\leq d-1\text{ with }j\neq i\rangle
\end{align*}
For each unit subgroup $U_m=1+\pi^{m}\OO_K\subset \OO^{\times}_{K}$, we construct (Definition \ref{defn-orbit-norm}) certain norms for elements in $V_i$ with respect to $U_m$ by averaging its action and taking the limit. In particular, the resulting $U_m$-norm of $u_i$ is a pseudouniformizer of $V_i^{U_m}$, and the invariant pair $(V_i^{U_m}, V_i^{+U_m})$ is perfectoid. To identify $(V_i,V_i^{+})$ as a pro-\'etale $U_1$-torsor over $(V_i^{U_1}, V_i^{+U_1})$, the essential step  is to show the quotient at level  $U_{m+1}$ is a finite \'etale $\Fq$-torsor over the one at level $U_{m}$. For this, we construct (Lemma \ref{lem-theta}) an element $\theta_m\in V_i^{U_{m+1}}$ such that $g(\theta_m)-\theta_m\in (V_i^{U_{m+1}})^{\times}$ for every nontrivial $g\in \Fq$.  We also show (Proposition \ref{prop-Hansen}) such a condition in fact gives a general criterion for finite \'etale torsors. The constructions above yield the following result:

\begin{thmx}[Theorem \ref{thm-Gr1-perf}]\label{thm-C}
	If $\bB^{\varphi^{n}=\pi^{d}}$ has slope $<1$, the $v$-sheaf quotients   
	$$(\bB^{\varphi^{n}=\pi^{d}}\bs \{0\})/\underline{\OO^{\times}_{K}} \quad \text{and}\quad (\bB^{\varphi^{n}=\pi^{d}}_C\bs \{0\})/\underline{K^{\times}}$$ 
	are representable by perfectoid spaces.   
\end{thmx}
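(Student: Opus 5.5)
The plan follows the outline in the introduction. Work absolutely over $\Fq$ first; the $C$-statement will follow by base change. For slope $d/n<1$, identify $\bB^{\varphi^{n}=\pi^{d}}$ with the special fiber of the universal cover $\wt{G}$ of a $p$-divisible group (a Lubin--Tate type formal group of height $n$ and dimension $d$ over $\OO_K$). Then use \cite[Proposition~3.1.3]{SW2013-1} to cover $(\bB^{\varphi^{n}=\pi^{d}}\bs\{0\})/\underline{\mu_{q-1}}$ by finitely many $\OO_K^\times$-stable affinoid perfectoids $\cV_i=\Spa(V_i,V_i^+)$ of the displayed shape. It then suffices to prove two things. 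First, each $\cV_i/\underline{U_1}$ is representable by the affinoid perfectoid $\Spa(V_i^{U_1},V_i^{+U_1})$. Second, these glue, and dividing further by the finite group $\OO_K^\times/(\mu_{q-1}U_1)=1$ (or by the finite group $\Fq^\times$, depending on normalization) preserves perfectoidness, since quotients of perfectoid spaces by finite groups are perfectoid.

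\textbf{Step 1 (invariants are perfectoid).} For each $m$, I would define the $U_m$-norm of $f\in V_i$ as the limit of products $\prod_{g\in U_m/U_{m+r}} g(f)$, suitably $p$-power rooted so that the limit converges. This is Definition~\ref{defn-orbit-norm}. I would check that $N_m(u_i)$ is a topologically nilpotent unit lying in $V_i^{U_m}$ and dividing a power of $u_i$. Then $V_i^{U_m}$ is a uniform Tate ring with pseudouniformizer $N_m(u_i)$. Since $\Fq$-perfectness is preserved by taking invariants (Frobenius commutes with the action), $(V_i^{U_m},V_i^{+U_m})$ is a perfect Tate--Huber pair, hence perfectoid in characteristic $p$. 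Closedness of invariants handles completeness.

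\textbf{Step 2 (torsor structure).} I would show that $V_i^{U_{m+1}}$ is a finite étale $U_m/U_{m+1}\cong\Fq$-torsor over $V_i^{U_m}$. Using Lemma~\ref{lem-theta}, choose $\theta_m\in V_i^{U_{m+1}}$ with $g(\theta_m)-\theta_m$ a unit for all nontrivial $g\in\Fq$. Then apply Proposition~\ref{prop-Hansen}: an $\Fq$-action admitting such a $\theta$ makes the ring a finite étale Galois $\Fq$-extension of the invariants, since the Vandermonde-type discriminant is a unit. Next, show that $\varinjlim_m V_i^{U_m}$ is dense in $V_i$, for instance by writing the coordinates $u_i,y_{ij}$ as limits of norms at deeper levels. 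Together these give $\cV_i\to\Spa(V_i^{U_1},V_i^{+U_1})$ as a pro-étale $\underline{U_1}$-torsor. Hence the $v$-quotient $\cV_i/\underline{U_1}$ equals this affinoid perfectoid.

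\textbf{Step 3 (globalize).} Glue over the cover using $\OO_K^\times$-stability: intersections are rational subsets, and invariants commute with rational localization. For the $C$-version, note that $K^\times=\pi^{\Z}\times\OO_K^\times$. Quotienting $(\bB_C\bs\{0\})/\underline{\OO_K^\times}$ by $\pi^{\Z}$ is a free, properly discontinuous action: $\pi$ acts through $\varphi^n$-multiplication, which is a Frobenius twist of the structure. After base change to $C$, this identifies the quotient with a quotient of the absolute one by a Frobenius-type action. That quotient is representable, by the standard argument as for $\Div^d=\Spd C$-variants (cf.\ \cite{Fargues2020-1}), using a fundamental domain given by an annulus in $|u_i|$.

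\textbf{Main obstacle.} I expect the hardest step to be Step 2, specifically the construction of $\theta_m$ with unit differences and the density of $\bigcup_m V_i^{U_m}$. I would first try $\theta_m$ as an $U_{m+1}$-norm of a suitable linear combination of the $y_{ij}$ and $u_i$, exploiting that $g\in U_m$ acts via the formal group law as $x\mapsto x+_{G}[\pi^m a]x$. The leading term of $g(\theta)-\theta$ should then be $a$ times a unit power of $u_i$.
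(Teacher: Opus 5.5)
Your outline follows the paper's route: tame quotient $\cW_i/\underline{\mu_{q-1}}\cong\cV_i$, orbit norms making $(V_i^{U_m},V_i^{+U_m})$ perfectoid, Proposition~\ref{prop-Hansen} with $\theta_m$ for each step $U_m/U_{m+1}$, density, the pro-\'etale $U_1$-torsor, gluing, and proper discontinuity of $\pi\times\id$ after base change to $C$. However, the step carrying the real content is left as ``I would check'', and you put the slope hypothesis in the wrong place. The presentation of Proposition~\ref{prop-B-presen} holds for all $0<d\le n[K:\Qp]$. That includes $d=n=1$, where $\Gr_{1,C}(\bB^{\varphi=\pi})\cong\Div^1_C$ is \emph{not} perfectoid (Proposition~\ref{prop-div}). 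Apart from the lemmas you cite from the paper, nothing in your sketch uses $d<n$.

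\textbf{The missing idea: quantitative convergence of the orbit norms.} By Lemma~\ref{lem-U_m-est}, every $g\in U_{m+r}$ moves $u_i$ and $y_{ij}$ only within $u_i^{C_{m+r}}V_i^+$, where $C_m=(q^{cn+f}-1)/(q-1)$ for $m=cd+f$. The $q^{r+1}$-th power computation of Lemma~\ref{lem-successive} then puts $N^{(r+1)}_{U_m}(u_i)-N^{(r)}_{U_m}(u_i)$ in $x_i^{(q-1)(1+C_{m+r}/q^{r+1})}W_i^+$. The same holds for $y_{ij}$, and for the comparison between two compatible systems of transversals. So the limit exists, is independent of choices, and satisfies $N_{U_m}(u_i)/u_i-1\in u_i^{C_m/q}V_i^+$, precisely because $C_{m+r}/q^{r+1}\to\infty$. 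That divergence holds only for $d<n$ (Lemma~\ref{lem-infty}); for $d=n$ the ratio tends to $q^{m-1}/(q-1)$ and the argument gives nothing. Without this estimate you have neither the pseudouniformizer $N_{U_m}(u_i)$ of Step~1 nor the density and $\theta_m$ of Step~2. (The root $1/[G:G_r]$ only normalizes the norm; it is not what makes the limit converge.)

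\textbf{Your heuristic for $\theta_m$ fails when $m\not\equiv 0 \pmod d$.} For $g=1+\pi^m b$, the leading term of $g(x_i)-x_i$ is $\ol{b}\,\pi^m(x_i)=\ol{b}\,x_{i-f}^{q^{cn+f}}$ (with the cyclic twist at $x_0$). On $\cW_i$ this is not a unit times a power of $x_i$. Moreover, for functions of $u_i,y_{ij}$, the hidden division by the non-invariant $x_i$ can contribute a term $-y_{ij}\delta_i$ of the same order as the main one. For instance, for $f\le i<d-f$ one gets
\[
g(y_{i,i+f})-y_{i,i+f}\equiv\ol{b}\,x_i^{q^{cn+f}-1}\bigl(1-y_{i,i+f}\,y_{i,i-f}^{q^{cn+f}}\bigr)
\]
to leading order, and this leading coefficient vanishes where $x_{i-f}=x_i=x_{i+f}$. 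Likewise $g(u_i)-u_i$ has leading term a multiple of $x_{i-f}^{q^{cn+f}}$. So the natural candidates do not give unit differences.

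The paper avoids this by taking the coordinate $\xi_i$ of $Z=\pi^{-f}X$, for which $g(\xi_i)-\xi_i\equiv\ol{b}\,x_i^{q^{cn}}$ to leading order. It then divides by the $U_m$-invariant $N_{U_m}(x_i)^{q^{cn}}$ rather than by a power of $x_i$, giving $\theta_m=N_{U_{m+1}}(\xi_i)/N_{U_m}(x_i)^{q^{cn}}$.

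\textbf{Two smaller points.}
\begin{itemize}
\item Gluing needs no claim that invariants commute with rational localization. $\OO_K^\times$-stability of $\cW_i$ already makes $\cW_i/\underline{\OO_K^\times}\to(\bB^{\varphi^n=\pi^d}\bs\{0\})/\underline{\OO_K^\times}$ an open immersion by $v$-descent.
\item It is $\pi^d$, not $\pi$, that acts as the $q^n$-Frobenius on the coordinates (Lemma~\ref{lem-action}). This is why $\pi\times\id$ becomes properly discontinuous only on the product with $\Spd C$.
\end{itemize}
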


Together, Theorems \ref{thm-B} and \ref{thm-C} prove the positive direction of the main theorem. For the converse, suppose $d/n\geq 1/k$, one observes that the Grassmannian then contains a locus of injections which factor through the subbundle $\OO(1/k)\subset \cE_{n,d}$. This locus parametrizes exactly the degree 1 modifications of a vector bundle fibrewise isomorphic to $\OO^{k}$, and we show it is Zariski closed in the Grassmannian. For  $k=1$, this locus is simply $\Div^{1}_{C}$, which is not perfectoid (Proposition \ref{prop-div}). For $k>1$, consider the sublocus of modifications supported on the canonical divisor $i: S\rightarrow X_{S^{\tilt}}$. This is the $\GL_k(K)$-quotient of the infinite-level Lubin--Tate space $\cM^{\LT}_{\infty}$, hence isomorphic to $\P^{k-1}$. As $\P^{k-1}$ is not perfectoid, this completes the other direction of the main theorem.  

Theorem \ref{thm-A} has applications to Scholze's mod $p$ Jacquet--Langlands functor 
\[
	\cJ^{i}_{n}: \Rep^{\sm}_{\Fp}(\GL_n(K))\rightarrow \Rep^{\sm}_{\Fp}(D^{\times}_{1/n}). 
\]
Scholze's general bound \cite[Theorem~1.1]{Scholze2018-1} gives $\cJ^{i}_{n}(\rho)=0$ for $i>2(n-1)$ and every admissible $\Fp$-representation $\rho$ of $\GL_n(K)$. For $1\leq k\leq n-1$, denote by $P_k\subset\GL_n$ the standard maximal parabolic subgroup of block size $(n-k,k)$. Hansen--Mann's partial K\"{u}nneth formula \cite[Theorem~5.2]{HM2022-1} expresses Scholze's functor on parabolic inductions from $P_k(K)$ in terms of cohomology of  $\Gr_{n-k,C}(\bB^{\varphi^{n=\pi}})$, with coefficients in a complex of \'etale $\Fp$-sheaves involving $\cJ_{k}$. Since this Grassmannian has  dimension $n-k$ and the \'etale cohomology of a perfectoid space vanishes in degrees larger than its dimension, we obtain the following vanishing bounds for all admissible proper parabolic inductions for $\GL_n(K)$.

\begin{thmx}[Theorem \ref{thm-para-ind}]\label{thm-D}
	Let $0<k<n$. For any $\rho\in \Rep^{\adm}_{\Fp}(P_k(K))$, one has 
	\[
		\cJ^{i}_{n}(\Ind^{\GL_n(K)}_{P_k(K)}(\rho))=0 \quad \text{for $i>n+k-2$}. 
	\] 
\end{thmx}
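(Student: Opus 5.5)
Following the outline in the introduction, I would prove Theorem \ref{thm-D} by combining Hansen--Mann's partial K\"{u}nneth formula with the perfectoidness statement of Theorem \ref{thm-A}.

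\textbf{Step 1: the K\"{u}nneth formula.} For $\rho\in\Rep^{\adm}_{\Fp}(P_k(K))$, write $\rho$ through its Levi quotient $\GL_{n-k}(K)\times\GL_k(K)$. By \cite[Theorem~5.2]{HM2022-1}, $\cJ_n(\Ind^{\GL_n(K)}_{P_k(K)}\rho)$ is computed as $R\Gamma_{\et}$ (compactly supported, or with the appropriate supports as in loc.\ cit.) of $W_{n,k}\cong\Gr_{n-k,C}(\bB^{\varphi^{n}=\pi})$. The coefficients form a complex $\mathcal{K}_\rho$ of \'etale $\Fp$-sheaves built from $\cJ_k$ applied to the $\GL_k$-part of $\rho$, together with the $\GL_{n-k}(K)$-action, which is absorbed by the torsor $\Inj\to\Gr$. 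I would first pin down the exact indexing, i.e.\ that it is the $(n-k)$-Grassmannian rather than the $k$-Grassmannian, via $\Gr_{k,C}(\bB^{\varphi^n=\pi})\cong W_{n,n-k}$. I would also pin down the cohomological amplitude of $\mathcal{K}_\rho$. Scholze's bound for $\GL_k$ gives that $\cJ^j_k$ vanishes for $j>2(k-1)$, so $\mathcal{K}_\rho$ lives in degrees $[0,2(k-1)]$.

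\textbf{Step 2: perfectoidness and dimension.} The relevant space has $d=1$ and slope $1/n<1/(n-k)$, so Theorem \ref{thm-A} (via Theorems \ref{thm-B} and \ref{thm-C}) shows that $\Gr_{n-k,C}(\bB^{\varphi^n=\pi})$ is perfectoid. Its dimension is $n-k$: for instance, the Pl\"ucker morphism is a Zariski closed immersion into a perfectoid space, and one can count via the $\GL_{n-k}(K)$-torsor $\Inj$, which is open in a product of copies of $\bB^{\varphi^n=\pi}$ of total dimension $n-k$ (each copy having dimension $1$).

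\textbf{Step 3: vanishing and assembly.} \'Etale cohomology of a perfectoid space of dimension $m$ with $\Fp$-coefficients vanishes above degree $m$. This holds also with compact support, using the $p$-cohomological dimension of affinoid perfectoids (Artin--Schreier together with Scholze's results), or by passing to the tilt and using characteristic-$p$ vanishing. Combining this with the hypercohomology spectral sequence $E_2^{a,b}=H^a(\Gr,\mathcal{H}^b(\mathcal{K}_\rho))$, every contribution lies in total degree at most $(n-k)+2(k-1)=n+k-2$, so $\cJ^i_n$ vanishes for $i>n+k-2$.

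\textbf{Main obstacle.} The hardest part is Step 1's bookkeeping: matching the precise form of Hansen--Mann's formula (degree shifts, compact supports, and whether $\cJ_k$ or $\cJ_{n-k}$ appears) so that the sum of bounds is exactly $n+k-2$. I would first check the edge case $k=n-1$, which should recover a bound of $2n-3$.
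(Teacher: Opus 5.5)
Your architecture matches the paper: the Hansen--Mann K\"unneth formula, perfectoidness of $W_{n,k}\cong\Gr_{n-k,C}(\bB^{\varphi^n=\pi})$ in dimension $n-k$, Scholze's bound $2k-2$ for $\cJ_k$, and the hypercohomology spectral sequence. However, Step~3 has a genuine gap, and it sits exactly where the sharp bound is decided.

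\textbf{The gap in Step 3.} Artin--Schreier does not give vanishing of $\Fp$-cohomology of an $m$-dimensional perfectoid space above degree $m$. Affinoid perfectoids have $p$-cohomological dimension $1$, not $0$: for instance, the perfectoid disc has a huge Artin--Schreier $H^1$. So for $\nu: X_{\et}\to X_{\mathrm{an}}$ the sheaf $R^1\nu_{\ast}\Fp$ is nonzero. Combining the Leray spectral sequence with Scheiderer then only gives vanishing above degree $m+1$. In degree $m+1$ you are left with $\coker(F-1)$ on $H^m(X^\flat,\OO)$, which need not vanish. Your argument would therefore only give $\cJ^i_n=0$ for $i>n+k-1$, one degree short. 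There is a second problem: your coefficients $h_{1}^{\ast}\rho_1\otimes h_{2}^{\ast}\cJ_k(\rho_2)$ are not constant. They come from infinite-dimensional admissible representations via pro-\'etale torsors, so an Artin--Schreier argument for the constant sheaf does not apply to them directly. Relatedly, the real obstacle is this step, not the indexing bookkeeping you flagged.

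\textbf{The missing idea: work with $\OO^+/p$-coefficients.} Since $\cJ^i_n(\cdot)$ is an $\Fp$-vector space, it suffices to show that $\cJ^i_n(\cdot)\otimes\OO_C/p$ is almost zero. Scholze's comparison \cite[Theorem~3.2]{Scholze2018-1}, which uses that $\P^{n-1}$ is proper, identifies this almost with the cohomology of $\cF\otimes\OO^+/p$. The Hansen--Mann formula then turns it into
\[
H^i(W_{n,k},\, h_{1,\et}^{\ast}\rho_1\otimes h_{2,\et}^{\ast}\cJ_k(\rho_2)\otimes\OO^+/p).
\]
On the perfectoid space $W_{n,k}$, such $\OO^+/p$-twisted sheaves have almost vanishing higher cohomology on affinoid perfectoids. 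Scheiderer's theorem \cite[Theorem~4.5]{Scheiderer1992-1} then gives almost vanishing above the Krull dimension $n-k$; this is where the bound $n-k$, rather than $n-k+1$, comes from. Your spectral sequence argument then yields vanishing above $n+k-2$.

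\textbf{Two smaller points.} First, \cite[Theorem~5.2]{HM2022-1} is stated for external products $\sigma_1\boxtimes\sigma_2$. For a general Levi representation $\ol{\rho}$ there is no ``$\GL_k$-part'', so you need the variant with coefficients $(h_1,h_2)^{\ast}_{\et}(\id\times\cJ_k)(\ol{\rho})$. Its amplitude $\le 2k-2$ follows from proper base change along $\id\times g_k$, which has $\P^{k-1}$-fibres, together with Scholze's bound. Second, the formula uses ordinary $R\Gamma$, not compactly supported cohomology.
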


Theorem \ref{thm-D} further  gives a vanishing theorem for  non-supersingular  admissible representations in the top  degree $2(n-1)$. Scholze's general bound implies that $\cJ^{2n-2}_{n}$ is right-exact.  
The classification theorem of Henniart--Vign\'eras \cite{HV2019-1} shows  that every non-supersingular irreducible $\rho \in \Rep^{\adm}_{\Fp}(\GL_n(K))$ is isomorphic to a parabolic induction of some Steinberg type representation. Upon further restricting to such $\rho$ which are infinite-dimensional, one gets a surjection from a  proper parabolic induction to $\rho$, from which we deduce the following vanishing result.

\begin{thmx}[Theorem \ref{thm-Herzig}]\label{thm-E}
	For any $n>1$ and any irreducible $\rho\in \Rep^{\adm}_{\Fp}(\GL_n(K))$ that is non-supersingular and infinite-dimensional, we have $\cJ^{2n-2}_n(\rho)=0$. 
\end{thmx}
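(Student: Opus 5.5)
The plan is to deduce Theorem~\ref{thm-E} from Theorem~\ref{thm-D}, using right-exactness of the top-degree functor and the Henniart--Vign\'eras classification. Recall that Scholze's functor arises as the cohomology of a $\delta$-functor, so a short exact sequence $0\to\rho'\to\rho''\to\rho\to 0$ of admissible representations yields a long exact sequence in $\cJ^{i}_n$. Since $\cJ^{i}_n$ vanishes for $i>2(n-1)$ by \cite[Theorem~1.1]{Scholze2018-1}, the map $\cJ^{2n-2}_n(\rho'')\to\cJ^{2n-2}_n(\rho)$ is surjective. It therefore suffices to exhibit an admissible $\rho''$ surjecting onto $\rho$ with $\cJ^{2n-2}_n(\rho'')=0$.

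For the choice of $\rho''$, let $\rho$ be irreducible, admissible, non-supersingular and infinite-dimensional. By \cite{HV2019-1}, $\rho$ is isomorphic to $\Ind^{\GL_n(K)}_{P(K)}(\sigma)$, where $P$ is a parabolic subgroup, $\sigma$ is built from a supersingular representation of a Levi subgroup $M\subset P$, and a generalized Steinberg factor is attached to a larger parabolic $Q\supset P$.

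The key step is to show that infinite-dimensionality forces $\rho$ to be a quotient of a representation induced from a \emph{proper} parabolic. Two cases arise.
\begin{itemize}
\item If $P\neq\GL_n$, then $\rho$ is already a proper parabolic induction. It is contained in (or equal to) a representation $\Ind_{P_k(K)}^{\GL_n(K)}(\tau)$ for some maximal parabolic $P_k\supseteq P$, by transitivity of induction.
\item If $P=\GL_n$, then $\rho$ is a twisted generalized Steinberg representation of $\GL_n(K)$. Being infinite-dimensional, it is not a character, so the associated parabolic is proper. A generalized Steinberg representation $\mathrm{St}_Q$ is by definition a quotient of $\Ind_{Q}^{G}(\chi)$, namely the induction modulo the sum of the inductions from larger parabolics. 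So $\rho$ is a quotient of $\Ind_{Q(K)}^{\GL_n(K)}(\chi)$ with $Q$ proper.
\end{itemize}
In either case we obtain a surjection $\Ind_{Q(K)}^{\GL_n(K)}(\tau)\surj\rho$ with $Q$ proper and $\tau$ admissible. Enlarge $Q$ to a maximal parabolic $P_k\supseteq Q$, which is conjugate to a standard one. Then transitivity gives $\Ind_Q^G\tau=\Ind_{P_k}^G(\Ind_Q^{P_k}\tau)$, and $\Ind_Q^{P_k}\tau$ is admissible. Admissibility is preserved by parabolic induction since $P_k\bs G$ is compact.

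Now apply Theorem~\ref{thm-D}. For $0<k<n$ we have $n+k-2<2n-2$, so $\cJ^{2n-2}_n(\Ind_{P_k(K)}^{\GL_n(K)}(\rho'))=0$ for every admissible $\rho'$. Combined with the surjectivity from the first step, this gives $\cJ^{2n-2}_n(\rho)=0$.

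The main obstacle is the case analysis in the key step. It requires extracting the correct form of the classification, namely that every infinite-dimensional non-supersingular irreducible is a quotient of a proper parabolic induction. It also requires handling the choice of standard versus non-standard parabolic, which causes no harm after conjugation since $\cJ_n$ depends only on the isomorphism class.
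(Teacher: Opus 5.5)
Your proposal is correct and is essentially the paper's proof: Henniart--Vign\'eras gives a surjection from a proper parabolic induction onto $\rho$, transitivity of induction lets you rewrite it as an induction from a maximal $P_k$, and Theorem~\ref{thm-para-ind} combined with right-exactness of $\cJ^{2n-2}_n$ finishes. The differences are only in bookkeeping, with a few slips to correct. You split on whether the outer parabolic $P^{\sigma}$ is proper, whereas the paper shows directly that $Q\neq\GL_n$; its argument is that the normal closure of $N(K)$ is $\SL_n(K)$, and this is exactly what justifies your ``twisted generalized Steinberg'' claim, which also silently uses the non-supersingular hypothesis. In the classification one has $P\subseteq Q\subseteq P^{\sigma}$, not $Q\supset P^{\sigma}$. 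Finally, in your first case what you use is the equality $\rho\cong\Ind^{\GL_n(K)}_{P_k(K)}(\cdots)$, since an inclusion of $\rho$ into an induced representation would point the wrong way for right-exactness.
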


Therefore, for such infinite-dimensional non-supersingular representations of $\GL_n(K)$, Scholze's bound can be improved to $2n-3$. Via explicit examples, one can see this new bound is in fact sharp, and the  assumption of infinite-dimension is also crucial.

\begin{rem*}[Related Work]
	The result of this text is obtained in June 2026. After the  first draft of this paper was prepared (September 2026), Ma independently proposed an inductive argument to reduce  the perfectoidness of $W_{n,k}$ of \cite{HM2022-1} to the perfectoidness of $W_{n,1}$ proved in \cite[Theorem~A]{JLH2021-1}. Our paper concerns more general absolute BC spaces beyond the dimension 1 case (see \S\ref{section-1.3}) and is self-contained. We also give a purely local proof of \cite[Theorem~A]{JLH2021-1}, see Remark \ref{rem-new-JLH}. 
\end{rem*}

\subsection*{Outline of the article}
For $S\in \Perf_{\Fq}$, let  $X_S=X_{S,K}$ be the relative Fargues--Fontaine curve. Fix an absolute BC space  $\bB^{\varphi^{n}=\pi^{d}}$ of slope $\mu>0$.  

\S1  reviews vector bundles on $X_S$, defines the \textit{Grassmannians} $\Gr_{k}(\bB^{\varphi^{n}=\pi^{d}})$ for $0<k<n$, and surveys known results on their representability. 

\S2 introduces the notion of \textit{orbit norm} for elements of an $\Fp$-algebra endowed with an  action of a filtered profinite group. After covering  the punctured BC space by affinoid perfectoid charts, it proves  their rings of    $\OO^{\times}_{K}$-invariants are perfectoid when $\mu<1$. 

\S3 realizes this geometrically by showing each chart is a pro-\'etale $\OO^{\times}_{K}$-torsor over the adic spectrum of its invariant ring. For $\bB^{\varphi^{n}=\pi^{d}}$ with $\mu<1$, it then proves its  projectivization is perfectoid after base change to $\Spd C$. 

\S4 constructs the \textit{Pl\"{u}cker morphisms} and identifies the $k$-Grassmannian of $\bB^{\varphi^{n}=\pi^{d}}$ with the decomposable locus in the projectivization of a BC space of slope $k\mu$. It deduces that the $k$-Grassmannian is perfectoid when $\mu<1/k$.

\S5 treats the rest cases of  $\mu\geq 1/k$. It proves that $\Div^{1}_{C}$ is not perfectoid, and  then establishes  the non-perfectoidness of the  $k$-Grassmannian by reduction to $\Div^{1}_{C}$ or $P^{k-1}$.

\S6 presents applications to representation theory. It recalls Scholze's functor, proves vanishing bounds of the functor on parabolic inductions and non-supersingular representations, and gives examples with explicit computations.

\subsection*{Notation and conventions}
We let $K/\Qp$ be a finite extension, with uniformizer $\pi$ and residue field $\Fq$. We use  $K_n$ to denote the  unramified degree $n$ extension of $K$.
For a perfect  $\Fp$-algebra $R$, $W(R)$ denotes its ring of Witt vectors and $[\cdot]: R\rightarrow W(R)$ denotes the Teichm\"{u}ller lift. The (relative) Fargues--Fontaine curve will be  defined with respect to $K$.  For $m\in \N^{\ast}$, $D_{1/m}$ is the division algebra over $K$ of invariant $1/m$. 

We fix  $F$  a complete algebraically closed nonarchimedean field over $\Fq$, $C/\Qp$ a complete algebraically closed nonarchimedean field with ring of integers $\OO_C$. The category of perfectoid spaces over $\Fq$ (resp. over $C$) is denoted $\Perf_{\Fq}$ (resp. $\Perf_{C}$). 

We reserve the letters $n,d$ for the rank and degree of an absolute Banach--Colmez space $\bB^{\varphi^{n}=\pi^{d}}$ and always assume $n\in \N^{\ast}, d\in \Z$. 
\subsection*{Acknowledgements}
I learned about  \cite[Conjecture~5.4]{HM2022-1} from Dave Hansen during the ``Geometric approaches to the local Langlands program" workshop at Brin Mathematics Research Center, and I am very thankful to  him for many helpful conversations and explanations on surrounding topics.  I am very grateful  to my advisor Jared Weinstein for sharing his insights, suggestions, and for his encouragement  throughout this project. I also want to thank Marie-France Vign\'eras for helpful correspondences and explanations.  I am also grateful to  Dave Hansen and Jared Weinstein for helpful comments on earlier drafts of this text. 
The author was partially supported by NSF grant DMS-2401472.

The author is a member of the AI-free caucus of Association of Human Mathematics.

\section{Absolute Banach--Colmez spaces}
In this section, we first recall vector bundles on the relative Fargues--Fontaine curve and their relationship to absolute BC spaces following \cite{Fargues2020-1} and \cite[Chapter~II]{FS2024-1}. We also review the moduli spaces $X_{n,k}$ and $W_{n,k}$ from  \cite[Appendix~A]{JLH2021-1} and \cite[Section~5]{HM2022-1}, which parametrize  homomorphisms between vector bundles, and compare them with the punctured absolute BC spaces. We then introduce  Grassmannians of absolute BC spaces, and survey some previous results of their representability. 
\subsection{Vector bundles on the Fargues--Fontaine curve and absolute BC spaces}
For $S=\Spa(R,R^{+})\in \Perf_{\Fq}$ with pseudo-uniformizer $\varpi\in R^{+}$, denote by 
\[
	Y_{S}=Y_{S,K}:= \Spa(W_{\OO_{K}}(R^{+}))\bs V(\pi[\varpi]).
\] 
The \textit{relative Fargues--Fontaine curve} is the quotient $X_S=X_{S,K}:=Y_S/\varphi^{\Z}$. Via gluing, such constructions also extend to general $S\in \Perf_{\Fq}$. For  an interval $I=[a,b]\subset (0,\infty)$ with rational ends, denote by 
\[
	Y_{S,I}:=\{\left|\pi\right|^{b}\leq \left|[\varpi]\right|\leq \left|\pi\right|^{a}\}\subset Y_{S},
\] 
which is an open subspace of $Y_{S}$. In this notation, $Y_S$ can be also written as $Y_{S,(0,\infty)}$.

We write $\bB$ for $v$-sheaf on $\Perf_{\Fq}$ given by $\bB(S)=\OO(Y_{S})$. It  has a Frobenius endomorphism $\varphi=\varphi_q$ coming from that of $Y_S$. When $S=\Spa(R,R^{+})$ is affinoid, we also denote $\OO(Y_S)$ by $B_S$ and $\OO(Y_{S,I})$ by $B_{S,I}$. The open subspace $Y_{S,I}$ is then given by $\Spa(B_{S,I}, B_{S,I}^{+})$, hence affinoid. 

Let $S\in \Perf_{\Fq}$ and assume $d\in \Z$, $n\in \N^{\ast}$. When $n,d$ are relatively prime, we write $\OO(d/n)$ for the stable vector bundle of slope $d/n$ on $X_{S}$. For a generic pair $(d,n)$, we write $\cE_{n,d}$ for the semistable vector bundle of degree $d$ and rank $n$ on $X_{S}$. According to the classification of vector bundles on $X_F=X_{\Spa(F,F^{+})}$ \cite{FF2018-1}, if $g=\gcd(d,n)$, then
\[
	\cE_{n,d}=\OO\left(\frac{d/g}{n/g}\right)^{\oplus g}.
\] 
Moreover, any vector bundle $\cE/X_F$ of slope $\mu$ is  always isomorphic to a direct sum of vector bundles of the form $\OO_{X_F}(d/n)$. When $\cE$ is semistable,    $H^{0}(X_F, \cE)\neq 0$ if and only if $\mu\geq 0$. 

For a vector bundle $\cE$ on $X_S$, the \textit{Banach--Colmez space} $\BC(\cE)$ associated to  $\cE$ is the locally spatial diamond over $\Perf_{S}$ defined by 
\[
	\BC(\cE)(T)=H^{0}(X_T, \cE_{T}), \quad T\in \Perf_{S}.
\] 
For  $d\geq 0$, the \textit{absolute Banach-Colmez space} $\bB^{\varphi^{n}=\pi^{d}}$ is the $v$-sheaf over $\Perf_{\Fq}$ defined by 
\[
	\bB^{\varphi^{n}=\pi^{d}}=\OO(Y_S)^{\varphi^{n}=\pi^{d}}=H^{0}(X_S, \cE_{n,d}), \quad S\in \Perf_{\Fq}.
\] 
Notice that $\bB^{\varphi^{n}=\pi^{d}}$ is not a diamond. For example, when $d=0$ and  $n=1$, one gets $\bB^{\varphi^{n}=\id}=\underline{K}$. However, pullback to  $S\in \Perf_{\Fq}$ gives 
\[
	\bB^{\varphi^{n}=\pi^{d}}_{S}:=\bB^{\varphi^{n}=\pi^{d}}\times_{\Spd \Fq} S=\BC(\cE_{n,d}),
\] 
hence  $\bB^{\varphi^{n}=\pi^{d}}_S$ is a locally spatial diamond over $S$. This also explains the notion ``absolute" (see \cite[\S7]{Fargues2020-1} for more details).

\begin{prop}[{\cite[Proposition~II.2.5 (iv)]{FS2024-1}}]\label{prop-B-presen}
	For $0<d\leq n\cdot [K:\Qp]$, there is an isomorphism of $v$-sheaves
	\[
	\bB^{\varphi^{n}=\pi^{d}}\cong \Spd \F_{q} \llbracket x_0^{1/p^{\infty}}, \ldots, x_{d-1}^{1/p^{\infty}}\rrbracket.
	\] 
\end{prop}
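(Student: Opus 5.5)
The plan is to reduce to the theory of $p$-divisible groups and universal covers, as in Scholze--Weinstein. Since $0<d\leq n[K:\Qp]$, we may choose a $\pi$-divisible $\OO_K$-module $G$ over $\overline{\F}_q$ (in fact over $\Fq$) of height $n$ and dimension $d$. Concretely, take the formal $\OO_K$-module whose Dieudonn\'e module is $\OO_{K_n}$-isocrystal of slope $d/n$. When $d\le n$ we can use a genuinely formal $\OO_K$-module. For general $d\leq n[K:\Qp]$ the dimension over $\OO_{K}$-strict sense must be reinterpreted, and we instead pass to a relative $\pi$-divisible group over $\Zp$ of dimension $d$. Its universal cover $\widetilde{G}=\varprojlim_{\times \pi} G$ then has Dieudonn\'e isocrystal of slope $d/n$ with respect to $\varphi=\varphi_q$.

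The first step is to identify $\bB^{\varphi^{n}=\pi^{d}}(S)$ with $\widetilde{G}(R^{+})$ for $S=\Spa(R,R^{+})$ affinoid perfectoid over $\Fq$. This follows the argument of \cite[Proposition~3.1.3, Theorem~4.1.4]{SW2013-1}. For a perfect ring, $\widetilde G(R^{+})=\widetilde G(R^{+}/\varpi)$ is rigid. Via Dieudonn\'e theory for perfect rings it is computed as $(B^+_{\mathrm{cris}}\otimes M)^{\varphi=1}$, and with the isocrystal $\cE_{n,d}$ this is $\OO(Y_S)^{\varphi^n=\pi^d}$, i.e.\ $H^0(X_S,\cE_{n,d})$. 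I would check here that $H^0$ of a positive-slope bundle is insensitive to passing between $B^+$ and $\OO(Y_S)$; this is standard from the convergence of $\sum_i \varphi^{i}(\cdot)/\pi^{\dots}$ type expressions. The check must be done functorially in $S$ and compatible with $v$-descent.

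The second step is to compute $\widetilde G$ explicitly. Write $G=\mathrm{Spf}\,\Fq\llbracket x_0,\dots,x_{d-1}\rrbracket$ as a formal group in characteristic $p$. Multiplication by $\pi$ is then, up to a unit change of coordinates, a Frobenius power composed with an automorphism. The inverse limit along $\pi$ is therefore the inverse limit along Frobenius, giving $\Spf\Fq\llbracket x_0^{1/p^\infty},\dots,x_{d-1}^{1/p^\infty}\rrbracket$. This is exactly \cite[Proposition~3.1.3(iii)]{SW2013-1}. Taking the associated $v$-sheaf $\Spd$ yields the claimed isomorphism.

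The main obstacle is the construction of $G$ with the right isocrystal in the range $n<d\leq n[K:\Qp]$. There the $\OO_K$-dimension exceeds the height, and strict $\OO_K$-modules no longer suffice. I would handle this by viewing everything over $\Zp$ with the $\OO_K$-action non-strict: choose a $p$-divisible group of height $n[K:\Qp]$ and dimension $d$ with $\OO_K$-action. Then $\varphi_q$ on the isocrystal recovers the slope $d/n$ after restriction of scalars. The reason is that $\pi^{d}$ relative to $\varphi_q^n$ corresponds to $p$-adic valuation data $d/(n[K:\Qp])\le 1$, which is exactly the range where a $p$-divisible group exists.
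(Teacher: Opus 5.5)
Your proposal is correct and is essentially the argument behind \cite[Proposition~II.2.5 (iv)]{FS2024-1}, which is what the paper invokes. You realize $\cE_{n,d}$ as the isocrystal of a connected $p$-divisible group over $\Fq$ with $\OO_K$-action of $\OO_K$-height $n$ and dimension $d$ (strict when $d\leq n$, non-strict otherwise), identify its universal cover with $\bB^{\varphi^{n}=\pi^{d}}$, and compute that cover by \cite[Proposition~3.1.3 (iii)]{SW2013-1} directly over the perfect ring $\Fq$; this last point is exactly the paper's remark that the $\ol{\F}_q$ hypothesis in Fargues--Scholze is harmless.

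One correction to your heuristic in the second step. $[\pi]$ itself is generally not a Frobenius power up to an automorphism: for $1<d<n$ its kernel $V(x_0^{q},\ldots,x_{d-2}^{q},x_{d-1}^{q^{n-d+1}})$ is not a Frobenius kernel. What is true is that $[\pi^{d}]$ is the $q^{n}$-Frobenius in the paper's coordinates, and in general that the inverse systems along $[p]$ and along Frobenius are cofinal for connected $G$; the latter is what the cited proposition actually uses.
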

\begin{proof}
	The statement in \cite{FS2024-1} is originally over $\ol{\F}_{q}$; its  proof uses \cite[Proposition~3.1.3 (iii)]{SW2013-1}, which holds for any perfect ring of characteristic $p$. 
\end{proof}

Notice the presentation above only involves the degree $d$ of $\bB^{\varphi^{n}=\pi^{d}}$. As is later explained in Lemma \ref{lem-action}, the rank $n$ is related to the Frobenius action and also the $\pi$-action on the variables $x_i$. Also from this presentation, we see again $\bB^{\varphi^{n}=\pi^{d}}$ is  not a diamond as it contains the non-analytic point $\Spd \F_{q}$. 

Let $\bB^{\varphi^{n}=\pi^{d}}\bs \{0\}$ be the $v$-subsheaf of $\bB^{\varphi^{n}=\pi^{d}}$ containing sections  which are nonzero after pullback to any geometric point, which is called a \textit{punctured absolute Banach--Colmez space}. Specifically,  when $0<d\leq n\cdot [K:\Qp]$, we have
\begin{align}
	\bB^{\varphi^{n}=\pi^{d}} \bs \{0\} &\cong  \Spd \left( \F_{q}\llbracket x_0^{1/p^{\infty}},\ldots, x_{d-1}^{1/p^{\infty}}\rrbracket\right)\bs V(x_0,\ldots, x_{d-1})\nonumber\\
	&= \bigcup^{d-1}_{i=0}D(x_i)\nonumber
\end{align}
where each $D(x_i)$ is an open perfectoid ball over the perfectoid field $\F_{q}\llparent x_i^{1/p^{\infty}}\rrparent$. Therefore, we have:

\begin{prop}\label{prop-B*-presen}
	For $0<d\leq n\cdot [K:\Qp]$, the $v$-sheaf   $\bB^{\varphi^{n}=\pi^{d}}\bs \{0\}$ is representable by a perfectoid space.
\end{prop}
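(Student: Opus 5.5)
The plan is to read the statement off the presentation in Proposition~\ref{prop-B-presen} by covering the complement of the origin with standard rational opens. Write $A=\F_{q}\llbracket x_0^{1/p^{\infty}},\ldots,x_{d-1}^{1/p^{\infty}}\rrbracket$ with its $(x_0,\ldots,x_{d-1})$-adic topology, so that $\bB^{\varphi^{n}=\pi^{d}}\cong \Spd A=\Spa(A,A)^{\diamond}$. The first step is to identify the subsheaf $\bB^{\varphi^{n}=\pi^{d}}\bs\{0\}$ inside $\Spd A$.

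A section $f\in A$-valued point, i.e.\ a map $S=\Spa(R,R^{+})\to\Spd A$, is a continuous map $A\to R^{+}$. It is zero at a geometric point exactly when all images of the $x_i$ vanish there. This identification uses the isomorphism of Proposition~\ref{prop-B-presen}: the zero section corresponds to the closed point $V(x_0,\ldots,x_{d-1})$, since the isomorphism is $\OO_K$-linear and compatible with the origin. Hence the punctured space is $(\Spa(A,A)\bs V(x_0,\ldots,x_{d-1}))^{\diamond}$. This complement is the analytic locus, and it is covered by the opens $D(x_i)=\{|x_j|\le|x_i|\neq0 \ \forall j\}$.

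The second step is to show that each $D(x_i)$ is an affinoid perfectoid space. It is the rational subset $\Spa(V_i,V_i^{+})$ with
\[
V_i=\F_q\llparent x_i^{1/p^{\infty}}\rrparent\langle (x_j/x_i)^{1/p^{\infty}}: j\neq i\rangle ,
\]
and $V_i^{+}$ is the analogous ring with $\F_q\llbracket x_i^{1/p^\infty}\rrbracket$ in place of the Laurent series. Here $x_i$ is a pseudouniformizer, and $V_i$ is perfect and complete. Since we are in characteristic $p$, $V_i$ is therefore a perfectoid Tate ring, and $D(x_i)$ is a closed perfectoid unit ball over the perfectoid field $\F_q\llparent x_i^{1/p^\infty}\rrparent$. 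The point to check is that the rational localization of $\Spa(A,A)$ really is given by this completed ring. This follows because inverting $x_i$ and adjoining the $x_j/x_i$ on the perfect completed algebra yields exactly the perfection of a Tate algebra over the Laurent series field.

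The last step is to glue. The $D(x_i)$ are open in the analytic adic space $\Spa(A,A)\bs V(x_0,\ldots,x_{d-1})$ and cover it. Their pairwise intersections are again rational subsets, hence affinoid perfectoid. So this analytic adic space is a perfectoid space $X$. Moreover, the diamond functor applied to a perfectoid space of characteristic $p$ gives the representable sheaf, so $X^{\diamond}=\Spd X$ represents $\bB^{\varphi^{n}=\pi^{d}}\bs\{0\}$.

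The only mildly delicate point is the first step. There one must justify that the sheaf-theoretic definition ("nonzero at every geometric point") matches the removal of the non-analytic locus $V(x_0,\ldots,x_{d-1})$. I would verify this on geometric points $\Spa(F,F^{+})$: a point of $\Spd A$ is $0$ precisely when all $x_i\mapsto 0$. Since $\F_q$-points are not analytic, their preimage is exactly that closed locus.
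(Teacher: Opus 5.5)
Your proof is correct and follows the paper's argument. You identify $\bB^{\varphi^{n}=\pi^{d}}\bs\{0\}$ with $\Spd \F_q\llbracket x_0^{1/p^{\infty}},\ldots,x_{d-1}^{1/p^{\infty}}\rrbracket$ minus the origin $V(x_0,\ldots,x_{d-1})$ and cover the complement by perfectoid pieces. The only cosmetic difference is that you use the affinoid rational charts $\{|x_j|\le|x_i|\neq 0\}$, which are the paper's $\cW_i$ from Section 2, rather than the open balls $\{|x_i|\neq 0\}$ the paper uses at this point.
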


\subsection{Homomorphisms between vector bundles}
We now recall certain functors on $\Perf_{\Fq}$ parametrizing  homomorphisms between vector bundles  following \cite{Hansenetal2022-1} and \cite[Appendix~A]{JLH2021-1}. We then  compare them with the punctured absolute BC spaces, and define Grassmannians of  absolute BC spaces. 

 Let $\cE,\cF$ be vector bundles over $X_F$. The Hom-functor $\HHom(\cE, \cF)$ sends $S\in \Perf_{\Fq}$ to the set of $\OO_{X_S}$-module homomorphisms $\cE_{X_S}\rightarrow \cF_{X_S}$ and we have $\HHom(\cE, \cF)(S)\cong H^0(X_S, \cE^{\vee}\otimes \cF)$, where $\cE^{\vee}$ denotes the dual vector bundle of $\cE$.  Denote by  $\Surj(\cE, \cF)\subseteq \HHom(\cE, \cF)$ the subfunctor parametrizing   surjective homomorphisms. Also denote by  $\Inj(\cE, \cF)\subseteq \HHom(\cE, \cF)$ the subfunctor parametrizing   \textit{stably injective} homomorphisms, i.e. maps $\cE_{X_S}\rightarrow \cF_{X_S}$ which are  injective after pullback along any geometric point $\Spa (F,F^{+})\rightarrow S$. Both $\Inj(\cE, \cF)$ and $\Surj(\cE,\cF)$ are open subfunctors of $\HHom(\cE,\cF)$, and all three functors are locally spatial diamonds over $\Spd \Fq$ (see \cite[\S3]{Hansenetal2022-1} and \cite[Proposition~13.4]{Scholze-Diamond}).

Directly from their definitions, one notices that 
$$\Inj(\OO, \cE_{n,d})\cong \bB^{\varphi^{n}=\pi^{d}}\bs \{0\}.$$ 
Moreover, $\underline{K^{\times}}$ acts on both sides and the isomorphism between their $S$-points is $K^{\times}$-equivariant. Thus we have an isomorphism of quotient sheaves
\begin{equation}\label{eq-1.2.1}
	\Inj(\OO, \cE_{n,d})/\underline{K^{\times}}\cong (\bB^{\varphi^{n}=\pi^{d}}\bs \{0\})/\underline{K^{\times}},
\end{equation}
where the right hand side is a projectivized absolute BC space. The idea of studying such projectivized absolute BC  spaces first appeared in  \cite{Fargues2020-1}. Specifically, for $d\geq 0$, it was shown in loc. cit. that there is an isomorphism 
$$(\bB^{\varphi=\pi^{d}}\bs \{0\})/\ul{K}^{\times}\cong \Div^{d}$$ 
where $\Div^{d}$ is the sheaf sending $S\in \Perf_{\Fq}$ to the set of closed degree-$d$ Cartier divisors of $X_{S}$. However, such an identification with the moduli space of Cartier divisors does not exist for $\Inj(\OO, \cE_{n,d})/\ul{K}^{\times}$ when $n>1$, in which case one instead has
\[
	(\Inj(\OO, \cE_{n,d})\times_{\Spd \Fq}\Spd \F_{q^{n}})/\underline{K_n^{\times}}\cong \Div^{d}\times _{\Spd \Fq} \Spd \F_{q^{n}}.
\]

Nevertheless, the quotient $\Inj(\OO, \cE_{n,d})/\underline{K^{\times}}$ for  $n>1$ naturally leads to some other moduli spaces. 
As one can see from  (\ref{eq-1.2.1}),  $\Inj(\OO, \cE_{n,d})/\ul{K}^{\times}$ parametrizes lines  inside $\bB^{\varphi^{n}=\pi^{d}}$. Generalizing on this idea, we may  also parametrize higher dimensional subspaces of $\bB^{\varphi^{n}=\pi^{d}}$ by considering stably injective morphisms  from $\OO^{k}$ to  $\cE_{n,d}$.

\begin{defn}\label{defn-grass}
Let   $n,d\in \N^{\ast}$ and   $1\leq k\leq n-1$.  We define the \textit{$k$-Grassmannian} of the absolute Banach--Colmez space $\bB^{\varphi^{n}=\pi^{d}}$ as the quotient sheaf 
	\[
		\Gr_k(\bB^{\varphi^{n}=\pi^{d}}):=\Inj(\OO^{k}, \cE_{n,d})/\underline{\GL_k(K)}
	\]
	and we denote by $\Gr_{k,C}(\bB^{\varphi^{n}=\pi^{d}})$ its base change $\Gr_k(\bB^{\varphi^{n}=\pi^{d}})\times_{\Spd \Fq} \Spd C$. 
\end{defn}

\subsection{Representability of Grassmannians}\label{section-1.3}
After base change to $\Spd C$, the Grassmannian $\Gr_{k,C}(\bB^{\varphi^{n}=\pi^{d}})$ is a locally spatial diamond. The main goal of this text is to study when are such Grassmannians  representable by perfectoid spaces. 
The first examples of perfectoid Grassmannians come from the auxiliary spaces $X_{n,k}$ and $W_{n,k}$ studied in \cite{JLH2021-1} and \cite{HM2022-1}, and Definition \ref{defn-grass} is directly motivated by such spaces. 

We now recall the construction of $X_{n,k}$ and $W_{n,k}$. Let $\cM^{\LT}_{\infty}$ be the infinite-level Lubin--Tate space over $C$, which is perfectoid and admits a continuous $\GL_n(K)\times D^{\times}_{1/n}$-action \cite{SW2013-1}. The quotient $\cM^{\LT}_{\infty}/\GL_n(K)$ is identified with the projective space $\P^{n-1}$. For $1\leq k\leq n-1$, let $P_{k}(K)\subseteq \GL_n(K)$ be the maximal parabolic subgroup of the form  
\[
	P_{k}(K)=\twomat{\GL_{n-k}(K)}{\ast}{0}{\GL_{k}(K)}.
\] 
The  space $X_{n,k}$ is defined as the quotient $\cM^{\LT}_{\infty}/P_k(K)$. In the meantime, by passing to its associated diamond, $X_{n,k}$ also admits a moduli interpretation in terms of vector bundles on the relative Fargues--Fontaine curve. By \cite{SW2013-1}, the diamond $\cM^{\LT,\diamond}_{\infty}$ associated to $\cM^{\LT}_{\infty}$ can be  identified with the functor which assigns to $S\in \Perf_{C}$ the set of  modifications
\begin{equation}\label{eq-mod}
	0\rightarrow \OO^{n}\rightarrow \OO(1/n)\rightarrow i_{*}W\rightarrow 0
\end{equation}
over $X_{S^{\tilt}}$, where $i: S\rightarrow X_{S^{\tilt}}$ is the canonical closed immersion and $W$ is some rank one projective $\OO_S$-module. By \cite[Theorem~A.1.1]{JLH2021-1}, the quotient sheaf $\cM^{\LT,\diamond}_{\infty}/\ul{P_{k}(K)}$ then assigns to $S\in \Perf_C$ the set of diagrams 
\begin{equation}\label{eq-diagram}
	\OO(1/n)\surj \cE\hookleftarrow \cF
\end{equation}
of vector bundles over $X_{S^{\tilt}}$ with $\cE\cong \OO(1/k)$ and $\cF\cong \OO^{k}$ at all geometric points of $S^{\tilt}$ and $\cF\hookrightarrow \cE$ a modification of $\cF$ as in (\ref{eq-mod}). The space $W_{n,k}$ is then defined as the functor on $\Perf_{C}$ which parametrizes surjections of the form $\OO(1/n)\surj\cE$ over $X_{S^{\tilt}}$ with $\cE\cong \OO(1/k)$ at all geometric points of $S^{\tilt}$. In other words, $W_{n,k}$ remembers only the left half of (\ref{eq-diagram}). 

Before stating the representability results of $X_{n,k}$ and $W_{n,k}$, we first describe their relationship with the Grassmannians defined in Definition \ref{defn-grass}. Since $\OO(1/n)$ is stable, taking the kernel of a surjection $\OO(1/n)\surj \cE$ as in (\ref{eq-diagram}) gives a stably injective morphism $\cE'\hookrightarrow \OO(1/n)$ with $\cE'\cong \OO^{n-k}$ at every geometric point. Thus we have\footnote{In \cite[Appendix~A]{JLH2021-1}, Hansen defined the $\Inj$-functor and the $\Surj$-functor directly over $\Perf_{C}$, whereas we defined them over $\Fq$; hence we need to base change to $\Spd C$ to recover the space $W_{n,k}$.} 
\begin{align}
	W_{n,k} &= (\Surj(\OO(1/n), \OO(1/k))\times _{\Spd \Fq} \Spd C)/\underline{D^{\times}_{1/k}}\nonumber\\
	&\cong (\Inj(\OO^{n-k}, \OO(1/n))\times _{\Spd \Fq} \Spd C)/\underline{\GL_{n-k}(K)}\label{eq-W-inj}\\
	&= \Gr_{n-k, C}(\bB^{\varphi^{n}=\pi})\label{eq-W-B}. 
\end{align}
The space $X_{n,k}$ is not perfectoid when $k>1$ (\cite[Theorem~A.1.5]{JLH2021-1}); when $k=1$, one sees from (\ref{eq-diagram}) that $X_{n,1}=W_{n,1}$, hence we have 
\begin{equation}\label{eq-X-B}
	X_{n,1}\cong \Gr_{n-1,C}(\bB^{\varphi^{n}=\pi}). 
\end{equation}

\begin{thm}[{\cite[Theorem~4.2.1]{JLH2021-1}}]\label{thm-JLHmain}
The diamond $X_{n,1}$ is representable by a perfectoid space. 
\end{thm}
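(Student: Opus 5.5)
Since the paper is self-contained, I would not reproduce the argument of \cite{JLH2021-1}. Instead I would deduce the statement from the general machinery developed later (Theorems \ref{thm-B} and \ref{thm-C}), using the identification (\ref{eq-X-B}):
\[
X_{n,1}\cong \Gr_{n-1,C}(\bB^{\varphi^{n}=\pi}).
\]
With $k=n-1$ and $d=1$, the slope is $\mu=1/n<1/(n-1)=1/k$. So we are in the range where Theorem \ref{thm-B} applies.

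\emph{Step 1: the Pl\"ucker target.} Compute the data of the target. We have $N=\binom{n}{n-1}=n$ and $D=\binom{n-1}{n-2}=n-1$. The Pl\"ucker morphism is therefore
\[
\Pl:\Gr_{n-1}(\bB^{\varphi^{n}=\pi})\to \Gr_1(\bB^{\varphi^{n}=\pi^{n-1}}),
\]
sending $i:\cF\hookrightarrow \OO(1/n)$ to $\wedge^{n-1}i:\det\cF\hookrightarrow \bigwedge^{n-1}\OO(1/n)\cong \OO((n-1)/n)$. The target has slope $(n-1)/n<1$. By Theorem \ref{thm-C}, after base change to $\Spd C$, the target $(\bB^{\varphi^{n}=\pi^{n-1}}_C\bs\{0\})/\underline{K^\times}$ is a perfectoid space.

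\emph{Step 2: closed immersion.} By Theorem \ref{thm-B}, base changed to $\Spd C$, $\Pl$ is a Zariski closed immersion. A Zariski closed subspace of a perfectoid space is again perfectoid, since Zariski closed subsets of affinoid perfectoids are affinoid perfectoid (Scholze's result on Zariski closed subsets). Applying this chart by chart on the cover of the target gives that $\Gr_{n-1,C}(\bB^{\varphi^n=\pi})$, hence $X_{n,1}$, is perfectoid.

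\emph{Expected obstacle.} The substantive input is Theorem \ref{thm-B}, specifically that $\Pl$ is a closed immersion onto a Zariski closed "decomposable" locus. The hardest part there is showing that a line $L\hookrightarrow \bigwedge^{k}\cE_{n,d}$ lying in the decomposable locus comes from a unique saturated $\OO^k\hookrightarrow\cE_{n,d}$ up to $\GL_k(K)$, uniformly in families; the slope bound $\mu<1/k$ is what forces the image of $\OO^k$ to be determined.

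In this particular case there is a shortcut I would try first, because $k=n-1$ is the corank-one case. Taking the kernel of $\OO(1/n)\surj\cE$, with $\cE\cong\OO(1)$ geometrically, and dualizing identifies $W_{n,1}$ directly with an open part of the projectivized $\bB^{\varphi^n=\pi^{n-1}}$. Concretely, a surjection $\OO(1/n)\to\OO(1)$ twisted by $\OO(-1)$ is a section of $\OO((n-1)/n)$, namely its Pl\"ucker image. One then needs to check that the surjectivity condition cuts out a Zariski closed (in fact, I expect, all) part of the punctured space. Showing that every nonzero section of $\OO((n-1)/n)$ yields a surjection fibrewise is the point I would verify via the classification of vector bundles on $X_F$: a non-surjective map would have image of degree $<1$ with slope $\ge$ that of a quotient of $\OO(1/n)$, which is the contradiction to establish.
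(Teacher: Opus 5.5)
Your proof is correct and not circular, since neither Theorem \ref{thm-B} nor Theorem \ref{thm-C} depends on Theorem \ref{thm-JLHmain}.

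At this point the paper itself only cites Johansson--Ludwig's global argument, which goes through Harris--Taylor Shimura varieties. Its own local proof is Remark \ref{rem-new-JLH}, and that is exactly your ``shortcut''. Dualizing a surjection $\OO(1/n)\surj\OO(1)$ identifies $X_{n,1}=W_{n,1}$ with the \emph{whole} space $\Gr_{1,C}(\bB^{\varphi^{n}=\pi^{n-1}})$, and Corollary \ref{cor-Gr1-perf} then finishes the proof.

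Your main route is the case $(d,k)=(1,n-1)$ of Theorem \ref{thm-dk<n}. Its advantage is that it needs no special argument. Its cost is that it passes through the decomposable locus, Proposition \ref{prop-reconstruct} and Proposition \ref{prop-Zariski-closed}, all of which are vacuous here. Every nonzero vector of $\bigwedge^{n-1}E$ with $\dim E=n$ is decomposable, so $q_u=0$ automatically and $\Pl$ is an isomorphism onto the entire target, as the paper notes in \S4.3.

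In the shortcut, two points should be corrected. First, you should dualize the surjection itself, not its kernel; taking the kernel leads back to (\ref{eq-W-inj}). Second, the condition you hedge on is indeed ``all'', so the identification is with the full projectivized space, not an open part of it. Over $X_F$, a nonzero map $\OO(1/n)\to\OO(1)$ has image a rank-one quotient of the stable bundle $\OO(1/n)$. That image therefore has slope $>1/n>0$, hence degree $\geq 1$, so it equals $\OO(1)$. Dually, this is Lemma \ref{lem-coker}(ii): $\OO\hookrightarrow\OO((n-1)/n)$ is saturated because $1>(n-1)/n$.
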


Johansson and Ludwig proved this result   by first exhibiting $\cM^{\LT}_{\infty}/P_1(K)$ as an open subspace of some infinite level Harris--Taylor Shimura variety. They then proved the latter is perfectoid via global methods.

\begin{thm}[{\cite[Corollary~A.1.4]{JLH2021-1}}]\label{thm-Hansen}
The diamond $W_{n,n-1}$ is representable by a perfectoid space. 
\end{thm}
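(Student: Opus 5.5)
We identify $W_{n,n-1}$ with a projectivized absolute BC space of slope $<1$ and show that this quotient is perfectoid. By \eqref{eq-W-B}, $W_{n,n-1}\cong \Gr_{1,C}(\bB^{\varphi^{n}=\pi})$. By \eqref{eq-1.2.1} this is $(\bB^{\varphi^{n}=\pi}_C\bs\{0\})/\underline{K^{\times}}$. The slope is $1/n<1$ since $n\geq 2$, and $d=1\leq n[K:\Qp]$, so Proposition \ref{prop-B-presen} and Proposition \ref{prop-B*-presen} apply. Concretely,
\[
\bB^{\varphi^{n}=\pi}\bs\{0\}\cong \Spd \Fq\llparent x_0^{1/p^\infty}\rrparent,
\]
the punctured open perfectoid disc, because $d=1$ leaves only one chart $D(x_0)$.

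First step: split $K^\times=\pi^{\Z}\times \OO_K^\times$. The element $\pi$ acts through $\varphi^{n}$, i.e. through a power of Frobenius, and so it acts on the coordinate by $x_0\mapsto x_0^{q^n}$ up to a unit (cf.\ the later Lemma \ref{lem-action}). After base change to $\Spd C$, Frobenius becomes trivial on the diamond level. Hence quotienting by $\pi^{\Z}$ amounts to forming the quotient of $\Spd C\times \Spd\Fq\llparent x_0^{1/p^\infty}\rrparent$ by a properly discontinuous action. That is, $\varphi^{\Z}$ acting on $\Spd C\times\Spd\Fq\llparent x^{1/p^\infty}\rrparent$ is the product of $\Spd C$ with the punctured disc, modulo Frobenius. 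This is exactly the diamond $\Spd C\times \Spd\Fq\llparent x^{1/p^\infty}\rrparent/\varphi^{\Z}$, which is the punctured-disc version of the curve $Y_C/\varphi^{\Z}$. I would realize it as the (tilted) Fargues–Fontaine-type annulus quotient, and observe that the quotient by $\pi^{\Z}$ is covered by a fundamental annulus $\{|x_0|^{q^n}\le \cdot \le |x_0|\}$, which is affinoid perfectoid over $C$.

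Second step, the $\OO_K^\times$-quotient. Here $\OO_K^\times$ acts on the Lubin–Tate-type coordinate through the formal $\OO_K$-module of height $n$ and dimension $1$: $[a](x_0)=ax_0+\dots$. The torsion $\mu_{q-1}$ acts by scaling, so the quotient by it is $x_0\mapsto x_0^{q-1}$, which is finite étale and preserves perfectoidness. For $U_1=1+\pi\OO_K$, I would show that $\Spd C\times D(x_0)$ is a pro-étale $U_1$-torsor over an affinoid perfectoid base. Write $U_1=\varprojlim U_1/U_m$ and form invariants $A^{U_m}$ of the fundamental-annulus rings. The perfectoidness of $A^{U_1}$ should follow from a norm construction, as for perfectoid towers: $N_m(x_0)=\prod_{g\in U_1/U_m}g(x_0)$, suitably normalized, converges and gives a pseudo-uniformizer of the invariants. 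Then a quotient of a perfectoid space by a profinite group acting freely, with invariants of this shape, is perfectoid. The main obstacle is exactly this last point: proving that each step $A^{U_{m+1}}/A^{U_m}$ is finite étale, which amounts to freeness of the action on the punctured disc. I would first try to produce an element $\theta$ with $g\theta-\theta$ a unit for $g\ne1$.

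Alternatively, and more simply, there is a direct route bypassing all this, following Hansen. Under \eqref{eq-W-inj}, $W_{n,n-1}=\Surj(\OO(1/n),\OO(1/(n-1)))_C/D^\times_{1/(n-1)}$. Dually, a surjection $\OO(1/n)\surj\cE$ with $\cE$ of slope $1/(n-1)$ has a line-bundle kernel $\OO^{1}$-dual, i.e.\ $\cE'\cong\OO$, and so
\[
W_{n,n-1}\cong \Inj(\OO,\OO(1/n))_C/\underline{K^\times}\cong \cM^{\LT}_\infty/\text{(mirabolic-type subgroup)}.
\]
Using the Lubin–Tate duality isomorphism $\cM^{\LT}_\infty\cong\cM^{\mathrm{Dr}}_\infty$, this space becomes the quotient of the Drinfeld tower by $K^\times$ acting centrally. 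That quotient is a finite disjoint union of $\cM^{\mathrm{Dr}}_\infty/\OO_K^\times$-type quotients and is perfectoid by Scholze–Weinstein. I would pursue this duality route first, and fall back on the invariant-norm approach if the group-theoretic identification proves delicate.
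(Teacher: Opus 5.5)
Your reduction is correct and matches the paper: $W_{n,n-1}\cong\Gr_{1,C}(\bB^{\varphi^{n}=\pi})=\bigl((\bB^{\varphi^{n}=\pi}\bs\{0\})\times_{\Spd\Fq}\Spd C\bigr)/\underline{K^{\times}}$, and $\pi^{\Z}$ can be split off as a properly discontinuous action. The gap is that the whole content lies in the $\OO_K^{\times}$-quotient, and neither of your routes proves it is perfectoid.

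\textbf{First route.} This sketches the method the paper uses in Sections~2--3 for all $d<n$, which also reproves this statement via Corollary~\ref{cor-Gr1-perf}. But you leave open exactly the steps that carry the weight:
\begin{itemize}
\item the normalized norms converge and give a pseudo-uniformizer of the invariants;
\item each layer $A^{U_{m+1}}/A^{U_m}$ is finite \'etale (your $\theta$);
\item the invariant rings are dense.
\end{itemize}
None of this is formal. For $n=1$ the analogous punctured space $\bB^{\varphi=\pi}\bs\{0\}$ is $\Spd\hat K_\infty^{\tilt}$, with $K_\infty$ the Lubin--Tate extension, and $\OO_K^{\times}$ acts on it freely through Galois. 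Yet the quotient is $\Spd K$, which is not perfectoid; this is the phenomenon behind Proposition~\ref{prop-div}. So $n\geq 2$ must be used essentially, and your sketch never uses it. In the paper it enters through Lemma~\ref{lem-infty}, which needs $d<n$, in the convergence of orbit norms and in the estimates for $\theta_m$ (Lemma~\ref{lem-theta}).

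\textbf{Second route.} This is the route you would pursue first, and it rests on a false identification.
\begin{itemize}
\item For $n\geq 3$, $\Inj(\OO,\OO(1/n))_C/\underline{K^{\times}}$ is not $\cM^{\LT}_\infty$ modulo a subgroup of $\GL_n(K)$. Quotienting by the stabilizer $P_{n-1}(K)$ of the line $\OO e_1\subset\OO^{n}$ gives $X_{n,n-1}$. This space also remembers a minuscule modification $\OO^{n-1}\hookrightarrow\OO(1/(n-1))$, so $X_{n,n-1}\to W_{n,n-1}$ has $\P^{n-2}$-fibres. A mirabolic quotient is a $K^{\times}$-cover of $X_{n,n-1}$ with the same defect. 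In fact $X_{n,n-1}$ is not even perfectoid for $n\geq 3$, by \cite[Theorem~A.1.5]{JLH2021-1}.
\item The duality step does not do what you want. The Lubin--Tate/Drinfeld isomorphism is $\GL_n(K)\times D^{\times}_{1/n}$-equivariant, so it sends a parabolic quotient to a parabolic quotient of the Drinfeld tower, not to a central $K^{\times}$-quotient.
\item Scholze--Weinstein prove that $\cM_\infty$ itself is perfectoid, not its quotients by infinite profinite subgroups.
\end{itemize}

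\textbf{The missing idea} is Hansen's argument as reproduced in the paper: for $d=1$ the punctured space is the tilt of a Lubin--Tate field. Let $E=K_n$ and let $E_\infty/E$ be the Lubin--Tate extension. Then
\[
\Inj(\OO,\OO(1/n))\times_{\Spd\Fq}\Spd\F_{q^n}\cong\Spd\hat E_\infty^{\tilt}
\]
equivariantly, with $\OO_K^{\times}\subset\OO_E^{\times}=\Gal(E_\infty/E)$ acting by Galois. Hence the $\OO_K^{\times}$-quotient is $\Spd\hat L^{\tilt}$ with $L=E_\infty^{\OO_K^{\times}}$. This is perfectoid because $L/E$ is an infinitely ramified $p$-adic Lie extension with group $\OO_E^{\times}/\OO_K^{\times}$. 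This is where $n\geq 2$ enters: for $n=1$ one gets $L=K$.

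Only after this do you base change to $\Spd C$ and divide by $(\varphi_n\times\id)^{\Z}$, as in your first step. Taking the $\OO_K^{\times}$-quotient before base change, where the space is the spectrum of a perfectoid field, lets Galois theory replace your whole norm/$\theta$ analysis on annuli over $C$.
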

Hansen's proof was originally written for the case of $n=2$, which  also applies to all $n>1$. We briefly recall  the argument here. 
\begin{proof}
	As in (\ref{eq-W-inj}), we have 
		$W_{n,n-1} \cong (\Inj(\OO, \OO(\frac{1}{n}))\times_{\Spd \Fq} \Spd C)/\underline{K^{\times}}$.
	Let $E/K$ be the unramified degree $n$ extension and let $E_{\infty}/E$ be  the Lubin--Tate extension of $E$, which has Galois group $\Gal(E_{\infty}/E)\cong \OO^{\times}_{E}$. Then for $S\in \Perf_{\F_{q^{n}}}$, we have 
	\begin{equation}\label{eq-Hansen-1}
		\Inj(\OO, \OO(1/n))(S)=H^{0}(X_{S,K}, \OO(1/n))^{\times}=H^{0}(X_{S,K_n}, \OO(1))^{\times},
	\end{equation}
	and \cite[Proposition~3.5.3]{Weinstein2017-1} further gives an $E^{\times}$-equivariant isomorphism
	\begin{equation}\label{eq-Hansen-2}
		H^{0}(X_{S,K_n},\OO(1))^{\times}\cong \Spd \hat{E}_{\infty}^{\tilt}.
	\end{equation}
	Therefore, combining (\ref{eq-Hansen-1}) and (\ref{eq-Hansen-2}), we get 
	\[
		\Inj(\OO, \OO(1/n))\times _{\Spd \Fq}\Spd \F_{q^{n}}\cong \Spd \hat{E}^{\tilt}_{\infty}.
	\] 
	Since $\hat{E}$ is perfectoid and $\Gal(E_{\infty}/E)\cong \OO^{\times}_{E}$, the subfield $L:=E_{\infty}^{\OO^{\times}_{K}}$ is again  deeply ramified over $K$, hence $\hat{L}$ is perfectoid. Thus we get 
	\begin{align*}
		W_{n,n-1} &\cong (\Spd \hat{E}^{\tilt}_{\infty}\times _{\Spd _{\F_{q^{n}}}} \Spd C)/\underline{K^{\times}}\\
		&\cong (\Spd \hat{E}^{\tilt}_{\infty}/\underline{\OO^{\times}_{K}}\times _{\Spd \F_{q^n}}\Spd C)/(\varphi_n\times \id)^{\Z}\\
		&\cong (\Spd \hat{L}^{\tilt}\times _{\Spd \F_{q^n}}\Spd C)/(\varphi_n\times \id)^{\Z},
	\end{align*}
	where $\varphi_n$ is the Frobenius of $K_n$.  
	Since both $\hat{L}^{\tilt}$ and $C$ are perfectoid and $\varphi_n\times \id$ acts properly discontinuously on the product,  $W_{n,n-1}$ is perfectoid. 
\end{proof}

These two theorems above give the perfectoidness of $W_{n,k}$ at the two ends when $k=1$ and $k=n-1$. Hansen and Mann then  conjectured the following: 

\begin{conj}[{\cite[Conjecture~5.4]{HM2022-1}}]\label{conj-HM}
	For every $1\leq k\leq n-1$, $W_{n,k}$ is perfectoid.  
\end{conj}

In terms of Grassmannians, Theorem \ref{thm-JLHmain} and Theorem \ref{thm-Hansen} show that $\Gr_{k,C}(\bB^{\varphi^{n}=\pi})$ is perfectoid when $k=1$ and $k=n-1$; the conjecture above further predicted the perfectoidness of all other Grassmannians (over $\Spd C$) of the slope $1/n$ absolute BC space $\bB^{\varphi^{n}=\pi}$.

\section{Orbit norm of formal coordinates}\label{section-2}
In this section, we  describe the  $K^{\times}$-action on an absolute Banach-Colmez space $\bB^{\varphi^{n}=\pi^{d}}$ and fix an $\OO^{\times}_{K}$-stable affinoid perfectoid open cover $\bigcup \cW_i$ of the punctured space $\bB^{\varphi^{n}=\pi^{d}}\bs \{0\}$. For each affinoid $\cW_i=\Spa (W_i,W_i^{+})$, we  construct $\OO^{\times}_{K}$-invariant elements in $W_i$, and use them to show each quotient  pair $(W_i^{\OO_K^{\times}}, W_i^{+ \OO^{\times}_{K}})$ is  perfectoid. Unless otherwise specified, we assume $0<d<n$ throughout this section. 
\subsection{Punctured BC spaces and $K^{\times}$-action}
Recall from Proposition \ref{prop-B*-presen} that when $0<d\leq n\cdot [K:\Qp]$, the punctured absolute BC space $\bB^{\varphi^{n}=\pi^{d}}\bs \{0\}$ is  perfectoid and admits a presentation 
\[
	\bB^{\varphi^{n}=\pi^{d}}\bs \{0\}\cong \Spd \Fq\llbracket x_0^{1/p^{\infty}}, \ldots, x_{d-1}^{1/p^{\infty}}\rrbracket \bs V(x_0,\ldots, x_{d-1}).
\] 
The formal variables  $x_i$ arise from the coordinates of a formal $\OO_K$-module and parametrize topologically nilpotent elements, thus we have $0\leq \left|x_i\right|<1$ for all $i$. Since the variables do not vanish simultaneously,  $\bB^{\varphi^{n}=\pi^{d}}\bs \{0\}$ admits an open cover
\begin{equation}\label{eq-cover}
	\bB^{\varphi^{n}= \pi^{d}}\bs \{0\}\cong \bigcup^{d-1}_{i=0}\cW_i, \quad \cW_i=\{\left|x_i\right|\neq 0, \left|x_j\right|\leq \left|x_i\right|\text{ for every $j$}\}.
\end{equation}
Let $y_{ij}=x_j/x_i$ for $j\neq i$,  we then have $0\leq \left|y_{ij}\right|\leq 1$ over $W_i$. Since $x_i$ is invertible over $W_i$, we can write $\cW_i$ more explicitly as $\Spa(W_i,W_i^{+})$ with 
\[
	W_i:=\F_{q}\llparent x_i^{1/p^{\infty}}\rrparent \langle y_{ij}^{1/p^{\infty}}, j\neq i\rangle \quad \text{and} \quad W_i^{+}:=\F_{q}\llbracket x_i^{1/p^{\infty}}\rrbracket \langle y_{ij}^{1/p^{\infty}}, j\neq i\rangle.
\] 
Therefore,  $\bigcup^{d-1}_{i=0}\cW_i$ gives an affinoid perfectoid open cover of $\bB^{\varphi^{n}=\pi^{d}}\bs \{0\}$.

The absolute BC space $\bB^{\varphi^{n}=\pi^{d}}$ is a $K$-vector space object  in the category of $v$-sheaves on $\Perf_{\Fq}$. Thus it  admits an action of $K^{\times}$. We now describe and fix the $K^{\times}$-action on the formal variables. 

\begin{lem}\label{lem-action}
The formal variables $x_0,...,x_{d-1}$ of $\bB^{\varphi^{n}=\pi^{d}}$ can be chosen such that 
\begin{renumerate}
	\item $\pi\in K^{\times}$ acts on $x_i$ by
		\[
	\pi(x_i)=\begin{cases}
	x_{i-1}^{q}  &  \text{if $1\leq i\leq d-1$}\\
	x_{d-1}^{q^{n-d+1}}  &  \text{if $i=0$}
	\end{cases}
\]
\item $c\in \Fq^{\times}$ with Teichm\"uller lift $[c]\in \OO^{\times}_{K}$ acts on $x_i$ by $$[c](x_i)=cx_i$$ for all $0\leq i\leq d-1$. 
\end{renumerate}
\end{lem}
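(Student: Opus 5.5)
We would prove Lemma \ref{lem-action} by unwinding the presentation of Proposition \ref{prop-B-presen}, i.e. \cite[Proposition~3.1.3]{SW2013-1}. Let $G$ be a $\pi$-divisible formal $\OO_K$-module over $\Fq$ of height $n$ and dimension $d$ whose (covariant) isocrystal is the slope-$d/n$ one corresponding to $\cE_{n,d}$. Then $\bB^{\varphi^{n}=\pi^{d}}$ is the universal cover $\tilde G=\varprojlim_{\pi} G$. Concretely, for $S=\Spa(R,R^{+})$ we have $\tilde G(R^{+})=\varprojlim_{\pi}G(R^{+})$, and since $R^{+}$ is perfect this equals $\tilde G(R^{+}/\varpi)$, which is identified with $\OO(Y_S)^{\varphi^{n}=\pi^{d}}$. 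Under this identification the $K$-vector space structure on $\tilde G$ (with $\pi$ acting invertibly, via $\pi$-divisibility) matches the $K$-structure on $H^{0}(X_S,\cE_{n,d})$, and the coordinates $x_i$ are the coordinates of $\tilde G$.

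The plan is to choose $G$ explicitly, so that multiplication by $\pi$ on $G$ has a prescribed shape up to isomorphism. Take the standard Dieudonné module with basis $e_0,\dots,e_{n-1}$. Let $F$ act as a cyclic shift, $F e_j=e_{j+1}$ for suitable indices and $F e_j=\pi e_{j+1}$ for the others, arranged so that $F^{n}=\pi^{d}$. Its reduction modulo $V$ then gives a $d$-dimensional formal group. The corresponding $p$-typical (i.e. $\OO_K$-typical) formal group law is determined by a Hazewinkel-type functional equation. For a perfect base, the universal cover only depends on $G$ up to quasi-isogeny, and one can pass to the perfection. On the perfected coordinates, $[\pi]$ then becomes a ``monomial'' map: $[\pi]$ equals Frobenius-twists composed with the cyclic permutation, i.e. $x_i\mapsto x_{i-1}^{q}$ and $x_0\mapsto x_{d-1}^{q^{n-d+1}}$. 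The exponents come from $F^{n}=\pi^{d}$: going once around the cycle of the $d$ coordinates under $\pi$ must produce $q^{n}$, which is $q^{d-1}\cdot q^{n-d+1}$.

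The key step is to justify that on $\tilde G$ over perfect rings, all higher-order terms of the formal group law and of $[\pi]$ can be discarded. We would argue as in \cite[Proposition~3.1.3]{SW2013-1}: $\tilde G\cong \varprojlim_{\pi}G$ is identified with $\varprojlim$ of the perfection. Moreover, $\tilde G$ is isomorphic to the universal cover of any quasi-isogenous group, and one may take the ``minuscule'' or Lubin--Tate-like model. A concrete instance is the group attached to $\OO_{K_n}$ with the $d$-dimensional CM type, where $[\pi]$ is literally of the stated form on the canonical coordinates (cf. the case $d=1$, $[\pi](x)=x^{q^{n}}$ for the Lubin--Tate group of $K_n$ restricted). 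Logarithm-free coordinate changes then make the stated formulas exact rather than merely modulo higher degree. This is the main obstacle: showing that a coordinate change exists on the perfected universal cover that linearizes $[\pi]$ exactly. We would handle it by the standard limit $x_i:=\lim_m \big([\pi^{m}]\text{-pullback of } y\big)^{q^{-e_m}}$, which converges in the perfect complete ring.

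For (ii), choose the formal group law $\OO_K$-typical with $[\zeta]$ acting as $x\mapsto \zeta x$ for Teichmüller $\zeta=[c]$. This is possible since $\mu_{q-1}\subset\OO_K^{\times}$, and one may diagonalize its action in coordinates by averaging over the finite group of order prime to $p$. These coordinates commute with the normalization above, because $[c]$ commutes with $[\pi]$ and the limit construction is equivariant under the linear scaling $x\mapsto cx$, using $c^{q}=c$.
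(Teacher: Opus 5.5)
\textbf{There is a genuine gap.} It is the step you yourself flag as the main obstacle: nothing in the proposal produces coordinates on $\tilde G$ in which $[\pi]$ is \emph{exactly} the twisted cyclic shift.

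The Dieudonn\'e/Hazewinkel paragraph only asserts that $[\pi]$ ``becomes monomial''. The CM-type model is likewise only asserted. The limit $x_i=\lim_m([\pi^m]^*y)^{q^{-e_m}}$ cannot do the job. The $q$-Frobenius of $\bB^{\varphi^{n}=\pi^{d}}$ over $\Fq$ acts through $\varphi$, so $[\pi^d]$ is the $q^n$-power Frobenius endomorphism. Hence $[\pi^{d\ell+f}]^*y=([\pi^f]^*y)^{q^{n\ell}}$ for every function $y$. Whatever the $e_m$, every term of your sequence is a Frobenius twist of one of the $d$ functions $[\pi^f]^*y$, $0\le f<d$. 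The sequence is periodic up to twist and contracts nothing, so it cannot converge to a new coordinate, let alone one in which $[\pi]$ is the twisted shift.

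The same remark shows that for $d=1$ one has $[\pi]^*x=x^{q^n}$ in any coordinate. So for $d>1$ the whole content of (i) is the shape of $[\pi]$ itself, which no $[\pi^m]$-limit detects.

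Separately, your claim that the higher-order terms of the formal group law can be discarded on $\tilde G$ is false. Coordinatewise addition would make $\tilde G(R)$ killed by $p$, whereas it is a $K$-vector space. The lemma does not need this, but it means the ``key step'' as you phrase it is unattainable.

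\textbf{What is missing} is a model in which the $\pi$-action is visibly a twisted shift. The paper gets this with no formal group law at all, from the explicit bijection of \cite[Proposition~4.2.1]{FF2018-1},
$\sL_{d,n}(r_0,\ldots,r_{d-1})=\sum_{i=0}^{d-1}\sum_{m\in\mathbb{Z}}[r_i^{q^{-mn}}]\pi^{md+i}$.
\begin{itemize}
\item Multiplying by $\pi$ shifts $i\mapsto i+1$. The wrap-around terms $[r_{d-1}^{q^{-mn}}]\pi^{(m+1)d}$ re-index to give $\pi\,\sL_{d,n}(r_0,\ldots,r_{d-1})=\sL_{d,n}(r_{d-1}^{q^n},r_0,\ldots,r_{d-2})$.
\item So $\pi(\xi_0)=\xi_{d-1}^{q^n}$ and $\pi(\xi_i)=\xi_{i-1}$, and the rescaling $x_i=\xi_i^{q^i}$ gives (i).
\item For (ii), $[c]\sL_{d,n}(r)=\sL_{d,n}(cr)$ because $c^{q^{-mn}}=c$.
\end{itemize}

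If you want to stay inside your framework, you could set $x_i:=([\pi^{-i}]^*x_0)^{q^i}$ for a single coordinate $x_0$. The relations in (i) then hold automatically, since $[\pi^d]^*x_0=x_0^{q^n}$. But you would still have to show that $x_0,\ldots,x_{d-1}$ form a coordinate system on $\tilde G$ for a suitable $x_0$. That is the real content, and it is not addressed.

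Your averaging argument for (ii) is fine and preserves the twisted-shift form, because Frobenius is additive and $\zeta^{q}=\zeta$. So (ii) is not the issue once (i) is established.
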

\begin{proof}
	Let us first relabel the formal variables  of $\bB^{\varphi^{n}=\pi^{d}}$ as $\xi_0,\ldots,\xi_{d-1}$. For affinoid perfectoid $S=\Spa(R,R^{+})\in \Perf_{\Fq}$, an $S$-point of $\Spd \Fq \llbracket \xi_0^{1/p^{\infty}}, \ldots, \xi_{d-1}^{1/p^{\infty}}\rrbracket$ is a tuple $(r_0,\ldots, r_{d-1})\in (R^{\circ\circ})^{d}$ where $r_i$ is the value of the coordinate $\xi_i$. By \cite[Proposition~4.2.1]{FF2018-1}, there is a bijection  
	\begin{align*}
		\sL_{d,n}: (R^{\circ\circ})^{d}&\xrightarrow{\sim} \bB^{\varphi^{n}=\pi^{d}}(R)\\
		(r_0,...,r_{d-1})&\mapsto \sum^{d-1}_{i=0}\sum^{}_{m\in \Z}[r_i^{q^{-mn}}]\pi^{md+i}.
	\end{align*}
	Multiplying the image by $\pi$, we get $\pi\sL_{d,n}(r_0,\ldots,r_{d-1})=\sL_{d,n}(r^{q^{n}}_{d-1}, r_0,\ldots, r_{d-2})$. Matching this with the other side of  $\sL_{d,n}$, we see  $\pi$ acts on the coordinates $(\xi_i)$ by  
	\[
		\pi(\xi_0)=\xi^{q^{n}}_{d-1}, \quad \pi(\xi_i)=\xi_{i-1} \text{ for $1\leq i\leq d-1$}.
	\] 
	For each $i$, let $x_i=\xi^{q^{i}}_{i}$. This is an invertible change of coordinates because $R$ is  a perfect ring. The $\pi$-action on the variables $(x_i)$ is then given by 
\[
		\pi(x_0)=\xi^{q^{n}}_{d-1}=x^{q^{n-d+1}}_{d-1}, \quad \pi(x_i)=\xi^{q^{i}}_{i-1}=x^{q}_{i-1} \text{ for $1\leq i\leq d-1$}.
	\] 
For $c\in \Fq^{\times}$, since $c^{q}=c$, multiplication by its Teichm\"uller lift satisfies 
\[
	[c]\sL_{d,n}(\xi_0,\ldots,\xi_{d-1})=\sL_{d,n}(c \xi_0,\ldots,c\xi_{d-1}).
\] 
Hence we get  $[c](x_i)=(c\xi_{i})^{q^{i}}=cx_i$ as desired. 
\end{proof}

We \textbf{fix} these formal variables  $(x_i)$ for $\bB^{\varphi^{n}=\pi^{d}}$ together with the $K^{\times}$-action described in this lemma. Notice that our choice of $(x_i)$ is analogous to  the ones used in \cite[\S~2.3]{BHHMS2025-1}; their construction is the special case when $K/\Qp$ is unramified. 

The formal variables  $(x_i)$ can be identified with the coordinates of a formal $\OO_K$-module $\cG_{d,n}$ of dimension $d$ and height $n$ over $\Fq$ (see \cite[Exemple~4.3.2]{FF2018-1}). Let $\wt{\cG}_{d,n}=\invlim_{[\pi]}\cG_{d,n}$ be its universal cover. By \cite[Proposition~4.4.5]{FF2018-1}, we have an isomorphism 
	$\wt{\cG}_{d,n}\cong \bB^{\varphi^{n}=\pi^{d}}$.
Let $F=(F_0,\ldots,F_{d-1})$ denote the formal group law of $\cG_{d,n}$ in the chosen coordinates. For the $d$-dimensional formal variables $X,Y$ of $\cG_{d,n}$, since $F(X,0)=X$ and $F(0,Y)=Y$, the $i$-th component of $F(X,Y)$ is given by $F_i(X,Y)=X_i+Y_i+C_i(X,Y)$, where $C_i(X,Y)\in (X_0,\ldots,X_{d-1})(Y_0,\ldots,Y_{d-1})$ consists of higher order terms.

\begin{lem}\label{lem-higher-order}
	Let $I\subseteq \Fq\llbracket x_0,\ldots,x_{d-1}\rrbracket$ be the ideal generated by $x_0,\ldots,x_{d-1}$. 
	\begin{renumerate}
		\item For every $a\in \OO^{\times}_{K}$ with residue $\ol{a}\in \F^{\times}_{q}$, we have 
			\[
				a(x_i)\equiv \ol{a}x_i \mod I^{q}.
			\] 
		\item For $a=1+\pi b$ with  $b\in \OO_K$, we have 
			\[
				a(x_i)\equiv x_i+\ol{b} \pi (x_i) \mod I^{q+1}.
			\] 
		\item For $a=1+\pi^{m}b$ with $m\geq 1$ and $b\in \OO_{K}$, write $m=cd+f$ with $c\geq 0$ and $0\leq f<d$. We then have  
			\[
				a(x_i)\equiv x_i \mod I^{q^{cn+f}}.
			\] 
	\end{renumerate}
\end{lem}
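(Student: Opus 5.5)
The plan is to use only two facts. First, $a\mapsto [a]$ is a ring homomorphism $\OO_K\to \mathrm{End}(\cG_{d,n})$: in particular $[a+a'](X)=F([a](X),[a'](X))$ and $[aa']=[a]\circ[a']$. Second, Lemma \ref{lem-action} gives the actions of $\pi$ and of Teichm\"uller lifts explicitly. Throughout, for a $d$-tuple $X$ write $X\in I^{N}$ to mean every component lies in $I^{N}$, and note that $F_i(X,Y)=X_i+Y_i+C_i(X,Y)$ with $C_i\in (X)(Y)$.

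The basic estimate I would prove first concerns $[\pi^{m}]$. By Lemma \ref{lem-action}, $[\pi]$ acts by a monomial substitution $x_i\mapsto x_{i-1}^{q}$ for $i\geq 1$ and $x_0\mapsto x_{d-1}^{q^{n-d+1}}$. Iterating, $[\pi^{m}](x_i)=x_j^{q^{e}}$ for some index $j$. Each step contributes a factor $q$, except a wrap-around from index $0$, which contributes $q^{n-d+1}$. Any $d$ consecutive steps contain exactly one wrap-around, so they contribute $q^{(d-1)+(n-d+1)}=q^{n}$. Writing $m=cd+f$, the $c$ full cycles give $q^{cn}$ and the remaining $f$ steps give at least $q^{f}$; hence $e\geq cn+f$. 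Since $[\pi^{m}]$ is a continuous substitution of this monomial form and each monomial of positive degree maps into $I^{q^{cn+f}}$, we get
\[
[\pi^{m}](I)\subseteq I^{q^{cn+f}}, \qquad\text{and in particular}\qquad [\pi](I^{N})\subseteq I^{qN}.
\]
The only delicate point is this exponent bookkeeping; everything else is formal.

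For (iii), write $[1+\pi^{m}b](x)=F\bigl(x,[\pi^{m}]([b](x))\bigr)$. Since $[b](x)\in I$, the estimate gives $Y:=[\pi^{m}]([b](x))\in I^{q^{cn+f}}$. The $i$-th component is $x_i+Y_i+C_i(x,Y)$ with $C_i\in I\cdot I^{q^{cn+f}}$, so $a(x_i)\equiv x_i \bmod I^{q^{cn+f}}$.

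For (i), write $a=[\ol a]+\pi b$ with $b\in\OO_K$. Then $[a](x)=F\bigl([\ol a](x),[\pi]([b](x))\bigr)$. Here $[\ol a](x_i)=\ol a x_i$ by Lemma \ref{lem-action}, and $[\pi]([b](x))\in I^{q}$. The cross term lies in $I^{q+1}$, which gives (i).

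For (ii), take $a=1+\pi b$, so $[a](x)=F\bigl(x,[\pi]([b](x))\bigr)$. By (i) (and trivially if $\ol b=0$, since then $b\in\pi\OO_K$ and $[b](x)\in I^{q}$) we have $[b](x)=\ol b x+Z$ with $Z\in I^{q}$. Because $[\pi]$ is the monomial substitution above, whose exponents are at least $q$, it is additive on components in characteristic $p$. Hence $[\pi](\ol b x+Z)=[\pi](\ol b x)+[\pi](Z)$, with $[\pi](Z)\in I^{q^{2}}\subseteq I^{q+1}$. Moreover $[\pi](\ol b x)_i=\ol b^{\,q^{s}}\pi(x_i)=\ol b\,\pi(x_i)$, since $\ol b\in\Fq$. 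Finally, the cross term $C_i$ lies in $I\cdot I^{q}=I^{q+1}$, which gives (ii).
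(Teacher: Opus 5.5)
Your proof is correct and is essentially the paper's argument. You split $a$ as $[\ol a]+\pi b$ (resp.\ $1+\pi^m b$), use $[a+a']=F([a],[a'])$ with cross terms in $(X)(Y)$, and bound $\pi^m(I)\subseteq I^{q^{cn+f}}$ by tracking exponents of the monomial substitution of Lemma~\ref{lem-action} (this needs $n-d+1\geq 1$, which holds under the standing assumption $d<n$). Your (iii) skips the paper's intermediate refinement $a(x_i)\equiv x_i+\ol b\,\pi^m(x_i)$ modulo $I^{1+q^{cn+f}}$, and your (ii) uses additivity of the $p$-power substitution $[\pi]$ to spell out $[\pi]([b](X))_i\equiv \ol b\,\pi(x_i)$ modulo $I^{q^2}\subseteq I^{q+1}$, which the paper compresses into calling (ii) a special case of (i).
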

\begin{proof}
	For $a\in \OO^{\times}_{K}$, write $a=[\ol{a}]+\pi a_1$ with $a_1\in \OO_K$. By Lemma \ref{lem-action}, we have  $[\ol{a}](x_i)=\ol{a}x_i$. As  every scalar endomorphism fixes the origin of $\wt{\cG}_{d,n}$, we have $a_1(I)\subseteq I$. Lemma  \ref{lem-action} also implies  $\pi(I)\subseteq I^{q}$, from which we get  $(\pi a_1)(I)\subseteq I^{q}$. Let $X=(x_0,\ldots,x_{d-1})$. By the formal group law, we have $a(x_i)=F_i([\ol{a}](X), (\pi a_1)(X))$.
	Since the second factor  lies in $I^{q}$, every higher order term lies in $I^{q+1}$, hence $a(x_i)\equiv \ol{a}x_i\mod I^{q}$.  This proves part (i), and part (ii) is the special case of (i) when $\ol{a}=1$. 

	For part (iii), we first compute the action of $\pi^{m}$. Notice that $\pi^{cd}(x_i)=x_i^{q^{cn}}$. In  the remaining $f$ iterations of $\pi$-action, the cyclic index crosses the special case  $x_0\mapsto x_{d-1}^{q^{n-d+1}}$ of part (i) of Lemma \ref{lem-action} exactly when $i<f$. Thus we have 
	\[
		\pi^{m}(x_i)=\begin{cases}
		x_{i-f}^{q^{cn+f}}  &  \text{if $i\geq f$}\\
		x_{i-f+d}^{q^{cn+f+n-d}}  &  \text{if $i<f$.}
		\end{cases}
	\] 
	Because $d<n$ by assumption, the exponent in both cases is  at least $q^{cn+f}$, hence we have $\pi^{m}I\subseteq I^{q^{cn+f}}$. By part (i), we have $b(x_i)\equiv \ol{b}x_i\mod I^{q}$, which implies $\pi^{m}b(x_i)\equiv \ol{b}\pi^{m}(x_i)\mod I^{q^{cn+f+1}}$ and  $\pi^{m}b(I)\subseteq I^{q^{cn+f}}$. Since $a(x_i)=F_i(X, \pi^{m}b(X))$, the higher order  terms belong to $I\cdot I^{q^{cn+f}}=I^{1+q^{cn+f}}$. As  $q^{cn+f+1}>1+q^{cn+f}$, we get $a(x_i)\equiv x_i+\ol{b}\pi^{m}(x_i) \mod I^{1+q^{cn+f}}$. Since $\pi^{m}(x_i)\in I^{q^{cn+f}}$, this proves part (iii). 
\end{proof}

\begin{prop}\label{prop-stable}
	Each affinoid perfectoid $\cW_i$ in the open cover (\ref{eq-cover}) of $\bB^{\varphi^{n}=\pi^{d}}\bs \{0\}$ is stable under the action of $\OO^{\times}_{K}$. 
\end{prop}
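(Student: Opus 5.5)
We have to show that for every $a\in \OO^{\times}_{K}$, the automorphism of $\bB^{\varphi^{n}=\pi^{d}}\bs\{0\}$ induced by $a$ maps $\cW_i$ into itself. Since $a^{-1}\in \OO^{\times}_{K}$ as well, it is enough to prove $a(\cW_i)\subseteq \cW_i$. Concretely, we must check that at every point $x\in \cW_i$ the pulled-back functions satisfy two conditions:
\[
\left|a(x_j)(x)\right|\leq \left|a(x_i)(x)\right| \text{ for all } j, \qquad \left|a(x_i)(x)\right|\neq 0.
\]
The tool is Lemma \ref{lem-higher-order}(i): $a(x_j)= \ol{a}x_j + e_j$ with $e_j\in I^{q}$. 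The plan is to show that on $\cW_i$ the error term $e_j$ is strictly smaller in absolute value than $\left|x_i\right|$.

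First we bound the error. Each $e_j$ is a convergent power series in the $x_k$ with $\Fq$-coefficients, all of whose monomials have total degree $\geq q$. At a point of $\cW_i$ we have $\left|x_k\right|\leq \left|x_i\right|<1$ for every $k$. Hence each monomial has absolute value $\leq \left|x_i\right|^{q}$, and by the ultrametric inequality (passing to the limit for the series),
\[
\left|e_j(x)\right|\leq \left|x_i(x)\right|^{q}<\left|x_i(x)\right|.
\]
The strict inequality uses $0<\left|x_i\right|<1$ on $\cW_i$. Here $\left|x_i\right|<1$ holds because the $x_k$ are topologically nilpotent, and $\left|x_i\right|\neq 0$ holds by the definition of $\cW_i$.

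Next we compare. Since $\ol{a}\in \Fq^{\times}$ has absolute value $1$ and $\left|e_i\right|<\left|x_i\right|$, the strict ultrametric inequality gives
\[
\left|a(x_i)\right|=\left|\ol{a}x_i+e_i\right|=\left|x_i\right|\neq 0.
\]
For $j\neq i$ we get
\[
\left|a(x_j)\right|\leq \max(\left|x_j\right|,\left|e_j\right|)\leq \left|x_i\right|=\left|a(x_i)\right|.
\]
Thus $a$ preserves both defining conditions of $\cW_i$. In fact it preserves the rational subset together with its ring of functions, sending $y_{ij}\mapsto a(x_j)/a(x_i)$, which has norm $\leq 1$, and $x_i\mapsto a(x_i)$, which is a unit. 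The only point needing care, and the main one, is making the valuation estimate on $e_j$ rigorous. Elements of $\Fq\llbracket x_0,\ldots,x_{d-1}\rrbracket$ evaluated at a point of the open unit polydisc converge, and the bound $\left|\sum c_\alpha x^\alpha\right|\leq \sup_\alpha \left|x^\alpha\right|$ over $\left|\alpha\right|\geq q$ follows from continuity of the valuation. Since $\OO^{\times}_K$ acts through continuous automorphisms of the formal group, this is the standard estimate.
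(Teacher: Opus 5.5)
Your proof is correct and follows the paper's argument essentially step for step. Both write $a(x_j)=\ol{a}x_j+\epsilon_j$ with $\epsilon_j\in I^{q}$ using Lemma \ref{lem-higher-order}(i), bound $|\epsilon_j|\leq |x_i|^{q}<|x_i|$ on $\cW_i$, and conclude $|a(x_i)|=|x_i|\neq 0$ and $|a(x_j)|\leq |x_i|$ by the ultrametric inequality; your extra remarks (the reduction via $a^{-1}$ and the justification of the estimate on $\epsilon_j$) only make explicit what the paper leaves implicit.
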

\begin{proof}
	Recall that each $\cW_i$ is defined by the condition  $\left|x_j\right|\leq \left|x_i\right|\neq 0$ for all $j$.  For  $a\in \OO^{\times}_{K}$, we show $\left|a(x_j)\right|\leq \left|a(x_i)\right|\neq 0$ for all $j$. By  part (i) of Lemma \ref{lem-higher-order}, we have $a(x_j)=\ol{a}x_j+\epsilon_j$ for some $\epsilon_j\in I^{q}$. Since  the variables $x_j$ are topologically nilpotent, we have $0\leq \left|x_j\right|<1$. Since $\epsilon_j\in I^{q}$, every monomial in $\epsilon_j$ has total degree at least $q$, hence we get $\left|\epsilon_j\right|\leq \left|x_i\right|^{q}<\left|x_i\right|$. Because $\left|\ol{a}\right|=1$, we also have   $\left|\epsilon_i\right|<\left|x_i\right|=\left|\ol{a}x_i\right|$. The non-archimedean triangle inequality then gives 
	\[
		\left|a(x_i)\right|=\left|\ol{a}x_i+\epsilon_i\right|=\left|x_i\right|, \quad 
	\left|a(x_j)\right|\leq \max \{\left|x_j\right|, \left|x_i\right|^{q}\}\leq \left|x_i\right|.
\] 
Therefore each $\cW_i$ is $\OO^{\times}_{K}$-stable. 
\end{proof}

\subsection{Tame quotients}
The group $\OO^{\times}_{K}$  has a decomposition 
\[
\OO^{\times}_{K}\cong \mu_{q-1}\times U_1,
\] 
where $\mu_{q-1}=[\Fq^{\times}]\subseteq \OO^{\times}_{K}$ is the \textit{tame subgroup} and $U_{1}=1+\pi \OO_{K}$ is the \textit{principal unit group}. For $m\geq 1$, let $U_m=1+\pi^{m}\OO_{K}$ be the \textit{$m$-th higher unit group} of $\OO^{\times}_{K}$. Such  $U_m$ form a  descending filtration
\[
\OO^{\times}_{K}\supseteq U_1\supseteq U_2\supseteq \cdots
\] 
with quotients $\OO^{\times}_{K}/U_1\cong \Fq^{\times}$ and $U_m/U_{m+1}\cong \Fq$. We now describe the tame quotient $\cW_i/\underline{\mu_{q-1}}$ for each chart $\cW_i$ of the punctured space.

\begin{prop}\label{prop-tame-quotient}
	Let $0\leq i\leq d-1$ and let  $u_i=x_i^{q-1}$. 
\begin{renumerate}
	\item For each $i$, let  $(V_i, V^{+}_{i})$ be the perfectoid Tate--Huber pair given by 
\[
	V_i=\F_{q}\llparent u_i^{1/p^{\infty}}\rrparent \langle y^{1/p^{\infty}}_{ij}, j\neq i\rangle, \quad V_i^{+}=\F_{q}\llbracket u_i^{1/p^{\infty}}\rrbracket \langle y^{1/p^{\infty}}_{ij}, j\neq i\rangle.
\] 
		Then for the affinoid perfectoid  $\cW_i=\Spa(W_i,W_i^{+})\subseteq\bB^{\varphi^{n}=\pi^{d}}\bs \{0\}$, we have $(W_i^{\mu_{q-1}},W_i^{+\mu_{q-1}})=(V_i,V_i^{+})$.

	\item Let $\cV_i=\Spa(V_i, V_i^{+})$. Then  we have 
		\[
			\cW_i/\underline{\mu_{q-1}}\cong \cV_i\quad\text{and}\quad (\bB^{\varphi^{n}=\pi^{d}}\bs \{0\})/\underline{\mu_{q-1}}\cong \bigcup^{d-1}_{i=0}\cV_i.
		\]
		In particular, the  quotient $(\bB^{\varphi^{n}=\pi^{d}}\bs \{0\})/\underline{\mu_{q-1}}$ is a perfectoid space.
	\end{renumerate}
\end{prop}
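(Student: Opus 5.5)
The plan is to compute the $\mu_{q-1}$-invariants of $W_i$ directly, using that the action is diagonal in the chosen coordinates. By Lemma \ref{lem-action}(ii), $[c]\in\mu_{q-1}$ acts by $x_j\mapsto cx_j$ for every $j$. Consequently $y_{ij}=x_j/x_i$ is fixed, $u_i=x_i^{q-1}$ is fixed because $c^{q-1}=1$, and the $p$-power roots of these elements are fixed as well. Indeed $c^{1/p^k}$ is again an element of $\Fq^\times$, since $\Fq$ is perfect, and $(cx)^{1/p^k}=c^{1/p^k}x^{1/p^k}$. So the action on $x_i^{1/p^k}$ is by the character $c\mapsto c^{1/p^k}$.

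\textbf{Step 1: decompose $W_i$ into isotypic pieces.} Elements of $W_i$ are convergent sums $\sum_{e}x_i^{e}f_e$, where $e\in\Z[1/p]$ and $f_e\in\Fq\langle y_{ij}^{1/p^\infty}\rangle$, with the usual boundedness and convergence conditions. The element $[c]$ fixes each $f_e$ and scales $x_i^{e}$ by $c^{e}$. Here $c^e$ is well defined because raising to powers of $p$ is bijective on $\Fq^\times$, and $e\mapsto c^e$ factors through $\Z[1/p]\to\Z_{(p)}/(q-1)\cong\Z/(q-1)$, since $q-1$ is prime to $p$. Averaging over $\mu_{q-1}$ with $\frac{1}{q-1}$, which is a unit in $\Fq$, gives a continuous projector onto the invariants. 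The invariant part is exactly the subseries over those $e$ whose image in $\Z/(q-1)$ vanishes, i.e.\ $e\in(q-1)\Z[1/p]$. These terms are precisely the series in $u_i^{1/p^\infty}$ with coefficients in $\Fq\langle y_{ij}^{1/p^\infty}\rangle$. Hence $W_i^{\mu_{q-1}}=V_i$.

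\textbf{Step 2: the integral ring.} The same projector preserves $W_i^+$, because it is a convex combination with unit coefficients of automorphisms preserving $W_i^+$. It therefore gives $W_i^{+\mu_{q-1}}=V_i\cap W_i^+=V_i^+$. This uses that $\Fq\llbracket x_i^{1/p^\infty}\rrbracket$ intersected with the $u_i$-series is $\Fq\llbracket u_i^{1/p^\infty}\rrbracket$, which follows from the same decomposition into monomials.

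\textbf{Step 3: the quotient.} For (ii) I will show that $\cW_i\to\cV_i$ is a finite \'etale $\mu_{q-1}$-torsor. Since $(q-1)$ is prime to $p$, the extension $V_i\to W_i$ is obtained by adjoining a $(q-1)$-th root $x_i$ of the unit $u_i$. Thus
\[
W_i=V_i[T]/(T^{q-1}-u_i),
\]
and $W_i^{+}$ is the integral closure of $V_i^{+}$. This extension is finite \'etale, and $\mu_{q-1}$ acts simply transitively on the roots. The identification uses that $x_i^{a/p^k}=x_i^{a'}\cdot u_i^{b}$ for suitable residues, i.e.\ $W_i=\bigoplus_{r=0}^{q-2}x_i^rV_i$. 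It follows that $\cW_i\to\cV_i$ is a $\underline{\mu_{q-1}}$-torsor, and hence $\cW_i/\underline{\mu_{q-1}}\cong\cV_i$ as $v$-sheaves. Finally, $\cV_i$ is affinoid perfectoid, being a perfectoid Tate algebra over $\Fq\llparent u_i^{1/p^\infty}\rrparent$. The charts $\cW_i$ are $\mu_{q-1}$-stable (Proposition \ref{prop-stable}) and cover the punctured space, so the quotients glue to give $\bigcup\cV_i$. This gluing is compatible with open immersions because the quotient by a finite group of a stable open is open in the quotient.

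The main obstacle is the bookkeeping of exponents in Step 1: one has to check that $\Z[1/p]\cap\{e: c^e=1\ \forall c\}$ equals $(q-1)\Z[1/p]$, and that the projector is continuous. Both reduce to $\gcd(p,q-1)=1$.
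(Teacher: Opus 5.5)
Your proposal is correct, and for part (ii) it takes a different route from the paper.

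\textbf{Part (i).} The paper checks that $u_i$, $y_{ij}$ and their $p$-power roots are fixed. It then gets the reverse inclusion from the remark that $c\mapsto c^{1/p^r}$ is an automorphism of $\Fq^\times$. Your isotypic decomposition, by the class of the $x_i$-exponent in $\Z[1/p]/(q-1)\Z[1/p]\cong\Z/(q-1)$, is the fully written-out version of that same character computation. One slip: you wrote $\Z_{(p)}$, but $1/p\notin\Z_{(p)}$. What you actually use is that $p$ is invertible modulo $q-1$.

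\textbf{Part (ii).} The paper never exhibits a torsor. It applies the general result \cite{CGJ2023-1} (cf.\ Hansen) that the $v$-quotient of an affinoid perfectoid space by a finite group is $\Spa(A^G,A^{+G})$, and combines this with (i). You instead show directly that
\[
W_i=\bigoplus_{r=0}^{q-2}x_i^rV_i\cong V_i[T]/(T^{q-1}-u_i)
\]
is a Kummer extension, so $\cW_i\to\cV_i$ is a finite \'etale $\mu_{q-1}$-torsor. This is exactly the paper's own criterion, Proposition \ref{prop-Hansen}, applied to $a=x_i$, since $[c](x_i)-x_i=(c-1)x_i\in W_i^\times$ for $c\neq 1$.

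\textbf{Comparison.} The paper's route is shorter and works for any finite group, including wild ones, but it only identifies the quotient. Yours exploits tameness and the diagonal form of the action. It is self-contained and yields the stronger statement that $\cW_i\to\cV_i$ is a finite \'etale torsor.

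\textbf{One point to spell out.} You should justify that $W_i^+$ is the integral closure of $V_i^+$ in $W_i$, which the finite \'etale statement for the pairs needs.
\begin{itemize}
\item Any $w\in W_i^+$ is a root of $\prod_{c}(T-[c](w))$. This polynomial has coefficients in $W_i^{+\mu_{q-1}}=V_i^+$ by part (i), so $w$ is integral over $V_i^+$.
\item Conversely, $W_i^+$ is integrally closed in $W_i$ and contains $V_i^+$, so the integral closure of $V_i^+$ lies in $W_i^+$.
\end{itemize}
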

\begin{proof}
	Let  $c\in \Fq^{\times}$ and let $[c]\in \mu_{q-1}$ be its Teichm\"uller lift. From Lemma \ref{lem-action}, we get $[c](u_i)=u_i$ and $[c](y_{ij})=y_{ij}$.
	For $p$-power roots of $x_i$, Lemma \ref{lem-action} also implies $[c](x_i^{1/p^{r}})=c^{1/p^{r}}x_i^{1/p^{r}}$, where $c^{1/p^{r}}$ is uniquely determined in the perfect field $\Fq$. Hence we have 
	\[
		[c](u_i^{1/p^{r}})=u_i^{1/p^{r}},\quad [c](y^{1/p^{r}}_{ij})=y^{1/p^{r}}_{ij},
	\] 
	which shows that $V_i\subseteq (W_i)^{\mu_{q-1}}$ and $V_i^{+}\subseteq (W_i^{+})^{\mu_{q-1}}$. 
	As $q-1$ is prime to $p$, the map $c\mapsto c^{1/p^{r}}$ is an automorphism. This gives  $V_i=(W_i)^{\mu_{q-1}}$ and $V_i^{+}=(W_i^{+})^{\mu_{q-1}}$, which proves  part (i). 

	Since the tame subgroup $\mu_{q-1}$ is finite, \cite[Proposition~2.1.3]{CGJ2023-1} (cf. also \cite[Theorem~1.2]{Hansen2016-1}) together with part (i) imply $\cW_i/\underline{\mu_{q-1}}\cong \cV_i$. Because the open cover $\bigcup^{d-1}_{i=0}\cW_i$ of $\bB^{\varphi^{n}=\pi^{d}}\bs \{0\}$  is $\OO^{\times}_{K}$-stable by Proposition \ref{prop-stable}, such $\cV_i$ form an affinoid perfectoid open cover of $(\bB^{\varphi^{n}=\pi^{d}}\bs \{0\})/\underline{\mu_{q-1}}$, which proves part (ii).  
\end{proof}

From now on, we also  fix the affinoid perfectoid cover $$(\bB^{\varphi^{n}=\pi^{d}}\bs \{0\})/\underline{\mu_{q-1}}=\bigcup^{d-1}_{i=0}\cV_i, \quad \cV_i=\Spa(V_i,V_i^{+})$$ as described in the proposition above. 

On each chart $\cV_i$, we have an action of  $U_m$ for every $m\geq 1$. 
Recall from Lemma \ref{lem-higher-order} that  the estimates of the $\OO^{\times}_{K}$-action on the variables  $x_i$ are expressed $I$-adically for the  ideal $I=(x_0,\ldots,x_{d-1})$. On each chart $\cW_i$ of $\bB^{\varphi^{n}=\pi^{d}}\bs \{0\}$, since  $x_i$ is a topologically nilpotent unit, we can describe such estimates $x_i$-adically. Similarly on each chart $\cV_i$, such estimates can be stated  $u_i$-adically.

As one can see from part (iii) of Lemma \ref{lem-higher-order}, the $x_i$-adic order of the $U_m$-action is described in terms of $m$ modulo the cycle length $d$. For the $U_m$-action on $\bB^{\varphi^{n}=\pi^{d}}$, we fix the following constant. 

\begin{notn}\label{notn-Qm}
For every $m\geq 1$, write $m=cd+f$ with $0\leq f\leq d-1$. We denote by 
\[
C_m:=\frac{q^{cn+f}-1}{q-1}.
\] 
\end{notn}

For each higher unit subgroup $U_m\subset \OO^{\times}_{K}$, we now give an estimate of its action on the elements $u_i, y_{ij}\in V_i^{+}$. 
\begin{lem}\label{lem-U_m-est}
Let $m\geq 1$, $g\in U_m$, and $j\neq i$. We have 
\begin{renumerate}
	\item $g(u_i)/u_i -1, g(y_{ij})-y_{ij}\in u_i^{C_m}V_i^{+}$;
	\item $g(u_i^{\lambda }V_i^{+})=u_i^{\lambda }V_i^{+}$ for every $\lambda \in \Z[1/p]_{\geq 0}$. 
\end{renumerate}
\end{lem}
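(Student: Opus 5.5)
The plan is to transfer Lemma \ref{lem-higher-order}(iii) from the ideal $I$ to the chart $\cW_i$, and then to the invariants $V_i$.

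\textbf{Estimate on $\cW_i$.} Write $g=1+\pi^{m}b$ and $m=cd+f$. Lemma \ref{lem-higher-order}(iii) gives $g(x_k)=x_k+\epsilon_k$ with $\epsilon_k\in I^{q^{cn+f}}$. Here $\epsilon_k$ is a power series in $x_0,\ldots,x_{d-1}$ all of whose monomials have degree at least $N:=q^{cn+f}$.

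On $\cW_i$ each $x_j=x_i y_{ij}$, where $y_{ij}\in W_i^{+}$ and $y_{ii}=1$. So any monomial of degree $\geq N$ lies in $x_i^{N}W_i^{+}$. The series converges in $W_i^{+}$ because $W_i^{+}$ is $x_i$-adically complete, which gives $\epsilon_k\in x_i^{N}W_i^{+}$.

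\textbf{Translating to $u_i$ and $y_{ij}$.} Set $\delta_i:=\epsilon_i/x_i\in x_i^{N-1}W_i^{+}$. Then $g(x_i)/x_i=1+\delta_i$, a principal unit. Raising to the power $q-1$ gives
\[
g(u_i)/u_i=(1+\delta_i)^{q-1}\in 1+x_i^{N-1}W_i^{+}=1+u_i^{C_m}W_i^{+},
\]
using $x_i^{N-1}=u_i^{(N-1)/(q-1)}=u_i^{C_m}$.

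For $y_{ij}$ we compute
\[
g(y_{ij})=\frac{x_j+\epsilon_j}{x_i(1+\delta_i)}=\frac{y_{ij}+\epsilon_j/x_i}{1+\delta_i},
\]
and $\epsilon_j/x_i\in x_i^{N-1}W_i^{+}$. Hence $g(y_{ij})-y_{ij}\in u_i^{C_m}W_i^{+}$.

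Both differences are $\mu_{q-1}$-invariant, since $\OO_K^\times$ is abelian and $u_i$, $y_{ij}$ are invariant. It therefore remains to check
\[
u_i^{C_m}W_i^{+}\cap V_i=u_i^{C_m}V_i^{+}.
\]
This follows by dividing by $u_i^{C_m}$, which is $\mu_{q-1}$-invariant, and applying Proposition \ref{prop-tame-quotient}(i). This proves part (i).

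\textbf{Part (ii).} By part (i), $g(u_i)=u_i v$ with $v\in 1+u_i^{C_m}V_i^{+}$, so $v$ is a unit of $V_i^{+}$ because $C_m>0$ and $V_i^{+}$ is $u_i$-adically complete. Since $g$ is a continuous ring automorphism of the perfect ring $V_i$, it commutes with $p$-th roots, so $g(u_i^{1/p^r})=u_i^{1/p^r}v^{1/p^r}$ with $v^{1/p^r}$ again a unit of $V_i^{+}$. Hence $g(u_i^{\lambda})\in u_i^{\lambda}(V_i^{+})^{\times}$ for all $\lambda\in\Z[1/p]_{\geq0}$.

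The group $\OO_K^\times$ preserves $W_i^{+}$: by Proposition \ref{prop-stable} it stabilizes $\cW_i$, and it acts by continuous automorphisms, so it preserves the ring of integral elements. Thus $g(V_i^{+})\subseteq V_i^{+}$, and we get $g(u_i^\lambda V_i^{+})\subseteq u_i^\lambda V_i^{+}$. Applying the same argument to $g^{-1}\in U_m$ gives the reverse inclusion, hence equality.

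\textbf{Expected main obstacle.} The delicate point is justifying that a power series in $I^{N}$, once evaluated on $\cW_i$, lands in $x_i^{N}W_i^{+}$. This requires convergence and completeness in $W_i^{+}$, together with the fact that the action on $W_i$ is the continuous extension of the formal action on $\Fq\llbracket x_0,\ldots,x_{d-1}\rrbracket$ to the perfection and the localization. I would handle this first by noting that the action is determined by the images of the $x_k$, through perfectness and continuity.
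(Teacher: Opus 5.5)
Your proposal is correct and is essentially the paper's proof. Your $\epsilon_j/x_i$ is the paper's $\delta_j$, and you use the same identities $g(u_i)/u_i=(1+\delta_i)^{q-1}$ and $g(y_{ij})-y_{ij}=(\delta_j-y_{ij}\delta_i)/(1+\delta_i)$, with $\mu_{q-1}$-invariance to pass from $W_i^{+}$ to $V_i^{+}$. The differences are cosmetic: you descend the final differences rather than the $\delta_j$, and in part (ii) you state $g(V_i^{+})\subseteq V_i^{+}$ and the reverse inclusion via $g^{-1}$ explicitly, where the paper leaves them implicit.
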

\begin{proof}
	For any $j$, denote by  $\delta _j:=(g(x_j)-x_j)/x_i$. Part (iii) of Lemma \ref{lem-higher-order} then gives $\delta _j\in x_i^{q^{cn+f}-1}W_i^{+}$. As $g$ commutes with the $\mu_{q-1}$-action, $\delta _j$ is $\mu_{q-1}$-invariant, hence $\delta _j\in u_i^{C_m}V_i^{+}$; in particular, we get  $(1+\delta _j)\in (V_i^{+})^{\times}$. Direct computation then gives
	\[
		\frac{g(u_i)}{u_i}-1=(1+\delta _i)^{q-1}-1, \quad g(y_{ij})-y_{ij}=\frac{\delta _j-y_{ij}\delta _i}{1+\delta _i},
	\] 
	from which part (i) follows. Since $g(u_i)/u_i\in (V_i^{+})^{\times}$, taking its $p$-power roots preserves the filtration induced by $u_i$. This proves part (ii). 
\end{proof}

\begin{lem}\label{lem-infty}
	For every $m\geq 1$, the sequence $(C_{m+r}/q^{r+1})_{r\geq 0}$ is increasing and tends to $\infty$ as $r\to\infty$. 
\end{lem}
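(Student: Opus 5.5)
The plan is to make the $q$-adic exponent in $C_m$ explicit and compare how it grows with the factor $q^{r+1}$. For $m\geq 1$ write $m=cd+f$ with $0\leq f\leq d-1$ as in Notation \ref{notn-Qm}, and set $e(m):=cn+f$, so that $C_m=(q^{e(m)}-1)/(q-1)$. Throughout I use the standing assumption $0<d<n$ of this section.

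\textbf{Step 1: how $e$ grows by one.} I will compute $e(m+1)-e(m)$ in two cases.
\begin{itemize}
\item If $f<d-1$, then $m+1=cd+(f+1)$, so $e(m+1)=e(m)+1$.
\item If $f=d-1$, then $m+1=(c+1)d$, so $e(m+1)=(c+1)n$. Hence $e(m+1)-e(m)=n-d+1\geq 2$, using $d<n$.
\end{itemize}
In both cases $e(m+1)\geq e(m)+1$.

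\textbf{Step 2: monotonicity.} From $e(m+r+1)\geq e(m+r)+1$ I get
\[
C_{m+r+1}\geq \frac{q^{e(m+r)+1}-1}{q-1}=qC_{m+r}+1>qC_{m+r}.
\]
Dividing by $q^{r+2}$ gives $C_{m+r+1}/q^{r+2}>C_{m+r}/q^{r+1}$. So the sequence is in fact strictly increasing.

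\textbf{Step 3: divergence.} Here I need the extra growth from Step 1. Among any $d$ consecutive increments $m'\mapsto m'+1$, exactly one has $f=d-1$, and that increment contributes $n-d+1$ rather than $1$. Summing the increments gives
\[
e(m+r)\geq e(m)+r+(n-d)\lfloor r/d\rfloor\geq e(m)+r+\lfloor r/d\rfloor .
\]
It follows that
\[
\frac{C_{m+r}}{q^{r+1}}\geq \frac{q^{e(m)+r+\lfloor r/d\rfloor}-1}{(q-1)q^{r+1}}\geq \frac{q^{e(m)+\lfloor r/d\rfloor-1}-q^{-r-1}}{q-1}.
\]
The right-hand side tends to $\infty$ as $r\to\infty$.

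The only point that needs care is Step 3. The divergence relies on the strict inequality $d<n$: this is what makes the jump at $f=d-1$ exceed $1$. If $d=n$, every increment of $e$ would equal $1$ and the ratio would stay bounded. Everything else is elementary bookkeeping.
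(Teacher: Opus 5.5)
Your proof is correct and follows essentially the paper's route. Your Step 2 is the paper's inequality $C_{m+1}\geq qC_m$, and your Step 3 is the same exponent bookkeeping as the paper's bound $C_{m+d}\geq q^{n}C_m$; you simply carry it out for every $r$, whereas the paper works along the subsequence $r=\ell d$ and then uses monotonicity.
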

\begin{proof}
From the  definition of $C_m$, we have $C_{m+1}\geq qC_m$ and $C_{m+d}\geq q^{n}C_m$. The first inequality proves that the sequence is increasing, and the second one further gives
\[
	\frac{C_{m+\ell d}}{q^{\ell d+1}}\geq q^{\ell (n-d)}\frac{C_m}{q}
\] 
	for any $\ell\geq 0$. Since $C_m>0$ and $d<n$, we get $C_{m+r}/q^{r+1}\to\infty$ as $r\to\infty$.
\end{proof}

\subsection{Orbit norm}
Let $G$ be a pro-$p$ group  equipped with a descending filtration 
\[
G=G_0\supseteq G_1\supseteq G_2\supseteq \cdots
\] 
formed by open normal subgroups $G_r$ such that $\bigcap^{}_{r\geq 0}G_r=\{1\}$. For each quotient $G/G_r$, a \textit{transversal} for $G/G_r$ is a subset of $G$ containing exactly one coset representative of each  coset in $G/G_r$. For each $r\geq 1$, let  $\Sigma _r$ be a transversal for $G/G_r$. We call $(\Sigma _r)_{r\geq 1}$ a \textit{compatible} system of transversals for (the filtration $G_{\bullet}$ of) $G$ if there exist transversals  $\Theta_r$ for $G_{r-1}/G_r$ such that 
\[
\Sigma _r=\Theta_1\cdots \Theta_r.
\] 

Let $A$ be a complete perfect topological $\Fp$-algebra with an action of $G$ as above.
\begin{defn}\label{defn-depth-norm}
	Let $r\geq 0$ and  let $\Sigma _r$ be a transversal for $G/G_r$. For an element $x\in A$, we call
\[
	N_{G}^{(r)}(x):=\left( \prod^{}_{\sigma \in \Sigma _r}\sigma(x)\right)^{1/[G:G_r]}
\] 
a \textit{depth-$r$ $G$-orbit norm} of $x$. 
\end{defn}

Notice that $N_G^{(r)}(x)$ depends on the choice of $\Sigma _r$. For fixed $\Sigma _r$, $N_G^{(r)}(x)$ is well-defined because $A$ is perfect.  Let  $(\Sigma _r)_{r \geq 0}$ be  a compatible system of transversals for $G$ with associated $(N_G^{(r)}(x))_{r\geq 0}$. If $N_G^{(r)}(x)$ converges in $A$ as $r\to\infty$ and the limit is independent of the chosen compatible transversals, we let 
\[
	N_G(x):=\lim_{r\to\infty}N^{(r)}_{G}(x). 
\] 

\begin{defn}\label{defn-orbit-norm}
	For $x\in A$, the element $N_G(x)$ defined above will be called the \textit{$G$-orbit norm}\footnote{The construction of $N^{(r)}_{G}(x)$ and $N_{G}(x)$ is motivated by an exercise given by Jared Weinstein at the Arizona Winter School in 2013; see \cite[Project~A]{Weinstein-AWS}.} of $x$. 
\end{defn}

In the rest of this article, we shall be concerned only with the case when $A$ is $V_i=\OO(\cV_i)$ or $W_i=\OO(\cW_i)$,  and $G$ is $U_m\subseteq \OO^{\times}_{K}$ for some $m\geq 1$ with  fixed filtration 
\[
G=U_m\supset G_1=U_{m+1}\supset \cdots\supset G_r=U_{m+r}\supset \cdots
\]

\begin{exmps}~{}
\begin{arenumerate}
	\item Suppose $p$ is odd and $K=\Qp$. Then we have $U_1=1+p\Zp$, which is topologically generated by $\gamma=1+p$. 
A compatible system of transversals for $U_1$ can be chosen as $\Sigma _r=\{\gamma ^{t}: 0\leq t<p^{r}\}$. 
\item For general finite extension $K/\Qp$, a compatible system of transversals for $U_1$ can be chosen as follows. For $r\geq 1$, we choose transversals $\Theta_r\subseteq U_1$ for the quotients $U_r/U_{r+1}$ as 
	$\Theta_r=\{1+\pi^{r}[c]: c\in \Fq\}$ 
	and then set $\Sigma _r=\Theta_1\cdots \Theta_r$. This gives $\left|\Sigma_r\right|=[U_1:U_{1+r}]=q^{r}$, and $\Sigma _r$ is explicitly given by 
	\[
		\Sigma _r=\{\prod^{r}_{t=1}(1+\pi^{t}[c_t]): c_1,\ldots, c_r \text{ vary independently over $\Fq$}\}.
	\] 
\end{arenumerate}
\end{exmps}

The following technical lemma gives an estimate of the difference between orbit norms at successive depths. 
\begin{lem}\label{lem-successive}
	Let $\lambda \in \Z[1/p]_{\geq 0}$, $f\in x_i^{\lambda }W_i^{+}$, and let $\{N^{(r)}_{U_m}(f)\}_{r\geq 0}$ be a compatible system of $U_m$-orbit norms of $f$. Suppose for every $r\geq 0$, there exists $e_r\in \Z[1/p]_{\geq 0}$ with $e_r\geq \lambda $ such that $g(f)-f\in x_i^{e_r}W_i^{+}$ for every $g\in U_{m+r}$, then we have $$N^{(r+1)}_{U_m}(f)-N^{(r)}_{U_m}(f)\in x_i^{\lambda +(e_r-\lambda )/q^{r+1}}W_i^{+}.$$
\end{lem}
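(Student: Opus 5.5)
Write $\Sigma_{r+1}=\Sigma_r\Theta_{r+1}$, where $\Theta_{r+1}$ is a transversal for $U_{m+r}/U_{m+r+1}$ contained in $U_{m+r}$ and has $q$ elements. Set $P_r=\prod_{\sigma\in\Sigma_r}\sigma(f)$, so $N^{(r)}_{U_m}(f)=P_r^{1/q^{r}}$ and $N^{(r+1)}_{U_m}(f)=P_{r+1}^{1/q^{r+1}}$. Because $A=W_i$ is perfect of characteristic $p$, taking $p$-power roots is a ring automorphism. The plan is therefore to compare the two elements after raising to the $q^{r+1}$-th power. We have
\[
\bigl(N^{(r+1)}_{U_m}(f)-N^{(r)}_{U_m}(f)\bigr)^{q^{r+1}}=P_{r+1}-P_r^{q}.
\]
Since $W_i^{+}=\F_q\llbracket x_i^{1/p^\infty}\rrbracket\langle y_{ij}^{1/p^\infty}\rangle$ and $x_i$ is a unit, the condition $h\in x_i^{e}W_i^{+}$ is equivalent to $h^{q^{r+1}}\in x_i^{q^{r+1}e}W_i^{+}$. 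Hence it suffices to show
\[
P_{r+1}-P_r^{q}\in x_i^{q^{r+1}\lambda+(e_r-\lambda)}W_i^{+}.
\]

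Next I would expand both terms as
\[
P_{r+1}=\prod_{\sigma\in\Sigma_r}\prod_{\theta\in\Theta_{r+1}}\sigma\theta(f),\qquad P_r^{q}=\prod_{\sigma\in\Sigma_r}\prod_{\theta\in\Theta_{r+1}}\sigma(f).
\]
Both products have $q^{r+1}$ factors and are indexed by the same pairs $(\sigma,\theta)$. For each pair, write $\sigma\theta(f)=\sigma(f)+\sigma(\theta(f)-f)$.

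Two facts are needed about these factors. First, each $\sigma(f)$ lies in $x_i^{\lambda}W_i^{+}$: $\sigma$ preserves $x_i^{\lambda}W_i^{+}$, by the $W_i$-analogue of Lemma~\ref{lem-U_m-est}(ii), which follows because $g(x_i)/x_i\in(W_i^{+})^\times$ by Lemma~\ref{lem-higher-order}(i). Second, since $\theta\in U_{m+r}$, the hypothesis gives $\theta(f)-f\in x_i^{e_r}W_i^{+}$, and applying $\sigma$ keeps the difference in $x_i^{e_r}W_i^{+}$ by the same stability.

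With these facts, I use the telescoping identity $\prod a_k-\prod b_k=\sum_k a_1\cdots a_{k-1}(a_k-b_k)b_{k+1}\cdots b_{N}$ with $N=q^{r+1}$. Each summand contains $N-1$ factors in $x_i^{\lambda}W_i^{+}$ and one factor in $x_i^{e_r}W_i^{+}$. It therefore lies in $x_i^{(q^{r+1}-1)\lambda+e_r}W_i^{+}$, and this exponent is exactly $q^{r+1}\lambda+(e_r-\lambda)$, as required. Since $W_i^{+}$ is closed under sums and products, this proves the lemma. Taking $q^{r+1}$-th roots then gives the stated exponent $\lambda+(e_r-\lambda)/q^{r+1}$.

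The step I expect to need the most care is the stability claim $\sigma(x_i^{\mu}W_i^{+})=x_i^{\mu}W_i^{+}$ for all $\sigma\in U_m$, which is used in both facts above. Lemma~\ref{lem-U_m-est} is stated on $V_i$, so I would reprove it on $W_i$. Write $g(x_i)=x_i(1+\delta_i)$ with $\delta_i$ topologically nilpotent; then $1+\delta_i$ is a unit in $W_i^{+}$, so $g(x_i^{\mu})=x_i^{\mu}(1+\delta_i)^{\mu}$ for all $\mu\in\Z[1/p]_{\ge0}$, using perfectness. In addition $g$ preserves $W_i^{+}$, since $W_i^{+}$ is $\OO_K^\times$-stable by Proposition~\ref{prop-stable} and is the ring of integral elements of the chart.

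A second point to check is that the identity $(a-b)^{q^{r+1}}=a^{q^{r+1}}-b^{q^{r+1}}$ applies to the $1/q^{r}$-th roots. This holds because Frobenius is additive in characteristic $p$.
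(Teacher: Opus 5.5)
Your proposal is correct and follows essentially the paper's proof. Both arguments write $\sigma\tau(f)=\sigma(f)+\sigma(\tau(f)-f)$, bound $P_{r+1}-P_r^{q}$ in $x_i^{q^{r+1}\lambda+(e_r-\lambda)}W_i^{+}$, and then take the unique $q^{r+1}$-th root; your telescoping identity simply replaces the paper's full expansion and count of $\delta$-factors. Your point that the stability $g(x_i^{\mu}W_i^{+})=x_i^{\mu}W_i^{+}$ needs checking on $W_i$ is a fair piece of extra care, since the paper cites Lemma~\ref{lem-U_m-est}(ii), which is phrased on $V_i$ in terms of $u_i$, and your argument via $g(x_i)/x_i\in(W_i^{+})^{\times}$ handles it.
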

\begin{proof}
	Let $(\Sigma _r)$ be the compatible transversals inducing such  orbit norms. 
	Since they are compatible, there exists transversal $\Theta_{r+1}$ for $U_{m+r}/U_{m+r+1}$ such that $\Sigma _{r+1}=\Sigma _r\Theta_{r+1}$. 
	Set $P_r=\prod^{}_{\sigma \in \Sigma _r}\sigma (f)$ so that $N_{U_m}^{(r)}(f)^{q^{r}}=P_r$. For $\sigma \in \Sigma _r$ and $\tau \in \Theta_{r+1}$, write 
	\[
		\sigma \tau (f)=\sigma (f)+\delta _{\sigma ,\tau }, \quad \delta _{\sigma ,\tau }=\sigma (\tau (f)-f).
	\] 
	Part (ii) of Lemma \ref{lem-U_m-est} implies 
	\[
		\sigma (f)\in x_i^{\lambda }W_i^{+}, \quad \delta _{\sigma ,\tau }\in x_i^{e_r}W_i^{+}.
	\] 
	From the definition of $N_{U_m}^{(r)}(f)$, we have 
	\[
		P_{r+1}=\prod^{}_{\sigma \in \Sigma _r}\prod^{}_{\tau \in \Theta_{r+1}}(\sigma (f)+\delta _{\sigma ,\tau }).
	\] 
	Within this product, terms containing no $\delta _{\sigma ,\tau }$-factor have product  $\prod^{}_{\sigma \in \Sigma _r}\sigma (f)^{q}=P_r^{q}$. In the product of the rest terms, there are $k\geq 1$ factors of order at least $e_r$ and $q^{r+1}-k$ factors of order at least $\lambda $, giving total order at least 
	\[
		ke_r+(q^{r+1}-k)\lambda  =q^{r+1}\lambda  +k(e_r-\lambda )\geq q^{r+1}\lambda  +(e_r-\lambda  ). 
	\] 
	Thus we have 
	\[
		P_{r+1}-P_r^{q}\in x_i^{q^{r+1}\lambda  +(e_r-\lambda  )}W_i^{+}.
	\] 
	Since $W_i^{+}$ is perfect of characteristic $p$, we have 
	\[
		(N^{(r+1)}_{U_m}(f)-N^{(r)}_{U_m}(f))^{q^{r+1}}=N^{(r+1)}_{U_m}(f)^{q^{r+1}}-N^{(r)}_{U_m}(f)^{q^{r+1}}=P_{r+1}-P^{q}_{r}.
	\] 
	Taking the unique $q^{r+1}$-th root then proves the claim. 
\end{proof}

We now give a criterion for the existence of the orbit norm. 
\begin{lem}\label{lem-exist}
	Under the hypothesis of Lemma \ref{lem-successive}, if we further have $$\frac{e_r-\lambda  }{q^{r+1}}\to\infty,$$ then the orbit norm $N_{U_m}(f)$ exists and belongs to $W_i^{+U_m}$. Moreover, if $\beta\in \Z[1/p]_{\geq 0}$ satisfies $\lambda +(e_r-\lambda )/q^{r+1}\geq \beta$ for every $r\geq 0$, then $N_{U_m}(f)-f\in x_i^{\beta}W_i^{+}$. 
\end{lem}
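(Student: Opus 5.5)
Fix a compatible system of transversals $(\Sigma_r)$ and write $N^{(r)}=N^{(r)}_{U_m}(f)$. By Lemma \ref{lem-successive}, the successive differences satisfy $N^{(r+1)}-N^{(r)}\in x_i^{\lambda+(e_r-\lambda)/q^{r+1}}W_i^{+}$. Since $x_i$ is a topologically nilpotent unit and $W_i^{+}$ is $x_i$-adically complete, the hypothesis $(e_r-\lambda)/q^{r+1}\to\infty$ makes $(N^{(r)})$ Cauchy, so it converges to some $N\in W_i^{+}$. For $r=0$ we have $\Sigma_0=\{1\}$ and $N^{(0)}=f$, so telescoping gives
\[
N-f=\sum_{r\geq 0}\bigl(N^{(r+1)}-N^{(r)}\bigr)\in x_i^{\beta}W_i^{+}
\]
whenever each exponent $\lambda+(e_r-\lambda)/q^{r+1}$ is at least $\beta$, using that $x_i^{\beta}W_i^{+}$ is closed. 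This proves the last assertion, once existence is settled.

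Next I show the limit does not depend on the compatible system. Let $\Sigma_r,\Sigma'_r$ be two transversals of $U_m/U_{m+r}$. After reordering, each $\sigma'\in\Sigma'_r$ has the form $\sigma h_\sigma$ with $h_\sigma\in U_{m+r}$. Then $\sigma'(f)-\sigma(f)=\sigma(h_\sigma(f)-f)\in x_i^{e_r}W_i^{+}$ by Lemma \ref{lem-U_m-est}(ii). Expanding the product of $q^r$ factors as in Lemma \ref{lem-successive}, the two depth-$r$ products differ by an element of $x_i^{q^r\lambda+(e_r-\lambda)}W_i^{+}$. Taking $q^r$-th roots, which is legitimate because $W_i^{+}$ is perfect, gives $N'^{(r)}-N^{(r)}\in x_i^{\lambda+(e_r-\lambda)/q^r}W_i^{+}$, and this tends to $0$. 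The same computation shows that any transversal system gives the same limit, compatible or not.

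For invariance, take $g\in U_m$. Since $U_m$ is abelian, and normality suffices, $g\Sigma_r$ is again a transversal of $U_m/U_{m+r}$, and $g(N^{(r)})$ is exactly the depth-$r$ norm computed with $g\Sigma_r$; here I use that $g$ commutes with $p$-power roots because the action is by ring automorphisms of a perfect ring. By the previous paragraph, $g(N^{(r)})-N^{(r)}\to 0$. The action is continuous, since each $g$ preserves the $x_i$-adic filtration by Lemma \ref{lem-U_m-est}(ii), so $g(N)=N$. Hence $N_{U_m}(f)\in W_i^{+U_m}$.

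The main obstacle is the independence from the choice of transversal. It requires the comparison estimate to improve with $r$. This works only because the $q^r$-th root divides the error exponent $e_r-\lambda$ by $q^r$, not $q^{r+1}$, and the hypothesis already controls the ratio with $q^{r+1}$.
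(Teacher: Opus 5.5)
Your proposal is correct and follows essentially the same route as the paper: Cauchy convergence from Lemma \ref{lem-successive}, and independence of the transversal by matching $\sigma'=\sigma h_\sigma$ with $h_\sigma\in U_{m+r}$ and taking $q^r$-th roots to get the exponent $\lambda+(e_r-\lambda)/q^r$; then invariance via the translated transversals $g\Sigma_r$, and the $\beta$-bound by telescoping from $N^{(0)}=f$. You are slightly more explicit than the paper: your comparison works for arbitrary, not just compatible, transversals, so $g\Sigma_r$ needs no compatibility check, and you spell out that $g$ is continuous and commutes with $p$-power roots.
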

\begin{proof}
	By Lemma \ref{lem-successive}, we have 
	\begin{equation}\label{eq-exist-1}
		N^{(r+1)}_{U_m}(f)-N^{(r)}_{U_m}(f)\in x_i^{\lambda +(e_r-\lambda )/q^{r+1}}W_i^{+}.
	\end{equation}
	As the exponent of $x_i$ goes to infinity by assumption, the sequence $(N^{(r)}_{U_m}(f))_{r}$ is Cauchy and converges in $W_i^{+}$. We are left to show the limit is independent of  the chosen transversals. Suppose $(\Sigma'_r)_{r}$ is another compatible system of transversals for $(U_m/U_{m+r})_r$ and let $N^{(r)}_{U_m,\Sigma _r'}(f)$ be the corresponding depth-r norms. For each $\Sigma _r'$, matching its representatives with those of $\Sigma _r$ gives 
	\[
	\Sigma _r'=\{\sigma h_{\sigma }: \sigma \in \Sigma _r\}
	\] 
	where each $h_{\sigma }$ belongs to $U_{m+r}$. By assumption of Lemma \ref{lem-successive} and part (ii) of Lemma \ref{lem-U_m-est}, we have $\sigma h_{\sigma }(f)-\sigma (f)\in x_i^{e_r}W_i^{+}$. Taking the product over the transversals $\Sigma _r$ and $\Sigma _r'$ respectively and taking the $q^{r}$-th root, we get 
		\[
		N^{(r)}_{U_m, \Sigma _r'}(f)-N^{(r)}_{U_m, \Sigma _r}(f)\in x_i^{\lambda +(e_r-\lambda )/q^{r}}W_i^{+}.
	\]
	As the exponent of $x_i$ goes to infinity, we see that $N^{(r)}_{U_m, \Sigma _r'}(f)$ converges to the same limit as $N^{(r)}_{U_m, \Sigma _r}(f)$. Thus the orbit norm $N_{U_m}(f)$ exists. To see that $N_{U_m}(f)$ is $U_m$-invariant, notice that any $g\in U_m$ gives another system of transversals $(g\Sigma _r)_{r}$. As the orbit norm is independent of the transversals, we get $g(N_{U_m}(f))=N_{U_m}(f)$. 

	Finally, if the exponent in (\ref{eq-exist-1}) is bounded below by $\beta$, summing up successive differences gives $N^{(r)}_{U_m}(f)-f\in x_i^{\beta}W_i^{+}$. Passing to the limit  gives  $N_{U_m}(f)-f\in x_i^{\beta}W_i^{+}$. 
\end{proof}

\begin{rem}
	Notice that the  $U_m$-orbit norm of $f$ being $U_m$-invariant is almost  formal and  ``built-in" from its construction. In general, given $G$ a filtered pro-$p$ group acting continuously on a complete perfect topological $\Fp$-algebra $A$, if $N_G(x)$ exists for some $x\in A$, it is natural to expect $N_G(x)$ is $G$-invariant.   
\end{rem}

For each chart $\cV_i=\Spa (V_i,V_i^{+})$ of the tame quotient $(\bB^{\varphi^{n}=\pi^{d}}\bs \{0\})/\underline{\mu_{q-1}}$, we now show that the elements $u_i, y_{ij}\in V_i^{+}=\Fq\llbracket u_i^{1/p^{\infty}}\rrbracket\langle y^{1/p^{\infty}}_{ij}, j\neq i\rangle$ admit $U_m$-orbit norm for every higher unit group $U_m\subset \OO^{\times}_{K}$. 
\begin{prop}\label{prop-Cauchy}
	Assume $0<d<n$. For every chart $\cV_i$ of $(\bB^{\varphi^{n}=\pi^{d}}\bs \{0\})/\underline{\mu_{q-1}}$ and every $m\geq 1$, the orbit norms $N_{U_m}(u_i)$ and $N_{U_m}(y_{ij})$ exist in $V_i^{+}$ and are $U_m$-invariant. Moreover, we have 
		\[
			\frac{N_{U_m}(u_i)}{u_i}-1\in u_i^{C_m/q}V_i^{+}, \quad N_{U_m}(y_{ij})-y_{ij}\in u_i^{C_m/q}V_i^{+}.
		\]
\end{prop}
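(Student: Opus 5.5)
The plan is to apply Lemma \ref{lem-exist}, transported from $W_i$ to $V_i$ via $u_i=x_i^{q-1}$. Since everything here is $\mu_{q-1}$-invariant and $V_i^{+}=W_i^{+\mu_{q-1}}$ by Proposition \ref{prop-tame-quotient}, it is cleaner to run the argument of Lemmas \ref{lem-successive} and \ref{lem-exist} directly in $V_i^{+}$ with $u_i$-adic orders. Their proofs use only three facts: Lemma \ref{lem-U_m-est}(ii) (each $g\in U_m$ preserves $u_i^{\lambda}V_i^{+}$), perfectness of $V_i^{+}$, and $u_i$-adic completeness. All three hold verbatim in $V_i^{+}$. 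Alternatively, one can note that $u_i^{\lambda}V_i^{+}\subseteq x_i^{(q-1)\lambda}W_i^{+}$, apply the $W_i$ version of the lemmas, and then intersect with $V_i$. This intersection is harmless, because an element of $V_i$ lying in $x_i^{(q-1)\beta}W_i^{+}$ lies in $u_i^{\beta}V_i^{+}$ after taking $\mu_{q-1}$-invariants; this uses that $x_i^{(q-1)\beta}=u_i^\beta$ is itself invariant.

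For $y_{ij}$, I take $f=y_{ij}$ and $\lambda=0$. For $g\in U_{m+r}$, Lemma \ref{lem-U_m-est}(i) applied at level $m+r$ gives $g(y_{ij})-y_{ij}\in u_i^{C_{m+r}}V_i^{+}$, so I take $e_r=C_{m+r}$. By Lemma \ref{lem-infty}, $e_r/q^{r+1}=C_{m+r}/q^{r+1}$ is increasing and tends to $\infty$. Hence the orbit norm exists and is $U_m$-invariant. Since the sequence is increasing, its minimum is the $r=0$ term $C_m/q$, so the bound in Lemma \ref{lem-exist} holds with $\beta=C_m/q$. This gives $N_{U_m}(y_{ij})-y_{ij}\in u_i^{C_m/q}V_i^{+}$.

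For $u_i$, I take $f=u_i$ and $\lambda=1$. For $g\in U_{m+r}$, Lemma \ref{lem-U_m-est}(i) gives $g(u_i)-u_i=u_i(g(u_i)/u_i-1)\in u_i^{1+C_{m+r}}V_i^{+}$, so $e_r=1+C_{m+r}$ and $(e_r-\lambda)/q^{r+1}=C_{m+r}/q^{r+1}\to\infty$. Lemma \ref{lem-exist} then yields existence, invariance, and $N_{U_m}(u_i)-u_i\in u_i^{1+C_m/q}V_i^{+}$. To divide by $u_i$, I use that $u_i$ is a unit in $V_i$ and that $V_i^{+}$ is $u_i$-torsion free with $u_i^{1+\beta}V_i^{+}=u_i\cdot u_i^{\beta}V_i^{+}$. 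This gives $N_{U_m}(u_i)/u_i-1\in u_i^{C_m/q}V_i^{+}$.

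The main obstacle I expect is the bookkeeping in transferring Lemmas \ref{lem-successive} and \ref{lem-exist} from $W_i$ to $V_i$. In particular, I need to check that the exponents remain in $\Z[1/p]$ so that the $q^{r+1}$-th roots make sense. This does hold here, since $C_{m+r}$ are integers and $q^{r+1}$ is a power of $p$. I would also confirm that completeness of $V_i^{+}$ gives convergence of the Cauchy sequence of depth-$r$ norms, and that transversal-independence goes through unchanged.
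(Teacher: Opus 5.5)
Your proposal is correct and follows the paper's proof. Both arguments feed Lemma \ref{lem-U_m-est}(i), applied at level $m+r$, into Lemma \ref{lem-exist}, with divergence and the lower bound $C_m/q$ supplied by Lemma \ref{lem-infty}.

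The only difference is bookkeeping. The paper runs the argument in $W_i^{+}$ with $x_i$-adic exponents $(q-1)(C_{m+r}+1)$ and $(q-1)C_{m+r}$, then passes to $V_i^{+}$ by closedness. You work $u_i$-adically in $V_i^{+}$ directly, or descend using the $\mu_{q-1}$-invariance of $u_i^{\beta}$. This has the small advantage of making explicit the final membership in $u_i^{C_m/q}V_i^{+}$, which the paper leaves implicit.
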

\begin{proof}
	We first work in $W_i^{+}$. Let $r\geq 0$ and $g\in U_{m+r}$. Since $u_i=x_i^{q-1}$, we have $u_i\in x_i^{q-1}W_i^{+}$ and $y_{ij}\in W_i^{+}$, hence  part (i) of Lemma \ref{lem-U_m-est} gives 
	\begin{align*}
		 &g(u_i)-u_i\in x_i^{(q-1)(C_{m+r}+1)}W_i^{+}\\
		&g(y_{ij})-y_{ij}\in x_i^{(q-1)C_{m+r}}W_i^{+}.
	\end{align*}
	For both $u_i$ and $y_{ij}$, the ratio $(e_r-\lambda )/q^{r+1}$ from Lemma \ref{lem-exist} is given by
	\[
		\frac{(q-1)(C_{m+r}+1)-(q-1)}{q^{r+1}}=\frac{(q-1)C_{m+r}}{q^{r+1}},
	\] 
	which goes to infinity as $r\to\infty$ by Lemma \ref{lem-infty} and is bounded below by $(q-1)C_m/q$. Applying Lemma \ref{lem-exist} then shows the existence of $N_{U_m}(u_i)$ and $N_{U_m}(y_{ij})$, proves they are $U_m$-invariant, and also gives
	\[
		N_{U_m}(u_i)-u_i\in x_i^{(q-1)(1+C_m/q)}W_i^{+}, \quad N_{U_m}(y_{ij})-y_{ij}\in x_i^{(q-1)C_m/q}W_i^{+}.
	\] 
	Now passing to $V_i^{+}$, since $V_i^{+}$ is closed in $W_i^{+}$ and all the intermediate depth-$r$ norms belong to  $V_i^{+}$ by construction, we get $N_{U_m}(u_i), N_{U_m}(y_{ij})\in V_i^{+U_m}$.  
\end{proof}

\begin{cor}\label{cor-norm-prop}
	For every $m\geq 1$ and every $\ell\geq 0$, we have 
\begin{renumerate}
	\item $N_{U_m}(u_i)$ is a topologically nilpotent unit in $V_i$;
	\item $N_{U_m}(u_i)^{1/p^{\ell}}, N_{U_m}(y_{ij})^{1/p^{\ell}}$ belongs to $V_i^{+U_m}$;
	\item $N_{U_m}(u_i)^{1/p^{\ell}}$ is a topologically nilpotent unit in $V_i$. 
\end{renumerate}
\end{cor}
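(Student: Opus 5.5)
The plan is to derive all three parts directly from Proposition \ref{prop-Cauchy}, which gives $N_{U_m}(u_i)=u_i(1+\epsilon)$ with $\epsilon\in u_i^{C_m/q}V_i^{+}$. The key observation is that $C_m\geq 1$, so $C_m/q>0$. Hence $\epsilon$ is topologically nilpotent and $1+\epsilon\in (V_i^{+})^{\times}$, because the geometric series $\sum(-\epsilon)^k$ converges in the $u_i$-adically complete ring $V_i^{+}$.

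For part (i), $u_i$ is a topologically nilpotent unit of $V_i$; indeed it is the pseudo-uniformizer of $\Fq\llparent u_i^{1/p^{\infty}}\rrparent$. Multiplying it by a unit of $V_i^{+}$ gives again a topologically nilpotent unit. Explicitly, $N_{U_m}(u_i)^k=u_i^k(1+\epsilon)^k\in u_i^kV_i^{+}\to 0$, and $N_{U_m}(u_i)^{-1}=u_i^{-1}(1+\epsilon)^{-1}\in V_i$.

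For part (ii), $V_i^{+}$ is perfect, so $p$-power roots exist uniquely in $V_i^{+}$. For $g\in U_m$ we have $g(N^{1/p^\ell})^{p^\ell}=g(N)=N$. Injectivity of Frobenius on $V_i$ then forces $g(N^{1/p^\ell})=N^{1/p^\ell}$, so $N^{1/p^\ell}\in V_i^{+U_m}$; the same argument applies to $N_{U_m}(y_{ij})$. It is also worth noting that the $p$-power roots lie in $V_i^{+}$ and not merely in $V_i$, since $V_i^{+}$ is integrally closed and perfect.

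For part (iii), compute $N_{U_m}(u_i)^{1/p^\ell}=u_i^{1/p^\ell}(1+\epsilon)^{1/p^\ell}$, using that Frobenius is multiplicative. The factor $(1+\epsilon)^{1/p^\ell}=1+\epsilon^{1/p^\ell}$ is still a unit of $V_i^{+}$, since $\epsilon^{1/p^\ell}\in u_i^{C_m/(qp^\ell)}V_i^{+}$ remains topologically nilpotent. Likewise $u_i^{1/p^\ell}$ is a topologically nilpotent unit. The product is therefore a topologically nilpotent unit, as in (i).

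There is no real obstacle here. The only point needing care is that the exponent $C_m/q$ is positive, so that $1+\epsilon$ is genuinely a unit of $V_i^{+}$ rather than merely an element of it. This holds because $C_m=(q^{cn+f}-1)/(q-1)\geq 1$ whenever $m\geq 1$.
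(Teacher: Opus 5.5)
Your proposal is correct and follows essentially the paper's own route. Both arguments use Proposition~\ref{prop-Cauchy} and $C_m>0$ to write $N_{U_m}(u_i)=u_i(1+\epsilon)$ with $1+\epsilon\in (V_i^{+})^{\times}$, deduce (i) and (iii) from $u_i$ being a topologically nilpotent unit, and deduce (ii) from perfectness of $V_i^{+}$; your explicit treatment of $(1+\epsilon)^{1/p^{\ell}}=1+\epsilon^{1/p^{\ell}}$ just fills in a step the paper leaves implicit.
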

\begin{proof}
	Since $C_m>0$, Proposition \ref{prop-Cauchy} gives
	\[
		\frac{N_{U_m}(u_i)}{u_i}\in 1+u_i^{C_m/q}V_i^{+}\subset 1+V_i^{\circ\circ}\subset (V_i^{+})^{\times}.
	\] 
	Since $u_i$ is a topologically nilpotent unit, we get (i) and (iii).  Part (ii) follows directly from the perfectness of $V_i^{+}$. 
\end{proof}

\subsection{Perfectoid invariant rings}
We conclude this section by showing the $\OO^{\times}_{K}$-invariant subring of each $W_i$ is perfectoid.

\begin{prop}\label{prop-ring-perf}
	For every $m\geq 1$, $(V_i^{U_m}, V_i^{+ U_m})$ is a perfectoid Tate--Huber pair. In particular, $(W_i^{\OO^{\times}_{K}}, W_i^{+\OO^{\times}_{K}})$  is  a perfectoid Tate--Huber pair. 
\end{prop}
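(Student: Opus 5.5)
To show $(V_i^{U_m},V_i^{+U_m})$ is perfectoid, I would first check that it is a Tate--Huber pair with pseudouniformizer $\varpi_m:=N_{U_m}(u_i)$. By Corollary~\ref{cor-norm-prop}, $\varpi_m$ and all its $p$-power roots lie in $V_i^{+U_m}$ and are topologically nilpotent units of $V_i$. Their inverses are $U_m$-invariant, so they are units in $V_i^{U_m}$.

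Since $V_i$ is perfect and $U_m$ acts by ring automorphisms, $V_i^{U_m}$ is perfect: if $g(x)=x$ then $g(x^{1/p})^p=x$, which forces $g(x^{1/p})=x^{1/p}$. The same argument shows $V_i^{+U_m}$ is perfect. It is closed in $V_i^+$, hence $\varpi_m$-adically complete for the subspace topology, provided that topology agrees with the $\varpi_m$-adic one on $V_i^{+U_m}$. For this I would use $\varpi_m/u_i\in (V_i^{+})^\times$ to get $\varpi_m^kV_i^+=u_i^kV_i^+$, and then
\[
\varpi_m^k V_i^{+U_m} \;=\; u_i^kV_i^+\cap V_i^{U_m}.
\]
The inclusion $\supseteq$ holds because if $x$ is invariant and $x\in\varpi_m^kV_i^+$, then $x/\varpi_m^k$ is invariant and lies in $V_i^+$. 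So $V_i^{+U_m}$ is open and bounded in $V_i^{U_m}=V_i^{+U_m}[1/\varpi_m]$, and it is integrally closed since $V_i^+$ is. A complete perfect Tate $\Fp$-algebra with integrally closed open bounded ring of integral elements is perfectoid, which settles the first claim.

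The main point to watch is making the topology on $V_i^{U_m}$ Tate with pseudouniformizer $\varpi_m$, i.e.\ checking that the subspace topology from $V_i$ coincides with the one defined by $V_i^{+U_m}$ and $\varpi_m$. The identity above handles this, and it uses Proposition~\ref{prop-Cauchy} essentially: without a topologically nilpotent invariant unit there is no pseudouniformizer. This is where $d<n$ (slope $<1$) enters, through Lemma~\ref{lem-infty}.

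For the ``in particular'', I would write $\OO_K^\times=\mu_{q-1}\times U_1$. Proposition~\ref{prop-tame-quotient}(i) gives $W_i^{\mu_{q-1}}=V_i$, and the two actions commute, so
\[
W_i^{\OO_K^\times}=(W_i^{\mu_{q-1}})^{U_1}=V_i^{U_1},
\]
and likewise for the $+$-rings. The case $m=1$ then gives the claim.
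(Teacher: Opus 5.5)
Your proposal is correct and follows the paper's proof essentially step by step. The shared steps are: the pseudouniformizer $N_{U_m}(u_i)$ from Proposition~\ref{prop-Cauchy} and Corollary~\ref{cor-norm-prop}; the identity $V_i^{U_m}\cap N_{U_m}(u_i)^sV_i^{+}=N_{U_m}(u_i)^sV_i^{+U_m}$ to get the Tate topology; closedness for completeness; the $p$-th root argument for perfectness; and integral closedness from $V_i^{+U_m}=V_i^{+}\cap V_i^{U_m}$. Your explicit reduction $W_i^{\OO^{\times}_{K}}=(W_i^{\mu_{q-1}})^{U_1}=V_i^{U_1}$ via Proposition~\ref{prop-tame-quotient}(i) spells out the ``in particular'' that the paper leaves implicit.
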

\begin{proof}
	By \cite[Proposition~3.5]{Scholze-Diamond}, to show $V_i^{U_m}$ is perfectoid, it is enough to show it is a perfect complete Tate ring. 
	By Proposition \ref{prop-Cauchy} and Corollary \ref{cor-norm-prop},  $N_{U_m}(u_i)$ is a topologically nilpotent unit in $V_i^{+U_m}$ with $N_{U_m}(u_i)=u_i\epsilon_m$ for some $\epsilon_m\in (V_i^{+})^{\times}$. 
	Thus for every $s\geq 0$, we get $N_{U_m}(u_i)^{s}V_i^{+}=u_i^{s}V_i^{+}$, where $\{u_i^{s}V_i^{+}\}_{s\geq 0}$ form a neighbourhood basis of 0 in $V_i$. 
	Since we also have $V_i^{U_m}\cap N_{U_m}(u_i)^{s}V_i^{+}=N_{U_m}(u_i)^{s}V_i^{+U_m}$, 
	$\{N_{U_m}(u_i)^{s}V_i^{+U_m}\}_{s\geq 0}$ form a neighbourhood basis of 0 in $V_i^{U_m}$. 
	Hence $V_i^{+U_m}$ is an open bounded ring of definition, and $V_i^{U_m}$ is a Tate ring with pseudouniformizer $N_{U_m}(u_i)$. 
	As $V_i^{U_m}=\bigcap^{}_{g\in U_m}\ker(g-\id)$, $V_i^{U_m}$ is closed in $V_i$, hence complete. 
	To show  it  is perfect, let $v\in V_i^{U_m}$ and let  $w\in V_i$ be its unique $p$-th root. 
	For every $g\in U_m$, we have $g(w)^{p}=g(v)=v=w^{p}$. Since $V_i$ is perfect, this gives $g(w)=w$, hence $w\in V_i^{U_m}$. 
	Thus Frobenius on $V_i^{U_m}$ is bijective, hence $V_i^{U_m}$ is perfect. This shows $V_i^{U_m}$ is perfectoid. 
	As $V_i^{+}$ is integrally closed in $V_i$ and $V_i^{+U_m}=V_i^{U_m}\cap V_i^{+}$,  $V_i^{+U_m}$ is also integrally closed in $V_i^{U_m}$, whence the assertion. 
\end{proof}

\begin{rem}
	Notice that although we constructed $U_m$-invariant elements $N_{U_m}(u_i)$ and $N_{U_m}(y_{ij})$ in $V_i$, the proof of perfectoidness of $(V_i^{U_m}, V_i^{+U_m})$ above did not really make use of $N_{U_m}(y_{ij})$. It would be ideal if we actually have 
	\begin{align*}
		&V_i^{U_m} = \Fq\llparent N_{U_m}(u_i)^{1/p^{\infty}}\rrparent \langle N_{U_m}(y_{ij})^{1/p^{\infty}}, j\neq i\rangle\\
		&V_i^{+U_m} = \Fq\llbracket N_{U_m}(u_i)^{1/p^{\infty}}\rrbracket \langle N_{U_m}(y_{ij})^{1/p^{\infty}}, j\neq i\rangle,
	\end{align*}
	but we do not know whether this is true at the moment. 
\end{rem}

\section{Projectivized Banach--Colmez spaces}\label{section-3}
In this section, we show the projectivization of an absolute Banach--Colmez space $\bB^{\varphi^{n}=\pi^{d}}$ of slope $<1$ is perfectoid. We assume $0<d<n$ throughout this section.  

\begin{notn}\label{notn-uniform}
	To ease the notation, we may sometimes suppress the index $0\leq i\leq d-1$. Thus for the chart  $\cW_i=\Spa (W_i,W_i^{+})$ of $\bB^{\varphi^{n}=\pi^{d}}\bs \{0\}$ and the chart $\cV_i=\Spa (V_i, V_i^{+})$ of  the tame quotient, we simply write them respectively as 
	\begin{align*}
		&\cW=\Spa(W, W^{+})=\Spa \Fq \llparent x^{1/p^{\infty}}\rrparent \langle y_j^{1/p^{\infty}}\rangle\\
		&\cV=\Spa (V, V^{+})=\Spa \Fq \llparent u^{1/p^{\infty}}\rrparent \langle y_j^{1/p^{\infty}}\rangle.
	\end{align*}
When it is clear, we may also just write $y$ for $y_j$. Such a convention is harmless, since all the constructions and computations below apply verbatim for every $i$. We shall restore the indices only when comparing different charts or their intersections. 
\end{notn}

Motivated by \cite[Theorem~1.2]{Hansen2016-1}, we give the following criterion for a Tate--Huber pair to be a finite \'etale torsor over its quotient by a finite group.
\begin{prop}\label{prop-Hansen}
	Let $(A,A^{+})$ be a complete Tate-Huber pair  with a continuous action of a finite group $G$ and put  $(B,B^{+})=(A^{G}, A^{+G})$. Suppose there exists $a\in A$ such that $g(a)-a\in A^{\times}$ for every nontrivial $g\in G$, then $(B,B^{+})$ is a complete Tate-Huber pair with $A=B[a]$ and $A/B$ is a finite \'etale $G$-torsor. 
\end{prop}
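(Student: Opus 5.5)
We will build the torsor explicitly from the element $a$, following the standard Galois-theoretic argument in \cite[Theorem~1.2]{Hansen2016-1}. Enumerate $G=\{g_1=1,g_2,\ldots,g_N\}$ and put $a_k=g_k(a)$. The hypothesis gives $g(a)-a\in A^{\times}$ for $g\neq 1$. Applying $h\in G$ shows that $a_k-a_l=g_l\bigl(g_l^{-1}g_k(a)-a\bigr)\in A^{\times}$ for all $k\neq l$. Consequently the Vandermonde matrix $M=(a_k^{\,j})_{1\le k\le N,\,0\le j\le N-1}$ has determinant $\Delta=\prod_{k<l}(a_l-a_k)\in A^{\times}$, so $M$ is invertible over $A$.

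\textbf{Step 1: $A=B[a]$ and $A$ is free over $B$ with basis $1,a,\ldots,a^{N-1}$.} Given $x\in A$, consider the column vector $(g_k(x))_k$. Solving $M c=(g_k(x))_k$ by Cramer's rule gives coefficients $c_j=\det M_j/\Delta$, where $M_j$ is obtained from $M$ by replacing the $j$-th column. A permutation $h\in G$ of the rows changes $\det M_j$ and $\Delta$ by the same sign, so $c_j\in A^G=B$. The first row then reads $x=\sum_j c_j a^j$. Uniqueness follows from the invertibility of $M$, since any relation $\sum b_j a^j=0$ with $b_j\in B$ propagates to all conjugates. Hence $A\cong B^N$ as $B$-modules and $A=B[a]=B[T]/(P)$, where $P(T)=\prod_k(T-a_k)$ has coefficients in $B$. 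Its discriminant $\Delta^2$ lies in $B$ and is a unit in $A$. Since $\Delta^{-2}\in A$ is $G$-invariant, we get $\Delta^2\in B^{\times}$. Therefore $B\to A$ is a standard finite étale map.

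\textbf{Step 2: torsor property.} We will show that the map $A\otimes_B A\to\prod_{g\in G}A$, $x\otimes y\mapsto(x\,g(y))_g$, is an isomorphism. Both sides are free $A$-modules of rank $N$, the left with basis $1\otimes a^j$. In these bases the map is given exactly by the Vandermonde matrix $M$, which is invertible. This gives the finite étale $G$-torsor structure.

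\textbf{Step 3: topology.} This is the step I expect to need the most care. $B$ is closed in $A$, being an intersection of the kernels of the continuous maps $g-\mathrm{id}$, so it is complete. $B^{+}=A^{+}\cap B$ is open and integrally closed in $B$. For the Tate property, pick a pseudouniformizer $\varpi\in A^{+}$ and take the norm $\varpi'=\prod_g g(\varpi)\in B$. Each $g(\varpi)$ is a topologically nilpotent unit because $g$ acts by continuous automorphisms, so $\varpi'$ is a topologically nilpotent unit of $B$.

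It remains to check that the subspace topology on $B$ is $\varpi'$-adic on $B^+$. First, $A_0=\bigcap_g g(A_0)$ is a $G$-stable ring of definition. Next, $\varpi'A_0$ is open and contained in $\varpi A_0$ up to a bounded factor, which gives cofinality of $\{\varpi'^sA_0\}$. Intersecting with $B$ then yields $\varpi'^s A_0\cap B$, and this is what we need. Finally, the identification $A\cong B^N$ should be a homeomorphism by the open mapping theorem for complete Tate rings, or directly from the Cramer formulas, which express the coordinates $c_j$ continuously in $x$.
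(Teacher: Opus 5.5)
Your proof is correct and follows essentially the same route as the paper. Your Vandermonde/Cramer argument is the matrix form of the paper's Lagrange interpolation $H_x(T)=\sum_g g(x)L_g(T)$, and both give $A=B[a]\cong B[T]/(P)$ with $P=\prod_g(T-g(a))$. Your unit discriminant $\Delta^2\in B^\times$ plays the role of the paper's $P'(a)\in A^\times$, and beyond the paper you explicitly check $A\otimes_B A\cong\prod_{g\in G}A$ and the Tate property of $(B,B^+)$, which the paper leaves as standard. In Step 3, make explicit the identity $(\varpi')^sA_0\cap B=(\varpi')^sA_0^G$, which holds because $\varpi'$ is a $G$-invariant unit.
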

The idea is that the elements $g(a)$, for $g\in G$, acts as  $\left|G\right|$  distinct points if $g(a)-a\in A^{\times}$. Thus the condition separates the extension $A/B$. 
\begin{proof}
	The pair $(B,B^{+})$ being a complete Tate--Huber pair is standard, so  we focus on the rest of the claim.  
	For such an element $a\in A$, denote $g(a)$ by $a_g$. Then since $g(a)-a\in A^{\times}$ for every nontrivial $g\in G$, we have  $a_g-a_h\in A^{\times}$ for $g\neq h$. For each $g\in G$, consider 
	\[
		L_g(T):=\prod^{}_{\substack{h\in G \\ h\neq g}}\frac{T-a_h}{a_g-a_h}\in A[T].
	\] 
	We have  $L_g(a_h)=1$ if $g=h$ and $L_g(a_h)=0$ if $g\neq h$. The group action permutes these polynomials by $h(L_g(T))=L_{hg}(T)$ for $h,g\in G$. For $x\in A$, set 
	\[
		H_x(T):=\sum^{}_{g\in G}g(x)L_g(T).
	\] 
	Since $G$ permutes its summands, we get $H_x(T)\in B[T]$. As $H_x(a)=x$, this proves $A=B[a]$. 

	Now consider 
	\[
		P(T):=\prod^{}_{g\in G}(T-a_g) \in B[T].
	\] 
	Since $P(a)=0$ and $A=B[a]$, evaluation at $a$ gives a surjection from $B[T]/(P)$ to $A$. We show it is also injective. Suppose we have $F(T)\in B[T]$ of degree less than $\left|G\right|$ and satisfies $F(a)=0$. Since its coefficients are $G$-invariant, we also get $F(a_g)=0$ for any $g\in G$. Since the differences $a_g-a_h$ belong to $A^{\times}$ for $g\neq h$ by assumption, successive division by $T-a_g$ shows $P(T)$ divides $F(T)$. As $P$ is monic and $\deg(F)<\left|G\right|$, this forces $F(T)=0$. Thus  we get $A\cong B[T]/(P(T))$. Since we also have  
	\[
		P'(a)=\prod^{}_{g\neq 1}(a-a_g)\in A^{\times},
	\] 
	the extension $A/B$ is a finite \'etale $G$-torsor. 
\end{proof}

We now construct such an extension-separating element for each $V^{U_{m+1}}$ over $V^{U_{m}}$. 
\begin{lem}\label{lem-theta}
For every $m\geq 1$, there exists $\theta_m\in V^{U_{m+1}}$ such that 
\[
	g(\theta_m)-\theta_m\in (V^{U_{m+1}})^{\times}
\] 
for every nontrivial $g\in U_{m}/U_{m+1}$. 
\end{lem}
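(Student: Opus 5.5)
Before constructing anything, we need a $U_{m+1}$-invariant element whose $U_m/U_{m+1}\cong\Fq$-translates differ by units. The natural candidate is an orbit norm built from the coordinate $u$ itself, but over the smaller group. Set $\theta_m := N_{U_{m+1}}(u^{-e})$, or more flexibly $\theta_m:=N_{U_{m+1}}(f)$ for a suitable negative power $f=u^{-e}$, $e\in\Z[1/p]_{>0}$. We aim to show that for $g=1+\pi^m[c]b$ representing a nontrivial class in $U_m/U_{m+1}$, the difference $g(\theta_m)-\theta_m$ has a dominant leading term that is a unit in $V$, hence in $V^{U_{m+1}}$ (a unit of $V$ that is $U_{m+1}$-invariant has invariant inverse). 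Since $U_m$ is abelian, $g$ commutes with $U_{m+1}$, so $g(N_{U_{m+1}}(f))=N_{U_{m+1}}(g(f))$. It therefore suffices to compare $g(f)-f$ with $N_{U_{m+1}}(g(f))-g(f)$ and $N_{U_{m+1}}(f)-f$.

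The first step is to pin down $g(u)$ exactly to leading order. By Lemma \ref{lem-higher-order}(iii) and the computation of $\pi^m(x_i)$ in its proof, for $g=1+\pi^m b$ with $\ol b\neq 0$ we have $g(x_i)=x_i+\ol b\,\pi^m(x_i)+(\text{higher})$, where $\pi^m(x_i)$ is a monomial $x_{i'}^{q^{cn+f}}$ or $x_{i'}^{q^{cn+f+n-d}}$. Dividing by $x_i$ on the chart gives $g(x_i)/x_i = 1+\ol b\, x_i^{Q-1}y_{ii'}^{Q}+\dots$, with $Q$ the relevant exponent. This makes the leading term depend on $y_{ii'}$, which need not be a unit. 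To get a genuine unit, I would instead pick $i'=i$ and use $\pi^{d}$-periodicity, replacing $m$ by the appropriate residue class. Alternatively, and more simply, I would use $f=x_i^{-(q-1)s}$ together with the fact that the $\pi$-action cycles indices. I expect the cleanest route is to allow the monomial $y^Q$ and note that on the chart $\cV_i$ we may exhibit the leading term $\ol b\,u^{-(\text{something})}y_{ii'}^{Q}$. The honest obstacle is that $y_{ii'}$ is not invertible. To circumvent this, I would take $\theta_m$ built from $N_{U_{m+1}}$ of the element $x_{i''}/x_i$ (or its $\mu_{q-1}$-invariant power) with $i''$ chosen so that the index shift of $\pi^m$ lands on $i$. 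Then $g(\theta)-\theta\approx \ol b\,\pi^m(x_{i''})/x_i=\ol b\, x_i^{Q-1}$ up to unit factors, and multiplying by a suitable negative power of $N_{U_{m+1}}(u)$ turns the leading term into $\ol b\cdot(\text{unit})$.

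Concretely, the steps are as follows. (1) Choose $\theta_m=N_{U_{m+1}}(u)^{-\alpha}\cdot N_{U_{m+1}}(h)$, where $h$ is the $\mu_{q-1}$-invariant monomial whose $U_m$-variation has leading term $\ol b\,u^{\alpha}\cdot$unit, using Proposition \ref{prop-Cauchy} and Lemma \ref{lem-exist} (with $U_{m+1}$ in place of $U_m$) for existence and invariance. (2) Compute $g(h)-h=\ol b\,u^{\alpha}(1+o(1))$, where $o(1)$ lies in $u^{\epsilon}V^+$. This uses Lemma \ref{lem-higher-order}(ii)/(iii) and the gap $q^{cn+f+1}>1+q^{cn+f}$ already exploited there. (3) Bound $N_{U_{m+1}}(h')-h'$ for $h'\in\{h,g(h)\}$ by $u^{\alpha+C_{m+1}/q-\dots}V^+$ via Lemma \ref{lem-exist}. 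Since $C_{m+1}\ge qC_m$, this is strictly smaller than $u^{\alpha}$, so the difference $g(\theta_m)-\theta_m$ equals $\ol b\,(1+o(1))$ times a unit, hence is a unit.

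The main obstacle is step (3) combined with step (2): ensuring that the error introduced by taking the $U_{m+1}$-orbit norm is of strictly larger $u$-adic order than the leading $U_m$-variation $\ol b\,u^\alpha$. The exponents $C_m$ versus $C_{m+1}/q$ are essentially equal when $f<d-1$, so the margin is thin. I would first try to exploit the extra $+1$ in $q^{cn+f+1}>1+q^{cn+f}$, together with the factor $(e_r-\lambda)/q^{r+1}$ in Lemma \ref{lem-successive} for $r\ge0$, to gain a strict inequality.
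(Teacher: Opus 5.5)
\textbf{Verdict: genuine gap.} Your overall shape does match the paper: a $U_{m+1}$-orbit norm, normalized so that its $U_m/U_{m+1}$-variation is $\ol{b}$ times a unit, with the orbit-norm error of strictly higher order. Choosing $x_{i''}$ so that $\pi^m$ lands on $x_i$ is, up to a Frobenius power, the paper's shift $Z=\pi^{-f}X$. But the proposal never exhibits an element that works, and it leaves the decisive inequality open.

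The elements you actually name fail once $f\neq 0$:
\begin{itemize}
\item For $u$ or $u^{-e}$, the leading term of $g(u)/u-1$ carries a power of $y_{ii'}$ with $i'\neq i$, as you noticed.
\item $y_{ii''}=x_{i''}/x_i$ is $1$ when $f=0$. For $f\leq i<d-f$ (so $i'=i-f$, $i''=i+f$), numerator and denominator move at the same order $Q=q^{cn+f}$, and
\[
g(y_{ii''})-y_{ii''}=\ol{b}\,x_i^{Q-1}\bigl(1-y_{ii''}y_{ii'}^{Q}\bigr)+O(x_i^{Q}).
\]
Its leading coefficient vanishes at points with $x_{i'}=x_i=x_{i''}$.
\item For $\mu_{q-1}$-invariant powers such as $x_{i''}^{q-1}$ (with $q>2$), the leading variation is $-x_{i''}^{q-2}\bigl(g(x_{i''})-x_{i''}\bigr)$, which carries the non-unit factor $y_{ii''}^{q-2}$.
\end{itemize}
So the $\mu_{q-1}$-invariant $h$ of your step (1) is asserted, not constructed.

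Step (2) is also too coarse. Lemma \ref{lem-higher-order}(iii) only determines $g(x_j)$ modulo $I^{1+q^{cn+f}}$. That cannot see $\ol{b}\,\pi^m(x_{i''})=\ol{b}\,x_i^{q^{cn+f+n-d}}$ in the wrap-around case $i+f\geq d$.

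Your first idea does work when $f=0$. Then $\pi^m(x_i)=x_i^{q^{cn}}$, and $N_{U_{m+1}}(u)^{-C_m}$ has $U_m$-variation equal to $\ol{b}$ times a unit. The whole difficulty is the case $f\neq 0$.

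The missing idea is to work in $W$ with the bare coordinate $\xi_i$ of $Z=\pi^{-f}X$, rather than with a $\mu_{q-1}$-invariant monomial.
\begin{itemize}
\item \emph{Leading term.} Since $g(Z)=F(Z,b\,\pi^{cd}(X))$ and $\pi^{cd}(x_i)=x_i^{q^{cn}}$, the formal group law gives $g(\xi_i)-\xi_i=\ol{b}\,x_i^{q^{cn}}+O(x_i^{q^{cn}+\lambda})$, with no $y$-factors. Here $\xi_j\in x_i^{\lambda}W^+$ for all $j$, with $0<\lambda\leq 1$.
\item \emph{Normalization.} Divide by $N_{U_m}(x_i)^{q^{cn}}$. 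It has the same $\mu_{q-1}$-character as $\xi_i$, which restores $\mu_{q-1}$-invariance. It is also $U_m$-invariant, so it drops out of $g(\theta_m)-\theta_m$. With your $N_{U_{m+1}}(u)^{-\alpha}$ you would additionally have to control its own $U_m$-variation.
\item \emph{Strict inequality.} The obstacle you flag is resolved by the positivity of $\lambda$. The $U_{m+1+r}$-variation of $\xi_i$ lies in $x_i^{1+(q-1)C_{cd+r+1}}W^+\subseteq x_i^{q^{cn+r+1}}W^+$. Lemma \ref{lem-exist} then gives $N_{U_{m+1}}(\xi_i)-\xi_i\in x_i^{q^{cn}+\lambda(1-1/q)}W^+$, strictly beyond the leading term $\ol{b}\,x_i^{q^{cn}}$.
\end{itemize}
Hence $g(\theta_m)-\theta_m=(x_i/N_{U_m}(x_i))^{q^{cn}}(\ol{b}+\epsilon)$ with $\epsilon$ topologically nilpotent, which is a unit.
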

\begin{proof}
Suppose $V=V_i$ and write $m=cd+f$ with $0\leq f<d$. Consider the $d$-dimensional formal variable  $X=(x_0,\ldots, x_{d-1})$. We set  $Z=\pi^{-f}X$ and let  $\xi_{i}$ be the $i$-th coordinate of $Z$. More explicitly, using the description of $\pi$-action given in Lemma \ref{lem-action}, we have 
	\begin{equation}\label{eq-theta-1}
	\xi_{i}=\begin{cases}
	x_{i+f}^{1/q^{f}}  &  \text{if $i+f<d$}\\
	x^{1/q^{n-d+f}}_{i+f-d}  &  \text{if $i+f\geq d$}
	\end{cases}
	\end{equation}
We then define 
	\[
		\theta_m:=\frac{N_{U_{m+1}}(\xi_{i})}{N_{U_m}(x_i)^{q^{cn}}}.
	\] 
	By Proposition \ref{prop-Cauchy}, $\theta_m$ is $U_{m+1}$-invariant.  Since  $\mu_{q-1}$ acts on the two orbit norms by the same scalar, we also get $\theta_m\in V_i^{U_{m+1}}$. 

	Consider an element $g=1+\pi^{m}b\in U_m$ with $b\in \OO^{\times}_{K}$. We first compute the action of $g$ on $\xi_{i}$. By (\ref{eq-theta-1}), we can choose $0<\lambda \leq 1$ in $\Z[1/p]$ such that $\xi_j\in x_i^{\lambda }W_i^{+}$ for every $j$. Then using the formal $\OO_K$-module law, we have 
	\[
		g(Z)=F(Z, b\pi^{cd}(X))=F(Z, b(X^{cn})).
	\] 
	Since $b(x_j)-\ol{b}(x_j)\in x_i^{q}W_i^{+}$ by Lemma \ref{lem-higher-order}, we get 
	\begin{equation}\label{eq-theta-2}
		g(\xi_i)-\xi_i-\ol{b}(x_i^{q^{cn}})\in x_i^{q^{cn}+\lambda }W_i^{+}. 
	\end{equation}

	We now compute the difference between $\xi_i$ and its $U_{m+1}$-orbit norm. For $r\geq 0$, consider $h=1+\pi^{m+1+r}b'\in U_{m+1+r}$, then we have 
	\[
		h(Z)=F(Z,Y), \quad Y=b'\pi^{cd+r+1}(X).
	\] 
	By Lemma \ref{lem-U_m-est}, every component of $Y$ belongs to $x_i^{1+(q-1)C_{cd+r+1}}W_i^{+}$. As every mixed term of $F(Z,Y)$ contains a $Y$-component, we then get
	\[
		h(\xi_i)-\xi_i\in x_i^{1+(q-1)C_{cd+r+1}}W_i^{+}.
	\] 
	By Lemma \ref{lem-infty}, as $r\to\infty$, we have $$\frac{1+(q-1)C_{cd+r+1}-\lambda }{q^{r+1}}\to\infty;$$
	direct comparison also shows  $1+(q-1)C_{cd+r+1}\geq q^{cn+r+1}$. Thus applying Lemma \ref{lem-exist}, we get
	\begin{equation}\label{eq-theta-3}
		N_{U_{m+1}}(\xi_i)-\xi_i\in x_i^{q^{cn}+\lambda (1-1/q)}W_i^{+}.
	\end{equation}

	Finally, since $N_{U_m}(x_i)$ is $U_m$-invariant, (\ref{eq-theta-2}) and (\ref{eq-theta-3}) together give 
	\[
		g(\theta_m)-\theta_m=\left( \frac{x_i}{N_{U_m}(x_i)}\right)^{q^{cn}}(\ol{b}+\epsilon)
	\] 
	for some $\epsilon \in W_i^{\circ\circ}$. Thus $g(\theta_m)-\theta_m$ is a unit in $W_i$; it also lies in $V_i^{U_{m+1}}$ because $\theta_m$ lies in $V_i^{U_{m+1}}$. Hence $g(\theta_m)-\theta_m$ is a unit in $V_i^{U_{m+1}}$.  
\end{proof}

For $m\geq 1$, put  $(R_m, R^{+}_{m})=(V^{U_m}, V^{+U_m})$ and let $\cR_m=\Spa(R_m, R_m^{+})$. Such $\cR_m$ form a tower of affinoid perfectoid spaces. 
\begin{prop}\label{prop-torsor}
The natural maps $R_{m}\rightarrow R_{m+1}$ induce  a tower of finite \'etale torsors
\[
\cdots\rightarrow \cR_{m+1}\rightarrow \cR_{m}\rightarrow \cdots \rightarrow \cR_1
\] 
where $\cR_n\rightarrow \cR_m$ is a finite \'etale $U_m/U_n$-torsor for every $1\leq m<n$. 
\end{prop}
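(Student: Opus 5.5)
The plan is to prove the one-step case first and then compose. For every $m\geq 1$, I will show that $\cR_{m+1}\rightarrow \cR_m$ is a finite \'etale $U_m/U_{m+1}$-torsor. Since a composition of finite \'etale maps is finite \'etale, and the torsor structures glue along the filtration $U_m\supset U_{m+1}\supset\cdots\supset U_n$, the general statement for $\cR_n\rightarrow\cR_m$ will follow by induction on $n-m$.

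\textbf{One-step case.} Fix $m$ and put $A=R_{m+1}=V^{U_{m+1}}$ and $A^{+}=R_{m+1}^{+}$.
\begin{itemize}
\item By Proposition \ref{prop-ring-perf}, $(A,A^{+})$ is a complete (perfectoid) Tate--Huber pair.
\item Since $U_{m+1}$ is normal in the abelian group $U_m$, the quotient $G:=U_m/U_{m+1}\cong \Fq$ acts on $(A,A^{+})$. The action is continuous because the action of $U_m$ on $V$ is continuous.
\item The invariants are $(A^{G},A^{+G})=(V^{U_m},V^{+U_m})=(R_m,R_m^{+})$.
\item Lemma \ref{lem-theta} supplies $\theta_m\in A$ with $g(\theta_m)-\theta_m\in A^{\times}$ for every nontrivial $g\in G$.
\end{itemize}
Proposition \ref{prop-Hansen} then gives $R_{m+1}=R_m[\theta_m]\cong R_m[T]/(P(T))$, where $P(T)=\prod_{g\in G}(T-g(\theta_m))$ is monic of degree $q$ with $P'(\theta_m)\in R_{m+1}^{\times}$. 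So $R_{m+1}$ is finite \'etale over $R_m$.

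\textbf{Torsor property.} I will check the torsor condition directly. Because $R_{m+1}$ is free over $R_m$ with basis $1,\theta_m,\ldots,\theta_m^{q-1}$, the map $R_{m+1}\otimes_{R_m}R_{m+1}\to\prod_{g\in G}R_{m+1}$, $x\otimes y\mapsto (x\,g(y))_g$, is identified via the Chinese remainder theorem with $R_{m+1}[T]/P(T)\cong\prod_g R_{m+1}[T]/(T-g(\theta_m))$. This uses that the differences of roots are units.

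\textbf{Integral structures.} On adic spectra I also need $R_{m+1}^{+}$ to be the integral closure of $R_m^{+}$ in $R_{m+1}$. This holds for the following reasons:
\begin{itemize}
\item $R_{m+1}^{+}$ is integrally closed in $R_{m+1}$ (proof of Proposition \ref{prop-ring-perf}).
\item Every $x\in R_{m+1}^{+}$ is a root of $\prod_{g}(T-g(x))$, whose coefficients lie in $R_m^{+}$ since $G$ preserves $V^{+}$.
\item Conversely, if $x\in R_{m+1}$ is integral over $R_m^{+}$, then it is integral over $V^{+}$, hence lies in $V^{+}\cap R_{m+1}$.
\end{itemize}
Thus $\cR_{m+1}\to\cR_m$ is the finite \'etale map attached to $R_m\to R_{m+1}$, and it is a $G$-torsor.

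\textbf{Composition.} For $\cR_n\to\cR_m$, the map is finite \'etale as a composite of such maps. For the torsor structure, I would run the same argument with $G=U_m/U_n$ and compute $R_n\otimes_{R_m}R_n\cong\prod_{U_m/U_n}R_n$ by induction. Write $R_n\otimes_{R_m}R_n=R_n\otimes_{R_{n-1}}(R_{n-1}\otimes_{R_m}R_{n-1})\otimes_{R_{n-1}}R_n$ and use the one-step isomorphisms, checking compatibility with the group action. Alternatively, apply Proposition \ref{prop-Hansen} directly to $U_m/U_n$ with the element $\theta_m+\theta_{m+1}^{N}+\cdots$; but the inductive route seems cleaner.

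\textbf{Main obstacle.} I expect the delicate point to be this last step: assembling the successive $\Fq$-torsors into a $U_m/U_n$-torsor. Concretely, I must check that the map $R_n\otimes_{R_m}R_n\to\prod_{U_m/U_n}R_n$ is an isomorphism. I would handle it by a rank count (both sides are finite projective over $R_n$ of rank $q^{n-m}$) together with surjectivity, which follows from the idempotents built from the Lagrange polynomials at each stage.
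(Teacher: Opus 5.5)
Your proposal is correct and follows the paper's route: Proposition \ref{prop-Hansen} applied to $U_m/U_{m+1}$ acting on $R_{m+1}$ with the element $\theta_m$ of Lemma \ref{lem-theta}; the fact that $R_{m+1}^{+}$ is the integral closure of $R_m^{+}$ in $R_{m+1}$ to pass to adic spectra; and induction along the filtration to reach $U_m/U_n$. Your Chinese-remainder check of $R_{m+1}\otimes_{R_m}R_{m+1}\cong\prod_{G}R_{m+1}$, your proof of the integral-closure claim, and the rank-plus-surjectivity argument for the composite make explicit what the paper asserts in one line. One caution on surjectivity: Lemma \ref{lem-theta} only controls $g(\theta_k)-\theta_k$ for $g\in U_k$, so take the level-$k$ Lagrange polynomials over a single $U_k$-coset and multiply by the coset idempotents already obtained at the previous level.
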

\begin{proof}
	Using the elements  $\theta_m$ constructed in Lemma \ref{lem-theta}, the criterion from Proposition \ref{prop-Hansen} implies that $R_{m+1}$ is a finite \'etale $U_{m}/U_{m+1}$-torsor over $R_m$. Since $R_{m+1}^{+}$ is the integral closure of $R^{+}_{m}$ in $R_{m+1}$, geometrically $\cR_{m+1}\rightarrow \cR_{m}$ is also a finite \'etale $U_m/U_{m+1}$-torsor (cf. \cite[Definition~6.2]{Scholze-Diamond})  with \v{C}ech groupoid given by 
	\[
		(U_m/U_{m+1})\times \cR_{m+1}\rightrightarrows \cR_{m+1}.
	\] 
	Since $U_m$ are normal subgroups of $U_1$ and all the actions are induced by the same $U_1$-action, induction gives the general case for $m<n$. 
\end{proof}

\begin{lem}\label{lem-density}
$\bigcup^{}_{m\geq 1}R_m^{+}$ is dense in $V^{+}$ and $\bigcup^{}_{m\geq 1}R_m$ is dense in $V$. 
\end{lem}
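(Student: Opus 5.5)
The plan is to approximate the topological generators of $V^{+}$ by invariant elements and then extend to everything by multiplicativity, additivity and completeness. The generators of $V^{+}=\Fq\llbracket u^{1/p^{\infty}}\rrbracket\langle y_j^{1/p^{\infty}}\rangle$ as a complete topological ring are $u^{1/p^{\ell}}$ and $y_j^{1/p^{\ell}}$ for $\ell\geq 0$. By Corollary \ref{cor-norm-prop}, $N_{U_m}(u)^{1/p^{\ell}}$ and $N_{U_m}(y_j)^{1/p^{\ell}}$ lie in $R_m^{+}=V^{+U_m}$. Proposition \ref{prop-Cauchy} gives
\[
N_{U_m}(u)-u\in u^{1+C_m/q}V^{+}, \qquad N_{U_m}(y_j)-y_j\in u^{C_m/q}V^{+}.
\]
Since $V^{+}$ is perfect of characteristic $p$, taking $p^{\ell}$-th roots yields
\[
N_{U_m}(y_j)^{1/p^{\ell}}-y_j^{1/p^{\ell}}\in u^{C_m/(qp^{\ell})}V^{+},
\]
and similarly for $u$. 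Because $C_m\to\infty$ as $m\to\infty$, for each fixed $\ell$ these generators are approximated arbitrarily well, $u$-adically, by elements of $\bigcup_m R_m^{+}$.

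Let $S=\bigcup_m R_m^{+}$. It is a subring, since the $R_m^{+}$ form an increasing chain. Its closure $\ol{S}$ in $V^{+}$ is a closed subring containing every $u^{1/p^{\ell}}$ and $y_j^{1/p^{\ell}}$ by the previous step. So $\ol{S}$ contains the polynomial ring $\Fq[u^{1/p^{\infty}},y_j^{1/p^{\infty}}]$, and this polynomial ring is dense in $V^{+}$ for the $u$-adic topology. Indeed, any element of $V^{+}$ is congruent modulo $u^{s}V^{+}$ to a finite sum of monomials: the $\Fq\llbracket u^{1/p^\infty}\rrbracket$ coefficients can be truncated, and the restricted power series in the $y_j$ can be truncated modulo $u^{s}$. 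Hence $\ol{S}=V^{+}$, so $\bigcup_m R_m^{+}$ is dense in $V^{+}$.

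For $V$, take $v\in V$. Then $u^{N}v\in V^{+}$ for some $N$. Put $\varpi=N_{U_1}(u)$, which lies in $R_1\subset R_m$ for all $m$. By Corollary \ref{cor-norm-prop}, $\varpi=u\epsilon$ with $\epsilon\in (V^{+})^{\times}$, so $\varpi^{N}v\in V^{+}$. Approximate $\varpi^{N}v$ by $s\in R_m^{+}$ with $\varpi^{N}v-s\in u^{t}V^{+}=\varpi^{t}V^{+}$. Then $\varpi^{-N}s\in R_m$ and
\[
v-\varpi^{-N}s\in \varpi^{t-N}V^{+}.
\]
This is small for large $t$, so $\bigcup_m R_m$ is dense in $V$.

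The only step requiring care is the uniformity in the second step: an approximation of the generators must give an approximation of arbitrary products and sums to a prescribed $u$-adic precision. This works because all elements involved lie in $V^{+}$, which is bounded, and $u^{s}V^{+}$ is an ideal. If $a\equiv a'$ and $b\equiv b'$ modulo $u^{s}V^{+}$ with all four in $V^{+}$, then $ab\equiv a'b'$ modulo $u^{s}V^{+}$. So the closure of a subring is again a subring and no loss of precision accumulates.
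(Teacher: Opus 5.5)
Your proposal is correct and follows the paper's argument. You approximate the $p$-power roots of $u$ and $y_j$ by the corresponding roots of the orbit norms $N_{U_m}(u), N_{U_m}(y_j)\in R_m^{+}$, using Proposition \ref{prop-Cauchy} together with $C_m\to\infty$, and you then conclude that the closure contains the dense subring $\Fq[u^{1/p^{\infty}},y_j^{1/p^{\infty}}]$ and pass to $V$ by inverting $N_{U_1}(u)\in R_1^{\times}$ — exactly as the paper does, only with the root-extraction estimate and the final approximation in $V$ written out more explicitly.
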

\begin{proof}
	By Proposition \ref{prop-Cauchy}, we have 
	\[
		\frac{N_{U_m}(u)}{u}-1, N_{U_m}(y)-y \in u^{C_m/q}V^{+},
	\] 
	where, recall from Notation \ref{notn-Qm}, for $m=cd+f$ with $0\leq f<d$ we set $C_m=(q^{cn+f}-1)/(q-1)$. Since $C_m\to\infty$ as $m\to\infty$, the orbit norms $N_{U_m}(u)$ and $N_{U_m}(y)$ converge to $u$ and $y$ respectively. Similarly, for every $\ell\geq 0$, the $p^{\ell}$-th roots of $N_{U_m}(u)$ and $N_{U_m}(y)$ also converge to the $p^{\ell}$-th roots of $u$ and $y$ respectively; such $p$-power roots also belong to $R^{+}_{m}$ by part (ii) of Corollary \ref{cor-norm-prop}. Therefore, we get
	\begin{equation}\label{eq-density-1}
		\ol{\bigcup^{}_{m\geq 1}R^{+}_{m}}\supseteq \Fq[u^{1/p^{\infty}}, y^{1/p^{\infty}}],
	\end{equation}
	and the latter is dense in $V^{+}$. This proves the first claim. Now part (i) of Corollary \ref{cor-norm-prop}	 gives $N_{U_1}(u)\in R^{\times}_{1}$, hence we get  $N_{U_1}(u)/u\in (V^{+})^{\times}$ and  $V=V^{+}[1/N_{U_1}(u)]$. As $N_{U_1}(u)^{-1}\in R_1\subset R_m$, multiplying (\ref{eq-density-1}) by powers of $N_{U_1}(u)^{-1}$ proves the density of $\bigcup^{}_{m\geq 1}R_m$ in $V$.  
\end{proof}

Combining this density lemma with the tower of finite \'etale torsors, we now upgrade the ring-level perfectoidness of Proposition \ref{prop-ring-perf} into a geometric one. 
\begin{prop}\label{prop-aff-quot}
	The natural morphism $\cV\rightarrow \cR_1$ is a pro-\'etale $U_1$-torsor. Consequently, we have  isomorphisms of $v$-sheaves $$\cW/\underline{\OO^{\times}_{K}}\cong \cV/\underline{U_1}\cong \cR_1.$$ In particular,  $\cW/\underline{\OO^{\times}_{K}}$ is affinoid perfectoid. 
\end{prop}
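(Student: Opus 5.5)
The plan is to show first that $\cV\simeq \invlim_m \cR_m$ as affinoid perfectoid spaces; combined with Proposition~\ref{prop-torsor}, this presents $\cV\to\cR_1$ as a pro-\'etale $U_1$-torsor. Let $R_\infty^{+}$ be the completion of $\dirlim_m R_m^{+}$ for the $N_{U_1}(u)$-adic topology, and put $R_\infty=R_\infty^{+}[1/N_{U_1}(u)]$. Then $\Spa(R_\infty,R_\infty^{+})\sim\invlim_m\cR_m$ is affinoid perfectoid, and it carries a natural map to $\cV$.

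To prove that this map is an isomorphism, I compare topologies on $\bigcup_m R_m^{+}\subset V^{+}$. By Proposition~\ref{prop-ring-perf}, $N_{U_m}(u)=u\epsilon_m$ with $\epsilon_m\in(V^{+})^{\times}$, and $R_m\cap u^{s}V^{+}=N_{U_m}(u)^{s}R_m^{+}$. Hence the subspace topology that $\bigcup_m R_m^{+}$ inherits from $V^{+}$ is the colimit topology: a basis of neighbourhoods of $0$ is given by the sets $u^{s}V^{+}\cap R_m^{+}$, which equal $N_{U_1}(u)^{s}R_m^{+}$ because all the $N_{U_m}(u)$ differ by units. It follows that the completion of $\bigcup_m R_m^{+}$ injects isometrically into $V^{+}$, and Lemma~\ref{lem-density} shows the image is dense and closed, so $R_\infty^{+}\cong V^{+}$ and $R_\infty\cong V$. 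I expect this to be the main obstacle. One point to verify is that each $R_m^{+}$ is the full $V^{+}\cap R_m$, which holds by definition. The other is that the integral structure of the completed colimit agrees with $V^{+}$, rather than merely being almost equal to it; this follows because both are closed and open, and $V^{+}$ is the closure.

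With $\cV\cong\invlim_m\cR_m$ in hand, each $\cR_m\to\cR_1$ is a finite \'etale $U_1/U_m$-torsor by Proposition~\ref{prop-torsor}, with compatible actions. Therefore $\cV\to\cR_1$ is a pro-\'etale $\underline{U_1}$-torsor: the map $\underline{U_1}\times\cV\to\cV\times_{\cR_1}\cV$ is the inverse limit of the isomorphisms $U_1/U_m\times\cR_m\cong\cR_m\times_{\cR_1}\cR_m$. Being a $v$-cover that is a $\underline{U_1}$-torsor, it identifies $\cV/\underline{U_1}\cong\cR_1$ as $v$-sheaves. Finally, $\OO_K^{\times}=\mu_{q-1}\times U_1$, and by Proposition~\ref{prop-tame-quotient} we have $\cW/\underline{\mu_{q-1}}\cong\cV$. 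Taking quotients in stages gives
\[
\cW/\underline{\OO_K^{\times}}\cong(\cW/\underline{\mu_{q-1}})/\underline{U_1}\cong\cV/\underline{U_1}\cong\cR_1,
\]
which is affinoid perfectoid by Proposition~\ref{prop-ring-perf}.
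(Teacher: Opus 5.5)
Your proposal is correct and follows the paper's proof essentially step for step. You identify $V^{+}$ with the $N_{U_1}(u)$-adic completion of $\varinjlim_m R_m^{+}$ using $R_m\cap t^{s}V^{+}=t^{s}R_m^{+}$ and Lemma~\ref{lem-density}, and deduce $\cV\cong\invlim_m\cR_m$. You then pass the finite \'etale torsors of Proposition~\ref{prop-torsor} to the limit to get $\cV\times_{\cR_1}\cV\cong\underline{U_1}\times\cV$, and quotient by $\mu_{q-1}$ via Proposition~\ref{prop-tame-quotient}.

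Your extra detail on why the completed colimit embeds isometrically into $V^{+}$ with closed dense image just spells out what the paper compresses into one line. One minor slip: the natural morphism of spaces goes $\cV\to\Spa(R_\infty,R_\infty^{+})$, induced by the ring map $R_\infty\to V$, not the other way.
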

\begin{proof}
	Let $t=N_{U_1}(u)$. By part (i) of Corollary \ref{cor-norm-prop}, $t$ is a topologically nilpotent unit in $R_m$ for every $m\geq 1$. Thus for every $s\geq 0$, we have 
	\[
	R_m\cap t^{s}V^{+}=t^{s}R_m^{+},
	\] 
	hence $V^{+}$ is identified with the $t$-adic completion of $\dirlim_{m}R_{m}^{+}$ by Lemma \ref{lem-density}. Also since  $V=V^{+}[1/t]$ and each $\cR_m$ is affinoid perfectoid (Proposition \ref{prop-ring-perf}), by \cite[Proposition~6.5]{Scholze-Diamond} we get an isomorphism of perfectoid spaces 
	\[
	\cV\cong \invlim_{m}\cR_m.
	\] 
	By  Proposition \ref{prop-torsor}, such $\cR_m$ form a tower of finite \'etale torsors and we have 
	\begin{align*}
		\cV\times _{\cR_1}\cV &\cong \invlim_{m}(\cR_m\times_{\cR_1}\cR_m)\\
		&\cong \invlim_{m}(\underline{U_1/U_m}\times \cR_m)\\
		&\cong \underline{U_1}\times \cV.
	\end{align*}
	Therefore, $\cV\rightarrow \cR_1$ is a pro-\'etale $U_1$-torsor, hence we get 
	\[
		\cR_1\cong \Coeq(\underline{U_1}\times \cV\rightrightarrows \cV)=\cV/\underline{U_1}. 
	\] 
	Finally, part (ii) of Proposition \ref{prop-tame-quotient} gives  $\cW/\underline{\mu_{q-1}}\cong \cV$ where the $\mu_{q-1}$-action commutes with the $U_1$-action. 
	Thus we  get $\cW/\underline{\OO^{\times}_{K}}\cong \cR_1$, which is affinoid perfectoid.  
\end{proof}

\begin{thm}\label{thm-Gr1-perf}
	For any  $0<d<n$, the  $v$-sheaf quotient ${\bB^{\varphi^{n}=\pi^{d}}\bs \{0\}}/\underline{\OO^{\times}_K}$ is representable by a perfectoid space.
\end{thm}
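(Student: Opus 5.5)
The plan is to glue the affinoid quotients of Proposition \ref{prop-aff-quot}. By Proposition \ref{prop-stable}, the cover $\bB^{\varphi^{n}=\pi^{d}}\bs \{0\}=\bigcup_{i=0}^{d-1}\cW_i$ of (\ref{eq-cover}) consists of $\OO^{\times}_K$-stable open subsheaves. Hence the quotient $v$-sheaf $(\bB^{\varphi^{n}=\pi^{d}}\bs \{0\})/\underline{\OO^{\times}_K}$ is covered by the open subsheaves $\cW_i/\underline{\OO^{\times}_K}$. Openness here means that the preimage of $\cW_i/\underline{\OO^{\times}_K}$ in the punctured space is exactly $\cW_i$. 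This holds because the quotient map is a $v$-surjection and $\cW_i$ is $\OO^\times_K$-saturated, so by descent the image is an open subsheaf. By Proposition \ref{prop-aff-quot}, each $\cW_i/\underline{\OO^{\times}_K}\cong \cR_{1,i}=\Spa(V_i^{U_1},V_i^{+U_1})$ is affinoid perfectoid.

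It remains to show that these affinoid perfectoid pieces glue to a perfectoid space. The key point is that the intersections $\cW_i\cap\cW_{i'}$ are again $\OO^{\times}_K$-stable, being intersections of stable opens. Their images $(\cW_i\cap \cW_{i'})/\underline{\OO^{\times}_K}$ are the open subsheaves $\cR_{1,i}\times_{Q}\cR_{1,i'}$, where $Q$ denotes the total quotient. I will identify this image with an open subset of $|\cR_{1,i}|$. Since $\cV_i\to\cR_{1,i}$ is a pro-\'etale $U_1$-torsor, hence a $v$-cover that is open on topological spaces (quotient maps by group actions of profinite groups are open), the image of the open $U_1$-stable subset $(\cW_i\cap\cW_{i'})/\mu_{q-1}\subset|\cV_i|$ is open in $|\cR_{1,i}|$. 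An open subsheaf of a perfectoid space corresponding to an open subset is representable by that open perfectoid subspace. Therefore each $\cR_{1,i}$ is an open subfunctor of $Q$, these subfunctors jointly cover $Q$, and pairwise fiber products are open subspaces.

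A $v$-sheaf admitting a covering by open subfunctors that are perfectoid spaces is then a perfectoid space, obtained by gluing along the open pairwise intersections; the cocycle condition is automatic since everything sits inside the sheaf $Q$. The main point needing care is the identification of open subfunctors of $Q$ with open subsets of the $|\cR_{1,i}|$, i.e. the assertion that an $\OO^{\times}_K$-stable open of the punctured space descends to an open of the quotient. I would handle this via $|Q|=|\bB^{\varphi^{n}=\pi^{d}}\bs\{0\}|/\OO_K^\times$ with the quotient topology, using \cite[Proposition~12.9]{Scholze-Diamond}-type statements for locally spatial diamonds, or more directly by working chart by chart with the torsor description from Proposition \ref{prop-aff-quot}.
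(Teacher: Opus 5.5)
Your proposal is correct and follows the paper's proof. The paper also uses the $\OO^{\times}_K$-stable cover $\bigcup_i\cW_i$ from Proposition \ref{prop-stable}, descends the open immersions $\cW_i\hookrightarrow \bB^{\varphi^{n}=\pi^{d}}\bs\{0\}$ along the $v$-surjective quotient map to open subsheaves $\cW_i/\underline{\OO^{\times}_K}$, identifies these with the affinoid perfectoid $\cR_{1,i}$ by Proposition \ref{prop-aff-quot}, and glues. Your separate treatment of the pairwise intersections is harmless but unnecessary: once each $\cR_{1,i}$ is an open subfunctor, its fibre product with any perfectoid space over the quotient is an open subspace by definition.
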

\begin{proof}
	By Proposition \ref{prop-stable}, the  affinoid perfectoid cover $\bigcup^{d-1}_{i=0}\cW_i$ of $\bB^{\varphi^{n}=\pi^{d}}\bs \{0\}$ is $\OO^{\times}_{K}$-stable, thus we get a commutative diagram
	\[
	\begin{tikzcd}
		\cW_i\arrow[r]\arrow[d] & \bB^{\varphi^{n}=\pi^{d}}\bs \{0\}\arrow[d]\\
		\cW_i/\underline{\OO^{\times}_{K}}\arrow[r] & (\bB^{\varphi^{n}=\pi^{d}}\bs \{0\})/\underline{\OO^{\times}_{K}}
	\end{tikzcd}
	\] 
	As the upper horizontal map is an open immersion and the quotient map on the right is $v$-surjective, the lower horizontal map is also an open immersion. As such $\cW_i/\underline{\OO^{\times}_{K}}$ cover $(\bB^{\varphi^{n}=\pi^{d}}\bs \{0\})/\underline{\OO^{\times}_{K}}$ and each of them is perfectoid by Proposition \ref{prop-aff-quot}, $(\bB^{\varphi^{n}=\pi^{d}}\bs \{0\})/\underline{\OO^{\times}_{K}}$ is covered by open perfectoid subspaces, hence  perfectoid. 
\end{proof}

\begin{cor}\label{cor-Gr1-perf}
	For any $0<d<n$, the Grassmannian $\Gr_{1,C}(\bB^{\varphi^{n}=\pi^{d}})$ is representable by a perfectoid space. 
\end{cor}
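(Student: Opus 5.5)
We deduce the corollary from Theorem \ref{thm-Gr1-perf} by passing from the quotient by $\underline{\OO^{\times}_K}$ to the quotient by $\underline{K^{\times}}$ after base change to $\Spd C$, following the last steps of the proof of Theorem \ref{thm-Hansen}. By Definition \ref{defn-grass} and (\ref{eq-1.2.1}),
\[
\Gr_{1,C}(\bB^{\varphi^{n}=\pi^{d}})\cong \bigl((\bB^{\varphi^{n}=\pi^{d}}\bs \{0\})/\underline{K^{\times}}\bigr)\times_{\Spd \Fq}\Spd C .
\]
Write $K^{\times}=\pi^{\Z}\times \OO^{\times}_K$. Quotients by the two commuting factors can be taken successively, and base change along $\Spd C\to \Spd\Fq$ commutes with quotients of $v$-sheaves. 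Hence
\[
\Gr_{1,C}(\bB^{\varphi^{n}=\pi^{d}})\cong (X\times_{\Spd\Fq}\Spd C)/\pi^{\Z},
\]
where $X:=(\bB^{\varphi^{n}=\pi^{d}}\bs\{0\})/\underline{\OO^{\times}_K}$. By Theorem \ref{thm-Gr1-perf}, $X$ is a perfectoid space over $\Fq$. The fibre product $X\times_{\Spd\Fq}\Spd C$ is then representable by a perfectoid space: locally it is $\Spa$ of a completed tensor product of a perfectoid $\Fq$-algebra with $C^{\tilt}$ over $\Fq$, which is perfectoid. More conceptually, the product of a perfectoid space of characteristic $p$ with $\Spd C$ is the perfectoid space $X\times\Spa C^{\tilt}$ equipped with the untilt induced from $C$.

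The main point is to show that $\pi$ acts freely and totally discontinuously on $Y:=X\times\Spd C$. Then $Y/\pi^{\Z}$ is covered by isomorphic images of open pieces of $Y$, so it is perfectoid. The action of $\pi$ on $Y$ is through the first factor. By Lemma \ref{lem-action}, $\pi$ raises coordinates to $q$-power or higher powers, so on the charts it shrinks absolute values of the $x_i$: on $\cW_i$ it strictly decreases $|x_i|$ relative to the fixed $C$-factor.

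The step I expect to be the main obstacle is making this discontinuity precise, since on $X$ alone $\pi$ acts like a Frobenius-type map and is not discontinuous. I would argue via the base change to $C$, where the pseudo-uniformizer $\varpi$ of $C^{\tilt}$ is untouched by $\pi$. Consider the continuous function $\log|x_{\max}|/\log|\varpi|$ on $Y$, where $x_{\max}$ is the coordinate $x_i$ of maximal absolute value, using the $\OO_K^{\times}$-invariance of $|x_i|$ on $\cW_i$ from Proposition \ref{prop-stable}. Passing to $\pi^d$, which acts by $x_i\mapsto x_i^{q^{n}}$ up to $\OO_K^{\times}$, this function is multiplied by $q^n$. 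Its level sets then give a fundamental domain: the open loci $\{\,\alpha<\text{function}<q^{n}\alpha'\,\}$ have $\pi^{d\Z}$-translates that are disjoint for suitable choices of $\alpha$ and $\alpha'$. This exhibits $Y\to Y/\pi^{\Z}$ as a local isomorphism, and perfectoidness of $Y/\pi^{\Z}$ follows as in the end of the proof of Theorem \ref{thm-Hansen}.
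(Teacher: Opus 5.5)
Your proposal follows the paper's route: rewrite $\Gr_{1,C}(\bB^{\varphi^{n}=\pi^{d}})$ as $\bigl((\bB^{\varphi^{n}=\pi^{d}}\bs\{0\})/\underline{\OO_K^{\times}}\times_{\Spd\Fq}\Spd C\bigr)/(\pi\times\id)^{\Z}$, invoke Theorem \ref{thm-Gr1-perf}, and use that $\pi\times\id$ acts properly discontinuously. The paper only asserts that last point; you sketch it with $f=\log|x_{\max}|/\log|\varpi|$. One small slip: you only check that the $\pi^{d\Z}$-translates are disjoint, but every exponent in Lemma \ref{lem-action} is at least $q$, so $|x_{\max}(\pi y)|\le |x_{\max}(y)|^{q}$ and hence $f(\pi y)\ge q f(y)$; the loci $\{\alpha<f<q\alpha\}$ therefore already have pairwise disjoint $\pi^{\Z}$-translates, and no separate passage from $\pi^{d\Z}$ to $\pi^{\Z}$ is needed.
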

\begin{proof}
By definition, we have 
	\begin{align*}
		\Gr_{1,C}(\bB^{\varphi^{n}=\pi^{d}})&=((\bB^{\varphi^{n}=\pi^{d}}\bs \{0\})\times _{\Spd \Fq}\Spd C)/\underline{K^{\times}}\\
		&= ((\bB^{\varphi^{n}=\pi^{d}}\bs \{0\})/\underline{\OO^{\times}_{K}}\times_{\Spd \Fq} \Spd C)/(\pi\times \id)^{\Z}.
	\end{align*}
	Using  Theorem \ref{thm-Gr1-perf}, the product before taking the quotient by $\pi\times \id$ is perfectoid. Since the action of $\pi\times \id$ is properly discontinuous on the product, the quotient is perfectoid. 
\end{proof}


\begin{rem}\label{rem-new-JLH}
	Recall from section \ref{section-1.3}, for $S\in \Perf_{C}$, the moduli space $W_{n,1}$ parametrizes surjections of the form $\OO(1/n)\surj \cE$ over $X_{S^{\tilt}}$ with $\cE$ isomorphic to $\OO(1)$ at every geometric point. By duality, a surjection $\OO(1/n)\surj \OO(1)$ is equivalent to  a stably injective morphism $\OO\hookrightarrow \OO(\frac{n-1}{n})$; also notice both $\OO(1)$ and $\OO$ have automorphism group $K^{\times}$. Therefore, we have 
	\begin{align*}
		W_{n,1}&\cong \left(\Inj\left(\OO, \OO\left(\frac{n-1}{n}\right)\right)\times _{\Spd \Fq}\Spd C\right)/\underline{K^{\times}}\\
		&=\Gr_{1,C}(\bB^{\varphi^{n}=\pi^{n-1}}).
	\end{align*}
Thus Corollary \ref{cor-Gr1-perf} gives an alternative local proof of Theorem \ref{thm-JLHmain}. 
\end{rem}


\section{Pl\"{u}cker morphisms}\label{section-4}
For classical Grassmannians of a vector space $E$, there is the Pl\"{u}cker map which embeds  the $k$-Grassmannian of $E$ into   the projective space of the $k$-th exterior power of $E$. In this section, we study the analogous  Pl\"{u}cker morphisms between Grassmannians of absolute BC spaces. As BC spaces are characterized by both their rank and degree, the latter will also play a role in describing the image of these Pl\"{u}cker morphisms.
\subsection{The classical Pl\"{u}cker embedding}
To motivate our later constructions, we give a brief exposition on classical Pl\"{u}cker map and Pl\"{u}cker relations. See \cite[\S~III.11-13]{Bourbaki-algebra} for more details on this material.

Let $E$ be an $n$-dimensional vector space over a field $K$ and let $0<k<n$. For a $k$-dimensional subspace $W$ of $E$, the  \textit{Pl\"{u}cker map} is given by
\begin{align*}
	\Pl: \Gr_k(E)&\rightarrow \P(\bigwedge^{k}E)\\
	W&\mapsto \bigwedge^{k}W.
\end{align*}
Notice here that changing the basis of $W$ would change the wedge product $\bigwedge^{k}W$ by a scalar. 
A vector in $\P(\bigwedge^{k}E)$ is called \textit{decomposable} if it is a single  wedge product of $k$ vectors in $E$, and the image of the Pl\"{u}cker map is exactly this decomposable locus inside  the projective space.

We now  give a more algebraic description of the decomposable vectors. Denote by $E^{\ast}$ the dual vector space of $E$ and let 
\[
\langle -,-\rangle_{r}: \bigwedge^{r}E^{\ast}\otimes \bigwedge^{r}E\rightarrow K
\] 
be the perfect pairing induced by the evaluation map. The adjoint of this map then gives the \textit{contraction map}
\[
c: \bigwedge^{k-1}E^{\ast}\otimes \bigwedge^{k}E\rightarrow E
\] 
which satisfies  $\langle \beta^{\ast}, c(\alpha ^{\ast}, w)\rangle_1=\langle \alpha^{\ast}\wedge \beta^{\ast}, w\rangle_k$. More explicitly, given $\alpha ^{\ast}\in \bigwedge^{k-1}E^{\ast}$ and $w_1\wedge\cdots \wedge w_k\in \bigwedge^{k}E$, we have 
\begin{equation}\label{eq-contra}
	c(\alpha ^{\ast}, w_1\wedge\cdots \wedge w_k)=\sum^{k}_{j=1}(-1)^{k-j}\langle \alpha ^{\ast}, w_1\wedge\cdots\wedge \wh{w_j}\wedge\cdots \wedge w_k\rangle_{k-1} w_j.
\end{equation}
For a nonzero vector $w\in \bigwedge^{k}E$, consider its associated contraction map and exterior multiplication map:
\begin{alignat}{2}
	&c_w: \bigwedge^{k-1}E^{\ast}\rightarrow E, \quad\quad && \alpha ^{\ast}\mapsto c(\alpha ^{\ast}, w)\\
	&m_w: E\rightarrow \bigwedge^{k+1}E, \quad\quad && v\mapsto v\wedge w. 
\end{alignat}
If  $w$ belongs to the image of the Pl\"{u}cker map, these two maps describe the $k$-plane $W$ associated to $w$  in two directions: the image of the contraction map $c_w$ recovers the plane  $W\subseteq E$, and  the exterior multiplication map $m_{w}$ vanishes exactly on $W$. Therefore, we have:

\begin{prop}[{\cite[Proposition~III.13.16]{Bourbaki-algebra}}]\label{prop-bourbaki}
A nonzero vector $w\in \bigwedge^{k}E$ is decomposable if and only if $m_w\circ c_w=0$.
\end{prop}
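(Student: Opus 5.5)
The plan is to prove both directions by linear algebra over the field $K$, working with $W_w:=\ker(m_w)=\{v\in E: v\wedge w=0\}$.

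\emph{Decomposable implies $m_w\circ c_w=0$.} Suppose $w=w_1\wedge\cdots\wedge w_k$ with $w\neq 0$. Then the $w_j$ are linearly independent and span a $k$-plane $W$. By the explicit formula (\ref{eq-contra}), every value $c_w(\alpha^{\ast})$ is a $K$-linear combination of $w_1,\ldots,w_k$, so $\im c_w\subseteq W$. Each $w_j$ already occurs in $w$, hence $w_j\wedge w=0$, and so $m_w$ vanishes on $W$. Consequently $m_w\circ c_w=0$.

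\emph{$m_w\circ c_w=0$ implies decomposable.} Two intermediate facts are needed.
\begin{itemize}
\item[(a)] $\dim W_w\leq k$, with equality exactly when $w$ is decomposable. To see this, pick a basis $v_1,\ldots,v_s$ of $W_w$ and extend it to a basis of $E$. Write $w=\sum_I a_I e_I$ in the induced basis of $\bigwedge^k E$. The condition $v_t\wedge w=0$ forces every $I$ with $a_I\neq0$ to contain $t$. So every such $I$ contains $\{1,\ldots,s\}$, which gives $s\leq k$. If $s=k$, then $w=a\,v_1\wedge\cdots\wedge v_k$. More generally, this argument shows $w=v_1\wedge\cdots\wedge v_s\wedge w'$ for some $w'\in\bigwedge^{k-s}E$.
\item[(b)] $\dim\im c_w\geq k$ whenever $w\neq 0$. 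Choose a basis $e_1,\ldots,e_n$ of $E$ and an index set $I=\{i_1<\cdots<i_k\}$ with $a_I\neq0$. For each $j$, let $\alpha^{\ast}_j$ be the dual basis element $e^{\ast}_{I\setminus\{i_j\}}$. By (\ref{eq-contra}), the coefficient of $e_{i_l}$ in $c_w(\alpha_j^{\ast})$ is $\pm a_I$ when $l=j$ and $0$ otherwise. The latter holds because any multi-index $J\supseteq I\setminus\{i_j\}$ that contributes to $e_{i_l}$ must equal $(I\setminus\{i_j\})\cup\{i_l\}$, which has a repeated index unless $l=j$. Projecting onto the coordinates indexed by $I$ therefore gives a diagonal $k\times k$ matrix with nonzero entries, so $\dim\im c_w\geq k$.
\end{itemize}

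With (a) and (b) in hand, the hypothesis $m_w\circ c_w=0$ says $\im c_w\subseteq W_w$. Then (b) gives $\dim W_w\geq k$, and (a) gives $\dim W_w\leq k$. So $\dim W_w=k$ and $w$ is decomposable by (a).

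The main obstacle is keeping the index bookkeeping in (b) honest, in particular checking that the off-diagonal entries really vanish. Signs play no role there, since only nonvanishing of the diagonal matters. Part (a) is the standard divisibility argument for wedge products.
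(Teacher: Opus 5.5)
Your proof is correct. It follows the same idea the paper sketches before deferring to Bourbaki: $\im c_w$ recovers the $k$-plane, and $m_w$ vanishes exactly on that plane. Your facts (a), that $\dim\ker m_w\le k$ with equality forcing decomposability, and (b), that $\dim\im c_w\ge k$, are precisely what turn the paper's heuristic into a complete argument for the converse direction.
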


The image of the Pl\"{u}cker map can  also be  described via  \textit{Pl\"{u}cker relations}. To recover such quadratic equations, consider
\begin{align*}
	Q: \bigwedge^{k}E&\rightarrow \Hom_{K}(\bigwedge^{k-1}E^{\ast}, \bigwedge^{k+1}E)\\
	w&\mapsto m_w\circ c_w.
\end{align*}
Let $\{e_i\}$ be a basis of $E$ with dual basis $\{e_i^{\ast}\}$. For a vector  $w\in \bigwedge^{k}E$, write $w=\sum^{}_{I}p_Ie_I$ for ordered $k$-tuples $I\subseteq \{1,\ldots, n\}$, where $e_I$ are the basis vectors of $\bigwedge^{k}E$ and $p_I$ are the Pl\"{u}cker coordinates.
Then to have  $Q(w)=0$, we need $Q(w)(e_J^{\ast})=0$ for every dual basis vector $e_J^{\ast}$ of $\bigwedge^{k-1}E^{\ast}$, which means the coordinate components of $Q(w)$ all vanish. This gives the Pl\"{u}cker relations.

\begin{exmp}
Let  $E$ be a 4-dimensional vector space with basis $e_1,\ldots, e_4$ and consider the Pl\"{u}cker map
\[
	\Pl: \Gr_2(E)\rightarrow \P(\bigwedge^{2}E).
\] 
	By the discussion above, for  $w=\sum^{}_{I}p_Ie_I\in \bigwedge^{2}E$, the class $[w]$ is in the image of $\Pl$ if and only if $Q(w)(e_i^{\ast})=0$ for $i=1,2,3,4$. 
	From  (\ref{eq-contra}), we get  
\[
	c_w(e_1^{\ast})=p_{12}e_2+p_{13}e_3+p_{14}e_4.
\] 
	Taking the wedge product with $w$ then gives $(p_{12}p_{34}-p_{13}p_{24}+p_{14}p_{23})e_{234}$, which induces the well-known Pl\"{u}cker relation
\[
p_{12}p_{34}-p_{13}p_{24}+p_{14}p_{23}=0.
\] 
Replacing $e_1^{\ast}$ by other dual basis vectors gives the same quadratic relation in this example. 
\end{exmp}

\subsection{The decomposable locus}
We now transition back to the setting of vector bundles on the Fargues--Fontaine curve, and define the decomposable locus of Grassmannians of absolute Banach--Colmez spaces. 

Let $S\in \Perf_{\Fq}$ and let $\cE$ be a vector bundle on $X_{S}$ of rank $n$. We write $\cE^{\vee}$ for its dual bundle. For any integer $0< k< n$, we have the contraction map 
 \[
c: \bigwedge^{k-1}\cE^{\vee}\otimes \bigwedge^{k}\cE\rightarrow \cE
\] 
which is adjoint to the perfect pairing $\langle-,-\rangle: \bigwedge^{k}\cE^{\vee}\otimes \bigwedge^{k}\cE\rightarrow \OO_{X_S}$. By construction, we have $\langle \beta, c(\alpha ,w)\rangle=\langle \alpha \wedge \beta, w\rangle$ on local sections. By further composing the contraction map  $c$ with  the exterior product map $m: \cE\otimes \bigwedge^{k}\cE\rightarrow \bigwedge^{k+1}\cE$, we get a morphism 
\[
	m\circ (c\otimes \id):  \bigwedge^{k-1}\cE^{\vee}\otimes \bigwedge^{k}\cE\otimes \bigwedge^{k}\cE\rightarrow \bigwedge^{k+1}\cE.
\] 
Thus on local sections, we have $m\circ (c\otimes \id)(\alpha , w_1,w_2)=c(\alpha, w_1)\wedge w_2$. All these constructions commute with base change in $S$.

Now take $\cE$ to be the semistable vector bundle $\cE_{n,d}$ of rank $n$ and degree $d$. Its  $k$-th wedge power gives  $\bigwedge^{k}\cE_{n,d}= \cE_{N,D}$ where $\cE_{N,D}$ is the semistable vector bundle over $X_{S}$ of rank $N=\binom{n}{k}$ and degree $D=d\cdot \binom{n-1}{k-1}$. 

Consider the $v$-sheaf  $\Inj(\OO, \bigwedge^{k}\cE_{n,d})/\underline{K^{\times}}$.  An $S$-point of this sheaf is represented by a stably injective morphism $u: \cL\rightarrow \bigwedge^{k}\cE_{n,d}$ where $\cL$ is a line bundle over $X_{S}$ fibrewise isomorphic to $\OO$. For such a pair $(u,\cL)$, we let 
\begin{align*}
	&c_u:=c\circ (\id\otimes u): \bigwedge^{k-1}\cE^{\vee}_{n,d}\otimes \cL\rightarrow \cE_{n,d}\\
	&m_u:= m\circ (\id\otimes u): \cE_{n,d}\otimes \cL\rightarrow \bigwedge^{k+1}\cE_{n,d}.
\end{align*}
Their composition then gives a morphism 
\[
	q_u:=m_u\circ (c_u\otimes \id): \bigwedge^{k-1}\cE^{\vee}_{n,d}\otimes \cL^{\otimes 2}\rightarrow \bigwedge^{k+1}\cE_{n,d}.
\] 
Let also $\wt{m}_u: \cE_{n,d}\rightarrow \cL^{\vee}\otimes \bigwedge^{k+1}\cE_{n,d}$ be the adjoint morphism of $m_u$, then we have:

\begin{lem}\label{lem-qu=0}
	The morphism $q_u$ is 0 if and only if $\wt{m}_u\circ c_u$ is 0. 
\end{lem}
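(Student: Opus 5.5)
The claim is a formal adjunction statement, so I will deduce it from the tensor--hom adjunction for the line bundle $\cL$ on $X_S$, using that $\cL$ is invertible. Write $\cA=\bigwedge^{k-1}\cE^{\vee}_{n,d}$ and $\cB=\bigwedge^{k+1}\cE_{n,d}$.

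Since $\cL$ is a line bundle, the functor $-\otimes\cL$ is an autoequivalence of $\OO_{X_S}$-modules with quasi-inverse $-\otimes\cL^{\vee}$. It induces natural bijections
\[
\Hom(\cM\otimes\cL,\cN)\cong\Hom(\cM,\cL^{\vee}\otimes\cN).
\]
By definition, $\wt{m}_u$ is the image of $m_u$ under this bijection with $\cM=\cE_{n,d}$ and $\cN=\cB$. Concretely, $\wt{m}_u$ is $m_u\otimes\id_{\cL^{\vee}}$ composed with the coevaluation $\OO\to\cL\otimes\cL^{\vee}$, which is an isomorphism.

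First I would use naturality of the adjunction in $\cM$. For the morphism $c_u\colon\cA\otimes\cL\to\cE_{n,d}$, naturality gives that the adjoint of
\[
m_u\circ(c_u\otimes\id_{\cL})\colon(\cA\otimes\cL)\otimes\cL\to\cB
\]
is $\wt{m}_u\circ c_u\colon\cA\otimes\cL\to\cL^{\vee}\otimes\cB$. The only care needed is the reassociation $\cA\otimes\cL^{\otimes2}=(\cA\otimes\cL)\otimes\cL$, which matches the source of $q_u$. So $q_u$ corresponds to $\wt{m}_u\circ c_u$ under a bijection of Hom-sets. This bijection is additive, being induced by tensoring with $\cL^{\vee}$ and composing with isomorphisms, so it sends $0$ to $0$. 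Hence $q_u=0$ if and only if $\wt{m}_u\circ c_u=0$.

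There is no real obstacle. The only point to check is that the adjunction is natural in the first variable, and that tensoring with the invertible $\cL^{\vee}$ is faithful, so no information is lost. If one prefers a local check, one can work on an open cover of $X_S$ trivializing $\cL$ with a generator $e$. There $q_u(\alpha\otimes e\otimes e)=c_u(\alpha\otimes e)\wedge u(e)$, while $\wt{m}_u(c_u(\alpha\otimes e))=e^{\vee}\otimes\bigl(c_u(\alpha\otimes e)\wedge u(e)\bigr)$. These two vanish simultaneously, and vanishing of a morphism of sheaves is local.
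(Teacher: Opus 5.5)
Your proposal is correct and is essentially the paper's argument. The paper invokes the tensor--hom adjunction for the invertible sheaf $\cL$ and notes that $\wt{m}_u\circ c_u$ corresponds to $q_u$; you additionally spell out the naturality in the first variable that justifies this correspondence.
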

\begin{proof}
Since $\cL$ is a line bundle, adjunction gives an isomorphism 
\[
	\Hom(\bigwedge^{k-1}\cE_{n,d}^{\vee}\otimes \cL, \cL^{\vee}\otimes \bigwedge^{k+1}\cE_{n,d})\cong \Hom_{\OO_{X_S}}(\bigwedge^{k-1}\cE^{\vee}_{n,d}\otimes \cL\otimes \cL, \bigwedge^{k+1}\cE_{n,d}),
\]
	under which  $\wt{m}_u\circ c_u$ corresponds to $q_u$. This proves the claim. 
\end{proof}

 Similar to the classical case discussed previously, we now use $q_u$ to define the decomposable locus of $\Inj(\OO, \bigwedge^{k}\cE_{n,d})/\underline{K^{\times}}$.

\begin{defn}\label{defn-Dec}
	For integers $1\leq k\leq n$ and $d\in \N^{\ast}$, we denote by $\Dec^{k}(\cE_{n,d})$ the sheaf on $\Perf_{\Fq}$ which assigns to $S$ the set of $S$-points  $[(u,\cL)]$ of $\Inj(\OO, \bigwedge^{k}\cE_{n,d})/\ul{K}^{\times}$ such that $q_u=0$.  We call $\Dec^{k}(\cE_{n,d})$ the \textit{decomposable locus} of $\Inj(\OO, \bigwedge^{k}\cE_{n,d})/\ul{K}^{\times}$. 
\end{defn}

One can easily check that the condition $q_u=0$ does not depend on the representative $(u,\cL)$: if $\varphi:(u,\cL)\rightarrow (u',\cL')$ is an isomorphism between two such pairs, then $u'\circ \varphi=u$, hence $q_{u'}\circ (\id\otimes \varphi^{\otimes 2})=q_u$. 
If one picks  an open subset of $X_S$ and a frame of the vector bundle $\cE_{n,d}$, then locally the condition $q_u=0$ recovers the Pl\"{u}cker relations on such coordinates. Since $\Inj(\OO, \bigwedge^{k}\cE_{n,d})/\underline{K^{\times}}=\Gr_{1}(\bB^{\varphi^{N}=\pi^{D}})$ for some $N,D\in \N^{\ast}$, $\Dec^{k}(\cE_{n,d})$ defines a $k$-decomposable locus of the Grassmannian.

\begin{lem}\label{lem-vsub}
	The decomposable locus $\Dec^{k}(\cE_{n,d})$ is a $v$-subsheaf of $\Inj(\OO, \bigwedge^{k}\cE_{n,d})/\ul{K}^{\times}$. 
\end{lem}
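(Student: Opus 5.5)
The plan is to check two things. First, that $\Dec^{k}(\cE_{n,d})$ is a subpresheaf of $\cG:=\Inj(\OO, \bigwedge^{k}\cE_{n,d})/\ul{K}^{\times}$, meaning the condition is stable under pullback. Second, that the condition $q_u=0$ is local for the $v$-topology. Together these show that $\Dec^{k}(\cE_{n,d})$ is itself a $v$-sheaf, hence a $v$-subsheaf.

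\textbf{Well-definedness and pullback stability.} We already know (remark after Definition \ref{defn-Dec}) that the condition $q_u=0$ does not depend on the representative $(u,\cL)$ of an $S$-point. For a morphism $f\colon T\to S$ in $\Perf_{\Fq}$, the contraction $c$, the exterior product $m$, and therefore $q_u$, all commute with base change along $X_T\to X_S$. Hence $q_{f^{\ast}u}=f^{\ast}q_u$, and $q_u=0$ implies $q_{f^{\ast}u}=0$.

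\textbf{Descent of the condition.} Let $s\in\cG(S)$ and let $\{S_j\to S\}$ be a $v$-cover such that each $s|_{S_j}$ lies in $\Dec^{k}(\cE_{n,d})(S_j)$. We must show $s\in\Dec^{k}(\cE_{n,d})(S)$.

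Since $\cG$ is a quotient sheaf, $s$ may not be represented globally by a pair $(u,\cL)$. The first step is to note that the moduli interpretation supplies a line bundle $\cL$ on $X_S$, fibrewise trivial, together with a stably injective map $u\colon\cL\to\bigwedge^{k}\cE_{n,d}$. This is the description already used in the text, where an $S$-point is a pair $(u,\cL)$ with $\cL$ a line bundle rather than $\OO$. Concretely, an $\ul{K}^{\times}$-torsor over $S$ produces such an $\cL$ by twisting $\OO$, so it is enough to reason with a pair $(u,\cL)$ defined over $S$.

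The element $q_u$ is then a global section of the vector bundle
\[
\cH:=\HHom\Bigl(\bigwedge^{k-1}\cE_{n,d}^{\vee}\otimes\cL^{\otimes 2},\,\bigwedge^{k+1}\cE_{n,d}\Bigr)
\]
on $X_S$, and its pullbacks to each $X_{S_j}$ vanish. Vanishing of $q_u$ is local in the $v$-topology by $v$-descent for vector bundles on relative Fargues--Fontaine curves (as in \cite[Chapter~II]{FS2024-1}). That result says $S\mapsto H^0(X_S,\cH_S)$ is a $v$-sheaf, and in particular the restriction map $H^0(X_S,\cH)\to\prod_j H^0(X_{S_j},\cH)$ is injective. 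Hence $q_u=0$.

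If one prefers to avoid global representatives, the same argument can be run after first passing to a $v$-cover of $S$ on which $s$ lifts to $\Inj(\OO,\bigwedge^{k}\cE_{n,d})$. Vanishing of $q$ on a $v$-cover of that cover then descends by the same injectivity.

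\textbf{Main obstacle.} The only delicate point is justifying the separatedness of $S\mapsto H^0(X_S,\cH_S)$ for a vector bundle $\cH$ that is twisted by $\cL$ and so is not a priori of the form $\cE_{N',D'}$. I would handle this by working locally on the cover, where $\cL\cong\OO$. There $\cH$ is isomorphic to $\bigwedge^{k-1}\cE_{n,d}\otimes\bigwedge^{k+1}\cE_{n,d}$, a direct sum of bundles with constant Harder--Narasimhan type, whose global sections are represented by absolute Banach--Colmez-type $v$-sheaves. Injectivity of restriction then follows from the fact that $H^0$ of vector bundles on $X_S$ is a $v$-sheaf.
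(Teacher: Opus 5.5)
Your proof is correct and follows the paper's argument. You show that the condition $q_u=0$ is stable under pullback because $c$, $m$ and hence $q_u$ commute with base change, and that vanishing of a morphism of vector bundles on $X_S$ is local for the $v$-topology.

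The worry in your last paragraph is unnecessary, and your proposed fix would not work on its own: trivializing $\cL$ on a cover still requires descending from that cover, which is the same statement. What settles it is that $v$-descent for vector bundles on relative Fargues--Fontaine curves (\cite[Chapter~II]{FS2024-1}) applies to every vector bundle, so $T\mapsto H^{0}(X_T,\cH_T)$ is a $v$-sheaf directly, whether or not it is twisted by $\cL$.
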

\begin{proof}
The maps between vector bundles related to the construction of $q_u$ all commute with base change. Hence for a $v$-cover $T\rightarrow S$, we have $q_{u_T}=q_u|_{X_T}$. Moreover, vanishing of a morphism of vector bundles is local for the $v$-topology. Therefore, the condition $q_u=0$ satisfies $v$-descent. 
\end{proof}


\subsection{Pl\"{u}cker morphisms}
In this section, we construct the Pl\"{u}cker morphism between Grassmannians, and identify its image with the decomposable locus defined above.

For $x\in \left|X_S\right|$ and $\cE$ a vector bundle on $X_S$, write $\cE(x)=\cE\otimes_{\OO_{X_S}}\kappa(x)$ for its fibre at $x$. For a morphism $f$ between vector bundles on $X_S$, let $f(x)$ denote the corresponding map on the fibres.  
\begin{lem}\label{lem-const-rank}
	Let $f: \cE\rightarrow \cF$ be a morphism of vector bundles on $X_S$. If the function $x\mapsto \rank_{\kappa(x)}f(x)$ is locally constant,  $\ker(f), \im(f), \coker(f)$ are vector bundles. 
\end{lem}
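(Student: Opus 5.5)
The plan is to reduce to a local statement about finite projective modules over a ring, and then use the standard fact that a finitely generated module with locally constant fibre rank over a reduced ring is projective. Being a vector bundle is local on $X_S$, so I would first work on an affinoid open. Cover $X_S$ by opens of the form $Y_{S,I}$ (modulo $\varphi$), which are affinoid $\Spa(B_{S,I},B_{S,I}^{+})$. On such an open, $\cE$ and $\cF$ are given by finite projective $B_{S,I}$-modules $M$ and $N$, and $f$ by a $B_{S,I}$-linear map $M\to N$. Further localizing (Zariski on $\operatorname{Spec}$, or rationally), I may assume $M$ and $N$ are free, so $f$ is a matrix $A$ over a ring $B$ for which $\Spa(B,B^{+})$ is a sheafy (indeed stably uniform) adic space. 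By hypothesis, on each connected piece the rank of $A(x)$ is a constant $r$ at every point $x\in\Spa(B,B^{+})$.

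The key step is to show that $\coker(f)$ is finite projective of rank $\operatorname{rk} N - r$. Consider the ideal $J_r$ generated by the $r\times r$ minors of $A$ and the ideal $J_{r+1}$ generated by the $(r+1)\times(r+1)$ minors.
\begin{itemize}
\item At every point $x$, some $r\times r$ minor is nonzero in $\kappa(x)$. Hence $J_r$ generates the unit ideal, because an ideal contained in no kernel of a point of $\Spa$ is the unit ideal, using that every maximal ideal of the Tate ring $B$ is the support of a point.
\item At every point $x$, all $(r+1)$-minors vanish in $\kappa(x)$. So they lie in every such support; since $B$ is uniform and reduced (being a perfectoid Tate ring in characteristic $p$ in the relevant cases), the minors vanish, i.e. $J_{r+1}=0$. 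Concretely, a function vanishing at every point of a uniform affinoid has spectral norm $0$, hence is $0$.
\end{itemize}
With $\operatorname{Fitt}$ ideals $\operatorname{Fitt}_{k-r}(\coker)=B$ and $\operatorname{Fitt}_{k-r-1}(\coker)=0$ (where $k=\operatorname{rk}N$), the standard Fitting ideal criterion shows that $\coker(f)$ is projective of constant rank $k-r$.

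Then $\im(f)=\ker(N\to\coker f)$ is the kernel of a surjection onto a projective module. That sequence splits, so $\im(f)$ is projective of rank $r$. Likewise $0\to\ker f\to M\to\im f\to 0$ splits, so $\ker(f)$ is projective. These constructions are compatible with the $\varphi$-gluing and with further localization, since split exact sequences of projective modules are preserved by base change. So the local pieces glue to vector bundles on $X_S$. Here I would use that vector bundles on $X_S$ are equivalent to $\varphi$-equivariant compatible systems of finite projective modules on the $Y_{S,I}$, as in \cite[Chapter~II]{FS2024-1}.

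The main obstacle is the vanishing step for the $(r+1)$-minors. The rings $B_{S,I}$ are not perfectoid; they are untilts in characteristic $0$, and I need the fact that a function on $Y_{S,I}$ vanishing at all points is zero. I would handle this by invoking that $B_{S,I}$ is uniform, in fact $(B_{S,I},B_{S,I}^{+})$ is a perfectoid Tate–Huber pair after a finite étale/pro-étale cover, or sousperfectoid. Uniformity then gives that the supremum seminorm is a norm, so pointwise-zero functions vanish. A parallel, but easier, point is that maximal ideals of $B$ are realized by points of $\Spa$. This is standard for Tate rings and gives $J_r=B$.
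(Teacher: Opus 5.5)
Your argument is correct and follows the paper's route. Both pass to a uniform affinoid on which $\cE,\cF$ are free, show $\coker(f)$ is finite projective, and then split off $\im(f)$ and $\ker(f)$. The paper simply cites \cite[Proposition~2.8.4]{KL-relative} for the middle step. Your Fitting-ideal argument supplies that step directly: $J_r=B$ because maximal ideals of a complete Tate ring are supports of points, and $J_{r+1}=0$ by uniformity. The one slip is the parenthetical calling $B_{S,I}$ a characteristic-$p$ perfectoid ring. What is actually needed, and what you correctly invoke at the end, is that $B_{S,I}$ and its rational localizations are sousperfectoid, hence stably uniform.
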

\begin{proof}
	The assertion is local on $X_S$. Since $X_S$ is stably uniform, we may work on an affinoid open $U=\Spa(R,R^{+})\subseteq X_S$ with $R$ uniform, on which $\cE$ and $\cF$ are free  and $f(x)$ has constant rank $r$. Then $f$ comes from a map $f_R: P\rightarrow Q$ between finite free $R$-modules. Let $M=\coker(f_R)$. For every $x\in U$, we have  $M\otimes _R\kappa(x)\cong \coker f(x)$. Its dimension $\dim \cF(x)-r$ is locally constant on $U$. Hence $M$ is finite projective by \cite[Proposition~2.8.4]{KL-relative}. It follows that $Q\rightarrow M$ splits, so  $N=\im(f_R)$ is finite projective. Thus $P\rightarrow N$ also splits, so $K=\ker(f_R)$ is finite projective. Tensoring these split exact sequences with $\OO_U$ identifies $\ker(f)|_U, \im(f)|_U, \coker(f)|_U$ with the vector bundles associated to $K, N, M$ respectively.  
\end{proof}

\begin{lem}\label{lem-coker}
Let $n,d,k\in \N^{\ast}$ and assume $kd<n$. 
\begin{renumerate}
	\item For any  injective morphism $i: \cF \rightarrow \cE_{n,d}$ over $X_{S}$ with $\cF$ fibrewise isomorphic to $\OO^{k}$, $\coker(i)$ is a vector bundle on  $X_S$. 
	\item For any  injective morphism $u: \cL\rightarrow \bigwedge^{k}\cE_{n,d}$ over $X_S$ with $\cL$ fibrewise isomorphic to $\OO$, $\coker(u)$ is a vector bundle on  $X_S$. 
\end{renumerate}
\end{lem}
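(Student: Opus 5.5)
By Lemma \ref{lem-const-rank}, it suffices in both parts to show that the fibrewise rank of the map is locally constant on $|X_S|$. We will show that $i(x)$ has rank $k$ and $u(x)$ has rank $1$ at every point $x\in|X_S|$. Equivalently, the map is injective on every fibre, so it is a subbundle embedding. The rank is lower semicontinuous and is always at most $k$ (resp. $1$), so it is enough to rule out drops in rank at individual points. Since both the hypotheses and the fibre ranks are compatible with base change along geometric points of $S$, we may reduce to $S=\Spa(F,F^{+})$ with $F$ algebraically closed. A point of $|X_S|$ lies over some point of $S$, and the fibre rank at $x$ can be computed after pulling back to a geometric point over the image of $x$. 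So we work on $X_F$ with $\cF\cong\OO^k$ and $\cL\cong\OO$.

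On $X_F$, let $\cF'\subseteq\cE_{n,d}$ be the saturation of $i(\cF)$. This is a subbundle of rank $k$, and $\cF'/i(\cF)$ is a torsion sheaf supported exactly at the points where $i(x)$ fails to be injective. Hence $\deg\cF'=\deg\cF+\operatorname{length}(\cF'/i(\cF))=\operatorname{length}(\cF'/i(\cF))\geq 0$, with equality iff $i$ is a subbundle. Because $\cE_{n,d}$ is semistable of slope $d/n$, we have $\deg\cF'\leq kd/n<1$. Since degrees on $X_F$ are integers, $\deg\cF'=0$, so the torsion quotient vanishes and $i(x)$ is injective at every $x$. This proves (i).

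For (ii) we run the same argument with $\bigwedge^k\cE_{n,d}=\cE_{N,D}$, which is semistable of slope $D/N=kd/n<1$. The saturation $\cL'$ of $u(\cL)$ is a line subbundle of degree in $[0,kd/n]$. It is therefore of degree $0$, so $u$ is a subbundle, and Lemma \ref{lem-const-rank} gives the claim.

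The step needing most care is the reduction from general $S$ to geometric points. We need that fibre injectivity of $i$ at a point $x\in|X_S|$ can be checked after pulling back to $X_{\Spa(F,F^+)}$ for a geometric point of $S$ below $x$, and that such a pullback hits a point over $x$; this uses that $X_T\to X_S$ is surjective on points. We also use that semistability of $\cE_{n,d}$ and integrality of degrees hold on $X_F$, which follows from the Fargues–Fontaine classification recalled in \S1.
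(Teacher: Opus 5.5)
Your proof is correct and follows the paper's route. The paper reduces to saturation at geometric points via \cite[Remark~A.2.3]{JLH2021-1} and obtains saturation from the criterion $(\deg\cF+1)/\rank\cF>\mu(\cE)$ of \cite[Lemma~A.2.1]{JLH2021-1}; you have unpacked both citations, proving that criterion directly by your degree-integrality argument on $X_F$ and replacing the globalization step by your pointwise reduction together with Lemma~\ref{lem-const-rank} (and, as in the paper, ``injective'' has to be read as stably injective for the hypothesis to survive pullback to geometric points, which is how the lemma is used later).
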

\begin{proof}
	By \cite[Remark~A.2.3]{JLH2021-1}, it suffices to show that $\cF$ and $\cL$ are saturated  when $S=\Spa(F,F^{+})$ is a geometric point. For (i), we have 
	\[
		\frac{\deg \cF+1}{\rank  \cF}=\frac{1}{k}>\frac{d}{n}=\mu(\cE_{n,d}).
	\] 
	Similarly for (ii), we have 
	\[
		\frac{\deg \cL +1}{\rank \cL}=1>\mu(\bigwedge^{k}\cE_{n,d})=\frac{kd}{n}.
	\] 
	By \cite[Lemma~A.2.1]{JLH2021-1}, such  inequalities imply both $\cF$ and $\cL$ are saturated, hence the corresponding quotients are vector bundles on  $X_S$. 
\end{proof}

Let $i: \cF\rightarrow \cE_{n,d}$ be a stably injective morphism over $X_S$ with $\cF$ fibrewise isomorphic to $\OO^{k}$. Taking its determinant gives $\wedge^{k}i: \det \cF\rightarrow \bigwedge^{k}\cE_{n,d}$, which is also  stably injective. Moreover, given two such pairs $(i,\cF)$ and $(i', \cF')$ together with an  isomorphism $\varphi: (i, \cF)\rightarrow (i',\cF')$, we have  $i'\circ \varphi=i$, hence $(\wedge^{k}i')\circ \det \varphi= \wedge^{k}i$. 
Thus $\det(\varphi)$ gives an isomorphism between  $(\wedge^{i}, \det\cF)$ and $(\wedge^{k}i', \det \cF')$. Therefore, we have a well-defined morphism of $v$-sheaves
\begin{align*}
	\Pl: \Inj(\OO^{k}, \cE_{n,d})/\underline{\GL_k(K)} &\rightarrow \Inj(\OO, \bigwedge^{k}\cE_{n,d})/\underline{K^{\times}}\\
	[(i, \cF)] &\mapsto [(\wedge^{k}i, \det \cF)]
\end{align*}
which we call the \textit{Pl\"{u}cker morphism}.

Our motivation for considering such a map comes from the special case when  $d=1$ and $k=n-1$, where the quotient on the right hand side is isomorphic to $\Inj(\OO, \OO(\frac{n-1}{n}))/\underline{K^{\times}}$ and the left one is isomorphic to $\Surj(\OO(1/n), \OO(1))/\underline{K^{\times}}$. Duality then implies  that $\Pl$ is an isomorphism (cf. Remark \ref{rem-new-JLH}). In the general case, however, such a Pl\"{u}cker morphism need not be surjective. We will show later that it nevertheless induces an isomorphism onto the decomposable locus.

\begin{lem}\label{lem-factor}
The Pl\"{u}cker morphism
\[
	\Pl: \Inj(\OO^{k}, \cE_{n,d})/\underline{\GL_k(K)}\rightarrow \Inj(\OO, \bigwedge^{k}\cE_{n,d})/\underline{K^{\times}}
\] 
	factors through the decomposable locus $\Dec^{k}(\cE_{n,d})$. 
\end{lem}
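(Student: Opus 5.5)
We need to show: for an $S$-point $[(i,\cF)]$ of the $k$-Grassmannian, the image $u=\wedge^{k}i:\det\cF\to\bigwedge^{k}\cE_{n,d}$ satisfies $q_u=0$. By Lemma \ref{lem-vsub} and the fact that $\Pl$ is a map of $v$-sheaves, it suffices to check the vanishing on a representative $(i,\cF)$ over $S$. Moreover, vanishing of a morphism of vector bundles on $X_S$ can be checked locally on $X_S$. The statement is therefore local, and we reduce to an affinoid open $U\subseteq X_S$ on which $\cF$ is free with basis $f_1,\ldots,f_k$, $\det\cF$ is free with generator $f_1\wedge\cdots\wedge f_k$, and $\cE_{n,d}$ is free.

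On such $U$, put $w_j=i(f_j)\in\cE_{n,d}(U)$, so that $u(f_1\wedge\cdots\wedge f_k)=w:=w_1\wedge\cdots\wedge w_k$. By construction, $q_u(\alpha\otimes\ell\otimes\ell')$ is, up to the scalars coming from $\ell,\ell'\in\det\cF$, the element $c(\alpha,w)\wedge w$. For a local section $\alpha$ of $\bigwedge^{k-1}\cE^{\vee}_{n,d}$, formula (\ref{eq-contra}), which holds verbatim for free modules over any commutative ring, gives
\[
c(\alpha,w)=\sum_{j=1}^{k}(-1)^{k-j}\langle\alpha,w_1\wedge\cdots\wedge\wh{w_j}\wedge\cdots\wedge w_k\rangle\,w_j .
\]
Each summand is an $\OO(U)$-multiple of some $w_j$. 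Since $w_j\wedge w_1\wedge\cdots\wedge w_k=0$ (a repeated factor in an exterior product over a commutative ring vanishes), we get $c(\alpha,w)\wedge w=0$. Hence $q_u$ vanishes on a generating set of local sections, so $q_u|_U=0$. Note that this is exactly the easy direction of Proposition \ref{prop-bourbaki}, carried out over a ring rather than a field; no invertibility or injectivity is needed.

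Gluing over a cover of $X_S$ gives $q_u=0$, so $[(\wedge^{k}i,\det\cF)]\in\Dec^{k}(\cE_{n,d})(S)$. Independence of the representative was already noted after Definition \ref{defn-Dec}, so $\Pl$ factors through $\Dec^{k}(\cE_{n,d})$.

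The only point needing some care is the bookkeeping of the twists by $\cL=\det\cF$ in $q_u$. We handle it through Lemma \ref{lem-qu=0}: it suffices to show $\wt m_u\circ c_u=0$, and after trivializing $\det\cF$ locally this reduces to the identity $c(\alpha,w)\wedge w=0$ above. There is no real obstacle beyond this.
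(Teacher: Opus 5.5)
Your proof is correct and is the paper's argument. The paper likewise chooses local generators $v_1,\ldots,v_k$ of $\cF$, notes that $c(\alpha,z)$ for $z=i(v_1)\wedge\cdots\wedge i(v_k)$ lies in the span of the $i(v_j)$, and concludes that $c(\alpha,z)\wedge z=0$, hence $q_{\wedge^{k}i}=0$; your extra bookkeeping (reducing to a representative via $v$-descent, and the twist by $\det\cF$) is fine.
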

\begin{proof}
	Let $(i, \cF)$ represent an $S$-point of $\Inj(\OO^{k},\cE_{n,d})/\underline{\GL_k(K)}$. We choose local generators $v_1,\ldots,v_k$ of $\cF$. Under the Pl\"{u}cker morphism, the corresponding local generator of $\det \cF$ is given by  $z=i(v_1)\wedge\cdots \wedge i(v_k)$. By definition of the contraction map, for every local section $\alpha $ of $\bigwedge^{k-1}\cE^{\vee}_{n,d}$ the contraction $c(\alpha, z)$ belongs to the submodule generated by $i(v_1),\ldots,i(v_k)$. Therefore, we get $c(\alpha ,z)\wedge z=0$ and $q_{\wedge^{k}i}=0$. Hence the image of $\Pl$ lies in $\Dec^{k}(\cE_{n,d})$.
\end{proof}

\begin{prop}\label{prop-reconstruct}
	Assume  $kd<n$. For every pair $(u,\cL)$ representing an $S$-point of $\Dec^{k}(\cE_{n,d})$, there is a unique rank $k$ subbundle $\cV_u\subseteq \cE_{n,d}$ such that
	\[
		\det \cV_u=u(\cL)\subseteq \bigwedge^{k}\cE_{n,d}.
	\] 
\end{prop}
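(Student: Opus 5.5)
The plan is to take $\cV_u$ to be the image of the contraction map $c_u:\bigwedge^{k-1}\cE_{n,d}^{\vee}\otimes\cL\rightarrow \cE_{n,d}$. Then the hypothesis $q_u=0$ (equivalently $\wt m_u\circ c_u=0$ by Lemma \ref{lem-qu=0}) will identify it with the kernel of $\wt m_u$, exactly as in the classical Proposition \ref{prop-bourbaki}.

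\textbf{Step 1 (fibres).} At every point $x\in|X_S|$ the fibre map $u(x):\cL(x)\rightarrow \bigwedge^{k}\cE_{n,d}(x)$ is nonzero. I would justify this using Lemma \ref{lem-coker}(ii): $\coker(u)$ is a vector bundle, so $u(\cL)$ is a subbundle and $u(x)$ is injective on fibres. Write $w=u(x)(\ell)\neq 0$ for a generator $\ell$ of $\cL(x)$. Since $q_u=0$, also $q_u(x)=0$, so $w$ satisfies $m_w\circ c_w=0$ over the field $\kappa(x)$. By Proposition \ref{prop-bourbaki}, $w=w_1\wedge\cdots\wedge w_k$ is decomposable with the $w_j$ linearly independent. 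Then formula (\ref{eq-contra}) shows that the image of $c_w$ is exactly $W_x=\langle w_1,\ldots,w_k\rangle$, of dimension $k$. Hence $x\mapsto\rank c_u(x)=k$ is constant.

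\textbf{Step 2 (subbundle).} By Lemma \ref{lem-const-rank}, $\cV_u:=\im(c_u)$ is a vector bundle and $\coker(c_u)$ is a vector bundle. So $\cV_u\subseteq\cE_{n,d}$ is a rank $k$ subbundle, locally a direct summand, whose fibre at $x$ is $W_x$. The inclusion $\bigwedge^k\cV_u\hookrightarrow\bigwedge^k\cE_{n,d}$ is then also a subbundle.

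\textbf{Step 3 (determinant).} I would show that $u$ factors through $\det\cV_u$ and that the induced map $\cL\rightarrow\det\cV_u$ is an isomorphism. For the factorization, work locally on an affinoid where $\cE_{n,d}=\cV_u\oplus\cV'$. The component of $u$ in $\bigwedge^k\cE/\bigwedge^k\cV_u$ vanishes at every fibre because $w\in\bigwedge^kW_x$. Since the quotient is a vector bundle over a uniform (reduced) ring, a section of it that vanishes at all points is zero. The resulting map $\cL\rightarrow\det\cV_u$ of line bundles is nonzero on every fibre, hence an isomorphism, giving $\det\cV_u=u(\cL)$.

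\textbf{Uniqueness and the main obstacle.} For uniqueness, suppose $\cV'\subseteq\cE_{n,d}$ is another rank $k$ subbundle with $\det\cV'=u(\cL)$. On each fibre, a decomposable $w$ determines its $k$-plane as the kernel of $m_w$, so $\cV'(x)=W_x=\cV_u(x)$ for all $x$. Then the composite $\cV'\rightarrow\cE_{n,d}/\cV_u$ vanishes fibrewise and hence vanishes, by reducedness again; this gives $\cV'\subseteq\cV_u$, and equality follows by comparing ranks. I expect the main technical obstacle to be this repeated passage from fibrewise statements to statements about bundles: the claim that a morphism of vector bundles on $X_S$ vanishing at all points of $|X_S|$ is zero. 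I would deduce it from stable uniformity of $X_S$, since on a uniform affinoid a function with $|f(x)|=0$ everywhere has spectral norm $0$. The assumption $kd<n$ enters only through Lemma \ref{lem-coker}.
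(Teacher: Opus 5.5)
Your proof is correct, but it builds the subbundle from the other side of the classical picture. You take $\cV_u=\im(c_u)$. The paper takes $\cV_u:=\ker(\wt m_u)$, using that $\wt m_u(x)$ has constant rank $n-k$ and Lemma \ref{lem-const-rank}.

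The trade-off is where the passage from fibrewise statements to global ones is needed.
\begin{itemize}
\item With the kernel, uniqueness is immediate at the level of sheaves. Any $\cV$ with $\det\cV=u(\cL)$ satisfies $\cV\wedge u(\cL)=0$, hence $\cV\subseteq\ker(\wt m_u)=\cV_u$, and equality follows from equality of determinants. The fibrewise argument is used only once, to show that $\cL\to\bigwedge^k\cE_{n,d}\to\bigwedge^k\cE_{n,d}/\det\cV_u$ vanishes.
\item With the image, the containment $\im(c_u)\subseteq\ker(\wt m_u)$ is exactly the hypothesis $q_u=0$, so the two constructions give the same rank $k$ subbundle. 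The fact that the fibres are spanned by $w_1,\dots,w_k$ is transparent in your version. The cost is that you need the vanishing-at-all-points argument twice, once for the factorization and once for uniqueness.
\end{itemize}

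Both versions rest on the same input: a morphism of vector bundles on the stably uniform $X_S$ that vanishes at every point of $|X_S|$ is zero. The paper extracts this from Lemma \ref{lem-const-rank} with rank $0$; you get it directly from the spectral norm.

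Two further remarks on your write-up. Your Step 3 makes explicit that $\cL\to\det\cV_u$ is an isomorphism (nonzero on every fibre, hence locally a unit), which the paper leaves implicit. Your closing ``comparing ranks'' step is justified, though it deserves a sentence: $\cV'$ is a local direct summand of $\cE_{n,d}$, hence of $\cV_u$, so $\cV_u/\cV'$ is a vector bundle of rank $0$.
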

\begin{proof}
	By part (ii) of Lemma \ref{lem-coker}, $u(\cL)\subseteq \bigwedge^{k}\cE_{n,d}$ is a line subbundle. Since $q_u=0$,  Proposition \ref{prop-bourbaki} and Lemma \ref{lem-qu=0} show that $u(x)(\cL(x))$ is decomposable for every $x\in \left|X_S\right|$, hence it is a single wedge product of $k$ vectors. Therefore, $\ker(\wt{m}_u(x))$ has dimension $k$ and $\wt{m}_u(x)$ has rank $n-k$. Lemma \ref{lem-const-rank} then  gives a rank $k$ subbundle 
	\[
		\cV_u:=\ker(\wt{m}_u)\subseteq \cE_{n,d}.
	\] 
	Specifically, we get  $\cV_u(x)=\ker(\wt{m}_u(x))$, hence  $\det(\cV_u(x))=u(x)(\cL(x))$. Therefore, the composition 
	\[
		\cL\xrightarrow{u}\bigwedge^{k}\cE_{n,d}\rightarrow (\bigwedge^{k}\cE_{n,d})/\det \cV_u
	\] 
	is zero at every fibre, hence the composition itself is also zero by Lemma \ref{lem-const-rank}. Thus the morphism $u: \cL\rightarrow \bigwedge^{k}\cE_{n,d}$ factors through $\det \cV_u$ and gives $u(\cL)=\det \cV_u$. 

	Suppose  $\cV\subseteq \cE_{n,d}$ is another rank $k$ subbundle with $\det \cV=u(\cL)$. We then have  $\cV \wedge u(\cL)=0$, which gives  $\cW\subseteq \ker(\wt{m}_u)=\cV_u$. Since $\det \cV=u(\cL)=\det \cV_u$, we get  $\cV=\cV_u$, which  proves the uniqueness part. 
\end{proof}

\begin{lem}\label{lem-deg-0}
	Let $\cV_u$ be the subbundle of Proposition \ref{prop-reconstruct}. For every geometric point $\ol{x}$ of $S$, we have $\cV_{u,\ol{x}}\cong \OO^{k}_{X_{\ol{x}}}$. 
\end{lem}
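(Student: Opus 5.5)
We may assume $S=\Spa(F,F^{+})$ is a geometric point, since the construction of $\cV_u$ commutes with base change: the kernel of $\wt{m}_u$ is a subbundle by Lemma \ref{lem-const-rank}, so its formation commutes with pullback. On $X_F$ we then have a rank $k$ subbundle $\cV:=\cV_u\subseteq \cE_{n,d}$ with $\det\cV\cong u(\cL)\cong\cL\cong\OO$. So $\deg\cV=0$, and the task is to show $\cV$ is semistable of slope $0$. The classification of vector bundles on $X_F$ then gives $\cV\cong\OO^{k}$.

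First we bound the Harder--Narasimhan slopes of $\cV$ from above. Since $\cV\subseteq\cE_{n,d}$ and $\cE_{n,d}$ is semistable of slope $d/n$, every subbundle $\cV'\subseteq\cV$ is a subsheaf of $\cE_{n,d}$. Hence $\mu(\cV')\le d/n$, and in particular every HN slope $\lambda_1\ge\cdots\ge\lambda_k$ of $\cV$ (counted with multiplicity) satisfies $\lambda_j\le d/n<1/k$.

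Next we bound them from below, and this is the main step. Write $\cV\cong\bigoplus_j\OO(\lambda_j)$, which is possible on $X_F$ by the classification. The key input is that $\det\cV=u(\cL)$ is \emph{saturated} in $\bigwedge^{k}\cE_{n,d}$ by Lemma \ref{lem-coker}(ii), since $\coker(u)$ is a vector bundle. At the same time, $\cV$ is itself a subbundle of $\cE_{n,d}$ with $\cE_{n,d}/\cV$ a bundle. I will argue by contradiction, supposing some $\lambda_j<0$. Since $\sum_j\lambda_j\operatorname{rk}=0$, some positive slope $\lambda_1>0$ must occur, with $\lambda_1\le d/n<1/k$. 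The idea is to use the bound $\lambda_1<1/k$ to show the negative part is too small to balance the degree. Write $\cV=\cV_{>0}\oplus\cV_{\le0}$ with $\cV_{>0}$ of rank $r\le k-1$. Then $\deg\cV_{>0}\le r\cdot d/n<r/k<1$. But a nonzero degree of a bundle on $X_F$ is a positive integer, so $\deg\cV_{>0}\ge1$ whenever $\cV_{>0}\neq0$. This contradicts $\deg\cV_{>0}<1$. Hence $\cV_{>0}=0$, and then $\deg\cV=0$ forces all $\lambda_j=0$. So the argument uses only the slope bound, not the saturation.

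Therefore $\cV$ is semistable of slope $0$, so $\cV_{u,\ol{x}}\cong\OO^k$. The point to verify carefully is the integrality of degrees on $X_F$ together with the HN-type subbundle bound for $\cV_{>0}\subseteq\cE_{n,d}$; both are standard from \cite{FF2018-1}. I also need to confirm that the hypothesis $kd<n$ is exactly what makes $r\,d/n<1$ for every $r\le k-1$.
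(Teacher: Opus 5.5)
Your argument is correct and is essentially the paper's: both use $\deg \cV_{u,\ol{x}}=0$ together with the fact that a nonzero positive-slope part of rank $r\le k$ has degree at least $1$, whereas semistability of $\cE_{n,d}$ bounds its slope by $d/n<1/k$ (the paper phrases this as the maximal HN piece having slope $\ge 1/r$). The supposition of a negative slope and the remark about saturation are unnecessary, since $\deg \cV_{>0}\le r d/n<r/k\le 1$ already forces $\cV_{>0}=0$ directly for any $r\le k$.
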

\begin{proof}
	Since $\det(\cV_u)=\cL$, the vector bundle  $\cV_u$ has degree 0. Let $\ol{x}$ be any geometric point of $S$. Suppose the maximal HN piece of $\cV_{u,\ol{x}}$ has positive degree and rank $r\leq k$, then its slope is at least $1/r$. Since $\cE_{n,d}$ is semistable, we get $1/r\leq d/n$, and the latter is $<1/k$ by assumption; this contradicts the constraint $r\leq k$. Thus all the HN slopes of $\cV_{u,\ol{x}}$ are non-positive, which implies $\cV_{u,\ol{x}}\cong \OO^{k}_{X_{\ol{x}}}$. 
\end{proof}

\begin{thm}\label{thm-iso-dec}
Suppose  $kd<n$. The Pl\"{u}cker morphism induces an isomorphism
\[
	\Pl: \Gr_{k}(\bB^{\varphi^{n}=\pi^{d}})\xrightarrow{\sim} \Dec^{k}(\cE_{n,d}). 
\] 
\end{thm}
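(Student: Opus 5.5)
We construct an inverse morphism $\Psi: \Dec^{k}(\cE_{n,d})\to \Gr_{k}(\bB^{\varphi^{n}=\pi^{d}})$ and check that both composites are the identity on $S$-points for every $S\in\Perf_{\Fq}$. By Lemma \ref{lem-factor}, $\Pl$ lands in $\Dec^{k}(\cE_{n,d})$. Both sides are quotient $v$-sheaves, so it is convenient to work with their groupoid descriptions. An $S$-point of $\Gr_k(\bB^{\varphi^n=\pi^d})$ is, $v$-locally on $S$, a pair $(i,\cF)$ with $\cF$ fibrewise $\OO^k$ and $i$ stably injective, up to isomorphism of pairs. Similarly, an $S$-point of $\Dec^{k}(\cE_{n,d})$ is a pair $(u,\cL)$ up to isomorphism. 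Since these pairs have no nontrivial automorphisms (stable injectivity), isomorphism classes glue. It therefore suffices to define $\Psi$ on pairs, check compatibility with isomorphisms and base change, and verify the two composites.

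Given $(u,\cL)$ with $q_u=0$, Proposition \ref{prop-reconstruct} produces a unique rank $k$ subbundle $\cV_u=\ker(\wt m_u)\subseteq\cE_{n,d}$ with $\det\cV_u=u(\cL)$. By Lemma \ref{lem-deg-0}, $\cV_u$ is fibrewise isomorphic to $\OO^k$. Since $\cV_u$ is a subbundle (the quotient is a vector bundle by Lemma \ref{lem-const-rank}), the inclusion $\cV_u\hookrightarrow\cE_{n,d}$ is stably injective. We set $\Psi([(u,\cL)])=[(\cV_u\hookrightarrow\cE_{n,d},\cV_u)]$.

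Three checks on $\Psi$ are needed. First, $\cV_u$ depends only on the subsheaf $u(\cL)$, so isomorphic pairs give the same subbundle. Second, $\wt m_u$ commutes with base change, and its kernel has constant fibre rank with the quotient a vector bundle, so taking the kernel commutes with base change. Third, uniqueness in Proposition \ref{prop-reconstruct} gives $v$-descent. To see that $\Pl\circ\Psi=\id$, note that $\wedge^k$ of the inclusion $\cV_u\hookrightarrow\cE_{n,d}$ identifies $\det\cV_u$ with $u(\cL)$. Hence $(\wedge^k\iota,\det\cV_u)\cong(u,\cL)$ via the isomorphism $u:\cL\xrightarrow{\sim}u(\cL)$.

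For $\Psi\circ\Pl=\id$, take $(i,\cF)$. By Lemma \ref{lem-coker}(i), which uses $kd<n$, the image $i(\cF)$ is a subbundle of rank $k$, and $\det i(\cF)=(\wedge^ki)(\det\cF)$. Uniqueness in Proposition \ref{prop-reconstruct} then forces $\cV_{\wedge^k i}=i(\cF)$. Finally, $i:\cF\xrightarrow{\sim} i(\cF)$ gives an isomorphism of pairs $(i,\cF)\cong(\iota,i(\cF))$.

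The main obstacle is the step $\Psi\circ\Pl=\id$, specifically the saturation and base-change bookkeeping. We need $i(\cF)$ to be an honest subbundle, so that uniqueness applies, and we need the kernel construction to commute with arbitrary base change $T\to S$. Both rest on the constant-rank argument of Lemma \ref{lem-const-rank} and on Lemma \ref{lem-coker}. This is exactly where the hypothesis $kd<n$ enters, since for larger slope $i(\cF)$ need not be saturated and $\Pl$ need not be injective.
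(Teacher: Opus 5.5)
Your proposal is correct and is essentially the paper's argument. The paper shows $\Pl(S)$ is bijective: surjectivity comes from Proposition~\ref{prop-reconstruct} and Lemma~\ref{lem-deg-0} (your $\Pl\circ\Psi=\id$), and injectivity comes from the uniqueness in Proposition~\ref{prop-reconstruct} (your $\Psi\circ\Pl=\id$), so framing this as an explicit inverse with base-change checks is only more careful bookkeeping. One correction to your closing remark: $kd<n$ is used at least as essentially in the surjectivity direction as in the injectivity step, namely through Lemma~\ref{lem-coker}(ii) (so $u(\cL)$ is a line subbundle and $\wt{m}_u$ has constant rank) and Lemma~\ref{lem-deg-0} (so $\cV_u$ is fibrewise trivial).
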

\begin{proof}
	It suffices to show that for every $S\in \Perf_{\Fq}$, $\Pl(S)$ is bijective. Surjectivity is given by Proposition \ref {prop-reconstruct} and Lemma \ref{lem-deg-0}. If two $S$-points $(i,\cF)$ and $(i',\cF')$ of the Grassmannian have the same image under $\Pl(S)$, then we get $\det\cF\cong \det \cF'$, and the images $(\wedge^{k}i)(\cF), (\wedge^{k}i')(\cF')$ identify the same line subbundle of $\bigwedge^{k}\cE_{n,d}$. By the uniqueness part of Proposition \ref{prop-reconstruct}, $i(\cF)$ and $i'(\cF')$ then identify the same vector subbundle of $\cE_{n,d}$, hence they represent the same isomorphism class in the Grassmannian. 
\end{proof}

We now show the decomposable locus  is Zariski closed inside $\Inj(\OO, \bigwedge^{k}\cE_{n,d})/\underline{K^{\times}}$. First we recall the following lemma of Fargues--Scholze: 
\begin{lem}[{\cite[Lemma~IV.4.23]{FS2024-1}}]\label{lem-FS}
	Let $S=\Spa (R,R^+)\in \Perf_{\Fq}$ be an affinoid perfectoid space with pseudouniformizer $\varpi$ and $I\subseteq (0,\infty)$ a compact interval with rational ends. Then for every ideal $J\subseteq \OO(Y_{S,I})$, there exists a unique Zariski closed perfectoid subspace $S(J)$ of $S$ such that for every perfectoid space $T\rightarrow S$, its structural morphism factors through $S(J)$ if and only if the image of $J$ is 0  along  $\OO(Y_{S,I})\rightarrow \OO(Y_{T,I})$. 
\end{lem}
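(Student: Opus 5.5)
This is Lemma IV.4.23 of Fargues--Scholze, quoted in the paper. My plan is to reduce it to the standard fact that Zariski closed subsets of an affinoid perfectoid space are perfectoid, via a "coefficient ideal" construction.

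\textbf{Uniqueness.} A Zariski closed perfectoid subspace is determined by its functor of points. Two such subspaces satisfying the stated universal property therefore coincide, by Yoneda. So only existence needs work.

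\textbf{Existence: the coefficient ideal.} Write $I=[a,b]$. Elements $f\in \OO(Y_{S,I})=B_{S,I}$ admit expansions of the form $f=\sum_{m\gg -\infty}[r_m]\pi^{m}$ with $r_m\in R$, subject to convergence conditions. Such an expansion is not unique in general, so I would first reduce to the case $S=\Spa(R,R^{+})$ over a perfectoid field. Alternatively, I would fix the completion of $W_{\OO_K}(R^{+})[1/\pi,1/[\varpi]]$ for the appropriate Gauss norms, which gives a canonical presentation.

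The key construction is as follows. For $f\in J$, let $J'\subseteq R$ be the closed ideal generated by all "coefficients" of $f$. More robustly, $J'$ should be the smallest closed ideal $J'\subseteq R$ such that $f$ maps to zero in $\OO(Y_{S',I})$, where $S'=\Spa(R/J')^{\mathrm{perf}}$.

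\textbf{Existence: the candidate subspace.} The clean formulation is to set
\[
S(J)=\bigcap_{f\in J}\{s\in S: f|_{Y_{s,I}}=0\}.
\]
I would then show this is cut out by an ideal of $R$. Scholze's result (Zariski closed subsets of affinoid perfectoids are affinoid perfectoid, with a universal property for maps from perfectoid $T$) then gives a perfectoid structure on $S(J)$.

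\textbf{Verifying the universal property.} I must check that, for perfectoid $T\to S$, the map factors through $S(J)$ if and only if $J\mapsto 0$ in $\OO(Y_{T,I})$. I would do this by reducing to geometric points using $v$-descent: vanishing of a function on $Y_{T,I}$ is $v$-local, since $\OO(Y_{T,I})$ injects into $\prod_t \OO(Y_{t,I})$ over points $t$ of $T$, by the uniformity of $B_{T,I}$. Factoring through a Zariski closed subset is also checked on points for perfectoid $T$, since $R\to\OO(T)$ kills $J'$ exactly when it does so pointwise, by reducedness.

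\textbf{Main obstacle.} The hard step is identifying the pointwise vanishing locus with $V(J')$ for an honest ideal $J'\subseteq R$ whose formation commutes with base change. Concretely, I need that $f|_{Y_{T,I}}=0$ if and only if all Witt-type coefficients of $f$ vanish in $\OO(T)$. I would attempt this by writing $f$ as a $\pi$-adically convergent series in Teichm\"uller lifts over $W_{\OO_K}(R^{+})$ localized, and using the Gauss norm characterization on the annulus $Y_{T,I}$ to see that $f$ vanishes if and only if each coefficient $r_m$ does.

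Non-uniqueness of the Teichm\"uller expansion after inverting $[\varpi]$ can be handled by first multiplying $f$ by a power of $[\varpi]$, so that it lies in the integral subring $B_{S,I}^{+}$. That subring admits unique expansions $\sum [r_m]\pi^m$ modulo the defining relation of the annulus. If this fails, I would instead follow Fargues--Scholze and reduce to the case where $J$ is generated by one element, then take a closed perfectoid intersection.
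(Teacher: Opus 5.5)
There is a genuine gap, and it sits exactly where you put your ``main obstacle''. Your reductions are fine: uniqueness by Yoneda, $S(J)=\bigcap_{f\in J}S(f)$, and checking both sides of the universal property on points. This works because, for perfectoid $T$, factoring through a Zariski closed subspace is a topological condition, and $\OO(Y_{T,I})$ is uniform.

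But these reductions only bring you to the following claim: for a single $f\in\OO(Y_{S,I})$, the set $\{s\in S: f|_{Y_{s,I}}=0\}$ equals $V(J')$ for some ideal $J'\subseteq R$. Mere closedness would not suffice. That claim is the entire content of the lemma, and your argument for it does not go through.

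Your argument needs every $f\in B_{S,I}$ to be a convergent series $\sum_m[r_m]\pi^m$ with $r_m\in R$, functorially in $S$. No such statement is available in mixed characteristic. The Teichm\"uller map is not additive, so adding two series that are infinite in the negative $\pi$-direction involves carries from arbitrarily negative indices. As a result, a general element of the completion $B_{S,I}$ of $W_{\OO_K}(R^{+})[1/\pi,1/[\varpi]]$ is not known to be such a series.

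Multiplying by $[\varpi]^{N}$ does not fix this. $B^{+}_{S,I}$ is a completed rational localization of $W_{\OO_K}(R^{+})$, not a ring of integral Witt vectors. It already contains bi-infinite series such as $\sum_{n\geq 1}[\varpi^{cn}]\pi^{-n}$ for an integer $c>1/a$, where $I=[a,b]$. Falling back to principal $J$ does not help either, since the principal case is where the difficulty lies. Defining $J'$ as the smallest closed ideal over which $f$ dies presupposes the result.

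The missing idea is to pass to characteristic $p$, where genuine series expansions exist. (The paper only cites this lemma from Fargues--Scholze, so the comparison is with the standard argument.)

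\begin{itemize}
\item Let $\wt{K}$ be the completion of $K(\pi^{1/p^{\infty}})$. For affinoid perfectoid $T$ over $S$, the space $\wt{Y}_{T,I}:=Y_{T,I}\times_{\Spa K}\Spa\wt{K}$ is affinoid perfectoid.
\item The map $\OO(Y_{T,I})\to\OO(\wt{Y}_{T,I})$ is injective, because $K\to\wt{K}$ has a continuous $K$-linear retraction. Hence $J\OO(Y_{T,I})=0$ if and only if $\wt{Y}_{T,I}\to\wt{Y}_{S,I}$ factors through the Zariski closed subspace $V(J)\subseteq\wt{Y}_{S,I}$.
\item Zariski closed subspaces of affinoid perfectoid spaces are compatible with tilting. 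So $V(J)^{\tilt}=V(J^{\tilt})$ for an ideal $J^{\tilt}$ of $\OO(\wt{Y}^{\tilt}_{S,I})$.
\item Here $\wt{Y}^{\tilt}_{S,I}$ is the relative perfectoid annulus $\{|t|^{b}\leq|\varpi|\leq|t|^{a}\}$ inside $S\times_{\Spa\Fq}\Spa\Fq\llparent t^{1/p^{\infty}}\rrparent$, and $\wt{Y}^{\tilt}_{T,I}=T\times_S\wt{Y}^{\tilt}_{S,I}$.
\item On this annulus every function is a convergent sum $\sum_{m\in\Z[1/p]}r_mt^{m}$ with uniquely determined $r_m\in R$. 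Pullback along $T\to S$ acts on these coefficients through $R\to\OO(T)$.
\end{itemize}

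Hence $S(J)$ is the Zariski closed subspace of $S$ cut out by the ideal generated by all coefficients of all elements of $J^{\tilt}$. This is your coefficient-ideal idea, but carried out on the tilted side, where addition of series is termwise and expansions are unique and compatible with base change.
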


\begin{prop}\label{prop-Zariski-closed}
	The decomposable locus $$\Dec^{k}(\cE_{n,d})\subseteq \Inj(\OO, \bigwedge^{k}\cE_{n,d})/\underline{K^{\times}}$$ is Zariski closed.
\end{prop}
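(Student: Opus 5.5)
Since Zariski closedness is local for the $v$-topology on the target, I will check it after pulling back along the quotient map $\Inj(\OO,\bigwedge^{k}\cE_{n,d})\to \Inj(\OO,\bigwedge^{k}\cE_{n,d})/\underline{K^{\times}}$. The condition $q_u=0$ is invariant under $K^{\times}$ (shown after Definition \ref{defn-Dec}). So it suffices to show that, for every affinoid perfectoid $S=\Spa(R,R^{+})$ and every $S$-point of $\Inj(\OO,\bigwedge^{k}\cE_{n,d})$, the locus of $T\to S$ on which $q_{u_T}=0$ is represented by a Zariski closed subspace of $S$. Here the $S$-point is a global section $u\in H^{0}(X_S,\bigwedge^{k}\cE_{n,d})$, with $\cL=\OO$ trivialized. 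Representability on the quotient then follows by descent, since Zariski closed immersions descend along $v$-covers. Alternatively, one can argue directly with representatives $(u,\cL)$ after trivializing $\cL$ $v$-locally.

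For such $u$, the map $q_u$ is a global section of the vector bundle $\cH:=\HHom(\bigwedge^{k-1}\cE_{n,d}^{\vee},\bigwedge^{k+1}\cE_{n,d})$ on $X_S$, and its formation commutes with base change in $S$. I will pull $\cH$ and $q_u$ back to $Y_S$. There $\cE_{n,d}$, and hence $\cH$, becomes the trivial bundle $\OO_{Y_S}^{M}$ together with a Frobenius-semilinear structure. Concretely, on $Y_S$ we have $\cE_{n,d}\cong\OO^{n}_{Y_S}$, so $\cH|_{Y_S}$ is free with a basis coming from $\{e_i\}$. Then $q_u$ becomes an $M$-tuple of elements $f_1,\dots,f_M\in\OO(Y_S)$, namely the Pl\"ucker coordinates of $q_u$, which are quadratic in the coordinates of $u$. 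Fix a compact interval $I=[a,b]$ with $b\geq qa$, i.e.\ long enough that $Y_{S,I}$ surjects onto $X_S$ under $\varphi^{\Z}$ (a fundamental domain). A section of $\cH$ over $X_S$ then vanishes iff its restriction to $Y_{S,I}$ vanishes, and this remains true after any base change $T\to S$. Let $J\subseteq\OO(Y_{S,I})$ be the ideal generated by the restrictions of $f_1,\dots,f_M$. Lemma \ref{lem-FS} then produces a Zariski closed $S(J)\subseteq S$ such that $T\to S$ factors through $S(J)$ iff $J$ maps to $0$ in $\OO(Y_{T,I})$, i.e.\ iff $q_{u_T}=0$. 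This identifies the pullback of $\Dec^{k}(\cE_{n,d})$ to $S$ with $S(J)$.

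The main point to check carefully is the compatibility of the coordinate description with base change. The trivialization of $\cE_{n,d}|_{Y_S}$, and hence of $\cH|_{Y_S}$, must be pulled back from $\Spd\Fq$, i.e.\ come from the standard presentation of $\cE_{n,d}$ as $\OO^{n}$ with a matrix Frobenius. This ensures that the $f_j$ for $T$ are exactly the images of the $f_j$ for $S$ under $\OO(Y_{S,I})\to\OO(Y_{T,I})$. I also need that vanishing on $Y_{T,I}$ detects vanishing on $X_T$ for every $T$: the $\varphi$-translates of $Y_{T,I}$ cover $Y_T$ when $b\ge qa$, and $q_{u_T}$ is $\varphi$-equivariant. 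Finally, for the descent to the $K^{\times}$-quotient, I use that the Zariski closed subsets $S(J)$ are $K^{\times}$-stable and compatible with pullback. Both properties follow from the uniqueness in Lemma \ref{lem-FS}.
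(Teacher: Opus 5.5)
Once $\cL$ is trivial, your argument matches the paper's. You restrict to $Y_{S,I}$ for an interval whose Frobenius translates cover $Y_S$, take the ideal generated by the components of $q_u$, apply Lemma \ref{lem-FS}, and use $\varphi$-equivariance to pass from $Y_{T,I}$ back to $X_T$. The gap is in the reduction to trivial $\cL$. An $S$-point of $\Inj(\OO,\bigwedge^{k}\cE_{n,d})/\underline{K^{\times}}$ lifts to $\Inj(\OO,\bigwedge^{k}\cE_{n,d})$ only after a $v$-cover $S'\to S$. The relevant $\underline{K^{\times}}$-torsor, equivalently the fibrewise trivial line bundle $\cL$ on $X_S$, is in general nontrivial; compare the pro-\'etale torsors of Proposition \ref{prop-aff-quot}. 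So your proof needs the assertion that Zariski closedness descends along $v$-covers, and you give no justification for it.

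Being Zariski closed means being cut out by an ideal of functions on $S$, here an ideal of $B_{S,I}$. The uniqueness in Lemma \ref{lem-FS} only shows that the subspaces $S'(J')$ are compatible with the descent datum; it does not produce an ideal on $S$. Nor can your generators be descended. On $S'$ the components $f_j$ of $q_{u'}$ are not invariant under the descent datum, because $q_{au}=a^{2}q_u$ for $a\in K^{\times}$. They are really sections of $\cL^{\otimes -2}$ rather than functions.

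The fix, which is what the paper does, is to never trivialize $\cL$. Work directly with a representative $(u,\cL)$ over $S$ and view $q_u$ as a section of $\cH=\HHom(\bigwedge^{k-1}\cE_{n,d}^{\vee}\otimes\cL^{\otimes 2},\bigwedge^{k+1}\cE_{n,d})$. Then $H=\Gamma(Y_{S,I},\pi^{*}\cH)$ is finite projective over $B_{S,I}$, though not necessarily free because of $\cL$. Let $J$ be the image of the evaluation map $H^{\vee}\to B_{S,I}$ at $h_u=\pi^{*}q_u$. Projectivity gives $J B_{T,I}=0$ if and only if $h_u\otimes 1=0$ in $H\otimes_{B_{S,I}}B_{T,I}$. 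When $\cL$ is trivial, this $J$ is exactly your ideal generated by the $f_j$. The rest of your argument then applies verbatim on $S$ itself, with no descent step.
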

\begin{proof}
	Let us simply write $\cI$ for  $\Inj(\OO, \bigwedge^{k}\cE_{n,d})/\underline{K^{\times}}$. For any affinoid perfectoid $S=\Spa(R,R^{+})\in \Perf_{\Fq}$, we want to  show the fibre product 
	\[
	\begin{tikzcd}
		S\times _{\cI}\Dec^{k}(\cE_{n,d})\arrow[d]\arrow[r] & S\arrow[d]\\
		\Dec^{k}(\cE_{n,d})\arrow[r] & \cI
	\end{tikzcd}
	\] 
	is Zariski closed in $S$.  Since the decomposable locus is defined by the condition $q_u=0$, this fibre product parametrizes exactly those $T\rightarrow S$ such that the pullback $(q_u)_T$ is 0.  

	For $(u, \cL)$ representing an $S$-point of $\cI$, the associated map  $q_u$ gives a global section of 
	\[
		\cH:=\HHom_{\OO_{X_S}}(\bigwedge^{k-1}\cE_{n,d}^{\vee}\otimes \cL^{\otimes 2}, \bigwedge^{k+1}\cE_{n,d})
	\] 
	over $X_S$. We pick the  interval $I=[1,q]\subset (0,\infty)$ and consider $Y_{S,I}$.  Also denote by $B_{S,I}=\OO(Y_{S,I})$ and by $\pi: Y_{S,I}\rightarrow X_S$ the covering map. Since $Y_{S,I}$ is sousperfectoid,  \cite[Theorem~5.2.8, Proposition~6.3.4]{SW-Berkeley} imply that $H:=\Gamma(Y_{S,I},\pi^{\ast}\cH)$ is a finite projective $B_{S,I}$-module. We let $h_u$ be the pullback of $q_u$ along the cover $\pi$. Then evaluation at  $h_u$ gives a $B_{S,I}$-linear map  $H^{\vee}\rightarrow B_{S,I}$, and 
	\[
		J:=\im(H^{\vee}\rightarrow B_{S,I})
	\] 
	gives  an ideal of $B_{S,I}$. As $H$ is finite projective, $J=0$ is equivalent to $h_u=0$.

	By Lemma \ref{lem-FS}, the ideal $J$ gives  a  Zariski closed perfectoid subspace, say $S^{\dec}$,  of $S$ such that a perfectoid space $T\rightarrow S$ factors through $S^{\dec}$ if and only if $J\cdot B_{T,I}=0$. Assume $T$ is affinoid perfectoid. Since  $H$ is finite projective, $J\cdot B_{T,I}=0$ is equivalent to $h_u\otimes 1=0$ in $H\otimes _{B_{S,I}}B_{T,I}$; in the meantime, $h_u\otimes 1$ is exactly the pullback of $(q_u)_T$ to $Y_{T,I}$. Since $I=[1,q]$, Frobenius translates of $Y_{T,I}$ cover $Y_{T}$, hence the cover $Y_{T}\rightarrow X_{T}$ is Frobenius equivariant. Therefore, $J\cdot B_{T,I}=0$ is equivalent to $(q_u)_T=0$.  This  also extends to arbitrary perfectoid $T\rightarrow S$ by taking affinoid open cover of $T$. Thus by the Yoneda lemma, we get $S^{\dec}\cong S\times_{\cI}\Dec^{k}(\cE_{n,d})$, hence the assertion.  
\end{proof}

For an absolute BC space of slope $<1/k$, the Pl\"{u}cker morphism therefore reduces the perfectoidness of its $k$-Grassmannian to the case of a projectivized absolute BC space. Combining this with what we established in section \ref{section-3} gives:
\begin{thm}\label{thm-dk<n}
	The Grassmannian $\Gr_{k,C}(\bB^{\varphi^{n}=\pi^{d}})$ is perfectoid if $kd<n$. 
\end{thm}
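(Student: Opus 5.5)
The plan is to combine Theorem~\ref{thm-iso-dec}, Proposition~\ref{prop-Zariski-closed} and Corollary~\ref{cor-Gr1-perf}, and then base change everything to $\Spd C$.

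First, by Theorem~\ref{thm-iso-dec} (which uses $kd<n$), the Pl\"ucker morphism identifies $\Gr_{k}(\bB^{\varphi^{n}=\pi^{d}})$ with $\Dec^{k}(\cE_{n,d})$. Proposition~\ref{prop-Zariski-closed} says this locus is Zariski closed in $\Inj(\OO,\bigwedge^{k}\cE_{n,d})/\underline{K^{\times}}$. Since $\bigwedge^{k}\cE_{n,d}\cong\cE_{N,D}$ with $N=\binom{n}{k}$ and $D=d\binom{n-1}{k-1}$, this ambient space is $\Gr_{1}(\bB^{\varphi^{N}=\pi^{D}})$.

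Both the isomorphism and the Zariski closedness are statements about $v$-sheaves over $\Spd\Fq$, so they survive the fibre product with $\Spd C$. Concretely, the condition for Zariski closedness was checked on every affinoid perfectoid $S$ mapping to the ambient sheaf, and $\Spd C$-points of the base change are such $S$ with an extra structure map. Hence $\Gr_{k,C}(\bB^{\varphi^{n}=\pi^{d}})$ is a Zariski closed subsheaf of $\Gr_{1,C}(\bB^{\varphi^{N}=\pi^{D}})$.

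Next we check the slope. We have $D/N=kd/n<1$, so $0<D<N$, and Corollary~\ref{cor-Gr1-perf} applies to show that $\Gr_{1,C}(\bB^{\varphi^{N}=\pi^{D}})$ is a perfectoid space $P$.

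It remains to see that a Zariski closed subsheaf of a perfectoid space is representable by a perfectoid space.
\begin{itemize}
\item Cover $P$ by affinoid perfectoid opens $S$.
\item By the proof of Proposition~\ref{prop-Zariski-closed}, the fibre product $S\times_{P}\Gr_{k,C}$ is the space $S(J)=S^{\dec}$ from Lemma~\ref{lem-FS}, which is a Zariski closed perfectoid subspace of $S$.
\item These pieces are compatible on overlaps, because each represents the same subfunctor. So they glue to a perfectoid space representing $\Gr_{k,C}(\bB^{\varphi^{n}=\pi^{d}})$.
\end{itemize}

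The main obstacle is making sure the Zariski closed locus is genuinely perfectoid and not merely a closed subdiamond. This is exactly what Lemma~\ref{lem-FS} guarantees: the Fargues--Scholze construction produces a perfectoid $S(J)$, and its universal property matches the moduli condition $q_u=0$. The rest of the argument is formal.
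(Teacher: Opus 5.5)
Your proposal is correct and follows the paper's proof. The paper likewise combines Theorem~\ref{thm-iso-dec} and Proposition~\ref{prop-Zariski-closed} to make the Pl\"ucker morphism a Zariski closed immersion into $\Gr_{1,C}(\bB^{\varphi^{N}=\pi^{D}})$, which is perfectoid by Corollary~\ref{cor-Gr1-perf} since $D/N=kd/n<1$; your explicit base-change and gluing steps simply spell out what the paper leaves implicit.
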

\begin{proof}
	Let  $N=\binom{n}{k}$ and $D=d\cdot \binom{n-1}{k-1}$. Then we have 
	\[
		(\Inj(\OO, \bigwedge^{k}\cE_{n,d})\times_{\Spd \Fq}\Spd C)/\underline{K^{\times}}\cong \Gr_{1,C}(\bB^{\varphi^{N}=\pi^{D}}).
	\] 
	The condition $kd<n$ implies  $D<N$.  Hence  Corollary \ref{cor-Gr1-perf} shows   $\Gr_{1,C}(\bB^{\varphi^{N}=\pi^{D}})$ is perfectoid. By Theorem \ref{thm-iso-dec} and Proposition \ref{prop-Zariski-closed}, the Pl\"{u}cker morphism
	\[
		\Pl: \Gr_{k,C}(\bB^{\varphi^{n}=\pi^{d}})\rightarrow \Gr_{1,C}(\bB^{\varphi^{N}=\pi^{D}})
	\] 
	is a Zariski closed immersion, thus $\Gr_{k,C}(\bB^{\varphi^{n}=\pi^{d}})$ is perfectoid. 
\end{proof}

From the statements leading  to Theorem \ref{thm-dk<n}, one can see that the assumption  $kd<n$ is crucial. For $k=1$,  the condition $d<n$ was essential for the convergence of the orbit norms. For higher Grassmannians, the Pl\"{u}cker morphism will not be an isomorphism onto the decomposable locus without assuming $kd<n$.

\section{Non-perfectoid Grassmannians}
In this section, we show the previous assumption of $kd<n$ is necessary by showing the $k$-Grassmannian of an absolute BC space  is not perfectoid whenever its slope is $\geq 1/k$.  

We start with the  simple yet foundational example of $\Gr_{1,C}(\bB^{\varphi=\pi})$. Recall that 
\[
	\Inj(\OO, \OO(1))/\underline{K^{\times}}\cong (\bB^{\varphi=\pi}\bs \{0\})/\underline{K^{\times}}.
\] 
The right hand side is identified with $\Div^{1}$ by \cite{Fargues2020-1}, and the same argument as in Theorem \ref{thm-Hansen} shows the left hand side is isomorphic to $\Spd K/\varphi_K^{\Z}$. Hence we have 
\[
	\Gr_{1,C}(\bB^{\varphi=\pi})=(\Spd K\times _{\Spd \Fq}\Spd C)/(\varphi_K\times \id)^{\Z}\cong \Div^{1}_{C}.
\] 
That this is not perfectoid is presumably well known to the experts; see e.g. \cite[p.2]{Weinstein2017-1}. We nevertheless include a proof here.

\begin{prop}\label{prop-div}
The diamond $\Div^{1}_{C}$ is not representable by a perfectoid space.  
\end{prop}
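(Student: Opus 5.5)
We argue by contradiction, using the presentation $\Div^{1}_{C}\cong (\Spd K\times_{\Spd \Fq}\Spd C)/(\varphi_K\times\id)^{\Z}$ recalled just above. Since $\varphi_K\times\id$ acts freely and properly discontinuously, the quotient map $\Spd K\times_{\Spd\Fq}\Spd C\to\Div^1_C$ is a local isomorphism. So $\Div^{1}_{C}$ is perfectoid if and only if $Y:=\Spd K\times_{\Spd\Fq}\Spd C$ is perfectoid. Write $\Spd K=\Spd \hat K_\infty^{\tilt}/\underline{\OO_K^\times}$, with $K_\infty$ the Lubin--Tate tower. Then $Y$ is the open punctured unit disc $\mathbb{D}^{*}_{C}$ over $C$ (the "$\Spd K$ over $C$" is the $v$-sheaf of untilts, i.e. $\Spd K\times\Spd C\cong (\Spd \hat K_\infty\times \Spd C)/\OO_K^\times$, and $\hat K_\infty^{\tilt}$-points over $C$ form the punctured universal cover $\wt{\mathcal G}_{1,1}\setminus\{0\}$). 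Modulo $\OO_K^\times$, this is the punctured Lubin--Tate formal group, which is the rigid-analytic punctured open disc $\mathbb{D}^{*}_{C}$ via the coordinate $x\mapsto$ its image under $\log$ / the finite-level quotient. Concretely, the plan is to identify $Y\cong (\mathbb{D}^{*}_{C})^{\diamond}$, a diamond attached to a smooth one-dimensional rigid space over $C$.

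It then suffices to show that the diamond $X^{\diamond}$ of a smooth rigid space $X$ of positive dimension over $C$ is never representable by a perfectoid space. I would argue as follows. Suppose $X^\diamond\cong Z^{\diamond}$ for a perfectoid space $Z$ over $C$ (representability over $\Spd C$ means $Z$ lives over $C$, by untilting). Pass to an affinoid open $Z'=\Spa(A,A^+)$ corresponding to an open $U\subseteq X$, an affinoid disc. The structure sheaves then agree via $\OO_{X}(U)\to \OO_{U^\diamond}$: by \cite{Scholze-Diamond} (Proposition on seminormal rigid spaces), $\OO(U)=\OO(U^{\diamond})$, whereas $\OO(Z'^\diamond)=A$. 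Hence $A\cong\OO(U)=C\langle T\rangle$ as topological $C$-algebras. But $C\langle T\rangle$ is not perfectoid, since Frobenius on $\OO^\circ/p$ is not surjective (no $p$-th root of $T$ mod $p$). This gives the contradiction.

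The main obstacle is the first step, pinning down $Y\cong(\mathbb D^*_C)^\diamond$ rigorously, including the $\OO_K^\times$-quotient and the product over $\Spd\Fq$. A cleaner alternative that I would try first avoids this: use $\Div^1\cong \Spd\breve{K}/\varphi^{\Z}$ and the fact that $\Div^1_C$ is the diamond of the rigid space $\Spa(C)\times\Spd K$, which is $(\Spa C\hat\otimes_{\Fq}\ldots)$. Equivalently, $\Spd K\times_{\Spd\Fq}\Spd C\cong \Spd C\times_{\Spd\Fq}\Spd K$ is the diamond of $Y_{C^\flat}$, the adic space of the curve $Y_{C^\flat,K}$, which is a (non-perfectoid) analytic adic space over $K$ whose rational opens $\Spa(B_{I},B_I^+)$ are not perfectoid: $\pi\in B_I$ has no compatible system of $p$-th roots up to units. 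With this route, the remaining key input is again that $\OO(\mathcal U^\diamond)=\OO(\mathcal U)$ for seminormal (here smooth/sousperfectoid) $\mathcal U$, after which one shows $B_{C^\flat,I}$ is not perfectoid.
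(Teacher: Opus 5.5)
Your main route fails at its first step: $\Spd K\times_{\Spd\Fq}\Spd C$ is not the diamond of the rigid punctured disc $\mathbb{D}^{*}_{C}$. By \cite[Proposition~II.1.17]{FS2024-1} it is $Y^{\diamond}_{C^{\tilt}}$, which is an adic space over $K$, and its projection to $\Spd C$ does not come from any morphism of adic spaces. Your heuristic mixes up two different quotients. The finite-level map $\wt{\cG}_{1,1}\to\cG_{1,1}$ is the quotient by the Tate module $T_{\pi}\cG_{1,1}\cong\OO_K$ acting by \emph{translation}. By contrast, $\Spd K$ is obtained by quotienting by $\OO_K^{\times}$ acting by \emph{scalars}.

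The identification is in fact false. Suppose $Y\cong(\mathbb{D}^{*}_{C})^{\diamond}$ over $\Spd C$. Full faithfulness of $X\mapsto X^{\diamond}$ on normal rigid spaces would make $\varphi_K\times\id$ an automorphism of $\mathbb{D}^{*}_{C}$. Every such automorphism preserves $|T|$, so the quotient would surject continuously onto $(0,1)$ and could not be quasi-compact. But $\Div^{1}_{C}$ is quasi-compact, being the image of the affinoid $Y_{C^{\tilt},[a,qa]}$. So the ``main obstacle'' you flag cannot be overcome. Your rigid-space step also has a hole. An affinoid perfectoid open of $Z$ need not correspond to an affinoid disc in $X$. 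Conversely, an affinoid open of a perfectoid space need not have a perfectoid ring of functions.

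Your alternative route is the right one, and its reduction is essentially the paper's. Take $I=[a,b]$ with $a<b<qa$. The radius map shows that the $\varphi_K$-translates of $Y_{C^{\tilt},I}$ are disjoint, so $Y^{\diamond}_{C^{\tilt},I}\hookrightarrow\Div^{1}_{C}$ is an open immersion. By full faithfulness for sousperfectoid spaces, untilting via the $\Spd K$ factor rather than the $\Spd C$ factor, it suffices to show that the adic space $Y_{C^{\tilt},I}$ is not perfectoid.

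Your final step does not establish this, for two reasons.
\begin{itemize}
\item The obstruction you propose is vacuous: $\pi$ is a unit in $B_I$, because $Y_{C^{\tilt}}$ is the complement of $V(\pi[\varpi])$.
\item Even a correct proof that the ring $B_I$ is not perfectoid would not suffice. A priori $Y_{C^{\tilt},I}$ could be a perfectoid space without $B_I$ being perfectoid (cf.\ \cite[Remark~7.1.3]{SW-Berkeley}). You have to rule out a cover by affinoid perfectoid opens.
\end{itemize}
The missing idea is the paper's rank-one-point count. In a finite such cover, one may take each $\Spa(R,R^{+})$ with $R$ a principal ideal domain \cite[Corollary~II.1.12]{FS2024-1}. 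A perfectoid principal ideal domain is a perfectoid field \cite[Corollary~2.9.3]{Kedlaya-AWS}. Hence $Y_{C^{\tilt},I}$ would have only finitely many rank-one points, contradicting the continuum of Gauss points $y_{\rho}$ for $\rho\in[a,b]$.
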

\begin{proof}
	Write $\Div^{1}_{C}$ as $(\Spd K\times_{\Spd \Fq}\Spd C)/(\varphi_{K}\times \id)^{\Z}$. By \cite[Proposition~II.1.17]{FS2024-1}, there is a natural isomorphism  $Y^{\diamond}_{C^{\tilt}}\cong \Spd K\times_{\Spd \Fq}\Spd C$, which also induces $X^{\diamond}_{C^{\tilt}}\cong (\Spd K\times_{\Spd \Fq}\Spd C)/(\id \times \varphi_{C^{\tilt}})^{\Z}$.\footnote{Notice here that $\Div^{1}_C$ and $X^{\diamond}_{C^{\tilt}}$ correspond to different Frobenius quotients of $Y^{\diamond}_{C^{\tilt}}$: one is the Frobenius on $\Spd K$ and the other is on $\Spd C$.}  
	For any rational interval $I\subseteq (0,\infty)$, consider the affinoid open  subset $Y_{C^{\tilt},I}=\Spa(B_I, B^{+}_{I})$ of $Y_{C^{\tilt}}$. By \cite[Proposition~II.1.16]{FS2024-1}, there is a radius map $\rad: \left|Y_{C^{\tilt}}\right|\rightarrow (0,\infty)$ such that $\rad\circ \varphi_{C^{\tilt}}=q\cdot \rad$ and $Y_{C^{\tilt},I}\subset \rad^{-1}(I)$. As the product of the two Frobenius maps acts trivially on the topological space $\left|Y_{C^{\tilt}}\right|$, we get  $\varphi_K^{n}(Y_{C^{\tilt},I})\subset \rad ^{-1}(q^{-n} I)$. Thus if we take $I=[a,b]$ with $a<b<qa$, the $\varphi_K$-translates of $I$ are disjoint, hence the same for the  $\varphi_K$-translates of $Y^{\diamond}_{C^{\tilt},I}$. Hence we get an open immersion 
	\[
	Y^{\diamond}_{C^{\tilt},I}\rightarrow \Div^{1}_{C},
	\] 
	and it is enough to show $Y^{\diamond}_{C^{\tilt},I}$ is not perfectoid. By  \cite[Remark~11.2 and \S13]{HK-sheaf}, we are further reduced to showing  $Y_{C^{\tilt},I}$ is not perfectoid. 
	 
	Assume $Y_{C^{\tilt},I}=\Spa(B_I,B_I^{+})$ is perfectoid. Since $Y_{C^{\tilt},I}$ is affinoid and thus quasicompact, it admits an open cover by finitely many affinoid perfectoid subspaces $U=\Spa(R,R^{+})$; specifically, each $R$ is perfectoid.\footnote{Each $U$ is picked to be affinoid perfectoid, which by definition means $R$ is perfectoid. On the other hand, the assumption of $Y_{C^{\tilt},I}$ being perfectoid does not imply $B_I$ is perfectoid (see \cite[Remark~7.1.3]{SW-Berkeley}), which is the reason why we need to take such a cover.}
	By \cite[Corollary~II.1.12]{FS2024-1}, we can also assume each $R$ is a principal ideal domain. Thus by  \cite[Corollary~2.9.3]{Kedlaya-AWS}, each $R$ is a perfectoid field, hence $Y_{C^{\tilt},I}$ has finitely many rank-1-points. On the other hand, each $\rho\in [a,b]$ gives rise to a Gauss point $y_{\rho}$ of $Y_{C^{\tilt},I}$, which corresponds to a Gauss norm of a perfectoid open unit disk (see the proof of \cite[Lemma~II.1.14]{FS2024-1}). As such Gauss norms have rank 1 (see \cite[Example~2.3.7]{SW-Berkeley}), we get a contradiction, whence the proposition. 
\end{proof}

We now consider the case of higher $k$-Grassmannians. First we observe the following. 
\begin{lem}\label{lem-extn}
Assume $0<k<n$ and $kd\geq n$. For any $S\in \Perf_C$, there exists an exact  sequence of vector bundles over the relative Fargues--Fontaine curve $X_{S^{\tilt}}$: 
	\[
	0\rightarrow \OO(1/k)\rightarrow \cE_{n,d}\rightarrow \cE_{n-k, d-1}\rightarrow 0
\]
\end{lem}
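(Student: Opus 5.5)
The plan is to construct the sequence once over a single geometric point and then pull it back. Every $S\in\Perf_C$ has tilt $S^{\tilt}$ lying over $\Spa(C^{\tilt},\OO_{C^{\tilt}})$, and the bundles $\OO(1/k)$, $\cE_{n,d}$, $\cE_{n-k,d-1}$ on $X_{S^{\tilt}}$ are pulled back from the corresponding standard bundles on $X_{C^{\tilt}}$. Pullback along $X_{S^{\tilt}}\to X_{C^{\tilt}}$ is exact on short exact sequences of vector bundles, since locally split sequences stay locally split. So it suffices to produce the exact sequence on the curve $X_{C^{\tilt}}$ over the algebraically closed field $C^{\tilt}$, where the Fargues--Fontaine classification is available.

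On $X_{C^{\tilt}}$ we first check the numerics. Ranks add ($k+(n-k)=n$) and degrees add ($1+(d-1)=d$). The hypothesis $kd\geq n$ gives
\[
\frac1k\leq\frac dn\leq\frac{d-1}{n-k},
\]
where the second inequality is equivalent to $n(d-1)\geq d(n-k)$, i.e. $kd\geq n$. So the subbundle has slope at most $\mu(\cE_{n,d})$ and the quotient has slope at least $\mu(\cE_{n,d})$, as semistability of $\cE_{n,d}$ requires. Equivalently, the HN polygon of $\OO(1/k)\oplus\cE_{n-k,d-1}$ lies on or above the straight line of $\cE_{n,d}$, with the same endpoints. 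This is exactly the slopewise dominance condition in the classification of extensions of vector bundles on the Fargues--Fontaine curve (Birkbeck--Feaux de Lacroix--Heuer--Hong--et al., and Hong's classification of subbundles and quotient bundles). That theorem says that for semistable $\cF,\cG$, a bundle $\cE$ is an extension of $\cG$ by $\cF$ iff $\mathrm{HN}(\cE)\leq \mathrm{HN}(\cF\oplus\cG)$ under their hypotheses. I would invoke it with $\cF=\OO(1/k)$, $\cG=\cE_{n-k,d-1}$, $\cE=\cE_{n,d}$ after checking that its side conditions, including the ordering of slopes $\mu(\cF)\leq\mu(\cG)$, hold here; they do by the displayed inequalities. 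This yields the sequence on $X_{C^{\tilt}}$.

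As a sanity check and fallback in the boundary case $kd=n$: then $d/n=1/k$, so $\gcd(n,d)=d$ and $\cE_{n,d}=\OO(1/k)^{\oplus d}$ while $\cE_{n-k,d-1}=\OO(1/k)^{\oplus(d-1)}$. The split sequence then works directly, and it is already defined over $\Fq$, so no appeal to the classification is needed.

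The main obstacle is the existence step over the point when $kd>n$. Here I need the precise hypotheses of the extension classification, and in particular to make sure the relevant subbundle criterion applies to the stable bundle $\OO(1/k)$ of slope strictly below $\mu(\cE_{n,d})$. If citing it turns out to be awkward, I would instead construct the sequence in two steps. First, produce a surjection $\cE_{n,d}\surj\cE_{n-k,d-1}$ from Hong's quotient criterion: the quotient is semistable of slope $\geq d/n$ and of the correct rank. Second, identify the kernel: it has rank $k$ and degree $1$, and semistability of $\cE_{n,d}$ forces all its HN slopes to be $\leq d/n$. I would then need to check that the kernel is semistable, hence isomorphic to $\OO(1/k)$. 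This last check is the delicate point, so I would first try choosing the surjection generically, i.e. avoiding the closed locus where the kernel destabilizes.
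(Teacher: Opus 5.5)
Your proposal is correct and follows the paper's proof essentially step for step. You verify $1/k\leq d/n\leq (d-1)/(n-k)$ on $X_{C^{\tilt}}$, apply the extension classification (\cite[Theorem~1.1.2]{Hansenetal2022-1}) when $kd>n$, use the splitting $\cE_{n,d}\cong\OO(1/k)^{\oplus d}$ when $kd=n$, and pull back along $X_{S^{\tilt}}\to X_{C^{\tilt}}$; your two-step fallback through a quotient criterion and kernel semistability is never needed.
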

\begin{proof}
	Let us first assume $S=\Spa(C,C^{+})$. 
	Assume $kd>n$, then  we get $(d-1)/(n-k)>d/n> 1/k$. Thus  the slope of $\OO(1/k)$ is smaller than that of $\cE_{n-k,d-1}$ and the straight Harder--Narasimhan polygon of $\cE_{n,d}$ lies below that of $\OO(1/k)\oplus \cE_{n-k,d-1}$. We then obtain the exact sequence by \cite[Theorem~1.1.2]{Hansenetal2022-1}. If $kd=n$, then all three vector bundles have slope $1/k$, hence we  have $\cE_{n,d}\cong \OO(1/k)^{\oplus d}$ and $\cE_{n-k,d-1}\cong \OO(1/k)^{\oplus (d-1)}$. The splitting of $\cE_{n,d}$ then gives the exact sequence. For general $S\in \Perf_C$, the claim follows by pulling back the exact sequence over $X_{C^{\tilt}}$ along $X_{S^{\tilt}}\rightarrow X_{C^{\tilt}}$. 
\end{proof}

Since the $k$-Grassmannian of $\bB^{\varphi^{n}=\pi^{d}}$ parametrizes injections $\cF\hookrightarrow \cE_{n,d}$ with $\cF$ fibrewise isomorphic to $\OO^{k}$, a short exact sequence as above allows us to consider such injections which factor through $\OO(1/k)$. 
\begin{defn}
	For any $k\geq 1$, let $H_{k}:=\Inj(\OO^{k}, \OO(1/k))/\underline{\GL_k(K)}$ and let $H_{k,C}:=H_k\times_{\Spd \Fq} \Spd C$. 
\end{defn}

\begin{lem}\label{lem-Zar2}
	When $kd\geq n$, there  exists a Zariski closed immersion $H_{k,C}\rightarrow \Gr_{k,C}(\bB^{\varphi^{n}=\pi^{d}})$. 
\end{lem}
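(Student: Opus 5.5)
The natural approach is to use the exact sequence of Lemma \ref{lem-extn},
\[
0\rightarrow \OO(1/k)\xrightarrow{\iota}\cE_{n,d}\xrightarrow{p}\cE_{n-k,d-1}\rightarrow 0,
\]
over $X_{S^{\tilt}}$ for $S\in\Perf_C$, and to realize $H_{k,C}$ as the locus of injections that are killed by $p$. First I would define the morphism. Given a stably injective $j:\cF\hookrightarrow \OO(1/k)$ with $\cF$ fibrewise isomorphic to $\OO^{k}$, I send it to $\iota\circ j$. Since $\iota$ is the inclusion of a subbundle, its cokernel is a vector bundle, so $\iota$ is injective on every fibre of $X_{S^{\tilt}}$ and hence at every geometric point. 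It follows that $\iota\circ j$ is again stably injective. This assignment is compatible with isomorphisms of pairs $(j,\cF)$, so it descends to a morphism of $v$-sheaves $H_{k,C}\rightarrow\Gr_{k,C}(\bB^{\varphi^{n}=\pi^{d}})$.

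Next I would show this morphism is a monomorphism onto the subfunctor $Z$ of $[(i,\cF)]$ satisfying $p\circ i=0$. By exactness, $i$ factors uniquely through $\ker p=\OO(1/k)$ exactly when $p\circ i=0$. The factored map $j$ is stably injective because $i$ is: at a geometric point $i=\iota\circ j$ is injective, so $j$ is injective there. Hence the map $H_{k,C}\rightarrow Z$ is bijective on $S$-points. The condition $p\circ i=0$ is independent of the representative, since it is invariant under automorphisms of $\cF$. It is also $v$-local, by the same reasoning as Lemma \ref{lem-vsub}.

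The remaining step, which is the main one, is to show $Z$ is Zariski closed. I would copy the proof of Proposition \ref{prop-Zariski-closed}. Fix an affinoid perfectoid $S$ with a map $S\rightarrow \Gr_{k,C}$, represented after passing to a $v$-cover by some $(i,\cF)$. Then $p\circ i$ is a global section of $\cH=\HHom(\cF,\cE_{n-k,d-1})$. Pull it back to $Y_{S,I}$ with $I=[1,q]$, where $H=\Gamma(Y_{S,I},\pi^{*}\cH)$ is finite projective. Take $J=\im(H^{\vee}\rightarrow B_{S,I})$ given by evaluation at $p\circ i$. Lemma \ref{lem-FS} then produces a Zariski closed $S(J)\subseteq S$ representing exactly those $T$ for which $(p\circ i)_T=0$, using the Frobenius-covering argument as before.

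The subtle points are twofold. First, $\cF$ is only fibrewise trivial and the point is only defined $v$-locally, so one has to check that the closed subsets constructed on a $v$-cover descend. Since Zariski closed subsets are characterized functorially by a vanishing condition, they agree on the overlaps, and this should follow formally. Second, the sequence of Lemma \ref{lem-extn} depends on choices, which I would fix once over $X_{C^{\tilt}}$ and then pull back.

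Combining these, $H_{k,C}\cong Z\hookrightarrow \Gr_{k,C}(\bB^{\varphi^{n}=\pi^{d}})$ is a Zariski closed immersion.
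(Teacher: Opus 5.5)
Your proposal is correct and follows essentially the same route as the paper. Both arguments fix the sequence of Lemma \ref{lem-extn} once and identify $H_{k,C}$ with the locus where $p\circ i=0$; you are in fact more careful than the paper in checking that the map is a monomorphism onto that locus. Both then cut this locus out by an ideal of $B_{S^{\tilt},I}$ with $I=[1,q]$, using Lemma \ref{lem-FS} and the Frobenius-translate argument.

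Two small corrections. First, since $S\in\Perf_C$, the ideal lives on $Y_{S^{\tilt},I}$ rather than $Y_{S,I}$. Lemma \ref{lem-FS} is applied to $S^{\tilt}$, and the resulting Zariski closed subspace is transported back to $S$ by tilting, as the paper does via \cite[Proposition~II.0.2]{FS2024-1}. Second, no $v$-cover is needed. An $S$-point of the Grassmannian already gives a bundle $\cF$ and a map $i$ over all of $X_{S^{\tilt}}$, by $v$-descent of vector bundles. This spares you the step of descending Zariski closed subspaces along $v$-covers, which you only sketched and which is not as formal as you suggest.
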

\begin{proof}
 Using Lemma \ref{lem-extn}, we fix a short exact sequence 
	\begin{equation}\label{eq-Zar2-1}
		0\rightarrow \OO(1/k)\xrightarrow{\beta}\cE_{n,d}\xrightarrow{\gamma}\cE_{n-k,d-1}\rightarrow 0
	\end{equation}
	For any affinoid perfectoid $S=\Spa(R,R^{+})\in \Perf_{C}$, we want to show the fibre product 
	\[
	\begin{tikzcd}
		S\times_{\Gr_{k,C}(\bB^{\varphi^{n}=\pi^{d}})} H_{k,C}\arrow[r]\arrow[d] & S\arrow[d]\\
		H_{k,C}\arrow[r] & \Gr_{k,C}(\bB^{\varphi^{n}=\pi^{d}})
	\end{tikzcd}
	\] 
	is Zariski closed in $S$. As an $S$-point of the Grassmannian is a stably injective map $i: \cF\rightarrow \cE_{n,d}$ over $X_{S^{\tilt}}$ with $\cF\cong \OO^{k}$ at any geometric point, the fiber product parametrizes such $i$ that  factors through $\OO(1/k)$, or equivalently that $(\gamma\circ i)=0$. 

	Let $I=[1,q]\subset(0,\infty)$ and denote by  $\pi: Y_{S^{\tilt},I}\rightarrow X_{S^{\tilt}}$ the covering map. Let $B_{S^{\tilt},I}$ be the global sections of $Y_{S^{\tilt},I}$.  Via pullback of (\ref{eq-Zar2-1}) along $\pi$, we get a map 
	\[
		\pi^{\ast}(\gamma \circ i): F\rightarrow N
	\] 
	where $F=\Gamma(Y_{S^{\tilt},I}, \pi^{\ast}\cF)$ and $N=\Gamma(Y_{S^{\tilt},I}, \pi^{\ast}\cE_{n-k,d-1})$. By \cite[Theorem~5.2.8, Proposition~6.3.4]{SW-Berkeley}, both $F$ and $N$ are finite projective $B_{S^{\tilt},I}$-modules. The map $\pi^{\ast}(\gamma \circ i)$ then induces a $B_{S^{\tilt},I}$-linear  evaluation map $F\otimes _{B_{S^{\tilt},I}}N^{\ast}\rightarrow B_{S^{\tilt},I}$, hence its image defines an ideal of $B_{S^{\tilt},I}$ which we denote by $J$.  By Lemma \ref{lem-FS}, $J$ gives a  Zariski closed perfectoid subspace $Z_0\hookrightarrow  S^{\tilt}$ which is universal for perfectoid $T_0\rightarrow S^{\tilt}$ such that $J\OO(Y_{T_0,I})=0$. By the tilting equivalence $\Perf_{S}\cong \Perf_{S^{\tilt}}$, such a $Z_0\hookrightarrow  S^{\tilt}$ corresponds to a $Z\hookrightarrow S$, which, by \cite[Proposition~II.0.2]{FS2024-1}, is  again a Zariski closed perfectoid subspace. Thus $Z\hookrightarrow S$ is universal for perfectoid $T\rightarrow S$ such that $J\OO(Y_{T^{\tilt},I})=0$. 
	Assume $T$ is affinoid perfectoid, since $F$ and $N$ are finite projective, $J\OO(Y_{T^{\tilt},I})=0$ if and only if the map 
	\[
		(\pi^{\ast}(\gamma \circ i))_{T^{\tilt}}: F_{T^{\tilt}}=F\otimes_{B_{S^{\tilt},I}}B_{T^{\tilt},I}\rightarrow N_{T^{\tilt}}=N\otimes_{B_{S^{\tilt},I}}B_{T^{\tilt},I}
	\] 
	is zero. Moreover, since $I=[1,q]$, Frobenius translates of $Y_{T^{\tilt},I}$ cover $Y_{T^{\tilt}}$, hence the map above is zero if and only if 
	\[
		(\gamma \circ i)_{T^{\tilt}}: \cF|_{X_{T^{\tilt}}}\rightarrow \cE_{n-k,d-1}\Big|_{X_{T^{\tilt}}}
	\] 
	is the zero map. Thus $Z\subseteq S$ represents $S\times _{\Gr_{k,C}(\bB^{\varphi^{n}=\pi^{d}})} H_{k,C}$, whence the claim. 
\end{proof}

Therefore, we are reduced to showing  such $H_{k,C}$ are not perfectoid. Let us also make the following observation. 

\begin{lem}\label{lem-proj}
There exists a Zariski closed immersion $\P^{k-1,\diamond}\rightarrow H_{k,C}$. 
\end{lem}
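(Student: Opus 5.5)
The plan is to realize $\P^{k-1,\diamond}$ inside $H_{k,C}$ as the locus of modifications supported on the canonical divisor, following the introduction's sketch. An $S$-point of $H_{k,C}$, for $S\in\Perf_C$, is a stably injective map $i:\cF\hookrightarrow \OO(1/k)$ over $X_{S^{\tilt}}$, with $\cF$ fibrewise isomorphic to $\OO^{k}$, taken up to isomorphism. Since $\deg\cF=0$ and $\deg\OO(1/k)=1$, and both have rank $k$, the cokernel is a torsion sheaf of degree $1$ at every geometric point. It is therefore of the form $i_{D*}W$ for a degree-one Cartier divisor $D\subset X_{S^{\tilt}}$ and a line bundle $W$ on $D$. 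This defines a morphism $H_{k,C}\to\Div^{1}_C$, and I will take the sublocus $Z\subseteq H_{k,C}$ where $D$ equals the canonical divisor $S\hookrightarrow X_{S^{\tilt}}$ given by the untilt.

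The first step is to identify $Z$. By the moduli description (\ref{eq-mod}) of \cite{SW2013-1}, modifications $0\to\OO^{k}\to\OO(1/k)\to i_*W\to 0$ supported on the canonical divisor form $\cM^{\LT,\diamond}_{\infty}$ for height $k$. Dividing by $\ul{\GL_k(K)}$ turns the framed $\OO^k$ into an unframed $\cF$, so $Z\cong\cM^{\LT,\diamond}_\infty/\ul{\GL_k(K)}\cong\P^{k-1,\diamond}$. This uses the Gross--Hopkins period map quotient recalled in \S\ref{section-1.3}. One point needs checking: isomorphisms of pairs $(i,\cF)$ and the $\GL_k(K)$-action must match. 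They do, because $\cF\cong\OO^k$ fibrewise and automorphisms of $\OO(1/k)$ are not allowed to vary in $H_k$.

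The second step is to show $Z\hookrightarrow H_{k,C}$ is Zariski closed, in the style of Lemma \ref{lem-Zar2}. Let $\xi$ be a generator of the ideal sheaf of the canonical divisor, namely the section of $\OO(1)$ cutting out $S$. Then $i$ has cokernel supported on $S$ exactly when $\xi\cdot\OO(1/k)\subseteq i(\cF)$, i.e. when the composite $\OO(1/k)\otimes\OO(-1)\xrightarrow{\xi}\OO(1/k)\to\coker(i)$ vanishes. Here I would pass to determinants to avoid working with the cokernel directly. The condition becomes that $\det i:\det\cF\to\OO(1)$, a section of a degree-one line bundle, is a multiple of $\xi$. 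Since $\det\cF$ is fibrewise trivial and $\det i$ is nonzero on fibres, $\det i$ vanishes on a degree-one divisor, and that divisor equals $S$ precisely when $\xi$ divides $\det i$. Equivalently, the composite $\det\cF\to\OO(1)\to\OO(1)|_{S}$ is zero. Pulled back to $Y_{S^{\tilt},I}$ with $I=[1,q]$, this vanishing is the vanishing of a map of finite projective modules, as in Proposition \ref{prop-Zariski-closed}. Lemma \ref{lem-FS} together with tilting via \cite[Proposition~II.0.2]{FS2024-1} then gives a universal Zariski closed subspace of each affinoid perfectoid $S$.

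I expect the main obstacle to be the equivalence between "cokernel supported on $S$" and the determinant condition, uniformly over an arbitrary base $S$ rather than only on geometric points. This needs the cokernel to be $i_{D*}$ of a line bundle with $D$ a relative Cartier divisor. I would first try to deduce this from fibrewise degree one and stable injectivity, using \cite[Appendix~A]{JLH2021-1}. Failing that, I would define $Z$ directly by the determinant condition, which is evidently Zariski closed, and then check that its points are exactly modifications of type (\ref{eq-mod}). On geometric points this holds because a degree-one torsion quotient of $\OO(1/k)$ is a skyscraper at the point cut out by $\det i$, and it extends to general $S$ by $v$-descent.
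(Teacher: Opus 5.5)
This is essentially the paper's argument. Your locus $Z$ is the fibre of the determinant (Pl\"ucker) map $H_{k,C}\to\Div^1_C$ over the section $\infty\colon\Spd C\to\Div^1_C$ given by the canonical divisor, and it is identified with $\cM^{\LT,\diamond}_\infty/\ul{\GL_k(K)}\cong\P^{k-1,\diamond}$ through the modification description. The paper gets Zariski-closedness in one line, by base change, since $\infty$ is itself a Zariski closed immersion.

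Your hand-made closedness argument has a slip as written. The sheaf $\OO(1)|_S=\iota_*\iota^*\OO(1)$ is torsion, so its sections over $Y_{S^{\tilt},I}$ are not a finite projective $B_{S^{\tilt},I}$-module. Moreover, Lemma~\ref{lem-FS} only detects the vanishing of an ideal of $B_{S^{\tilt},I}$ in $B_{T^{\tilt},I}$, so it cannot express ``$\det i$ is divisible by $\xi$''. The fix is to restrict to the divisor. The condition is that $\iota^{*}(\det i)$ vanishes, where $\iota^{*}(\det i)$ is a section of the line bundle $\iota^{*}(\det\cF^{\vee}\otimes\OO(1))$ on $S$ itself. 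This vanishing locus is cut out by an ideal of $\OO(S)$, compatibly with pullback along $T\to S$. That is exactly why $\infty$ is a closed immersion, and you need neither tilting nor Lemma~\ref{lem-FS}.

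The obstacle you flag also needs no descent. Once $\det i=\xi\cdot(\text{unit})$, the adjugate of $i$ shows that $\xi$ kills $\coker(i)$. The resulting finitely presented $\OO_S$-module has constant fibre rank one, so it is projective by \cite[Proposition~2.8.4]{KL-relative}.
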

\begin{proof}
	Let $\cM^{\LT}_{\infty}$ be the infinite-level Lubin--Tate space over $C$. Then $\P^{k-1,\diamond}$ is identified with $\cM^{\LT,\diamond}_{\infty}/\underline{\GL_k(K)}$. For $S\in \Perf_{C}$, this quotient  parametrizes injections $\cF\hookrightarrow \OO(1/k)$ such that $\cF$ is isomorphic to $\OO^{k}$ at all geometric points and the cokernel is isomorphic to $\iota_{\ast}W$ for some rank 1 projective  $\OO_{S}$-module $W$ \cite{SW2013-1}. Since the canonical divisor $\iota: S\rightarrow X_{S^{\tilt}}$ gives a section $\infty: \Spd C \rightarrow \Div^{1}_{C}$ and the Pl\"{u}cker morphism gives a map $H_{k,C}\rightarrow \Div^{1}_{C}$, we see that the fibre product
	\[
	\begin{tikzcd}
		H_{k,C}\times _{\Div^{1}_{C},\infty} \Spd C \arrow[d]\arrow[r] & H_{k,C}\arrow[d]\\
		\Spd C\arrow[r, "\infty"] & \Div^{1}_{C}
	\end{tikzcd}
	\] 
is exactly $\P^{k-1,\diamond}$. As $\infty$ is a Zariski closed immersion, so is the map $\P^{k-1,\diamond}\rightarrow H_{k,C}$.  
\end{proof}

\begin{prop}\label{prop-nonperf}
	The Grassmannian $\Gr_{k,C}(\bB^{\varphi^{n}=\pi^{d}})$ is not perfectoid if $kd\geq n$.  
\end{prop}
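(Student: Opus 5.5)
By Lemma \ref{lem-Zar2}, when $kd\geq n$ there is a Zariski closed immersion $H_{k,C}\hookrightarrow \Gr_{k,C}(\bB^{\varphi^{n}=\pi^{d}})$. The basic observation is that a Zariski closed subspace of a perfectoid space is again perfectoid; this is \cite[Proposition~II.0.2]{FS2024-1}, and the argument is local on affinoid perfectoid charts. So we argue by contradiction: if the Grassmannian were perfectoid, then $H_{k,C}$ would be perfectoid. It therefore suffices to show that $H_{k,C}$ is not perfectoid. We split into the cases $k=1$ and $k>1$.

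\emph{Case $k=1$.} Here $H_{1,C}=(\Inj(\OO,\OO(1))\times_{\Spd\Fq}\Spd C)/\underline{K^{\times}}=\Gr_{1,C}(\bB^{\varphi=\pi})$. The discussion opening this section identifies this with $\Div^{1}_{C}$. By Proposition \ref{prop-div}, $\Div^1_C$ is not perfectoid, so this case is done.

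\emph{Case $k>1$.} By Lemma \ref{lem-proj}, there is a Zariski closed immersion $\P^{k-1,\diamond}\hookrightarrow H_{k,C}$. By the same stability of perfectoidness under Zariski closed immersions, it is enough to show that the diamond $\P^{k-1,\diamond}$ is not representable by a perfectoid space. I would prove this in the same way as Proposition \ref{prop-div}.

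\begin{itemize}
\item Restrict to an affinoid open, say the closed unit disc $\mathbb{B}=\Spa(C\langle T\rangle,\OO_C\langle T\rangle)\subset\P^{1}\subset\P^{k-1}$, embedded Zariski closed in an affinoid chart of $\P^{k-1}$ (or just an open polydisc chart).
\item By \cite[Remark~11.2 and \S13]{HK-sheaf}, the diamond $\mathbb{B}^\diamond$ is perfectoid only if $\mathbb{B}$ itself is perfectoid.
\item If $\mathbb{B}$ were perfectoid, then being quasicompact it would be covered by finitely many affinoid perfectoid $\Spa(R,R^+)$. Each $R$ is then a perfectoid Tate ring which is also a reduced affinoid $C$-algebra of finite type.
\item A perfectoid ring that is topologically of finite type over $C$ must be finite étale over $C$, i.e.\ a finite product of copies of $C$. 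One can see this because Frobenius-type $p$-th roots are absent: every element of $R^\circ$ having a $p$-th root modulo $p$ forces the module of Kähler differentials $\Omega_{R/C}$ to vanish, so $\dim R=0$.
\item Hence $|\mathbb{B}|$ would have only finitely many rank-1 points. This contradicts the existence of the infinitely many classical points $T=a$, $a\in\OO_C$.
\end{itemize}

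The main obstacle I expect is this last step: ruling out perfectoid structure on a rigid analytic variety of positive dimension, and being careful about the distinction between the ring being perfectoid and the space being perfectoid, as in the footnote to Proposition \ref{prop-div}. My first attempt would be the differential-form argument on a finite cover by affinoid perfectoids. As a fallback, I would use dimension theory: a perfectoid affinoid that is noetherian must be a finite product of perfectoid fields \cite[Corollary~2.9.3]{Kedlaya-AWS}. The $R$ above is noetherian, being of finite type over $C$, so this gives finitely many points directly, mirroring the proof of Proposition \ref{prop-div}.
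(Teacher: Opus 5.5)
Your proposal is correct and follows the paper's proof: reduce via Lemma \ref{lem-Zar2} to $H_{k,C}$, then for $k=1$ use $H_{1,C}\cong\Div^{1}_{C}$ together with Proposition \ref{prop-div}, and for $k\geq 2$ use the Zariski closed immersion $\P^{k-1,\diamond}\hookrightarrow H_{k,C}$ of Lemma \ref{lem-proj}. The paper takes the non-perfectoidness of $\P^{k-1,\diamond}$ as known, and your extra argument for it (the fallback modeled on Proposition \ref{prop-div}) is a fine supplement, provided you first shrink to rational subsets of the disc so that the perfectoid rings involved are genuinely noetherian affinoid algebras.
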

\begin{proof}
	By Lemma \ref{lem-Zar2}, it suffices to show $H_{k,C}$ is not perfectoid. When $k=1$, $H_{k,C}$ is the same as $\Div^{1}_{C}$ and Proposition \ref{prop-div} gives the result. When $k\geq 2$,  since $\P^{k-1,\diamond}$ is not perfectoid, Lemma \ref{lem-proj} implies that $H_{k,C}$ is not perfectoid. 
\end{proof}

\section{Mod $p$ Jacquet--Langlands correspondence}\label{section-6}
In this section, we apply Theorem \ref{thm-dk<n} to obtain vanishing results of Scholze's mod $p$ Jacquet--Langlands functor. Specifically, Theorem \ref{thm-dk<n} proves the conjecture of  Hansen--Mann, from which the applications  naturally follow  using tools developed in \cite{HM2022-1}. 

Recall from section \ref{section-1.3},  for $0<k<n$ and $S\in \Perf_{C}$, the moduli space  $W_{n,k}$ parametrizes surjections $\OO(1/n)\surj \cE$ over $X_{S^{\tilt}}$ with $\cE$ isomorphic to $\OO(1/k)$ at every  geometric point. Specifically, it is isomorphic to the Grassmannian $\Gr_{n-k,C}(\bB^{\varphi^{n}=\pi})$, see (\ref{eq-W-B}). We then deduce: 

\begin{prop}[Conjecture \ref{conj-HM}]\label{prop-d=1}
The space $W_{n,k}$ is perfectoid for all $0<k<n$. 
\end{prop}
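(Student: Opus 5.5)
The plan is to deduce this directly from Theorem \ref{thm-dk<n} via the identification (\ref{eq-W-B}). Recall from section \ref{section-1.3} that taking kernels of surjections $\OO(1/n)\surj\cE$ with $\cE$ fibrewise $\OO(1/k)$ gives stably injective maps $\OO^{n-k}\hookrightarrow\OO(1/n)$. This yields the isomorphism
\[
	W_{n,k}\cong \Gr_{n-k,C}(\bB^{\varphi^{n}=\pi}),
\]
which is (\ref{eq-W-inj}) combined with (\ref{eq-W-B}). So it suffices to show that the $(n-k)$-Grassmannian of the slope $1/n$ space is perfectoid.

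Apply Theorem \ref{thm-dk<n} with $d=1$ and Grassmannian index $k'=n-k$. The hypothesis there is $k'd<n$, which here reads $n-k<n$. This holds precisely because $k>0$. The remaining range condition $1\le k'\le n-1$ needed in Definition \ref{defn-grass} is equivalent to $0<k<n$. Hence $\Gr_{n-k,C}(\bB^{\varphi^{n}=\pi})$ is perfectoid, and therefore so is $W_{n,k}$.

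The only point needing care is that the isomorphism (\ref{eq-W-inj}) really holds in families, not just at geometric points. One must check that the kernel of a surjection $\OO(1/n)\surj\cE$ over $X_{S^{\tilt}}$ is a vector bundle that is fibrewise $\OO^{n-k}$. Conversely, for a stably injective $\OO^{n-k}\hookrightarrow\OO(1/n)$, the cokernel must be a vector bundle that is fibrewise $\OO(1/k)$. The cokernel claim is Lemma \ref{lem-coker}(i) with $d=1$ and index $n-k$, using $(n-k)\cdot 1<n$. The fibrewise type then follows from the classification on $X_F$: the quotient has rank $k$ and degree $1$, and stability of $\OO(1/n)$ forces it to have all HN slopes $>0$, hence to be $\OO(1/k)$. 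The kernel direction also uses stability of $\OO(1/n)$, since the kernel has degree $0$ and no positive-slope part.

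Finally, the quotient by $\underline{D^\times_{1/k}}=\underline{\mathrm{Aut}\,\OO(1/k)}$ on one side matches the quotient by $\underline{\GL_{n-k}(K)}$ on the other, because both record only the subbundle/quotient pair. I expect no real obstacle, since all the substance is already contained in Theorem \ref{thm-dk<n}.
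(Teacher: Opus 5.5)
Your proposal is correct and follows the paper's proof. The paper likewise combines the identification $W_{n,k}\cong \Gr_{n-k,C}(\bB^{\varphi^{n}=\pi})$ from (\ref{eq-W-B}) with the $d=1$ case of Theorem \ref{thm-dk<n}, where the hypothesis $(n-k)\cdot 1<n$ holds because $k>0$. Your extra check that the kernel/cokernel correspondence works in families, via Lemma \ref{lem-coker}(i) and the stability of $\OO(1/n)$, is sound; the paper does not redo this and simply takes the identification as recalled in Section \ref{section-1.3}.
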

\begin{proof}
	This follows directly from the $d=1$ case of Theorem \ref{thm-dk<n} and the identification recalled above. 
\end{proof}

\subsection{Scholze's functor}
We recall Scholze's functor for the mod $p$ Jacquet--Langlands correspondence and also Hansen--Mann's partial K\"{u}nneth formula.

From the infinite-level Lubin--Tate space $\cM^{\LT}_{\infty}$ over $C$, quotienting by $\GL_n(K)$ gives the projective space $\P^{n-1}$.  Let $\Rep^{\adm}_{\Fp}(\GL_n(K))$ be the category of admissible (smooth) representations of $\GL_n(K)$ over $\Fp$. For $\rho\in \Rep^{\adm}_{\Fp}(\GL_n(K))$, Scholze \cite{Scholze2018-1} constructed an \'etale sheaf $\cF_{\rho}$ over $\P^{n-1}$ which assigns  to $U\in (\P^{n-1})_{\et}$
\[
	\cF_{\rho}(U):=\Hom_{\cts, \GL_n(K)} (\left|U\times_{\P^{n-1}}\cM^{\LT}_{\infty}\right|, \rho).
\] 
This sheaf $\cF_{\rho}$ is $D^{\times}_{1/n}$-equivariant, and Scholze's functor is defined as 
\[
	\cJ^{i}_{n}(\rho):=H^{i}_{\et}(\P^{n-1}, \cF_{\rho}),
\] 
which gives an admissible  $D^{\times}_{1/n}$-representation. Specifically, $\cJ^{i}_{n}$ has the following vanishing bound.

\begin{thm}[{\cite[Theorem~1.1]{Scholze2018-1}}]\label{thm-Sch}
	For every  $\rho\in \Rep^{\adm}_{\Fp}(\GL_n(K))$, $\cJ^{i}_{n}(\rho)$ vanishes for  $i>2n-2$.
\end{thm}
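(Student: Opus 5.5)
This is Scholze's vanishing bound \cite[Theorem~1.1]{Scholze2018-1}, which the paper only recalls; I would follow Scholze's argument. The idea is to bound the étale cohomological dimension of $\P^{n-1}$ for sheaves of the form $\cF_\rho$ by computing cohomology on the Lubin--Tate side.

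\textbf{Step 1: rewrite the cohomology.} Let $\pi_{\GT}:\cM^{\LT}_\infty\to\P^{n-1}$ be the Gross--Hopkins period map, which is a pro-étale $\GL_n(K)$-torsor. I would identify $H^i_{\et}(\P^{n-1},\cF_\rho)$ with continuous group cohomology of $\GL_n(K)$ with coefficients in the continuous functions $\cM^{\LT}_\infty\to\rho$. More precisely, pass to a compact open pro-$p$ subgroup $G_0\subset\GL_n(K)$ and to the finite-level quotient $\cM_\infty/G_0$, which is a finite-level Lubin--Tate space, a smooth rigid space of dimension $n-1$. Admissibility of $\rho$ writes $\rho|_{G_0}$ as a direct summand of a filtered colimit of (duals of) $C(G_0,\Fp)$-modules. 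This reduces $\cF_\rho$, restricted to $\cM_\infty/G_0$, to sheaves built from $\Fp$. The $D^\times$-action and Hochschild--Serre then reduce the problem to cohomology over $\P^{n-1}$ of sheaves pushed forward along the étale map $\cM_\infty/G_0\to \P^{n-1}$, which has infinite fibres.

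\textbf{Step 2: key vanishing.} For the pushforward, the relevant cohomology is $H^i_{\et}(\cM_{G_0},\Fp)$-type cohomology of an $(n-1)$-dimensional smooth rigid space. Here I would use the Artin--Grothendieck-type bound: for a quasicompact $p$-adic analytic variety of dimension $d$, $\Fp$-étale cohomology vanishes above degree $2d$, and above degree $d$ on affinoids. Working on the Drinfeld side via the duality $\cM^{\LT}_\infty\cong\cM^{\rm Dr}_\infty$ and quotienting by $D^\times$ instead gives the Drinfeld upper half space $\Omega^{n-1}$, which is Stein of dimension $n-1$. Combining with $\GL_n(K)$ acting on the fibres gives a two-step spectral sequence. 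The geometric part contributes at most $n-1$, and it has to be shown that the group part contributes at most $n-1$ as well, giving the total $2n-2$.

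\textbf{Main obstacle.} The hardest step is controlling the group-cohomological contribution. The group $\GL_n(K)$ is not compact, and its mod $p$ cohomological dimension for compact open subgroups is $n^2[K:\Qp]$, far larger than $n-1$, so a naive spectral sequence fails. What I would try first is Scholze's approach: compare with the projective space via the primitive comparison theorem, $H^i(\P^{n-1},\cF_\rho)\otimes\OO_C/p\approx_a H^i(\P^{n-1},\cF_\rho\otimes\OO^+/p)$. Then use that $\cF_\rho\otimes\OO^+/p$ is computed on the perfectoid space $\cM_\infty$, which is covered by affinoid perfectoids with almost-vanishing higher coherent cohomology. Coherent cohomology of $\P^{n-1}$ then gives at most $n-1$ degrees. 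Adding the $n-1$ degrees from the étale-to-analytic passage (the Huber bound for $\OO^+/p$ on a space of dimension $n-1$) yields vanishing above $2n-2$.
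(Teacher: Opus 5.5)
The paper gives no proof of this statement; it only quotes \cite[Theorem~1.1]{Scholze2018-1}. Judged on its own, your proposal has a genuine gap: it never isolates the input that actually produces the bound $2n-2$, and each place where you try to extract that bound fails as written.

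In Steps~1--2 you apply the \emph{$H^i=0$ for $i>2d$} bound to the finite-level Lubin--Tate space $\cM^{\LT}_\infty/G_0$, and then to $\Omega^{n-1}$. That bound requires quasicompactness, and neither space is quasicompact. More seriously, any route through the tower turns the problem into continuous cohomology of a compact open subgroup of $\GL_n(K)$ (or of $D^\times_{1/n}$). Such a subgroup has mod $p$ cohomological dimension $n^2[K:\Qp]$. You notice this yourself, and the sentence \emph{it has to be shown that the group part contributes at most $n-1$} is exactly the missing proof, not a step of one.

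Your last paragraph comes closer to the right shape $(n-1)+(n-1)$, but the two pieces are misidentified. The global term is not coherent cohomology, since $\cF_\rho\otimes\OO^+/p$ is not coherent. If the local term is computed through the perfectoid cover $\cM^{\LT}_\infty$, as you suggest, the same large group cohomology of compact open subgroups returns. So the \emph{bound for $\OO^+/p$} is unjustified in the form you use it.

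The missing idea is to stay on the quasicompact base and use a bound valid for \emph{every} torsion sheaf. Let $\nu\colon(\P^{n-1}_C)_{\et}\to|\P^{n-1}_C|$ be the projection to the analytic site.

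For any torsion \'etale sheaf $\mathcal G$, the stalk of $R^b\nu_*\mathcal G$ at a point $x$ is Galois cohomology of the henselian residue field $k(x)$. For every point of an $(n-1)$-dimensional rigid space over the algebraically closed field $C$, one has $\operatorname{cd}_p k(x)\le n-1$; the prime $p$ is allowed because $\operatorname{char} C=0$. So $R^b\nu_*\mathcal G=0$ for $b>n-1$.

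Next, $H^a(|\P^{n-1}_C|,-)=0$ for $a>n-1$, because $|\P^{n-1}_C|$ is a spectral space of Krull dimension $n-1$ \cite[Theorem~4.5]{Scheiderer1992-1}. The Leray spectral sequence then gives $H^i_{\et}(\P^{n-1}_C,\mathcal G)=0$ for $i>2(n-1)$. This is Huber's bound $\operatorname{cd}\le 2\dim$ for quasicompact rigid spaces over an algebraically closed field. Taking $\mathcal G=\cF_\rho$ gives the theorem.

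None of admissibility, primitive comparison, or the Lubin--Tate/Drinfeld towers is needed for the degree bound. In Scholze's theorem, admissibility matters for finiteness and admissibility of the output, not for this vanishing. The same split $2(n-1)=(n-1)+(n-1)$ is what the paper exploits in Theorem~\ref{thm-para-ind}: on a perfectoid space the \'etale-to-analytic term vanishes (almost) for $\OO^+/p$-coefficients, leaving only the Krull-dimension term.
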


Consider the quotient stack $[\P^{n-1}/D^{\times}_{1/n}]$. The commuting action of $\GL_n(K)$ and $D^{\times}_{1/n}$ on $\cM^{\LT}_{\infty}$ gives a $\GL_n(K)$-torsor and a $D^{\times}_{1/n}$-torsor over the stack. Write  $f_n: [\P^{n-1}/D^{\times}_{1/n}]\rightarrow B\underline{\GL_n(K)}$ and $g_n: [\P^{n-1}/D^{\times}_{1/n}]\rightarrow B\underline{D^{\times}_{1/n}}$ for the maps coming from these torsors, where both $f_n$ and $g_n$ are smooth, and $g_n$ is proper.  In \cite{HM2022-1}, Hansen-Mann considered the derived version of Scholze's functor
\[
	\cJ_{n}: D(\GL_n(K))\rightarrow D(D^{\times}_{1/n}),
\] 
which is defined as $\cJ_n=(g_n)_{\et,\ast}(f_n)^{\ast}_{\et}$.

For the space $W_{n,k}$, trivializing the target bundle $\OO(1/k)$ in its definition gives a  $D^{\times}_{1/k}$-torsor over it; trivializing the bundle $\OO^{n-k}$ from the identification (\ref{eq-W-inj}) gives a $\GL_{n-k}(K)$-torsor. We then get maps 
\[
\begin{tikzcd}
	& 	W_{n,k}\arrow[dl, "h_1",swap]\arrow[dr, "h_2"] & \\
	B\underline{\GL_{n-k}(K)} & & B\underline{D^{\times}_{1/k}}
\end{tikzcd}
\] 
For $0<k<n$, let $P_k(K)\subset \GL_n(K)$ be the standard maximal parabolic subgroup of block size $(n-k,k)$. Hansen-Mann obtained the following partial K\"{u}nneth formula:

\begin{thm}[{\cite[Theorem~5.2]{HM2022-1}}]\label{thm-HM}
	For $\sigma_1\in D(\GL_{n-k}(K))$ and $\sigma_2 \in D(\GL_k(K))$, we have 
	\[
		\cJ_{n}(\Ind^{\GL_n(K)}_{P_k(K)}(\sigma _1\boxtimes \sigma _2))\cong R\Gamma(W_{n,k}, h_{1,\et}^{\ast}\sigma _1\otimes h^{\ast}_{2,\et}\cJ_{k}(\sigma _2)).
	\] 
\end{thm}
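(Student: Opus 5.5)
The plan is to turn parabolic induction into a pushforward along a proper map of classifying stacks. One then transports that pushforward through the Lubin--Tate tower using proper base change. Finally, the fibres of $X_{n,k}=\cM^{\LT}_{\infty}/P_k(K)$ over $W_{n,k}$ are identified with the twisted Lubin--Tate quotients $[\P^{k-1}/D^{\times}_{1/k}]$ that compute $\cJ_k$.

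First, regard $\sigma_1\boxtimes\sigma_2$ as a representation of the Levi $\GL_{n-k}(K)\times\GL_k(K)$, inflated to $P_k(K)$. Let $p: B\underline{P_k(K)}\rightarrow B\underline{\GL_n(K)}$ be the natural map. Its fibre is $\underline{\GL_n(K)/P_k(K)}$, which is profinite, so $p$ is proper, and smooth induction corresponds to $p_{\et,\ast}$. Form the cartesian square with $f_n:[\P^{n-1}/D^{\times}_{1/n}]\rightarrow B\underline{\GL_n(K)}$. Its top-left corner is $[X_{n,k}/D^{\times}_{1/n}]$, since $X_{n,k}=\cM^{\LT}_{\infty}/P_k(K)$. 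Proper base change then gives
\[
f_n^{\ast}\Ind^{\GL_n(K)}_{P_k(K)}(\sigma_1\boxtimes\sigma_2)\cong p'_{\ast}\,\phi^{\ast}(\sigma_1\boxtimes\sigma_2),
\]
where $p':[X_{n,k}/D^{\times}_{1/n}]\rightarrow[\P^{n-1}/D^{\times}_{1/n}]$ and $\phi:[X_{n,k}/D^{\times}_{1/n}]\rightarrow B\underline{P_k(K)}$ are the two projections of the square. Applying $(g_n)_{\et,\ast}$ yields
\[
\cJ_n(\Ind(\sigma_1\boxtimes\sigma_2))\cong R\Gamma\bigl([X_{n,k}/D^{\times}_{1/n}],\phi^{\ast}(\sigma_1\boxtimes\sigma_2)\bigr),
\]
in the sense of pushforward to $B\underline{D^{\times}_{1/n}}$.

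Next, use the moduli description (\ref{eq-diagram}) of $X_{n,k}$, whose points are diagrams $\OO(1/n)\surj\cE\hookleftarrow\cF$. Forgetting $\cF$ gives a map $\alpha: X_{n,k}\rightarrow W_{n,k}$, and I would identify it with a twisted product. Let $\wt W\rightarrow W_{n,k}$ be the $D^{\times}_{1/k}$-torsor trivializing $\cE\cong\OO(1/k)$. Over $\wt W$, the datum $\cF\hookrightarrow\OO(1/k)$ is a point of $\cM^{\LT,\diamond}_{\infty,k}/\underline{\GL_k(K)}=\P^{k-1}$. This gives
\[
X_{n,k}\cong(\wt W\times\P^{k-1})/\underline{D^{\times}_{1/k}}.
\]
Under this identification the pullback of $\sigma_1\boxtimes\sigma_2$ factors into two pieces:
\begin{itemize}
\item the $\GL_{n-k}(K)$-torsor, coming from the kernel $\OO^{n-k}=\ker(\OO(1/n)\surj\cE)$, descends to $W_{n,k}$ and gives $h_1^{\ast}\sigma_1$;
\item the $\GL_k(K)$-torsor lives on the fibre $\P^{k-1}$ and is the Lubin--Tate torsor used to define $\cJ_k$.
\end{itemize}
The unipotent radical acts trivially, which justifies the splitting. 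By the projection formula, $\alpha_{\ast}\phi^{\ast}(\sigma_1\boxtimes\sigma_2)\cong h_1^{\ast}\sigma_1\otimes\alpha_{\ast}(\text{fibre term})$. Proper base change along the cartesian square
\[
\begin{tikzcd}
X_{n,k}\arrow[r]\arrow[d,"\alpha"] & \left[\P^{k-1}/D^{\times}_{1/k}\right]\arrow[d,"g_k"]\\
W_{n,k}\arrow[r,"h_2"] & B\underline{D^{\times}_{1/k}}
\end{tikzcd}
\]
then identifies the fibre term with $h_2^{\ast}(g_k)_{\ast}f_k^{\ast}\sigma_2=h_2^{\ast}\cJ_k(\sigma_2)$. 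Taking $R\Gamma(W_{n,k},-)$, still equivariantly for $D^{\times}_{1/n}$, gives the formula.

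The main obstacle is the second step. The claimed identification of $X_{n,k}$ must be shown to be a genuine cartesian square of $v$-stacks, compatible with the $D^{\times}_{1/n}$-action and with both torsors, and the base change and projection formulas must hold in this setting of non-representable stacks with $\Fp$-coefficients. I would first prove the cartesian square at the level of moduli functors on $\Perf_C$, using \cite[Theorem~A.1.1]{JLH2021-1}. I would then invoke proper base change for $g_k$, which is legitimate because $g_k$ is proper, in the étale formalism of \cite{Scholze-Diamond}. Finally, I would check the inflation/induction compatibility on classifying stacks by a direct computation with continuous functions on $\GL_n(K)/P_k(K)$.
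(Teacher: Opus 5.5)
Your proposal is correct and is essentially the argument the paper relies on. The statement is quoted from Hansen--Mann, and the proof of Theorem~\ref{thm-para-ind}, which invokes \cite[Theorem~5.2, Lemma~5.3]{HM2022-1}, indicates the same steps you use: base change along the proper map $B\underline{P_k(K)}\to B\underline{\GL_n(K)}$, a cartesian square exhibiting $X_{n,k}\to W_{n,k}$ as a pullback of $g_k$, and proper base change for $g_k$. The only difference is cosmetic: there one base changes along $\id\times g_k$ over $B\underline{\GL_{n-k}(K)}\times B\underline{D^{\times}_{1/k}}$ via $(h_1,h_2)$, rather than splitting off $h_1^{\ast}\sigma_1$ with the projection formula, and this is what lets the paper reuse the argument for Levi representations $\ol{\rho}$ that are not external tensor products.
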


\subsection{Vanishing bounds of Scholze's functor}
As one can readily see from \cite{JLH2021-1} and \cite{HM2022-1}, the perfectoidness of such moduli space of morphisms between vector bundles can be used  to prove  vanishing bounds of Scholze's functor sharper than the one in Theorem \ref{thm-Sch} on certain mod $p$ representations of $\GL_n(K)$. We now discuss the application of  Proposition \ref{prop-d=1}. 

As already dictated by  \cite[Proposition~5.5]{HM2022-1},  the first application of the perfectoidness of $W_{n,k}$ is the following: 
\begin{thm}\label{thm-para-ind}
	Let $0<k<n$. For any $\rho\in \Rep^{\adm}_{\Fp}(P_k(K))$, we have 
	\[
		\cJ^{i}_{n}(\Ind^{\GL_n(K)}_{P_k(K)}(\rho))=0 \quad \text{for all $i>n+k-2$}.
	\] 
\end{thm}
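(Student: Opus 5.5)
The plan is to follow \cite[Proposition~5.5]{HM2022-1}, which derives exactly this bound from the perfectoidness of $W_{n,k}$; that input is now available by Proposition \ref{prop-d=1}. The argument combines three ingredients. The first is the partial K\"unneth formula (Theorem \ref{thm-HM}), which rewrites $\cJ_n$ of the parabolic induction as cohomology on $W_{n,k}$. The second is Scholze's bound (Theorem \ref{thm-Sch}) for $\cJ_k$, which bounds the coefficient complex. The third is a bound on the \'etale $p$-cohomological dimension of the perfectoid space $W_{n,k}\cong \Gr_{n-k,C}(\bB^{\varphi^{n}=\pi})$, which has dimension $n-k$.

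\textbf{Step 1: reduction to a Künneth-type formula.} Theorem \ref{thm-HM} is stated for inflations $\sigma_1\boxtimes\sigma_2$ from the Levi, while $\rho$ is an arbitrary admissible representation of $P_k(K)$. I would first check whether the proof in \cite{HM2022-1} gives the formula for all $\rho\in D(P_k(K))$. Its proof passes through the stack $[W_{n,k}/\ldots]$ mapping to $B\underline{P_k(K)}$, by trivializing the extension $0\to\OO^{n-k}\to ?\to \OO(1/k)$ that appears in (\ref{eq-diagram}). If it does, we obtain
\[
\cJ_n(\Ind^{\GL_n(K)}_{P_k(K)}\rho)\cong R\Gamma(W_{n,k},\cK_\rho),
\]
where $\cK_\rho$ is built from $\rho$ by pulling back to $B\underline{P_k(K)}$ and applying the relative version of $\cJ_k$ along the $\OO(1/k)$-direction. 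Fibrewise, this relative functor is $R\Gamma(\P^{k-1},-)$ twisted by the torsors. In the fallback case where only the Levi version is available, I would argue by d\'evissage. I would use the $P_k(K)$-equivariant structure together with exactness of $\Ind$ and the fact that the bound is stable under extensions and filtered colimits. The latter holds because the functors involved commute with filtered colimits of admissible objects on quasicompact spaces. This would reduce to representations on which the unipotent radical acts trivially, which are of the form $\sigma_1\boxtimes\sigma_2$ up to finite filtrations. This reduction is the step I expect to be the main obstacle, since in characteristic $p$ the unipotent radical need not act through a finite quotient. I would therefore try the stacky generalization first.

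\textbf{Step 2: bounding the coefficients.} By Theorem \ref{thm-Sch} applied to $\GL_k(K)$, $\cJ_k(\sigma_2)$ has cohomology only in degrees $[0,2k-2]$. Hence $h_{2,\et}^{\ast}\cJ_k(\sigma_2)$, and with it the whole coefficient complex $h_{1,\et}^{\ast}\sigma_1\otimes h^{\ast}_{2,\et}\cJ_k(\sigma_2)$ (or $\cK_\rho$ in general), lies in $D^{[0,2k-2]}$. The tensor product causes no trouble here because $h_{1,\et}^{\ast}\sigma_1$ sits in degree $0$ for a representation $\sigma_1$.

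\textbf{Step 3: cohomological dimension and conclusion.} By Proposition \ref{prop-d=1}, $W_{n,k}$ is a perfectoid space of dimension $n-k$. Its tilt lives in characteristic $p$. For a perfectoid space of dimension $\delta$ in characteristic $p$, Artin--Schreier theory together with the almost vanishing of coherent cohomology on affinoid perfectoids gives $H^i_{\et}(-,\mathcal{G})=0$ for $i>\delta$ and every \'etale $\Fp$-sheaf $\mathcal{G}$; this is the input used in \cite{Scholze2018-1} and \cite{HM2022-1}. I would also verify that $W_{n,k}$ is qcqs, or at least that this bound survives on the relevant covers; this follows from its presentation as a proper-over-$\P$-type quotient. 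Combining this with the hypercohomology spectral sequence gives $H^i R\Gamma(W_{n,k},\cK)=0$ for $i>(n-k)+(2k-2)=n+k-2$. This is exactly the claimed vanishing of $\cJ^i_n(\Ind^{\GL_n(K)}_{P_k(K)}\rho)$.
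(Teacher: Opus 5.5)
The gap is in Step~3. You assert that on a perfectoid space of dimension $\delta$ in characteristic $p$, Artin--Schreier theory plus acyclicity of the structure sheaf on affinoid perfectoids gives $H^i_{\et}(-,\mathcal{G})=0$ for all $i>\delta$ and every \'etale $\Fp$-sheaf $\mathcal{G}$. That argument does not give this. The Artin--Schreier sequence costs one degree, so at best you get vanishing for $i>\delta+1$, with $H^{\delta+1}(X,\Fp)=\coker(F-1\mid H^{\delta}(X,\OO_X))$, and this cokernel need not vanish. The claim already fails for $\delta=0$: $X=\Spa(F,\OO_F)$, with $F\supset C^{\tilt}$ a perfectoid field that is not algebraically closed, has $H^1_{\et}(X,\Fp)=F/\{x^p-x : x\in F\}\neq 0$. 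Fed into your hypercohomology spectral sequence, the honest bound only gives vanishing for $i>n+k-1$. This is also not the input of \cite{HM2022-1}.

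The missing idea, which is how the paper argues, is to change coefficients \emph{before} using perfectoidness. Since $\cJ^i_n(\cdot)$ is an $\Fp$-vector space, it suffices to show that $\cJ^i_n(\cdot)\otimes\OO_C/p$ is almost zero. Scholze's almost comparison \cite[Theorem~3.2]{Scholze2018-1} identifies this group with $H^i(\P^{n-1},\cF\otimes\OO^{+}/p)$. The K\"{u}nneth formula then transports it to $H^i(W_{n,k},\mathcal{K}\otimes\OO^{+}/p)$. For the sheaves arising here (using admissibility), this $\OO^{+}/p$-twisted form is almost acyclic on affinoid perfectoids. Their cohomology is therefore almost computed on the spectral space $|W_{n,k}|$, where \cite[Theorem~4.5]{Scheiderer1992-1} gives almost vanishing above $\dim W_{n,k}=n-k$. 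The $\OO^{+}/p$-twist is exactly what absorbs the Artin--Schreier degree.

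Step~1 is also aimed at a non-problem. An admissible $\rho$ of $P_k(K)$ automatically has trivial $N(K)$-action. To see this, let $v\in\rho$ be fixed by some $P_0=M_0N_0$ and put $z=\mathrm{diag}(\pi I_{n-k},I_k)$. The vectors $z^{-m}v$ for $m\geq 0$ all lie in the finite-dimensional space $\rho^{P_0}$. Their span $W$ therefore satisfies $zW=W$, so it is fixed by $\bigcup_m z^{-m}N_0z^{m}=N(K)$. Hence $\rho=s^{\ast}\ol{\rho}$ for an admissible $\ol{\rho}$ of $\GL_{n-k}(K)\times\GL_k(K)$.

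The paper then runs the K\"{u}nneth argument with the coefficient complex $(h_1,h_2)^{\ast}_{\et}(\id\times\cJ_k)(\ol{\rho})$. It bounds this complex in degrees $\leq 2k-2$ by proper base change along the $\P^{k-1}$-fibred map $\id\times g_k$, together with Theorem~\ref{thm-Sch}. Your fallback d\'evissage is therefore not needed. As stated it would not work anyway: admissible representations of the Levi need not have finite length, so you cannot reduce to $\sigma_1\boxtimes\sigma_2$ by finite filtrations.
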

\begin{proof}
	The case when $\rho=\rho_1\boxtimes \rho_2$ with $\rho_1\in \Rep^{\adm}_{\Fp}(\GL_{n-k}(K))$ and $\rho_2\in \Rep^{\adm}_{\Fp}(\GL_k(K))$ is proved in \cite[Proposition~5.5]{HM2022-1}. We first briefly recall its proof. 

As $\cJ_n^{i}$ gives an $\Fp$-vector space, it is enough to prove the cohomology group in question, after tensoring with $\OO_{C}/p$,  is almost zero when $i>n+k-2$. We then have 
	\begin{align}
		&\cJ^{i}_{n}(\Ind^{\GL_n(K)}_{P_{n-k,k}}(\rho _1\boxtimes \rho _2))\otimes \OO_{C}/p\nonumber \\
		&\cong^{a}\cJ^{i}_{n}(\Ind^{\GL_n(K)}_{P_{n-k,k}}(\rho _1\boxtimes \rho_2)\otimes \OO^{+}/p)\nonumber\\
		&\cong H^{i}(W_{n,k}, h^{\ast}_{1,\et}\rho _1\otimes h^{\ast}_{2,\et}\cJ_k(\rho _2)\otimes \OO^{+}/p)\label{eq-HM-1}
	\end{align}
	where the first isomorphism is given by \cite[Theorem~3.2]{Scholze2018-1} and the second one is given by Theorem \ref{thm-HM}. By Theorem \ref{thm-Sch}, the cohomology of  $\cJ_k(\rho _2)$ vanishes in degrees larger than $2k-2$. The perfectoidness of the Grassmannian combined with \cite [Theorem~4.5]{Scheiderer1992-1} also implies the cohomology of the Grassmannian almost vanishes for degrees larger than its dimension; in this case it is $n-k$. Thus using the hypercohomology spectral sequence,  (\ref{eq-HM-1}) almost vanishes for $i>n+k-2$.

	To extend the result to general $\rho\in \Rep^{\adm}_{\Fp}(P_{k}(K))$, consider the projection  
\[
	s: B \underline{P_k(K)}\rightarrow  B\underline{\GL_{n-k}(K)}\times B\underline{\GL_{k}(K)}.
\] 
	Since admissible representations have trivial unipotent action, we have $\rho=s^{\ast}\ol{\rho}$ for some admissible representation $\ol{\rho}$ of $\GL_{n-k}(K)\times \GL_k(K)$. The same argument as in the proof of  \cite[Theorem~5.2, Lemma~5.3]{HM2022-1} then gives a natural isomorphism
\begin{equation*}
	\cJ_{n}(\Ind^{\GL_n(K)}_{P_k}(s^{\ast} \ol{\rho}) ) \simeq R\Gamma(W_{n,k}, (h_1,h_2)_{\et}^{\ast}(\id\times \cJ_k)(\ol{\rho})),
\end{equation*}
	where $\id\times \cJ_k=(\id\times g_k)_{\et,\ast}(\id\times f_k)^{\ast}_{\et}$. 
	Since $\id\times g_k$ is proper with $\P^{k-1}$-fibres,   Theorem \ref{thm-Sch} and proper base change imply   $(\id\times \cJ_k)(\ol{\rho})$ is concentrated in degrees $\leq 2k-2$. Using the perfectoidness of $W_{n,k}$ from Proposition \ref{prop-d=1} and the almost comparison with $\OO^{+}/p$-sheaves, the same argument as above gives the claimed vanishing bound. 
\end{proof}

\begin{rems}\label{rem-para-sharp}~{}
\begin{arenumerate}
\item In \cite[Theorem~5.3.1]{JLH2021-1}, Johansson--Ludwig  showed  that given a representation  $\rho\in \Rep^{\adm}_{\Fp}(P_1(K))$, 
	\[
		\cJ^{i}_{n}(\Ind^{\GL_n(K)}_{P_1}(\rho))=0 \quad \text{for all $i>n-1$}.
	\] 
		So the theorem above gives  a generalization of their result to all maximal parabolic subgroups of $\GL_n(K)$. Also notice that for $P_1(K)$, this bound is exactly half of Scholze's bound in Theorem  \ref{thm-Sch}. However, the  example in the following remark shows  that one should not  expect such  vanishing beyond the middle degree $n-1$ for all $P_k(K)\subset \GL_n(K)$. 
	\item Consider $\rho=\Ind^{\GL_n(K)}_{P_k}(\mathbf{1})$ for the trivial representation $\mathbf{1}$. Theorem \ref{thm-HM} gives 
	\[
		\cJ_{n}(\rho)\simeq R\Gamma(W_{n,k}, h^{\ast}_{2,\et} \cJ_{k}(\mathbf{1})).
	\] 
	Since $\cF_{\mathbf{1}}$ is the constant sheaf on $\P^{k-1}$, we get $\cJ_{k}(\mathbf{1})=R\Gamma (\P^{k-1},\Fp)$ which has highest nonzero degree $2k-2$. Therefore, we see $\cJ^{2k-2}_n(\rho)\neq 0$. For  $k>(n+1)/2$, this shows $\cJ^{i}_{n}$ can be nonvanishing beyond the middle degree  for representations induced from $P_k(K)$.
\end{arenumerate}
\end{rems}

At the moment, we do not know whether the bound given in Theorem \ref{thm-para-ind} is sharp, and it is natural to ask the following question
\begin{quest}\label{quest-sharp}
	For $0<k<n$, are there representations $\rho\in \Rep^{\adm}_{\Fp}(P_k(K))$ such that $\cJ^{n+k-2}_{n}(\Ind^{\GL_n(K)}_{P_k}(\rho))\neq 0$?
\end{quest}

For any standard parabolic subgroup $P\subseteq \GL_n$, we let 
\[
	\Sp_{P}= \frac{\Ind^{\GL_n(K)}_{P(K)}\mathbf{1}}{\sum^{}_{P\subsetneq Q}\Ind^{\GL_n(K)}_{Q(K)}\mathbf{1}}
\] 
be its associated \textit{generalized Steinberg representation}. Notice that when $P=\GL_n$, $\Sp_P$ gives the trivial representation of $\GL_n(K)$.

\begin{prop}\label{prop-Steinberg}
For each fixed $0<k<n$ and each $i>n+k-2$, there is a $D^{\times}_{1/n}$-equivariant isomorphism 
\[
	\cJ^{i}_{n}(\Sp_{P_k})\cong  \begin{cases}
		\Fp(- (i+1)/2)  &  \text{if $i$ is odd and $i\leq 2n-3$}\\
	0  &  \text{otherwise.}
	\end{cases}
\] 
\end{prop}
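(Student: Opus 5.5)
Since $P_k$ is a maximal parabolic subgroup, the only standard parabolic $Q$ with $P_k\subsetneq Q$ is $\GL_n$ itself. The definition of $\Sp_{P_k}$ therefore reduces to a short exact sequence of admissible $\GL_n(K)$-representations
\[
0\rightarrow \mathbf{1}\rightarrow \Ind^{\GL_n(K)}_{P_k(K)}\mathbf{1}\rightarrow \Sp_{P_k}\rightarrow 0,
\]
where the first map is the inclusion of constant functions. The plan is to apply the $\delta$-functor $(\cJ^{i}_{n})_i$ to this sequence and use Theorem \ref{thm-para-ind} to kill the middle terms in high degrees.

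First I will check that $\rho\mapsto \cF_\rho$ is exact on admissible representations, so that a short exact sequence of admissible $\rho$'s gives a long exact sequence in $\cJ^{\bullet}_n$. One route is to note that $\cJ_n=(g_n)_{\et,\ast}(f_n)^{\ast}_{\et}$ is a triangulated functor, as in the derived setup of \cite{HM2022-1}. All maps involved are $D^{\times}_{1/n}$-equivariant. This gives
\[
\cJ^{i}_{n}(\Ind\mathbf{1})\rightarrow \cJ^{i}_{n}(\Sp_{P_k})\rightarrow \cJ^{i+1}_{n}(\mathbf{1})\rightarrow \cJ^{i+1}_{n}(\Ind\mathbf{1}).
\]
Take $\rho=\mathbf{1}\in \Rep^{\adm}_{\Fp}(P_k(K))$ in Theorem \ref{thm-para-ind}. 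For $i>n+k-2$ both outer terms vanish, since $i+1>n+k-2$ as well. Hence the connecting map gives a $D^{\times}_{1/n}$-equivariant isomorphism $\cJ^{i}_n(\Sp_{P_k})\cong \cJ^{i+1}_n(\mathbf{1})$.

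It then remains to compute $\cJ^{j}_n(\mathbf{1})$ for $j=i+1$. Since $\cF_{\mathbf{1}}$ is the constant sheaf $\Fp$ on $\P^{n-1}$ (as already used in Remark \ref{rem-para-sharp}), we have $\cJ^{j}_n(\mathbf{1})=H^{j}_{\et}(\P^{n-1},\Fp)$. By the standard computation this is one-dimensional for $j$ even with $0\le j\le 2n-2$, generated by $c_1(\OO(1))^{j/2}$, and zero otherwise. Translating via $j=i+1$: nonvanishing occurs exactly when $i$ is odd and $i\le 2n-3$, which gives the case distinction in the statement. Note that $i>n+k-2\geq n-1$, so only the upper range matters.

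The step I expect to be the main obstacle is pinning down the $D^{\times}_{1/n}$-action on $H^{2j}(\P^{n-1},\Fp)$ as the Tate twist $\Fp(-j)$, in the normalization the paper uses. The action of $D^{\times}_{1/n}$ on $\P^{n-1}$ (the Gross--Hopkins period map) does not fix $\OO(1)$ strictly. I would argue instead through the first Chern class map from the Kummer sequence, which is equivariant for all automorphisms of the base over the relevant Galois/Weil structure. The action on $H^{2}$ should then be the inverse cyclotomic-type character induced by the reduced norm composed with the Lubin--Tate character. Cup product then gives $\Fp(-j)$ on $H^{2j}$. Substituting $j=(i+1)/2$ yields $\Fp(-(i+1)/2)$, as claimed.
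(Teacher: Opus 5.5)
Your argument is the paper's proof. It uses the same short exact sequence $0\to\mathbf{1}\to\Ind^{\GL_n(K)}_{P_k(K)}\mathbf{1}\to\Sp_{P_k}\to 0$ and the long exact sequence for $\cJ_n$. It applies Theorem~\ref{thm-para-ind} with $\rho=\mathbf{1}$ to kill both neighbouring terms when $i>n+k-2$, and then uses $\cJ^{i+1}_n(\mathbf{1})=H^{i+1}_{\et}(\P^{n-1},\Fp)$.

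The one thing to fix is your last paragraph, which overcomplicates the twist and predicts the wrong action. Here $D^{\times}_{1/n}=\mathrm{Aut}(\OO(1/n))$ acts on $\P^{n-1}_C$ over $C$. It does so through $D^{\times}_{1/n}\subset (D_{1/n}\otimes_K C)^{\times}\cong \GL_n(C)$, acting on the fibre of $\OO(1/n)$ at the canonical divisor. So it preserves the isomorphism class of $\OO(1)$; indeed $\OO(-1)$, the tautological line, is $\GL_n(C)$-linearized, so your worry about not fixing $\OO(1)$ strictly is a red herring. It also fixes $\mu_p(C)$. Your own Chern class argument then shows that $D^{\times}_{1/n}$ acts \emph{trivially} on $H^{2j}_{\et}(\P^{n-1}_C,\Fp)$. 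No nontrivial character through the reduced norm occurs. The twist $\Fp(-j)$ only records the canonical isomorphism $H^{2j}_{\et}(\P^{n-1},\Fp(j))\cong\Fp$, which is all the paper invokes.
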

\begin{proof}
Since $P_k$ is maximal, we have a short exact sequence 
	\begin{equation}\label{eq-St}
	0\rightarrow \mathbf{1}\rightarrow \Ind^{\GL_n(K)}_{P_k(K)}(\mathbf{1})\rightarrow \Sp_{P_k}\rightarrow 0.
	\end{equation}
	Then in the  long exact sequence obtained from applying $\cJ_{n}$, the cohomology groups for the middle term vanish in  degrees  $>n+k-2$ by Theorem \ref{thm-para-ind}. Thus we have 
\[
	\cJ^{i}_{n}(\Sp_{P_k})\cong \cJ^{i+1}_{n}(\mathbf{1})=H^{i+1}_{\et}(\P^{n-1}, \Fp).
\] 
	Since  $H^{2j}_{\et}(\P^{n-1}, \Fp(j))\cong \Fp$ for every $0\leq j\leq n-1$, the claim follows. 
\end{proof}

Using the classification of irreducible admissible representations of $\GL_n(K)$ from \cite{HV2019-1} (also cf. \cite{Herzig2011-1} and \cite{AHHV2017-1}), we get the following vanishing result in the top degree allowed by Theorem \ref{thm-Sch}: 

\begin{thm}\label{thm-Herzig}
	Let $n>1$ and let $\rho\in \Rep^{\adm}_{\Fp}(\GL_n(K))$. Suppose that $\rho$ is irreducible, non-supersingular and infinite-dimensional. Then we have  $\cJ^{2n-2}_{n}(\rho)=0$. 
\end{thm}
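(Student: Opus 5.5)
The plan is to combine right-exactness of $\cJ^{2n-2}_n$ with Theorem \ref{thm-para-ind}. By Theorem \ref{thm-Sch}, $\cJ^{i}_n$ vanishes for $i>2n-2$. So for any short exact sequence $0\to\rho'\to\rho''\to\rho\to 0$ of admissible representations, the long exact sequence ends with
\[
\cJ^{2n-2}_n(\rho')\to\cJ^{2n-2}_n(\rho'')\to\cJ^{2n-2}_n(\rho)\to 0 .
\]
Hence $\cJ^{2n-2}_n$ is right exact. In particular, if an admissible $\sigma$ surjects onto $\rho$ with admissible kernel, then $\cJ^{2n-2}_n(\sigma)=0$ forces $\cJ^{2n-2}_n(\rho)=0$. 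It therefore suffices to produce a surjection $\Ind^{\GL_n(K)}_{P_k(K)}(\tau)\twoheadrightarrow\rho$ for some $0<k<n$ and some admissible $\tau$ of $P_k(K)$. Theorem \ref{thm-para-ind} then gives $\cJ^{i}_n(\Ind(\tau))=0$ for $i>n+k-2$, and $2n-2>n+k-2$ because $k<n$.

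To construct the surjection I would use the Henniart--Vign\'eras classification \cite{HV2019-1}. A non-supersingular irreducible admissible $\rho$ has the form $\Ind^{\GL_n(K)}_{P(K)}(\sigma)$, where $P=MN$ is a standard parabolic, $\sigma$ is a representation of $P(K)$ built from a supersingular representation of a Levi $M'\subseteq M$ extended to $M$ and twisted by a generalized Steinberg $\Sp_Q$, and $\sigma$ is inflated to $P(K)$. I split into two cases.

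\textbf{Case $P\neq\GL_n$.} Choose a maximal parabolic $P_k\supseteq P$. Transitivity of induction gives $\rho\cong\Ind^{\GL_n(K)}_{P_k(K)}\bigl(\Ind^{P_k(K)}_{P(K)}\sigma\bigr)$. The inner induction is admissible, since parabolic induction preserves admissibility, and it is trivial on the unipotent radical of $P_k$. Here the surjection is an isomorphism.

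\textbf{Case $P=\GL_n$.} Then $\rho$ is an extension to $\GL_n(K)$ of a supersingular representation of a Levi $M'$, tensored with $\Sp_Q$. Since $\rho$ is not supersingular, $M'\neq\GL_n$. I expect this case to be the main obstacle, as it needs precise control over the classification. Because $M'\neq\GL_n$, the extension $e(\tau')$ is itself a quotient of a parabolic induction from a proper parabolic containing $M'$. If $Q\neq\GL_n$, then $\Sp_Q$ is by definition a quotient of $\Ind^{\GL_n(K)}_{Q(K)}\mathbf 1$. In either situation I would write $\rho=e(\tau')\otimes\Sp_Q$ as a quotient of $\Ind_{Q'}(\ldots)$ for a proper parabolic $Q'$, using the projection formula $\Ind(\mathbf 1)\otimes\pi\cong\Ind(\pi|_{Q})$. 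Then I enlarge $Q'$ to a maximal $P_k$ by transitivity, as in the first case.

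It remains to check that the exceptional situation, where no proper parabolic is available, is exactly the finite-dimensional one. This happens when $e(\tau')$ is a character and $Q=\GL_n$, so $\rho$ is a character $\chi\circ\det$. For $n>1$, $M'$ must then be the torus with $\tau'$ a character extending to $\GL_n$ and $Q=\GL_n$, and $\rho$ is one-dimensional. The infinite-dimensionality hypothesis excludes this. Verifying this dichotomy against the exact statement in \cite{HV2019-1} (cf. \cite{AHHV2017-1}) is the delicate point. Every other step follows formally from right-exactness and Theorem \ref{thm-para-ind}.
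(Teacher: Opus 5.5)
Your overall strategy is the paper's: right-exactness of $\cJ^{2n-2}_n$, Theorem~\ref{thm-para-ind}, and the Henniart--Vign\'eras classification to write $\rho$ as a quotient of a proper parabolic induction. Your case $P\neq\GL_n$ (your $P$ is the paper's $P^{\sigma}$) is fine.

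The gap is in the case $P=\GL_n$. There you assert that, because $M'\neq\GL_n$, the extension $e(\tau')$ is a quotient of a parabolic induction from a proper parabolic. This is false, and it fails for every $e(\tau')$ that actually occurs. In this case $e(\tau')$ is an irreducible representation of $\GL_n(K)$ that is trivial on $N'(K)\neq 1$, the unipotent radical of the parabolic with Levi $M'$. The normal closure of $N'(K)$ in $\GL_n(K)$ is $\SL_n(K)$. Hence $e(\tau')$ factors through $\det$, and being irreducible admissible, it is a character $\chi\circ\det$. Such a character is never a quotient of a proper parabolic induction; this is exactly the ordinary-parts argument of Remark~\ref{rem-MFV}. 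So the branch you dispose of with that claim, namely $Q=\GL_n$ with $e(\tau')$ not a character, is in fact empty. You give no argument for this, and the justification you do give is wrong. Consequently your claim that ``the exceptional situation is exactly the finite-dimensional one'' is unproved as written.

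The missing ingredient is precisely the normal-closure argument the paper uses. Once you know $e(\tau')=\chi\circ\det$, the case $Q=\GL_n$ forces $\rho$ to be one-dimensional, which is excluded. For $Q\neq\GL_n$, your projection-formula step exhibits $\rho$ as a quotient of $\Ind^{\GL_n(K)}_{Q(K)}(\chi\circ\det|_{Q(K)})$.

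This also repairs a second point you left implicit. To apply Theorem~\ref{thm-para-ind} after transitivity, the inducing representation must be an admissible representation of $P_k(K)$, in particular trivial on its unipotent radical. That holds for a character of $Q(K)$. It would fail for the restriction to $Q(K)$ of an infinite-dimensional $e(\tau')$.

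The paper avoids your case split. It starts from $\rho\cong\Ind^{\GL_n(K)}_{P^\sigma(K)}(\Sp^{M^\sigma(K)}_{Q(K)}(\sigma))$ and the surjection $\Ind^{M^\sigma(K)}_{Q(K)}(\sigma)\surj\Sp^{M^\sigma(K)}_{Q(K)}(\sigma)$, which uniformly give $\Ind^{\GL_n(K)}_{Q(K)}(\sigma)\surj\rho$. It then only needs to rule out $Q=\GL_n$. The case $P=\GL_n$ is excluded by non-supersingularity, and the case $P\neq\GL_n=Q$ by the normal-closure argument together with infinite-dimensionality.
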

\begin{proof}
	For any parabolic subgroup $P\subseteq \GL_n$, let $P=MN$ be its Levi decomposition. For an irreducible supersingular $\sigma \in \Rep^{\adm}_{\Fp}(M(K))$, we let $P^{\sigma }=M^{\sigma }N^{\sigma }$ be the maximal parabolic subgroup of $\GL_n$ containing $P$ such that $\sigma $ extends trivially on the unipotent subgroup. By \cite[Theorem~4]{HV2019-1}, for any irreducible $\rho\in \Rep^{\adm}_{\Fp}(\GL_n(K))$, there exists
	\begin{itemize}
		\item a parabolic subgroup $P=MN\subseteq \GL_n$ 
		\item a supersingular irreducible  $\sigma \in \Rep^{\adm}_{\Fp}(M(K))$ 
		\item a parabolic subgroup $Q\subseteq \GL_n$ with $P\subseteq Q\subseteq P^{\sigma }$
	\end{itemize}
	such that 
	\begin{equation}\label{eq-Herzig-1}
		\rho\cong \Ind^{\GL_n(K)}_{P^{\sigma }(K)}(\Sp^{M^{\sigma }(K)}_{Q(K)}(\sigma ))
	\end{equation}
	where
	\[\Sp^{M^{\sigma }(K)}_{Q(K)}(\sigma )=\frac{\Ind^{M^{\sigma }(K)}_{Q(K)}(\sigma )}{\sum^{}_{Q\subsetneq Q'\subseteq P^{\sigma }} \Ind^{M^{\sigma }(K)}_{Q'(K)} (\sigma )}
	\] 
	Since $\Sp^{M^{\sigma }(K)}_{Q(K)}(\sigma )$ admits a surjection from $\Ind^{M^{\sigma }(K)}_{Q(K)}(\sigma )$, (\ref{eq-Herzig-1}) gives a surjection 
	\begin{equation}\label{eq-surj-MFV}
		\Ind^{\GL_n(K)}_{Q(K)}(\sigma )\surj \rho.
	\end{equation}
	We claim $Q\neq \GL_n$. If $P=\GL_n$, then $\rho$ will be supersingular. If  $P\neq \GL_n$ and $Q=\GL_n$, then $\rho$ is identified with the extension of $\sigma $ to $\GL_n(K)$. Thus $N(K)$ acts trivially on $\rho$. Its normal closure is $\SL_n(K)$, so $\rho$ factors through $K^{\times}$ and gives an irreducible admissible representation of $K^{\times}$, which is finite-dimensional. This gives a contradiction. Then let $P_k$ be a  maximal parabolic subgroup containing $Q$. By Theorem \ref{thm-para-ind}, we get $\cJ^{2n-2}_{n}(\Ind^{\GL_n(K)}_{P_{k}(K)}(\sigma ))=0$, and right-exactness of $\cJ^{2n-2}_{n}$ gives $\cJ^{2n-2}_{n}(\rho)=0$. 
\end{proof}

Thus for irreducible non-supersingular infinite-dimensional $\rho\in \Rep^{\adm}_{\Fp}(\GL_n(K))$, we get an improved vanishing bound as $\cJ^{i}_{n}(\rho)=0$ for $i>2n-3$. 
\begin{rem}\label{rem-Herzig-sharp}
	The bound $2n-3$ is also sharp for such representations. Consider the short exact sequence (\ref{eq-St})  for $P_1(K)$. Applying the functor $\cJ_n$ to it, we get a long exact sequence. By Theorem \ref{thm-para-ind} and Proposition \ref{prop-Steinberg}, we get 
	\[
		\cJ^{2n-3}_{n}(\Sp_{P_1})\surj \cJ^{2n-2}_{n}(\mathbf{1})\neq 0.
	\] 
	Since Steinberg representations are non-supersingular and infinite-dimensional,  this gives examples of $\rho$ with nonvanishing $\cJ^{2n-3}_n(\rho)$. 
\end{rem}

\begin{rem}[Vign\'eras]\label{rem-MFV}
	The assumption of $\rho$ being infinite-dimensional is crucial in the proof  above. If $\rho\in \Rep^{\adm}_{\Fp}(\GL_n(K))$ is finite-dimensional, then $\rho$ factors through its determinant. 
	If $\rho$ is irreducible, it is then obtained by restriction of scalars from an admissible character $\chi\circ \det: \GL_n(K)\rightarrow \F^{\times}$ for some finite extension $\F/\Fp$. For a proper parabolic subgroup $P=MN$, let $\ol{P}$ denote its opposite parabolic subgroup and let $\Ord_{\ol{P}}$ denote Emerton's ordinary parts functor (\cite[\S3.1]{Emerton2010-1}).  Since the unipotent radical acts trivially on $\rho$, the Hecke operator defining $\Ord_{\ol{P}}$ is 0 in characteristic $p$, hence we have $\Ord_{\ol{P}}(\rho)=0$. As $\Ord_{\ol{P}}$ is right adjoint to $\Ind^{\GL_n(K)}_{P(K)}$ (\cite[Theorem~4.4.6]{Emerton2010-1}), we get $\Hom_{\GL_n(K)}(\Ind^{\GL_n(K)}_{P(K)}(\sigma ), \rho)=0$ for any $\sigma \in \Rep^{\adm}_{\Fp}(P(K))$. In particular, $\rho$ cannot be a quotient of a proper parabolic induction as in (\ref{eq-surj-MFV}).

	In the meantime,  finite-dimensional non-supersingular representations also give  exceptions to the vanishing bound in Theorem \ref{thm-Herzig}. For such a $\rho$, since  $\rho$ factors through the determinant, $\cF_{\rho}$ is constant on $\P^{n-1}$, cf. proof of \cite[Proposition~4.7]{Scholze2018-1}. Hence we have $\dim_{\Fp}\cJ^{2n-2}_n(\rho)=\dim_{\Fp}\rho>0$. 
\end{rem}

\printbibliography
\end{document}